\newtheorem{theorem}{Theorem}[section]
\newtheorem{lemma}[theorem]{Lemma}
\newtheorem{proposition}[theorem]{Proposition}
\newtheorem{extension}[theorem]{Extension}
\theoremstyle{definition}
\theoremstyle{remark}
\newtheorem{remark}{Remark}[section]
\numberwithin{equation}{section}
\theoremstyle{remark}
\newtheorem{assumption}{Regularity assumption}[section]
\newtheorem{admissible}[assumption]{Admissibility condition}
\title[Passive-observation inverse boundary problems]{Determining evolutionary equations from a single passive boundary observation}
\author{Lu Chen}
\address{School of Mathematics and Statistics, Beijing Institute of Technology, Beijing, China}
\email{chenlu@bit.edu.cn}
\author{Yan Jiang}
\address{Department of Mathematics, City University of Hong Kong, Kowloon, Hong Kong SAR, China}
\email{yjian24@cityu.edu.hk}
\author{Hongyu Liu}
\address{Department of Mathematics, City University of Hong Kong, Kowloon, Hong Kong SAR, China}
\email{hongyu.liuip@gmail.com; hongyliu@cityu.edu.hk}
\author{Catharine W. K. Lo}
\address{School of Mathematical Sciences, Shenzhen University, Shenzhen, Guangdong, China}
\email{cwklo@szu.edu.cn; catharinelowk@gmail.com}
\author{Longyue Tao}
\address{Department of Mathematics, City University of Hong Kong, Kowloon, Hong Kong SAR, China}
\email{longyue.tao@my.cityu.edu.hk; sdyctly@163.com}
\begin{document}
\keywords{inverse boundary problems, evolutionary equations, single passive observation, multi-parameter decoupling, formally determined, unique identifiability, Fourier analysis, microlocal analysis, frequency asymptotics}
\thanks{}
\date{\today}

\subjclass[2020]{35R30, 35L05, 35J10, 42B10}

\begin{abstract}
We study inverse boundary problems (IBPs) for evolutionary partial differential equations using only a {single passive boundary observation}---a setting where the data are generated by an unknown internal source and propagate through an unknown medium, without any externally designed active inputs. The primary goal is the simultaneous recovery of coupled unknown quantities, including source terms and medium coefficients, from this severely limited information. In stark contrast to active measurement scenarios---where multiple, carefully chosen inputs provide rich and structured data---passive-observation inverse boundary problems face {two fundamental challenges}: the observable information is minimal, and multiple types of unknowns are intrinsically coupled in the boundary data. Consequently, these problems have remained longstanding open questions, even in relatively simple settings, and have been studied far less systematically than their active counterparts.

In this paper, we develop a {unified and novel framework} based on fundamental integral identities, integrating tools from harmonic and microlocal analysis together with delicate low- and high-frequency asymptotics. This framework provides the first systematic resolution for a broad class of such inverse problems for {second-order hyperbolic, parabolic, and Schr\"odinger equations} within a single coherent approach. The key technical condition we impose is that the {cardinality of the measurement dataset exceeds that of the unknowns by at least one dimension}. This extra dimensional room enables us to decouple the unknowns and effectively handle inherent nonlinearity of the inverse problem. The unique identifiability results we obtain are arguably the best possible from a cardinality point of view. They not only subsume all existing results in the literature as special cases, but also cover significantly more general configurations of practical interest. Our framework complements several classical theories for IBPs and opens up a promising new direction with numerous potential developments.
\end{abstract}

\maketitle
\tableofcontents

%=====================================================================================

\section{Introduction}\label{sec intr}

\subsection{Hyperbolic POIBPs and summary of major results}

We begin by introducing the inverse boundary problem for a second-order hyperbolic partial differential equation (PDE) from a single passive observation, which serves as a prototypical model for our study. 
Let \(n\geq 3\) and consider
\begin{equation}\label{eq:1}
\begin{cases}
\displaystyle \frac{1}{c(x)^2}\,\partial_t^2 u(x,t)-\nabla\!\cdot\!\bigl(\sigma(x)\nabla u(x,t)\bigr)=0, & (x,t)\in\mathbb R^n\times\mathbb R_+, \\[6pt]
u|_{t=0}=f(x), \quad \partial_tu|_{t=0}=h(x), & x\in\mathbb R^n.
\end{cases}
\end{equation}
Here \(f\) and \(h\) represent the initial displacement and initial velocity, while \(c(x)\) and \(\sigma(x)\) are positive scalar coefficients describing the medium.
We assume that there exists a bounded Lipschitz domain \(\Omega\subset\mathbb R^n\) with connected exterior such that
\begin{equation}\label{eq:general_support}
\operatorname{supp}(f)\cup\operatorname{supp}(h)\cup\operatorname{supp}(1-\sigma)\cup\operatorname{supp}(1-c)\subset\Omega,
\end{equation}
and that there exist constants \(c_-,c_+,\sigma_-,\sigma_+>0\) with
\begin{equation}\label{eq:general_bounds}
0<c_-\leq c(x)\leq c_+<\infty,\qquad
0<\sigma_-\leq \sigma(x)\leq \sigma_+<\infty\quad\text{for }x\in\Omega.
\end{equation}
In this physical setup, \(\Omega\) represents a medium body embedded in a homogeneous background space, while the exterior region \(\mathbb R^n\setminus\Omega\) is uniformly homogeneous, with \(c(x)\equiv \sigma(x)\equiv 1\) for \(x\in\mathbb R^n\setminus\Omega\).
The coefficient \(c(x)\) is the wave speed, and its spatial variation reflects the inhomogeneity of the medium, whereas \(\sigma(x)^{-1}\) signifies the density of the medium.
Under suitable regularity and compatibility conditions (to be specified in Section~\ref{sec:setup}), the Cauchy problem \eqref{eq:1} admits a unique global solution in the natural energy class.

Starting from the unknown initial state, the resulting wave field propagates through the medium and reaches the boundary of the body, where one passively records the boundary response over time.
The inverse problem is to recover the interior physical quantities—namely the initial state and the medium parameters—from this passive boundary observation.
We refer to this inverse problem as the passive-observation inverse boundary problem (POIBP).

Mathematically, we introduce the associated boundary observation dataset by
\begin{equation}\label{eq:2}
\mathcal M_{f,h,c,\sigma}
:=
\bigl(u,\sigma(x)\partial_\nu u\bigr)
\big|_{\partial\Omega\times\mathbb R_+}
\end{equation}
where \(\nu(x)\) denotes the unit outward normal to \(\partial\Omega\). 
The inverse problem is to determine the causal sources \((f,h)\) and the medium parameters \((c,\sigma)\) from this dataset, i.e.,
\begin{equation}\label{eq:ip1}
\mathcal{M}_{f,h,c,\sigma}(x,t),\quad (x,t)\in\partial\Omega\times\mathbb R_+
\quad \longrightarrow \quad
(f(x),h(x),c(x),\sigma(x)),\qquad x\in\Omega.
\end{equation}

We note a trivial counterexample to \eqref{eq:ip1}: if the wave field \(u(x,t)\) is trapped inside the medium body \(\Omega\), then it is impossible to recover the unknowns \((f,h)\) or \((c,\sigma)\). Hence, one generically requires a non‑trapping condition on the configuration \((f,h,c,\sigma)\), which is reflected by a local energy decay condition of the form
\begin{equation}\label{eq:nt1}
\bigl\|u(x,t)\big|_{x\in\Omega,\,t\in\mathbb{R}_+}\bigr\| \lesssim \eta(t) \to 0 \quad \text{as } t\to +\infty.
\end{equation}

The primary goal of this paper is to establish the unique identifiability of the POIBP \eqref{eq:ip1} under generic non‑trapping conditions \eqref{eq:nt1}. That is, we will determine sufficient conditions ensuring the one‑to‑one correspondence between the unknown configuration \((f,h,c,\sigma)\) and the associated boundary dataset \(\mathcal{M}_{f,h,c,\sigma}(x,t)|_{(x,t)\in\partial\Omega\times\mathbb R_+}\):
\begin{equation}\label{eq:unn1}
\mathcal{M}_{f,h,c,\sigma}(x,t)=\mathcal{M}_{\tilde f,\tilde h,\tilde c,\tilde\sigma}(x,t),\ \ (x,t)\in\partial\Omega\times\mathbb R_+\ \iff\ (f,h,c,\sigma)=(\tilde f,\tilde h,\tilde c,\tilde\sigma),
\end{equation}
where \((f,h,c,\sigma)\) and \((\tilde f,\tilde h,\tilde c,\tilde\sigma)\) are two admissible configurations.

Next, we state the main unique identifiability results obtained for the POIBP \eqref{eq:ip1}. We first consider the relatively simpler case with \(\sigma\equiv 1\) and \(h\equiv 0\), which arises in thermoacoustic and photoacoustic tomography (TAT/PAT) \cite{liu2015determining} and appears as the 9th open problem in Uhlmann's list of ten open inverse problems \cite{Uhlmann2019}.

\begin{theorem}\label{thm:main1n}
Consider the TAT/PAT inverse problem as stated above. Under suitable local energy decay conditions of the form \eqref{eq:nt1}, three admissible classes of unknown configurations are uniquely identified as follows.

\begin{enumerate}
\item[(i)] \((f(x),c(x))\) is of separating form and satisfies certain global regularity requirements. That is,
\[
(f(x),c(x)) = (\mathfrak{f}(x')a(x_n),\; \mathfrak{c}(x')b(x_n)) \quad \text{with } (x',x_n)\in\mathbb{R}^n,
\]
where \(a(x_n)\) and \(b(x_n)\) are a‑priori known.

\item[(ii)] \(f(x)\) and \(c(x)\) are independent of one variable in a cylindrical‑like domain. More precisely,
\[
(f(x),c(x)) = (\tilde{\mathfrak{f}}(x'),\tilde{\mathfrak{c}}(x'))\,\chi_{\Omega'\times(\gamma_1,\gamma_2)}(x),
\]
where \(\operatorname{supp}(\tilde{\mathfrak{f}})=\operatorname{supp}(1-\tilde{\mathfrak{c}})=\Omega'\) is a bounded domain in the \(x'\)-plane and \(\gamma_1,\gamma_2\) are constants. The inhomogeneity support \(\Omega'\times(\gamma_1,\gamma_2)\) is not required to be a‑priori known, and only local regularity conditions are imposed on \(f\) and \(c\).

\item[(iii)] \(f\) and \(c\) are piecewise constant in generic domains. That is,
\[
(f(x),c(x)) = \sum_{j=1}^{N} (f_j,c_j)\,\chi_{\Omega_j}(x),
\]
where \(f_j,c_j\) are constants, and neither the constants nor the supports \(\Omega_j\) are a‑priori known.

\item[(iv)] If the strong Huygens principle holds, then the measurement time interval \((0,\infty)\) in \eqref{eq:ip1} can be replaced by a finite one \((0,T)\) for some sufficiently large \(T>0\).

\end{enumerate}
\end{theorem}

% Such cylindrical domains naturally arise in numerous applications, including wave propagation in optical fibers, acoustic waveguides, layered geological formations, and confined biological tissues, where the medium properties vary predominantly in one direction.

A crucial observation is that for all three cases, the cardinality of the unknown configuration is \(n-1\), where by {\it cardinality} we mean the number of independent variables in a function quantity. Indeed, in each case the unknown configuration depends only on the variable \(x'\), and therefore has cardinality \(n-1\). We would also like to note the cardinality of the dataset \(\mathcal{M}_{f,c}(x,t)\big|_{(x,t)\in\partial\Omega\times\mathbb{R}_+}\) is \(n\). The unique identifiability results derived in Theorem~\ref{thm:main1n} are arguably the best possible one can achieve in this passive setting; see our discussion in Subsection \ref{subsec:framework}. The detailed unique identifiability results summarized in Theorem~\ref{thm:main1n} are presented in Theorems~\ref{thm0} and \ref{thm:unique_f_c} for case (i), Theorem~\ref{thm:unique_f_c_piecewise} for case (ii), Theorem~\ref{thm:unique_f_c_piecewise_constant} for case (iii), and Theorem~\ref{thm:finite-time-measurement} for case (iv).

For the POIBP \eqref{eq:ip1} with a general unknown configuration $(f,h,c,\sigma)$, the unique identifiability results obtained can be briefly summarized as follows. 
\begin{theorem}\label{thm:main2n}
Consider the general hyperbolic inverse problem given in \eqref{eq:1}. Under suitable non-degeneracy and decay conditions, an admissible class of unknown configuration is uniquely identified as follows: \(f(x),h(x),c(x),\sigma(x)\) is of separating form and satisfies certain global regularity requirements. That is,
\begin{multline*}
(f(x),h(x),c(x),\sigma(x))=(\psi(x'')a(x',x_n),\varphi(x')b(x'',x_n),\phi(x')e(x'',x_n),\gamma(x')g(x'',x_n))\quad \\\text{with } (x',x'',x_n)\in\mathbb{R}^n,
\end{multline*}
where $a(x',x_n),b(x'',x_n),e(x'',x_n),g(x'',x_n)$ are a-priori known.
\end{theorem}
The detailed unique identifiability result summarized in Theorem~\ref{thm:main2n} are presented in Theorem~\ref{thm:general}. By combining the arguments from Theorem~\ref{thm:main1n}(ii)--(iv) with those developed in Theorem~\ref{thm:main2n}, the remaining cases can be proved analogously. However, the intricate coupling of the unknowns in these cases requires a more delicate analysis, which we omit in this paper for brevity.

Our results also extend to parabolic and Schrödinger-type equations, as stated in Extensions \ref{ext:parabolic-model} and \ref{ext:schrodinger-model}.

%-----------------------------------------------------------------------------------------
\subsection{Historical account and methodological background}

To appreciate the fundamental difficulty of the POIBP \eqref{eq:ip1}, it is instructive to contrast it with the corresponding inverse problem with multiple measurements, namely the active-measurement inverse boundary problem (AMIBP). Define the input-to-observation map (also known as the source-to-solution map)
\begin{equation}\label{eq:sts}
\Lambda_{c,\sigma}(f,h) = \left(u(x,t), \partial_\nu u(x, t) \right)\big|_{(x,t), \in\partial\Omega\times\mathbb{R}_+}.
\end{equation}
Knowing \(\Lambda_{c,\sigma}\) is equivalent to knowing the boundary observation dataset for every admissible active input \((f,h)\) supported in \(\Omega\). The active-measurement inverse problem is then to recover the medium coefficients \((c,\sigma)\) from the map \(\Lambda_{c,\sigma}\), i.e.,
\begin{equation}\label{eq:ip2}
\Lambda_{c,\sigma}\longrightarrow (c,\sigma).
\end{equation}

The AMIBP \eqref{eq:ip2} has been extensively and intensively studied in the literature, with a wealth of results. As we shall see, the POIBP \eqref{eq:ip1} is radically more challenging than the AMIBP, both physically and technically.

From a physical perspective, the AMIBP allows active design of inputs, producing rich, structured data whose cardinality exceeds that of the unknown. For the active dataset encoded in \(\Lambda_{c,\sigma}\), the data depend on \(x\in\partial\Omega\) (cardinality \(n-1\)), \(t>0\) (cardinality \(1\)), and the input \((f,h)\) (cardinality \(n\)), yielding a total of \(2n\) independent variables, while \((c(x),\sigma(x))\) has only \(n\). This higher cardinality provides an extra dimensional room that allows one to “squeeze” the inherent nonlinearity of the inverse problem—indeed, when viewed in a higher-dimensional space, nonlinear problems can be asymptotically or approximately linearized. In the POIBP, the sources are unknown and fixed, and only a single boundary observation is available; consequently, the dataset is less structured and has the same cardinality (\(n\)) as the unknowns. The problem is therefore formally determined and inherently more nonlinear. Moreover, it couples different types of unknowns in a nonlinear manner. 

From a technical perspective, classical theories for active-data inverse boundary problems have been developed using sophisticated methodologies. Within the AMIBP framework, where multiple or infinitely many measurements are accessible, the elliptic prototype is Calderón's inverse boundary value problem, for which the Dirichlet-to-Neumann (DtN) map and the complex geometric optics (CGO) method provide the standard model \cite{Calderon1980,sylvester1987global,lee1989determining,astala2006calderon}. For evolutionary equations, this direction is represented by the boundary control method \cite{belishev1992reconstruction,belishev1987approach} and the boundary spectral method \cite{KKLM2004}. What these methods have in common is the richness of the available data: they generate sufficiently large probing families, and their observable datasets have higher cardinality than the unknown configuration. In the Calderón problem, for instance, the DtN map encodes data of cardinality \(2(n-1)\), which exceeds the cardinality \(n\) of the unknown coefficient when \(n\ge 3\). Even in the two‑dimensional case \(n=2\), the intrinsic complex structure induces a symmetry that effectively reduces the dimensionality of the unknown. This extra dimensional room is what makes classical active‑measurement mechanisms effective. By contrast, in the passive‑observation setting considered here, one has neither adjustable inputs nor probing mechanisms built from products of two independent solutions, but only a single generated probing family associated with one passive boundary observation.

When only a single boundary measurement is available, a method based on Carleman estimates has been developed. Initiated by Bukhgeim and Klibanov, this approach has been elaborated in many works \cite{BukhgeuimKlibanov,KlibanovTimonov,KlibanovLi+2021,KlibanovYamamoto}, and has been highly effective for source reconstruction when the medium is known. However, for the POIBP, where source terms and medium coefficients are intrinsically coupled in a single boundary observation dataset, the available information is too limited for the standard Carleman‑based framework to decouple and identify all components directly. Indeed, the Carleman approach typically assumes that either the source is known and the medium is unknown, or vice versa; it does not naturally handle the case where both are simultaneously unknown.

The first uniqueness result for simultaneously determining \(f\) and \(c\) in the TAT/PAT problem (as considered in Theorem~\ref{thm:main1n}) was established in \cite{liu2015determining}, where a Fourier‑based approach was initiated by Liu and Uhlmann. This method relies on analyticity of the temporal Fourier transform of the wave field and low‑frequency asymptotics of the transformed field. Technically, it imposes a strong non‑trapping condition that requires exponential time decay in \eqref{eq:nt1}, which in turn imposes restrictive regularity conditions on \(f\) and \(c\). Moreover, because it discards high‑frequency spectral information, it can only recover \(f\) and \(c\) that satisfy stringent structural assumptions. Subsequent works extended this direction but remained tied either to low‑frequency arguments or to additional structural or regularity assumptions.

For the wave equation and closely related TAT/PAT models, further simultaneous recovery results have been obtained for the source and the sound speed, or for closely related parameter pairs \cite{knox2020determining,kian2025determination,kian2025uniqueness}. We mention in passing some related unique recovery results by assuming one is known to recover the other one between $f$ and $c$ \cite{Stefanov2013Recovery}. For other passive‑observation models arising in geomagnetic anomaly detection, brain imaging, and related applications, related inverse formulations have also been studied in different physical settings \cite{deng2019identifying,deng2020identifying,deng2019inverse}. More recent works have established coupled recovery results for evolution equations under additional structural or model‑specific assumptions \cite{feizmohammadi2025reconstruction,liu2024inversehyperbolicproblemapplication,kian2022simultaneous}.

At the same time, these studies also make clear a fundamental limitation of the passive setting: unlike the classical active‑measurement framework, one only has access to probing spaces generated by a single PDE, rather than to products of two independent solutions. For this reason, the existing simultaneous recovery results remain tied either to low‑frequency arguments or to additional structural/regularity assumptions. In particular, the first uniqueness result for the simultaneous recovery of \(f\) and \(c\) in TAT/PAT \cite{liu2015determining}, as well as its subsequent developments, relied on low‑frequency asymptotics and did not use the high‑frequency part of the spectral data. Consequently, the genuinely coupled single‑passive‑observation problem without strong structural restrictions has remained open for nearly two decades and was highlighted by Uhlmann as his 9th open problem \cite{Uhlmann2019}.

In summary, classical active-measurement methods exploit higher data cardinality, while Carleman-based approaches require prior knowledge of either the source or the coefficient, and existing Fourier-based simultaneous recovery results discard high-frequency information. The present work introduces a unified framework that overcomes these limitations by combining integral identities, CGO test waves, and complementary low- and high-frequency asymptotics. Moreover, it introduces a microlocal approach to tackle the piecewise-regular configurations. 
 In this way, our method complements the classical theories for inverse boundary problems and opens up a new field of research in passive-observation inverse problems for evolutionary PDEs.

%------------------------------------------------------------------------------------------
\subsection{A novel unified framework}\label{subsec:framework}

The method developed in this paper is designed to bridge the gap left by existing techniques and to provide a unified framework for passive-observation inverse boundary problems. The starting point is a key time-domain integral identity that serves as the foundation for all subsequent analysis:
\begin{equation}\label{eq:td}
\begin{aligned}
&\int_\Omega\int_0^\infty
\left(\frac{1}{\tilde{c}^2}-\frac{1}{c^2}\right)\tilde{u}(x,t)\,\partial_{tt}w(x,t)\,\mathrm{d}t\mathrm{d}x
+\int_\Omega\left(\frac{\tilde{f}}{\tilde{c}^2}-\frac{f}{c^2}\right)\partial_t w(x,0)\,\mathrm{d}x\\
&\quad
-\int_\Omega\left(\frac{\tilde{h}}{\tilde{c}^2}-\frac{h}{c^2}\right)w(x,0)\,\mathrm{d}x
+\int_\Omega\int_0^\infty
(\tilde{\sigma}-\sigma)(x)\,\nabla\tilde{u}(x,t)\cdot\nabla w(x,t)\,\mathrm{d}t\mathrm{d}x
=0, 
\end{aligned}
\end{equation}
where the test function \(w\) satisfies
\begin{equation}\label{eq:tdtf}
\frac{1}{c(x)^2}\partial_t^2 w(x,t)-\nabla\cdot(\sigma(x)\nabla w(x,t))=0
\qquad\text{in }\Omega\times(0,\infty).
\end{equation}
A crucial advantage of this identity is that, in the TAT/PAT setting (where the goal is to recover only \(f\) and \(c\) with $\sigma\equiv1$ and $h\equiv0$), it requires only a local energy decay condition: the wave field decays locally in time as \(t\to\infty\), but not necessarily exponentially (c.f. Admissibility condition~\ref{ass:local_decay_fc}). This is a much weaker requirement than the exponential decay needed in traditional Fourier-based methods \cite{liu2015determining}, which rely on analyticity of the Fourier transform. In the general case (recovering all four unknowns \(f,h,c,\sigma\)), however, exponential decay is still required; the relaxation to local energy decay is specific to the TAT/PAT simplification. Nevertheless, this local energy condition already represents a significant improvement over the assumptions adopted in most existing results for simultaneous recovery.

The four terms in \eqref{eq:td} correspond to mismatches in \(c\), \(f\), \(h\), and \(\sigma\). Their data cardinalities are:
\begin{itemize}
    \item \(I_1\) (c): \(\partial_{tt}w(x,t)\) integrated over time — cardinality \(n\) (since $w$ satisfies a PDE constraint).
    \item \(I_2\) (f): \(\partial_t w(x,0)\) — cardinality \(n-1\).
    \item \(I_3\) (h): \(w(x,0)\) — cardinality \(n-1\).
    \item \(I_4\) (\(\sigma\)): \(\nabla w(x,t)\) integrated over time — cardinality \(n\).
\end{itemize}
Without reduction, the unknowns \(f,h,c,\sigma\) each have cardinality \(n\). Terms \(I_2\) and \(I_3\) would therefore be underdetermined (data \(n-1\), unknown \(n\)). We thus impose dimensional reduction: all unknowns depend on only \(n-1\) variables. Then:

\begin{itemize}
    \item \(f\) and \(h\) (cardinality \(n-1\)) match \(I_2\) and \(I_3\).
    \item \(c\) and \(\sigma\) (cardinality \(n-1\)) are determined by \(I_1\) and \(I_4\), which retain cardinality \(n\) via time integration, providing the extra dimensional room to decouple the nonlinearity encoded in \(\tilde u\).
\end{itemize}

This one-dimensional excess is precisely what allows us to ``squeeze" the inherent nonlinearity into the asymptotically vanishing remainder terms of the CGO expansions, effectively linearizing the problem in the limit.

These results are arguably the best one can achieve: they fully exploit the limited cardinality of the passive observation, reducing the unknowns to \(n-1\) dimensions while using the \(n\)-dimensional spacetime integral to capture the nonlinear coupling. Any further recovery would require either additional measurements or stronger a priori information.

The proofs of the main results are built upon three pillars:
\begin{enumerate}
    \item \textbf{Temporal Fourier analysis and spatial harmonic analysis.} We construct families of solutions of the form
    \[
    w(x,t) = e^{\mathrm{i}\rho t}\psi_\rho(x),\qquad \psi_\rho(x) = e^{\rho'\cdot x}\bigl(1 + \text{corrector}\bigr),
    \]
    where the complex vector \(\rho'\) is chosen so that the product \(\rho'\cdot\rho' = 0\) (low-frequency regime) or satisfies a suitable dispersion relation (high-frequency regime). The corrector terms are controlled via elliptic estimates, and the temporal factor \(e^{\mathrm{i}\rho t}\) and the spatial factor \(\psi_\rho(x)\) are intricately coupled, enabling a delicate analysis that probes the medium at arbitrary frequencies while keeping the error terms asymptotically negligible.

    \item \textbf{Low- and high-frequency asymptotics.} By analyzing the Fourier-transformed wave field \(\hat u(x,k)\) as \(k\to 0\), we extract the quotient \(f/c^2\) as the leading-order term. This low-frequency analysis isolates the source-medium combination without requiring knowledge of \(c\) itself. Subsequently, a high-frequency asymptotic expansion as \(k\to\infty\)—combined with the previously recovered quotient—separates \(f\) and \(c\). The two asymptotic regimes are intricately connected, and their interplay fully exploits the complete temporal-spatial content of the boundary measurement.

    \item \textbf{Microlocal analysis for piecewise-regular configurations.} When the coefficients are piecewise regular (e.g., piecewise constant or piecewise smooth with interfaces), we employ microlocal techniques to characterize the geometric singularities (jumps) and the propagation of the wave field from the boundary data. This allows us to recover the support of the unknowns and the interfaces where the coefficients are discontinuous.
\end{enumerate}

These three ingredients are woven together within a single coherent framework. The integral identity (1) provides the decoupling mechanism; the CGO test waves supply the probing families; the asymptotic expansions extract the unknown parameters from the data; and microlocal analysis handles the geometric singularities. 

This unified approach yields the unique identifiability results summarized in Theorem~\ref{thm:main1n} and its generalizations to the full system \((f,h,c,\sigma)\). It also extends naturally to parabolic and Schrödinger-type equations, as outlined in Section~\ref{sec:parabolic}.

%------------------------------------------------------------------------------------

\subsection{Organization of the paper}

The remainder of the paper is structured as follows.
Section~\ref{sec main} states the main uniqueness results, beginning with the TAT/PAT model, then the piecewise-regular setting, and finally the general second-order hyperbolic system; we also present the extensions to parabolic and Schr\"odinger-type equations.
Section~\ref{sec:setup} develops the analytical framework for the hyperbolic forward problem, including well-posedness, temporal Fourier reduction, and the high-frequency Fourier analysis used later in the paper.
Section~\ref{sec:cgo} contains the proofs of the two TAT/PAT uniqueness results, where the unified framework is first implemented in a concrete model and already leads to the recovery of coupled unknowns from a single passive boundary observation.
Section~\ref{sec:piecewise} treats piecewise-regular configurations and shows that the same framework remains effective beyond the smooth setting.
Section~\ref{sec:generalhyperbolic} is devoted to the general second-order hyperbolic system, where the coupling is substantially stronger, the available information is more limited, and the same unified framework is pushed to recover four coupled parameters under suitable cardinality assumptions.
Finally, Section~\ref{sec:parabolic} outlines the extensions to parabolic and Schr\"odinger-type equations. We sketch the proofs, demonstrating that the same basic mechanism also applies to broader classes of evolutionary inverse problems.

%%%%%%%%%%%%%%%%%%%%%%%%%%%%%%%%%%%%%%%%%%%%%%%%%%%%%%%%%%%%%%%%%%%%%%%%%%%%%%%
\section{Statement of main results}\label{sec main}

\subsection{Joint recovery in TAT/PAT}

We first consider the case of thermoacoustic and photoacoustic tomography (TAT/PAT), which corresponds to taking \(n=3\), \(\sigma\equiv 1\), and \(h\equiv 0\) in \eqref{eq:1}:
\begin{equation}\label{eq:simple_hyper_eqns}
\begin{cases}
\displaystyle \frac{1}{c(x)^2}\,\partial_t^2 u(x,t)-\Delta u(x,t)=0, & (x,t)\in\mathbb R^3\times\mathbb R_+,\\[6pt]
u(x,0)=f(x),\quad \partial_t u(x,0)=0, & x\in\mathbb R^3.
\end{cases}
\end{equation}
In this setting, the boundary observation dataset is the Cauchy data pair \(\bigl(u,\partial_\nu u\bigr)\) on \(\partial\Omega\times\mathbb R_+\), in agreement with \eqref{eq:2}.
The inverse problem is therefore to recover the source \(f\) and the sound speed \(c\) inside \(\Omega\) from this single passive boundary observation -- precisely the open problem posed in \cite{Uhlmann2019}.
For technical convenience, after a harmless translation and scaling, we work with the reference cube
\[
Q=[-\pi,\pi]^3\supset\overline{\Omega}.
\]

\subsubsection{Determining $f/c^2$ independent of $c$}

Our first uniqueness result concerns the determination of the quotient $f/c^2$ under generic admissibility conditions.

\begin{admissible}\label{ass:local_decay_fc}
A configuration $(f,c)$ for the TAT/PAT system \eqref{eq:simple_hyper_eqns} is said to satisfy the local decay condition if its corresponding solution $u$ satisfies
\begin{equation}\label{eq:td1}
\|u(\cdot,t)\|_{L^2(\Omega)}+\|\partial_tu(\cdot,t)\|_{L^2(\Omega)}
\le C\eta(t)\|f\|_{H^1(\Omega)}
\qquad\text{for }t>0,
\end{equation}
where $\eta(t)\to0$ as $t\to\infty$ and $\eta\in L^1(0,\infty)$.
\end{admissible}
This is a much weaker requirement than the classical non‑trapping condition used in the traditional Fourier-based approach (cf.\ \cite{liu2015determining}), and can be generically fulfilled by the hyperbolic system \eqref{eq:simple_hyper_eqns}.

\begin{admissible}\label{ass:x3_invariance_cf}
A configuration \((f,c)\) is said to fulfill the separating-variable condition if, after extending \(f/c^2\) by zero to \(Q\setminus\Omega\), there holds
\begin{equation}\label{eq:qphi1}
\frac{f(x)}{c(x)^2}=q(x')\,\phi(x_3)
\qquad\text{for a.e. }x:=(x',x_3)\in Q,
\end{equation}
where $\phi$ is non‑degenerate in the sense that
\begin{equation}\label{eq:nonde}
\int_{-\pi}^{\pi}\phi(x_3)\,e^{-\kappa x_3}\,\mathrm{d}x_3 \neq 0,\qquad
\forall\,\kappa:=\bigl(\rho_1^2+(\rho_2-1/2)^2\bigr)^{1/2},\ (\rho_1,\rho_2)\in\mathbb{Z}^2.
\end{equation}
\end{admissible}

\begin{remark}\label{rem:chn1}
As discussed above, the separating‑variable condition essentially means that the unknown configuration \((f,c)\) in our inverse problem is independent of one spatial variable within its support \(\Omega\). In Section~\ref{sec:piecewise}, we will show that our uniqueness results hold for the specific choice \(\phi(x_3)=\chi_{[a,b]}(x_3)\), where \(\chi\) denotes the characteristic function. This choice corresponds to a cylindrical‑type domain \(\Omega\). We conjecture that the results should also hold for the more natural case
\begin{equation}\label{eq:cc2}
\frac{f(x)}{c(x)^2}=q(x')\,\chi_\Omega(x),
\end{equation}
where \(\chi_\Omega\) is the characteristic function of \(\Omega\). However, due to technical constraints arising from the need to perform certain analytic operations, we currently require some regularity conditions on \((f,c)\); the characteristic function is not regular enough and would need to be mollified. Apart from this technical limitation, our results are expected to hold in the general setting as well.

\end{remark}

\begin{remark}
Since the hyperbolic system \eqref{eq:simple_hyper_eqns} is invariant under rigid changes of spatial coordinates, the separating‑variable condition \eqref{eq:qphi1} can be replaced by a seemingly more general invariance condition: there exist a unit vector $\omega\in\mathbb S^2$ and a function $\phi\in L^\infty(\mathbb R)$, $\phi\not\equiv 0$, such that
\begin{equation}\label{eq:qphi2}
\frac{f(x)}{c(x)^2}=q(y)\,\phi(s)
\qquad\text{for a.e. }x\in\Omega,
\end{equation}
where $x=y+s\omega$ with $y\in\omega^\perp$ and $s\in\mathbb R$, and $q\in L^2(\omega^\perp)$.
\end{remark}

\begin{theorem}\label{thm0}
Let \((f,c)\) and \((\tilde f,\tilde c)\) be two TAT/PAT configurations satisfying
\(c,\tilde c\in L^\infty(\mathbb R^3)\), the support condition \eqref{eq:general_support}, and the local decay condition (Admissibility~\ref{ass:local_decay_fc}).
Assume that the quotients \(f/c^2\) and \(\tilde f/\tilde c^2\) satisfy the separating-variable condition (Admissibility~\ref{ass:x3_invariance_cf}) with the same profile \(\phi\).
That is, for almost every \(x=(x',x_3)\in\Omega\),
\[
\frac{f(x)}{c(x)^2}=q(x')\phi(x_3),
\qquad
\frac{\tilde f(x)}{\tilde c(x)^2}=\tilde q(x')\phi(x_3),
\]
where
\[
q,\tilde q\in H^2(\mathbb R^2),
\qquad
\phi|_{(a,b)}\in H^2(a,b),
\]
and \([a,b]\subset[-\pi,\pi]\) is the support interval of \(\phi\), with \(\phi\) extended by zero outside \([a,b]\).
If they generate the same passive boundary dataset,
\begin{equation}\label{eq:condition}
\mathcal{M}_{f,c}(x,t)=\mathcal{M}_{\tilde f,\tilde c}(x,t),
\qquad
(x,t)\in\partial\Omega\times\mathbb R_+,
\end{equation}
then the quotient \(f/c^2\) is uniquely determined:
\begin{equation}\label{eq:fc_ratio_unique}
\frac{f(x)}{c(x)^2}=\frac{\tilde f(x)}{\tilde c(x)^2}
\qquad\text{for a.e. }x\in\Omega.
\end{equation}
\end{theorem}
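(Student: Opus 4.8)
The plan is to compare the two configurations through the time-domain integral identity \eqref{eq:td}, specialized to the TAT/PAT case \(\sigma=\tilde\sigma\equiv 1\), \(h=\tilde h\equiv 0\). Since both configurations produce the same boundary Cauchy data \eqref{eq:condition}, subtracting the weak formulations of the two wave equations against a test function \(w\) solving \(c^{-2}\partial_t^2 w-\Delta w=0\) in \(\Omega\times(0,\infty)\) kills all boundary contributions and leaves
\[
\int_\Omega\int_0^\infty\!\Bigl(\tfrac{1}{\tilde c^2}-\tfrac{1}{c^2}\Bigr)\tilde u(x,t)\,\partial_{tt}w(x,t)\,\mathrm dt\mathrm dx
+\int_\Omega\Bigl(\tfrac{\tilde f}{\tilde c^2}-\tfrac{f}{c^2}\Bigr)\partial_t w(x,0)\,\mathrm dx=0.
\]
The local decay condition (Admissibility~\ref{ass:local_decay_fc}) is what makes the time integral converge and justifies the integration by parts in \(t\). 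The key structural observation is that the quotient \(f/c^2\), not \(f\) or \(c\) separately, is the natural unknown appearing in the second term — this is the ``decoupling quantity'' referred to in the introduction. First I would set \(g:=\tfrac{\tilde f}{\tilde c^2}-\tfrac{f}{c^2}=(\tilde q-q)(x')\phi(x_3)\) and rewrite the identity as \(\int_\Omega g(x)\,\partial_t w(x,0)\,\mathrm dx=-\int_\Omega\int_0^\infty(\cdots)\,\partial_{tt}w\,\mathrm dt\mathrm dx\).

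Next I would choose a family of explicit test functions adapted to the separating-variable structure. The idea, following the Fourier-based strategy, is to pass to the temporal Fourier transform: take \(w_k(x,t)=e^{-\mathrm ik t}v_k(x)\) where \(v_k\) solves the Helmholtz-type equation \(\Delta v_k + k^2 c(x)^{-2} v_k=0\) in a neighborhood of \(\Omega\) — or, more simply in the region where \(c\equiv 1\), the free Helmholtz equation — and build harmonic-type solutions of the form \(v_k(x)=e^{\zeta\cdot x}\) with \(\zeta\in\mathbb C^3\), \(\zeta\cdot\zeta=-k^2\) for the background part, i.e.\ the complex-exponential solutions underlying the Fourier analysis. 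Plugging these into the Fourier-transformed identity, the first term becomes the Fourier transform (in \(x\)) of \(\bigl(\tilde c^{-2}-c^{-2}\bigr)\widehat{\tilde u}(\cdot,k)\) and the second becomes \(-\mathrm ik\,\widehat g\) evaluated at a complex frequency determined by \(\zeta\). The non-degeneracy hypothesis \eqref{eq:nonde} on \(\phi\) enters precisely here: when one separates variables \(x=(x',x_3)\) and expands \(q-\tilde q\) in a Fourier series on the \(x'\)-torus with lattice \((\rho_1,\rho_2)\in\mathbb Z^2\), the \(x_3\)-integral \(\int_{-\pi}^{\pi}\phi(x_3)e^{\kappa x_3}\mathrm dx_3\) appears as a multiplicative factor with \(\kappa=(\rho_1^2+(\rho_2-1/2)^2)^{1/2}\); since this factor is nonzero, it can be divided out, leaving information purely on the Fourier coefficients of \(q-\tilde q\).

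The heart of the argument is then a low-frequency (\(k\to 0\)) asymptotic analysis of the resulting identity: as \(k\to 0\), the first term carries a factor that vanishes (the wave field \(\widehat{\tilde u}(\cdot,k)\) stays bounded while the prefactor structure forces the leading contribution to come from \(g\)), so that in the limit one extracts \(\widehat g\equiv 0\) on a set of complex frequencies rich enough — by analyticity in \(k\), guaranteed by the \(L^1\) decay \(\eta\in L^1(0,\infty)\) which makes \(\widehat{\tilde u}(\cdot,k)\) analytic across \(k=0\) — to conclude \(g\equiv 0\), i.e.\ \eqref{eq:fc_ratio_unique}. I expect the main obstacle to be two-fold: first, controlling the time-integral term \(\int_\Omega\int_0^\infty(\tilde c^{-2}-c^{-2})\tilde u\,\partial_{tt}w_k\,\mathrm dt\mathrm dx\) uniformly as \(k\to 0\) — one needs quantitative bounds showing this term is \(O(k)\) or better relative to the \(g\)-term, which requires combining the local decay estimate \eqref{eq:td1} with careful size estimates on the complex-exponential test functions over \(\Omega\); and second, arranging the complex frequencies \(\zeta\) so that the Fourier transform \(\widehat g\) is sampled on enough points (and with the exact \(\kappa\)-values matching \eqref{eq:nonde}) to invoke a Paley–Wiener / analytic continuation argument forcing \(g=0\) — this is where the separating-variable reduction from \(\mathbb R^3\) to a \(2\)D lattice problem, and the harmless rescaling placing \(\overline\Omega\subset[-\pi,\pi]^3\), are used decisively. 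The regularity hypothesis \(f,\tilde f\in H^2\) is needed so that \(\tilde u\) is regular enough for the integrations by parts in \(t\) and for the asymptotic expansions to be valid.
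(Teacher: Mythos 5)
Your overall strategy is the same as the paper's: the time-domain identity \eqref{eq:iid1} obtained from equal Cauchy data, the decoupling quantity \(g=\tilde f/\tilde c^2-f/c^2=(\tilde q-q)\phi\), test waves whose temporal frequency is sent to zero, and the separated-variable reduction through the non-degeneracy condition \eqref{eq:nonde} and the shifted lattice of spatial frequencies. But two of your steps do not go through as written. The appeal to analyticity of \(\widehat{\tilde u}(\cdot,k)\) across \(k=0\), ``guaranteed by the \(L^1\) decay,'' is false: under Admissibility~\ref{ass:local_decay_fc} the half-line temporal Fourier transform is only bounded and continuous in \(k\); analyticity in the frequency requires exponential time decay (the non-trapping setting), which is precisely the hypothesis this theorem is designed to avoid. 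No analyticity or Paley--Wiener argument is needed, and the paper uses none: it evaluates the identity directly at the discrete family \(\rho'=(\rho_1,\rho_2,0)+\mathrm{i}\bigl(0,0,\kappa\bigr)\) with \(\rho'\cdot\rho'=0\), divides out the nonzero \(\phi\)-integral via \eqref{eq:nonde}, and concludes by completeness of the two-dimensional Fourier modes on \([-\pi,\pi]^2\); the only information about \(\tilde u\) used in the limit is the uniform bound coming from \(\eta\in L^1(0,\infty)\).

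The second, more structural gap is the test function itself. A pure exponential \(v_k(x)=e^{\zeta\cdot x}\) with \(\zeta\cdot\zeta=-k^2\) solves only the \emph{free} Helmholtz equation, so \(w_k=e^{-\mathrm{i}kt}v_k\) is not a solution of \(c^{-2}\partial_t^2w-\Delta w=0\) inside \(\Omega\) where \(c\neq1\); consequently the identity \eqref{eq:iid1} cannot be applied to it and uncontrolled commutator terms appear. The paper resolves this by constructing an exact solution \(w=e^{\mathrm{i}(\rho_0t+\rho'\cdot x)}(1+\psi_\rho)\) with \(\rho'\cdot\rho'=0\) and \(\rho_0=\mathrm{i}\tau\), where the corrector \(\psi_\rho\) is produced by the periodic Fourier-multiplier right inverse \(G_\zeta\) of Lemma~\ref{lem G zeta} and the contraction argument of Lemma~\ref{lem rho_0} under the smallness condition \eqref{eq:rho0_smallness_new}, together with the quantitative bound \(\|\psi_\rho\|_{L^2}\le 4|\rho_0|^2\|c^{-2}\|_{L^2}\) in \eqref{estimate_remainder_term_new}. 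It is exactly this \(O(|\rho_0|^2)\) smallness that lets the corrector's contribution and the spacetime term vanish as \(\tau\to0^+\) and isolates \(\int_\Omega g\,e^{\mathrm{i}\rho'\cdot x}\,\mathrm{d}x=0\). Your proposal flags the need for ``quantitative bounds'' but supplies neither the corrector construction nor its estimate; that construction is the core missing piece of the proof.
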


Theorem~\ref{thm0} shows that the quotient \(f/c^2\) is uniquely determined from the boundary data, even though the individual values of \(f\) and \(c\) may remain undetermined at this stage. This provides a crucial first step toward the full simultaneous recovery of both parameters.

%-----------------------------------------------------------------------------------------
\subsubsection{Second uniqueness result: simultaneously determining $f$ and $c$}

We next strengthen the preceding result to simultaneous recovery of both the source $f$ and the sound speed $c$.
For this purpose, we impose one more structural admissibility condition:

\begin{admissible}\label{ass:x1_only_fc}
The source \(f\) is said to satisfy the separating-variable condition if its zero extension to \(Q\setminus\Omega\), still denoted by \(f\), has the form
\begin{equation}\label{eq fphi2}
f(x)=p(x')\,\tilde\phi(x_3)
\qquad\text{for a.e. }x=(x',x_3)\in Q,
\end{equation}
where \(p\in L^2([-\pi,\pi]^2)\), and 
\(\tilde\phi\in L^\infty(\mathbb R)\), \(\tilde\phi\not\equiv 0\).

Let \([a,b]\subset[-\pi,\pi]\) be the support interval of \(\tilde\phi\), with \(\tilde\phi\) extended by zero outside \([a,b]\).
Assume that \(\tilde\phi\) satisfies the following Fourier non-degeneracy condition: there exists a constant \(M>0\) such that
\begin{equation}\label{eq:chn1}
\int_a^b\tilde\phi(s)e^{\mathrm{i}\kappa s}\,\mathrm{d}s\neq 0
\qquad\text{for all }|\kappa|>M,
\end{equation}
and
\begin{equation}\label{eq:chn2}
\lim_{|\kappa|\to+\infty}
\left|
\int_a^b\tilde\phi(s)e^{\mathrm{i}\kappa s}\,\mathrm{d}s
\right|^{-1}
|\kappa|^{-2}
=0.
\end{equation}
\end{admissible}

We further impose the following admissibility condition (cf.\cite{graham2019helmholtz}), which is used in the high-frequency Fourier analysis:
\begin{admissible}\label{ass:nontrap_fc}
A configuration $(f,c)$ for the TAT/PAT system \eqref{eq:simple_hyper_eqns} is said to satisfy the decay condition if there exists a constant \(\mu>0\) such that
\begin{equation}\label{eq:nontrap_sigma_c_1}
2c(x)^{-2}+x\cdot\nabla\bigl(c(x)^{-2}\bigr)\ge \mu\quad\text{for a.e. }x\in\mathbb R^3.
\end{equation}
\end{admissible}

\begin{theorem}\label{thm:unique_f_c}
Let \((f,c)\) and \((\tilde f,\tilde c)\) be two TAT/PAT configurations satisfying the hypotheses of Theorem~\ref{thm0}.
Suppose in addition that
\(
c,\tilde c\in W^{2,\infty}(\mathbb R^2\times (a, b)),
\)
and that both configurations satisfy the decay condition (Admissibility~\ref{ass:nontrap_fc}).
Assume further that \(f\) and \(\tilde f\) satisfy the separating-variable condition (Admissibility~\ref{ass:x1_only_fc}) with the same profile \(\tilde\phi\).
That is, for almost every \(x=(x',x_3)\in\Omega\),
\[
f(x)=p(x')\tilde\phi(x_3),
\qquad
\tilde f(x)=\tilde p(x')\tilde\phi(x_3),
\]
where
\[
p,\tilde p\in H^4(\mathbb R^2),
\qquad
\tilde\phi|_{(a,b)}\in H^4(a,b),
\]
and \([a,b]\subset[-\pi,\pi]\) is the support interval of \(\tilde\phi\), with \(\tilde\phi\) extended by zero outside \([a,b]\).
If they generate the same passive boundary dataset,
\begin{equation*}
\mathcal{M}_{f,c}(x,t)=\mathcal{M}_{\tilde f,\tilde c}(x,t),
\qquad
(x,t)\in\partial\Omega\times\mathbb R_+,
\end{equation*}
then both the source and the sound speed are uniquely determined:
\begin{equation*}\label{eq:unique_f_c}
f(x)=\tilde f(x)\quad\text{for a.e. }x\in\Omega,
\qquad
c(x)=\tilde c(x)\quad\text{for a.e. }x\in\operatorname{supp}(f).
\end{equation*}
\end{theorem}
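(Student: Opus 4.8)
The plan is to upgrade the output of Theorem~\ref{thm0} -- namely that $g:=f/c^2=\tilde f/\tilde c^2$ -- to the separate identification of $f$ and $c$, by passing to the frequency domain and extracting a sufficiently precise high-frequency expansion of the Fourier-transformed wave field. First I would invoke Theorem~\ref{thm0} (all of whose hypotheses are assumed here) to get $f/c^2=\tilde f/\tilde c^2=:g$ a.e.\ in $\Omega$, a compactly supported $H^4$ function. Then, using the temporal Fourier transform of Section~\ref{sec:setup}, under the local decay condition $\hat u(\cdot,k):=\int_0^\infty e^{\mathrm ikt}u(\cdot,t)\,\mathrm dt$ is well defined for real $k$ and solves $\Delta\hat u+k^2c^{-2}\hat u=\mathrm ik\,g$ in $\mathbb R^3$ with the Sommerfeld radiation condition, while $\widehat{\tilde u}$ solves the equation with the \emph{same} source $\mathrm ik\,g$ but the operator $\Delta+k^2\tilde c^{-2}$. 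From $\mathcal M_{f,c}=\mathcal M_{\tilde f,\tilde c}$ the Cauchy data of $\hat u$ and $\widehat{\tilde u}$ coincide on $\partial\Omega$; since both solve the free Helmholtz equation with the radiation condition in the connected exterior $\mathbb R^3\setminus\overline\Omega$ (where $c=\tilde c\equiv1$, $g\equiv0$), Rellich's lemma together with unique continuation forces $w:=\hat u-\widehat{\tilde u}\equiv0$ in $\mathbb R^3\setminus\Omega$, so $w=\partial_\nu w=0$ on $\partial\Omega$.

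Subtracting the two Helmholtz equations, $w$ solves $\Delta w+k^2c^{-2}w=k^2(\tilde c^{-2}-c^{-2})\widehat{\tilde u}$ in $\Omega$. Testing this against any solution $v_k$ of $(\Delta+k^2)v_k=0$ in $\Omega$ (the spatial form of \eqref{eq:fdtf} with $\sigma\equiv1$) and integrating by parts twice over $\Omega$ -- the boundary terms drop since $w=\partial_\nu w=0$ on $\partial\Omega$ -- yields, cf.\ the frequency-domain identity \eqref{eq:fd},
\[
\int_\Omega\bigl(c^{-2}-1\bigr)w\,v_k\,\mathrm dx=\int_\Omega\bigl(\tilde c^{-2}-c^{-2}\bigr)\widehat{\tilde u}\,v_k\,\mathrm dx .
\]
Next I would write $\hat u=\mathrm ik\,R(k)g$ with $R(k)=(\Delta+k^2c^{-2})^{-1}$ the outgoing resolvent and iterate twice the elementary relation $R(k)h=k^{-2}c^2h-k^{-2}R(k)\Delta(c^2h)$ (valid for compactly supported $h$) to get, in $L^2(\Omega)$ and uniformly for large real $k$,
\[
\hat u=\frac{\mathrm if}{k}-\frac{\mathrm i\,c^2\Delta f}{k^3}+O(k^{-4}),\qquad
\widehat{\tilde u}=\frac{\mathrm i\tilde f}{k}-\frac{\mathrm i\,\tilde c^2\Delta\tilde f}{k^3}+O(k^{-4}),
\]
the remainders being controlled by $f,\tilde f\in H^4$, $c,\tilde c\in W^{2,\infty}$ and the Morawetz-type resolvent bound $\|R(k)\|_{L^2_{\mathrm{comp}}\to L^2_{\mathrm{loc}}}\lesssim k^{-1}$ supplied by the decay condition (Admissibility~\ref{ass:nontrap_fc}; cf.\ \cite{graham2019helmholtz} and Section~\ref{sec:setup}). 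Plugging these into the identity and using the crucial cancellation $(\tilde c^{-2}-c^{-2})\tilde f=g-\tilde f/c^2=(f-\tilde f)/c^2$ from Step~1, the $k^{-1}$-terms collapse exactly to $-\int_\Omega(f-\tilde f)v_k$ and the $k^{-3}$-terms to $\int_\Omega G\,v_k$ with $G:=(c^2-1)\Delta f-(\tilde c^2-1)\Delta\tilde f$ bounded and compactly supported; hence
\[
\int_\Omega(f-\tilde f)\,v_k=\frac1{k^2}\int_\Omega G\,v_k+O(k^{-3})\,\|v_k\|_{L^2(\Omega)} .
\]

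Now I would fix $\xi'\in\mathbb R^2$ and, for $|\kappa|$ large, take $v_k(x)=e^{\mathrm i(\xi'\cdot x'+\kappa x_3)}$, which solves $(\Delta+k^2)v_k=0$ with $k=(|\xi'|^2+\kappa^2)^{1/2}$ and $|v_k|\equiv1$; then $\|v_k\|_{L^2(\Omega)}\le|\Omega|^{1/2}$ and $|\int_\Omega G\,v_k|\le\|G\|_{L^1}$, so the last display gives $\int_\Omega(f-\tilde f)v_k=O(k^{-2})=O(\kappa^{-2})$ as $\kappa\to\infty$. By the separating-variable structure $f-\tilde f=(p-\tilde p)(x')\tilde\phi(x_3)$ (zero-extended, so the integral factorizes), the left-hand side equals $\widehat{(p-\tilde p)}(\xi')\,\widehat{\tilde\phi}(\kappa)$; since $\widehat{\tilde\phi}(\kappa)\neq0$ for $|\kappa|>M$ by \eqref{eq:chn1}, dividing gives $|\widehat{(p-\tilde p)}(\xi')|\le C\bigl(\kappa^2|\widehat{\tilde\phi}(\kappa)|\bigr)^{-1}\to0$ as $\kappa\to\infty$ by \eqref{eq:chn2}. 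Thus $\widehat{(p-\tilde p)}(\xi')=0$ for every $\xi'$, so $p=\tilde p$ and $f=\tilde f$ a.e.\ in $\Omega$. Finally $\operatorname{supp}(f)=\operatorname{supp}(\tilde f)$ up to a null set, and on it $g=f/c^2=f/\tilde c^2$ with $f\neq0$, hence $g\neq0$, so $c^2=f/g=\tilde c^2$, i.e.\ $c=\tilde c$ a.e.\ on $\operatorname{supp}(f)$, which finishes the proof.

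The hard part will be the high-frequency analysis: one must push the expansion all the way to the $k^{-3}$-coefficient -- the mere cancellation of the leading $k^{-1}$-term yields only $\int_\Omega(f-\tilde f)v_k=O(k^{-1})$, which is insufficient to be absorbed by the non-degeneracy rate in \eqref{eq:chn2} -- and control the $O(k^{-4})$ remainder uniformly in $k$, which rests squarely on the escape-function hypothesis (Admissibility~\ref{ass:nontrap_fc}) through the sharp limiting-absorption resolvent estimate. The remaining ingredients -- the double integration by parts, the algebra of the cancellations, and the final Fourier-uniqueness argument -- should be comparatively routine.
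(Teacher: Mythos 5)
Your proposal is correct and follows essentially the same route as the paper's proof: temporal Fourier reduction to an outgoing Helmholtz problem, a high-frequency expansion of $\hat u$ whose remainder after the leading $f/k$ term is controlled at order $k^{-3}$ via the resolvent bound furnished by Admissibility~\ref{ass:nontrap_fc} (the paper's Lemmas~\ref{lem Agmon} and~\ref{lem:R_L2_N1}), plane-wave testing yielding $\bigl|\int_\Omega(f-\tilde f)e^{\mathrm{i}x\cdot\xi}\,\mathrm{d}x\bigr|=O(k^{-2})$, the separated-variable factorization with the non-degeneracy conditions \eqref{eq:chn1}--\eqref{eq:chn2} to conclude $f=\tilde f$, and finally the quotient identity from Theorem~\ref{thm0} to recover $c$ on $\operatorname{supp}(f)$. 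The only cosmetic differences are that the paper derives the frequency-domain identity (Lemma~\ref{lem:second_identity}) by a time-domain integration by parts in which the $f/c^2$ terms cancel algebraically -- rather than by subtracting the two Helmholtz equations and invoking Theorem~\ref{thm0} to equate the sources -- and that it absorbs the $c^2\Delta f/k^{3}$ coefficient into the remainder $\mathbf R$ instead of displaying it as your explicit $G$-term; both implementations rest on the same estimates and give the same $O(k^{-2})$ rate.
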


Theorem~\ref{thm:unique_f_c} improves Theorem~\ref{thm0} by upgrading the determination of the quotient $f/c^2$ to the simultaneous recovery of both $f$ and $c$. The proof remains within the unified framework established above. Specifically, the integral identity of the time-domain (Lemma~\ref{lem:integral_identity}) and the integral identity of the frequency-domain (Lemma~\ref{lem:second_identity}) are used in conjunction with high-frequency Fourier analysis and suitable complex geometric optic solutions constructed. This combination achieves both the decoupling of unknown quantities and their unique identification. Hence, the simultaneous recovery result follows from the same unified framework, without recourse to an independent method. The detailed proof is presented in Section~\ref{sec:cgo}.

\begin{remark}\label{eq rem1}
The TAT/PAT analysis is presented for the physically relevant dimension $n=3$, which is the standard dimension for the underlying imaging model. However, the same framework extends straightforwardly to all dimensions $n\geq3$, under the corresponding structural assumptions of independence with respect to one spatial variable.
\end{remark}

\begin{remark}\label{rem regular}
The admissibility assumptions in Theorems~\ref{thm0} and~\ref{thm:unique_f_c} are formulated in a global Sobolev setting on \(\Omega\), with the functions \(\phi\) and \(\tilde\phi\) prescribed a priori. 
This setting must be distinguished from the case where the cutoff itself is unknown. For example, if the source takes the form \(f(x)=p(y)\chi_I(s)\) with an unknown interval \(I\), then the endpoints of \(I\) constitute an internal interface, and \(f\) is no longer a globally Sobolev function but rather a piecewise Sobolev function with an unknown interface. The same phenomenon arises for general piecewise configurations such as
\[
f=\sum_i f_i\chi_{\Omega_i},
\qquad
c=1+\sum_i(c_i-1)\chi_{\Omega_i}.
\]

Such configurations are treated within the piecewise regularity framework developed in Theorem~\ref{thm:unique_f_c_piecewise} and Section~\ref{sec:piecewise}. In that framework, a support-identification step first localizes the relevant component or interface, after which the unique continuation principle transfers the boundary measurement to that interface. The recovery argument then proceeds inside the identified subdomains, requiring only local Sobolev regularity within each component rather than global regularity across the unknown jump interface. This allows the framework to accommodate both unknown cutoff profiles, such as \(p(y)\chi_I(s)\) with unknown \(I\), and more general piecewise configurations.
\end{remark}
%----------------------------------------------------------------------------

\subsubsection{Joint recovery for piecewise-regular configurations}\label{subsec piece}

The TAT/PAT results presented above were formulated within a global Sobolev framework. We now consider piecewise-regular configurations, wherein the unknown quantities may exhibit different regularity regimes across internal interfaces. This setting is particularly well suited for treating profiles that possess jumps across the boundary of their support, as discussed in Remark~\ref{rem regular}.

To formulate the result, we introduce two additional admissibility conditions. The first governs the regularity of the piecewise coefficients, while the second prescribes the interface geometry. As in Remark~\ref{eq rem1}, the argument is presented for the three-dimensional case; nevertheless, the same strategy extends naturally to all dimensions \(n\ge 3\) after suitable technical modifications. The proof is given in Section~\ref{sec:piecewise}.

\begin{admissible}\label{ass:piecewise_jump}
Assume that, for some \(m\geq1\) and \(0<\mu<1\), the following piecewise conditions hold.

\smallskip
\noindent
{\rm (i) Geometric structure.}
After one fixed rigid change of coordinates, all components have the product form
\[
\Omega_i=\omega_i\times I_i,\qquad
\widetilde\Omega_j=\widetilde\omega_j\times\widetilde I_j,
\]
where \(\omega_i,\widetilde\omega_j\subset\mathbb R^2\) are bounded simply connected \(C^{1,1}\) domains and \(I_i,\widetilde I_j\subset\mathbb R\) are bounded open intervals.
The domains \(\Omega_i\), \(i=1,\ldots,N\), are pairwise disjoint, and the domains \(\widetilde\Omega_j\), \(j=1,\ldots,M\), are pairwise disjoint.
Moreover,
\[
\Omega\setminus\overline{\bigcup_{i=1}^N\Omega_i},
\qquad
\Omega\setminus\overline{\bigcup_{j=1}^M\widetilde\Omega_j}
\]
are connected.
The transverse boundaries \(\partial\omega_i\) and \(\partial\widetilde\omega_j\) are nowhere locally of class \(C^{m+1,\mu}\).

\smallskip
\noindent
{\rm (ii) Piecewise representation and regularity.}
The coefficients admit the representations
\begin{equation}\label{PiecewiseForm}
f=\sum_{i=1}^N f_i\chi_{\Omega_i},
\qquad
c=1+\sum_{i=1}^N (c_i-1)\chi_{\Omega_i},
\end{equation}
and
\begin{equation}\label{PiecewiseFormTilde}
\tilde f=\sum_{j=1}^M \tilde f_j\chi_{\widetilde\Omega_j},
\qquad
\tilde c=1+\sum_{j=1}^M (\tilde c_j-1)\chi_{\widetilde\Omega_j}.
\end{equation}
For every \(i=1,\ldots,N\) and \(j=1,\ldots,M\),
\[
f_i\in H^4(\Omega_i),
\qquad
\tilde f_j\in H^4(\widetilde\Omega_j),
\]
and
\[
c_i\in W^{2,\infty}(\Omega_i),
\qquad
\tilde c_j\in W^{2,\infty}(\widetilde\Omega_j).
\]
Moreover, the functions \(f_i,c_i\) admit \(C^{m,\mu}\) interior extensions up to the lateral boundary \(\partial\omega_i\times I_i\), and \(\tilde f_j,\tilde c_j\) admit \(C^{m,\mu}\) interior extensions up to the lateral boundary \(\partial\widetilde\omega_j\times\widetilde I_j\).

When extended by zero outside their components,
\[
\operatorname{supp}(f_i)=\operatorname{supp}(c_i-1)=\overline{\Omega_i},
\qquad
\operatorname{supp}(\tilde f_j)=\operatorname{supp}(\tilde c_j-1)=\overline{\widetilde\Omega_j}.
\]
Finally, the lateral jump conditions hold:
\[
c_i\neq1,\qquad f_i>0
\qquad
\text{on }\partial\omega_i\times I_i,
\]
and
\[
\tilde c_j\neq1,\qquad \tilde f_j>0
\qquad
\text{on }\partial\widetilde\omega_j\times\widetilde I_j.
\]
\end{admissible}

\begin{theorem}\label{thm:unique_f_c_piecewise}
Let \((f,c)\) and \((\tilde f,\tilde c)\) be two TAT/PAT configurations satisfying \eqref{eq:general_support} and \eqref{eq:general_bounds}, the global admissibility conditions (Admissibility~\ref{ass:local_decay_fc}--\ref{ass:nontrap_fc}), and the piecewise admissibility condition (Admissibility~\ref{ass:piecewise_jump}).
If they generate the same passive boundary dataset,
\begin{equation}\label{eq measure 2}
\mathcal M_{f,c}(x,t)=\mathcal M_{\tilde f,\tilde c}(x,t)
\qquad
(x,t)\in\partial\Omega\times\mathbb R_+,
\end{equation}
then the two families of subdomains coincide up to relabelling:
\[
\{\Omega_1,\dots,\Omega_N\}=\{\widetilde\Omega_1,\dots,\widetilde\Omega_M\},
\qquad M=N.
\]
Moreover,
\[
f(x)=\tilde f(x),
\qquad
c(x)=\tilde c(x)
\quad\text{for a.e. }x\in\Omega.
\]
\end{theorem}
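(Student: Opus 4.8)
\emph{Overview.} The plan is to reduce the piecewise problem to the globally regular situation already settled in Theorems~\ref{thm0} and~\ref{thm:unique_f_c}, by first pinning down the interfaces and then rerunning the integral‑identity machinery on a single smooth component. Concretely I would proceed in three stages. Stage~(i), \emph{support identification}: show that the two configurations share the same total support $K:=\bigcup_i\overline{\Omega_i}=\bigcup_j\overline{\widetilde\Omega_j}$, hence that the common trivial region $D:=\Omega\setminus K$ is connected (Admissibility~\ref{ass:piecewise_jump}), and—using the non‑smoothness clause in Admissibility~\ref{ass:piecewise_regularity}—that the decompositions into components must coincide. Stage~(ii), \emph{unique continuation}: transfer the boundary data through $D$ to the interior interfaces $\partial\Omega_i$. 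Stage~(iii), \emph{localized recovery}: rerun the time‑domain and frequency‑domain identities (Lemmas~\ref{lem:integral_identity} and~\ref{lem:second_identity}) together with the CGO construction, with $\Omega$ replaced by a single component $\Omega_i$, to obtain $f_i=\tilde f_i$ and $c_i=\tilde c_i$ on $\Omega_i$. Stages~(ii)--(iii) are carried out by layer‑stripping on the nesting order of the components.

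\emph{Stage (i).} First, $u\equiv\tilde u$ on the unbounded exterior component by uniqueness for the exterior wave problem with zero data. Next, on the component $U_0$ of $\mathbb R^3\setminus(K\cup\widetilde K)$ adjacent to $\partial\Omega$, the temporal Fourier transform (well defined by the decay condition in Admissibility~\ref{ass:local_decay_fc} / \ref{ass:nontrap_fc}) turns $\widehat u-\widehat{\tilde u}$ into a solution of the homogeneous Helmholtz equation with vanishing Cauchy data on $\partial\Omega$, so it vanishes by elliptic unique continuation, and inverting in $k$ gives $u\equiv\tilde u$ on $U_0\times\mathbb R_+$. Now suppose $K\neq\widetilde K$; joining $\partial\Omega$ to a point of (say) $\widetilde K\setminus K$ by a path inside $D$ produces a first crossing point $z^*\in D\cap\partial\widetilde\Omega_{j_0}$, which after a small perturbation we may take to be a smooth boundary point (away from the cylindrical edges). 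Near $z^*$ the pair $(f,c)$ is trivial while $(\tilde f,\tilde c)$ jumps; since $z^*\in\partial U_0$ and $u=\tilde u$ on the $U_0$‑side with $u$ a free‑wave solution vanishing there for small $t$, transmitting the vanishing Cauchy data across $\partial\widetilde\Omega_{j_0}$ by Tataru's unique continuation for the wave operator forces $\tilde u\equiv0$ on the $\widetilde\Omega_{j_0}$‑side for small $t$, hence $\tilde f_{j_0}\equiv0$ near $z^*$—contradicting $\tilde f_{j_0}\ge c_f>0$ in Admissibility~\ref{ass:piecewise_jump} (the bound $\Delta\tilde f_{j_0}\ge c_\Delta>0$ is used to keep the transmitted singularity nondegenerate when the leading jump is delicate, e.g.\ near edges). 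This yields $K=\widetilde K$, hence $D$ connected. Equivalently, stage~(i) can be phrased via propagation of singularities: the conormal jump of $(f,c)$ across $\partial\Omega_i$ emits a wave whose visible part, traveling through the Euclidean region $D$, is recorded in the boundary Cauchy data. The identity of the decompositions $\{\Omega_i\}=\{\widetilde\Omega_j\}$ then follows once $(f,c)=(\tilde f,\tilde c)$ (stage~(iii)), since the clause ``$\partial\Omega_i$ fails to be $C^{m+1,\mu}$ in any neighbourhood'' pins $\bigcup_i\partial\Omega_i$ down intrinsically as the singular locus of $(f,c)$, so the two families are the same connected components of $\Omega\setminus\bigcup_i\partial\Omega_i$ meeting $K$.

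\emph{Stages (ii)--(iii).} On the common connected set $D$ we have $u=\tilde u$ by the Helmholtz unique‑continuation argument, so the Cauchy data of $u$ and $\tilde u$ agree on the part of $\bigcup_i\partial\Omega_i$ facing $D$. For an outermost component $\Omega_{i_0}$ reached from $D$, I would rerun the proof of Theorem~\ref{thm0} with $\Omega\rightsquigarrow\Omega_{i_0}$: integrating the time‑domain identity over $\Omega_{i_0}\times(0,\infty)$, the $\partial\Omega_{i_0}$‑boundary terms drop out because the Cauchy data of $u$ and $\tilde u$ match there, the $t=0$ terms produce the source mismatch, the $t\to\infty$ terms vanish by decay, and the separating‑variable hypotheses (Admissibility~\ref{ass:x3_invariance_cf}, \ref{ass:x1_only_fc}) are inherited by the restrictions $f_{i_0}/c_{i_0}^2=q(x')\phi(x_3)$, $f_{i_0}=p(x')\tilde\phi(x_3)$ on the product domain $\Omega_{i_0}=\omega_{i_0}\times I_{i_0}$; this gives $f_{i_0}/c_{i_0}^2=\tilde f_{i_0}/\tilde c_{i_0}^2$. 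The localized frequency‑domain identity, together with the high‑frequency expansion of $\widehat u|_{\Omega_{i_0}}$ (valid because the decay/non‑trapping estimates of Section~\ref{sec:setup} restrict to $\Omega_{i_0}$, where $\widehat u$ solves the elliptic equation with coefficient $c_{i_0}^{-2}$ and source $f_{i_0}$) and the CGO solutions of that elliptic equation on $\Omega_{i_0}$, then upgrade this to $f_{i_0}=\tilde f_{i_0}$ on $\Omega_{i_0}$ and $c_{i_0}=\tilde c_{i_0}$ on $\operatorname{supp}(f_{i_0})=\overline{\Omega_{i_0}}$, exactly as in Theorem~\ref{thm:unique_f_c}. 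With coefficients matched on $\Omega_{i_0}$ and Cauchy data matched on $\partial\Omega_{i_0}$, well‑posedness of the forward problem gives $u\equiv\tilde u$ on $\Omega_{i_0}\times\mathbb R_+$; adjoin $\overline{\Omega_{i_0}}$ to the ``known'' region and repeat, peeling successive layers until all (finitely many) components are exhausted. Combining over $i$ with the trivial identity on $D$ yields $f=\tilde f$, $c=\tilde c$ a.e.\ in $\Omega$, and then $\{\Omega_i\}=\{\widetilde\Omega_j\}$, $M=N$, as explained in stage~(i).

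\emph{Main obstacle.} The delicate point is stage~(i): making the unique‑continuation step compatible with the \emph{only Lipschitz} cylindrical edges of $\partial\Omega_i$ (where Tataru's theorem does not apply verbatim, so one must exploit the product structure $\omega_i\times I_i$ with $C^{1,1}$ cross‑section to reduce to smooth interface points, or argue fibrewise in $x_3$), and the bookkeeping required to make the ``choose a smooth employed interface point'' step and the layer‑stripping recursion airtight when components touch or sit inside one another's cavities. Once the interfaces are identified and the boundary data propagated to them, stages~(ii)--(iii) are essentially the global proofs of Theorems~\ref{thm0} and~\ref{thm:unique_f_c} transplanted onto a subdomain.
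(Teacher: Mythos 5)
Your three\mbox{-}stage architecture (local argument at a boundary point of the symmetric difference, Helmholtz unique continuation through the connected complement to transfer the Cauchy data to the interfaces, then a componentwise rerun of Theorems~\ref{thm0} and~\ref{thm:unique_f_c}) is exactly the skeleton of the paper's proof. The genuine divergence is in the local support\mbox{-}identification step. The paper never continues data across the jump interface in the time domain: it works with $W=\operatorname{Im}(\hat{\tilde u}-\hat u)$, uses the high\mbox{-}frequency expansion of $\hat{\tilde u}$ to verify the sign conditions $W\le 0$ and $G^*\le-\alpha<0$ (this is precisely where $\tilde f_j\ge c_f$ \emph{and} $\Delta\tilde f_j\ge c_\Delta$ from Admissibility~\ref{ass:piecewise_jump} are consumed), and then derives a contradiction from free\mbox{-}boundary regularity (Lemmas~\ref{lem CPAM 33} and~\ref{lem CPAM}, Kinderlehrer--Nirenberg): zero Cauchy data with a nonvanishing right\mbox{-}hand side would force $\partial\widetilde\Omega_j$ to be $C^{m+1,\mu}$ near $x^*$, contradicting the non\mbox{-}smoothness clause of Admissibility~\ref{ass:piecewise_regularity}. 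Your replacement --- finite speed of propagation plus Tataru\mbox{-}type unique continuation across the interface to force $\tilde f_{j_0}=0$ near $z^*$ --- is a genuinely different mechanism. It is plausible (the even\mbox{-}in\mbox{-}$t$ extension and the zero\mbox{-}extension trick do reduce the transmission problem to an equation with the smoothly extended interior coefficient), but the step you yourself flag is not a routine citation: the partial\mbox{-}analyticity UCP theorems require more spatial regularity than the $C^{m,\mu}$ with $m\ge1$ allowed here, and this is exactly the difficulty the paper's frequency\mbox{-}domain route is engineered to avoid. A telling symptom is that your argument leaves $\Delta\tilde f_j\ge c_\Delta$ and the ``not $C^{m+1,\mu}$'' hypothesis essentially unused at the identification step, whereas the paper's proof hinges on both.

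There is also a structural gap in the ordering of your stages. Stage~(i) only yields equality of the total supports $K=\widetilde K$, and you postpone the matching of the families $\{\Omega_i\}=\{\widetilde\Omega_j\}$ until after stage~(iii). But stage~(iii) as written reruns Theorems~\ref{thm0} and~\ref{thm:unique_f_c} on a component $\Omega_{i_0}$ of the first family while writing $\tilde f_{i_0},\tilde c_{i_0}$, i.e.\ it tacitly assumes that the tilde configuration has no internal interfaces inside $\Omega_{i_0}$ --- in other words, that the components already match. Equality of total supports does not rule out the tilde family tiling $\Omega_{i_0}$ by several subdomains, in which case $(\tilde f,\tilde c)|_{\Omega_{i_0}}$ is only piecewise $H^4/W^{2,\infty}$ and the global proofs cannot be transplanted verbatim; and your time\mbox{-}domain local argument cannot reach such internal interface points, since $u$ does not vanish there for small $t$. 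The deferred ``singular locus'' identification at the end is likewise only sketched. The paper resolves this order of quantifiers by performing the component\mbox{-}by\mbox{-}component identification \emph{first} (iterating the local free\mbox{-}boundary argument on symmetric differences of components), and only then transferring the Cauchy data and recovering $(f,c)$ on each matched component. To repair your proposal you would need either to strengthen stage~(i) to a componentwise matching before any recovery, or to justify the localized identities on $\Omega_{i_0}$ under merely piecewise regularity of the tilde restriction --- neither of which is addressed in the current outline.
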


\begin{remark}\label{rem:piecewise-x3-profile}
Admissibility~\ref{ass:x3_invariance_cf} and Admissibility~\ref{ass:x1_only_fc} are separated-variable type assumptions.
In those assumptions, the dependence on the distinguished spatial variable is prescribed or restricted a priori.

The piecewise setting considered below keeps the same separated-variable idea on each component, but the one-dimensional factor is no longer prescribed in advance.
Indeed, if
\[
\Omega_i=\omega_i\times I_i,
\]
then the source profile on this component may be viewed as a three-dimensional function of the form
\[
f_i(x)=p_i(x')\chi_{I_i}(x_3),
\qquad x'=(x_1,x_2),
\]
for some transverse profile \(p_i\).
Equivalently,
\[
f_i(x)\chi_{\Omega_i}(x)
=
p_i(x')\chi_{\omega_i}(x')\chi_{I_i}(x_3).
\]
Thus the one-dimensional factor in the \(x_3\)-direction is the characteristic function \(\chi_{I_i}\), where the interval \(I_i\) is unknown.
This factor will be determined together with the support component \(\Omega_i\) in the support-identification step.
\end{remark}

\begin{remark}
An important special case is that of piecewise constant sound speeds: If \(c_i, \tilde c_j \in \mathbb R\) with \(c_i \neq 1\) and \(\tilde c_j \neq 1\) for all \(i,j\), then Theorem \ref{thm:unique_f_c_piecewise} applies, provided the source non-degeneracy and geometric admissibility conditions are satisfied.
\end{remark}

\subsubsection{Joint recovery for piecewise-constant configurations on general domains}\label{subsec piecewise constant}

The preceding piecewise-regular result was formulated under a product-type geometric structure for the support components.
We now treat a complementary piecewise-constant case.
Here the unknown source and sound speed are constant on each component, but the components themselves are allowed to be general bounded Lipschitz domains.
Thus the geometric setting is no longer restricted to product-type supports.
This provides a natural model for piecewise homogeneous media with jumps across general Lipschitz interfaces, and we shall show that both the support components and the corresponding constants are uniquely determined.

\begin{admissible}\label{ass:piecewise_constant_general_domains}
Assume that, for some \(m\geq1\) and \(0<\mu<1\), the following conditions hold.

\smallskip
\noindent
{\rm (i) Geometry.}
There exist bounded connected Lipschitz domains
\[
\Omega_i,\widetilde\Omega_j\Subset\Omega,
\qquad i=1,\ldots,N,\quad j=1,\ldots,M,
\]
such that the domains in each family are pairwise disjoint, and
\[
\Omega\setminus\overline{\bigcup_{i=1}^N\Omega_i},
\qquad
\Omega\setminus\overline{\bigcup_{j=1}^M\widetilde\Omega_j}
\]
are connected.
Moreover, for every \(i=1,\ldots,N\) and \(j=1,\ldots,M\), the boundaries
\[
\partial\Omega_i,\qquad \partial\widetilde\Omega_j
\]
are nowhere of class \(C^{m+1,\mu}\) locally.

\smallskip
\noindent
{\rm (ii) Piecewise constant structure.}
There exist constants
\[
f_i,\ c_i,\ \tilde f_j,\ \tilde c_j,
\qquad i=1,\ldots,N,\quad j=1,\ldots,M,
\]
such that
\begin{equation}\label{PiecewiseConstantForm}
f=\sum_{i=1}^N f_i\chi_{\Omega_i},
\qquad
c=1+\sum_{i=1}^N(c_i-1)\chi_{\Omega_i},
\end{equation}
and
\begin{equation}\label{PiecewiseConstantFormTilde}
\tilde f=\sum_{j=1}^M \tilde f_j\chi_{\widetilde\Omega_j},
\qquad
\tilde c=1+\sum_{j=1}^M(\tilde c_j-1)\chi_{\widetilde\Omega_j}.
\end{equation}
The constants satisfy
\[
f_i\neq0,\qquad \tilde f_j\neq0,
\qquad
c_i>0,\quad \tilde c_j>0,
\qquad
c_i\neq1,\quad \tilde c_j\neq1.
\]
\end{admissible}

\begin{theorem}\label{thm:unique_f_c_piecewise_constant}
Let \((f,c)\) and \((\tilde f,\tilde c)\) be two TAT/PAT configurations satisfying \eqref{eq:general_support} and \eqref{eq:general_bounds}, the global admissibility conditions (Admissibility~\ref{ass:local_decay_fc} and \ref{ass:nontrap_fc}), and the piecewise constant admissibility condition on general domains (Admissibility~\ref{ass:piecewise_constant_general_domains}).
If they generate the same passive boundary dataset,
\begin{equation}\label{eq measure piecewise constant}
\mathcal M_{f,c}(x,t)=\mathcal M_{\tilde f,\tilde c}(x,t)
\qquad
(x,t)\in\partial\Omega\times\mathbb R_+,
\end{equation}
then the support components coincide up to relabelling:
\[
M=N,
\qquad
\{\Omega_1,\ldots,\Omega_N\}
=
\{\widetilde\Omega_1,\ldots,\widetilde\Omega_M\}.
\]
Moreover, after relabelling the components so that
\[
\Omega_i=\widetilde\Omega_i,
\qquad i=1,\ldots,N,
\]
the corresponding constants agree:
\[
f_i=\tilde f_i,
\qquad
c_i=\tilde c_i,
\qquad i=1,\ldots,N.
\]
Consequently,
\[
f(x)=\tilde f(x),
\qquad
c(x)=\tilde c(x)
\quad\text{for a.e. }x\in\Omega.
\]
\end{theorem}
%============================================================================
\medskip

\subsection{Simultaneous recovery for the general hyperbolic system}
We now return to the general second-order hyperbolic system \eqref{eq:1}, where the unknown quantities are the initial displacement \(f\), the initial velocity \(h\), the wave speed \(c\), and the conductivity coefficient \(\sigma\). The passive boundary observation is the Cauchy dataset \eqref{eq:2}. Under suitable admissibility conditions, these four quantities are uniquely determined by a single passive boundary observation.

Ideally, one would like to recover all four unknowns simultaneously in the most general setting. However, as in the TAT/PAT case, such full generality is not currently achievable due to the formally determined nature of the POIBP. We therefore impose separating-variable conditions on the unknowns (see Admissibility condition~\ref{ass:general_structure}), which reduce the effective dimensionality of the unknowns and provide the extra room needed to decouple them.

The general four-parameter system exhibits a more intricate coupling of unknowns than the TAT/PAT case. In particular, the simultaneous identification of the geometric support of the unknowns presents additional difficulties. However, such support recovery arguments have already been developed in the simplified TAT/PAT and piecewise constant settings. We therefore omit them here and concentrate on the core result: the simultaneous recovery of the four functions $f$, $h$, $c$, and $\sigma$ within the unified framework under the stated structural assumptions.

With this preparation, we now present the precise formulation of our main results for the general hyperbolic system.

\medskip
\subsubsection{The general hyperbolic system}
We first state the structural admissibility condition. 
Once again, we adopt a rigid motion and a harmless rescaling that depends only on \(L:=\operatorname{diam}(\Omega)\),
and functions supported in \(\Omega\) are extended by zero outside.

\begin{admissible}\label{ass:general_structure}
A configuration \((f,h,c,\sigma)\) is said to satisfy the structural admissibility condition if
\[
\operatorname{supp}(f)=\operatorname{supp}(h)=\operatorname{supp}(1-c)=\operatorname{supp}(1-\sigma)=\overline{\Omega},
\]
where 
\[
\Omega=(a_1,b_1)\times(a_2,b_2)\times(a_3,b_3)\subset\mathbb R^3
\]
for some constants \(a_j<b_j\), \(j=1,2,3\),
and, for a.e. \(x=(x_1,x_2,x_3)\in\mathbb R^3\),
\[
f(x)=f_0(x_2)\chi_\Omega(x),
\qquad
h(x)=h_0(x_1)\chi_\Omega(x),
\]
\[
c(x)=1+\bigl(c_0(x_1)-1\bigr)\chi_\Omega(x),
\qquad
\sigma(x)=1+\bigl(\sigma_0(x_1)-1\bigr)\chi_\Omega(x)
\]
where \(f_0,h_0,c_0,\sigma_0:\mathbb R\to\mathbb R\) are unknown functions.
\end{admissible}

\begin{remark}
For Theorem~\ref{thm:general}, we fix \(\Omega\) as the common support (Assumption~\ref{ass:general_structure}) and treat all coordinate structure as known structural input. Unlike the TAT/PAT case -- wherein the support of the unknowns need not be prescribed a priori -- the general four-parameter system exhibits a more intricate coupling of unknowns. Support identification is considerably more difficult, but such arguments have already been illustrated in the simplified TAT/PAT and piecewise settings and are therefore omitted. We focus instead on the principal result: the simultaneous recovery of \(f, h, c, \sigma\) within the unified framework.
\end{remark}

\begin{admissible}\label{ass:general_source_nondeg}
A configuration \((f,h,c,\sigma)\) is said to satisfy the source non-degeneracy condition if there exist \(\delta_f>0\) and \(c_f>0\) such that
\begin{equation}\label{eq:fprime_nondeg_near_bf}
|f_0'(x_2)|\ge c_f\qquad\text{for }x_2\in(b_2-\delta_f,b_2),
\end{equation}
and \(f_0'\) has the same sign on \((b_2-\delta_f,b_2)\).
\end{admissible}

\begin{admissible}\label{ass:general_decay}
A configuration \((f,h,c,\sigma)\) is said to satisfy the local decay condition if its corresponding solution \(u\) to \eqref{eq:1} satisfies
\begin{equation}\label{eq decaygene}
\|u(\cdot,t)\|_{H^1(\Omega)}+\|\partial_tu(\cdot,t)\|_{L^2(\Omega)}
\le C\eta(t)\bigl(\|f\|_{H^1(\Omega)}+\|h\|_{L^2(\Omega)}\bigr)
\qquad\text{for }t>0,
\end{equation}
where \(\eta(t)\to0\) as \(t\to\infty\) and \(\eta\in L^1(0,\infty)\).
\end{admissible}

We finally impose the following condition, which will be used in the high-frequency Fourier analysis; (cf.\cite{graham2019helmholtz}).

\begin{admissible}\label{ass:general_nontrap}
A configuration \((f,h,c,\sigma)\) is said to satisfy the decay condition if there exist \(\mu_1,\mu_2>0\) such that
\begin{equation}\label{eq:general_nontrap}
\sigma(x)-x\cdot\nabla \sigma(x)\ge \mu_1,
\qquad
c(x)^{-2}+x\cdot\nabla(c(x)^{-2})\ge \mu_2
\qquad\text{for a.e. }x\in\Omega.
\end{equation}
\end{admissible}

We now state our main uniqueness result for the general hyperbolic system.

\begin{theorem}\label{thm:general}
Let \((f,h,c,\sigma)\) and \((\tilde f,\tilde h,\tilde c,\tilde\sigma)\) be two configurations for the hyperbolic system \eqref{eq:1} satisfying
\[
c,\tilde c,\sigma,\tilde\sigma\in W^{6,\infty}(\Omega),\qquad f,\tilde f,h,\tilde h\in H^7(\Omega),
\]
the uniform bounds \eqref{eq:general_bounds},  the structural condition (Admissibility~\ref{ass:general_structure}), the source non-degeneracy condition (Admissibility~\ref{ass:general_source_nondeg}), the local decay condition (Admissibility~\ref{ass:general_decay}), and the decay condition (Admissibility~\ref{ass:general_nontrap}).
If they generate the same passive boundary dataset,
\[
\mathcal{M}_{f,h,c,\sigma}(x,t)=\mathcal{M}_{\tilde f,\tilde h,\tilde c,\tilde\sigma}(x,t),\qquad (x,t)\in\partial\Omega\times\mathbb R_+,
\]
then
\[
f=\tilde f,\quad h=\tilde h,\quad c=\tilde c,\quad \sigma=\tilde\sigma\qquad\text{a.e. }x\in\Omega.
\]
\end{theorem}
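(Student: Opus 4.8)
The plan is to run the unified framework at four successive orders of information, using the fact that the common support $\Omega$ and all directional profiles are prescribed (Admissibility~\ref{ass:general_structure}), so that each unknown is a one-variable function and the cardinality mismatch gives exactly the extra dimension we need. We start from the time-domain integral identity \eqref{eq:td}, valid under the local decay hypothesis (Admissibility~\ref{ass:general_decay}), which by the unique continuation principle and the vanishing of the Cauchy data on $\partial\Omega\times\mathbb R_+$ holds for every test solution $w$ of \eqref{eq:tdtf} in $\Omega\times(0,\infty)$. The four terms $I_1,\dots,I_4$ correspond to the mismatches in $c,f,h,\sigma$. \emph{Step 1 (extracting $h$).} Choosing $w$ with $w(\cdot,0)$ ranging over a dense family while $\partial_t w(\cdot,0)=0$ and controlling the two space-time integrals via the decay of $\tilde u$ and $\partial_{tt}w$, we isolate $I_3$ and deduce $h/c^2=\tilde h/\tilde c^2$; combined with the structural forms this will ultimately pin down $h_0$ once $c_0$ is known. \emph{Step 2 (extracting $f$).} Symmetrically, taking $\partial_t w(\cdot,0)$ dense with $w(\cdot,0)=0$ isolates $I_2$ and gives $f/c^2=\tilde f/\tilde c^2$.

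\emph{Step 3 (separating $c$ via high frequencies).} Passing to the frequency-domain identity \eqref{eq:fd} with $w_k(x,t)=e^{-\mathrm ikt}v_k(x)$, where $v_k$ solves the elliptic equation associated to $\sigma$ at frequency $k$, and inserting suitably constructed complex geometrical optics solutions $v_k$ concentrated on lines in the $x_1$-direction, I would expand in powers of $k$. The $k^2$-term carries the factor $(1-1/c^2)\mathbf R-(1-1/\tilde c^2)\tilde{\mathbf R}$; using the high-frequency asymptotics of $\hat u$ developed in Section~\ref{sec:setup} (this is where Admissibility~\ref{ass:general_nontrap} enters, guaranteeing the requisite resolvent bounds à la \cite{graham2019helmholtz}) together with the already-established equalities $f/c^2=\tilde f/\tilde c^2$ and $h/c^2=\tilde h/\tilde c^2$, the leading coefficient reduces to an integral of $(1/\tilde c^2-1/c^2)$ against the CGO amplitude. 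The source non-degeneracy condition (Admissibility~\ref{ass:general_source_nondeg}), i.e. $|f_0'|\ge c_f$ near $b_2$, is used exactly here: it ensures the relevant amplitude does not degenerate, so that letting the CGO parameters sweep out enough line integrals and using that $c_0$ depends only on $x_1$, one recovers $1/c^2=1/\tilde c^2$, hence $c=\tilde c$ on $\Omega$. Feeding this back into Steps~1--2 yields $f=\tilde f$ and $h=\tilde h$.

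\emph{Step 4 (recovering $\sigma$).} With $c,f,h$ identified, the time-domain identity collapses to $I_4=\int_\Omega\int_0^\infty(\tilde\sigma-\sigma)\nabla\tilde u\cdot\nabla w\,\mathrm dt\,\mathrm dx=0$ for all admissible $w$; equivalently, in the frequency domain, $\int_\Omega(\sigma-\tilde\sigma)\nabla\widehat{\tilde u}(x,k)\cdot\nabla v_k\,\mathrm dx=0$. Since $\sigma_0$ depends only on $x_1$, I would again test with CGO solutions $v_k$ whose gradients are aligned to probe the $x_1$-dependence, and use that $\nabla\widehat{\tilde u}$ is, to leading order in $k$, nonvanishing on $\operatorname{supp}(f)=\overline\Omega$ because $\widehat{\tilde u}\sim \tilde f/\tilde c^2$ at low frequency and has a genuine WKB amplitude at high frequency. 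A density/orthogonality argument in the $x_1$-variable then forces $\sigma=\tilde\sigma$ a.e. in $\Omega$.

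\emph{Main obstacle.} The delicate point is Step~3: one must control the remainder terms $\mathbf R,\tilde{\mathbf R}$ in the high-frequency expansion of $\hat u$ uniformly enough that, after multiplying by $k^2$ and pairing with CGO solutions of the $\sigma$-elliptic operator (whose amplitudes themselves grow), the cross terms coupling the still-unknown $\sigma$-mismatch to the $c$-mismatch are genuinely lower order. This requires combining the resolvent estimates from Admissibility~\ref{ass:general_nontrap} with the CGO construction in a way that the two large parameters (the frequency $k$ and the CGO complex-phase parameter) are balanced; getting this balance right, while simultaneously using the source non-degeneracy to keep the leading amplitude away from zero, is the crux of the argument. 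Everything else — the density of boundary test solutions, the unique continuation step, and the final one-dimensional inversions in $x_1$ and $x_2$ — is routine once this estimate is in place.
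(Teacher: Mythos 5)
There is a genuine gap, and it lies in the order and mechanism of your decoupling. Your Steps 1--2 claim to isolate the initial-data terms $I_2,I_3$ of the time-domain identity by prescribing $w(\cdot,0)$ or $\partial_t w(\cdot,0)$ and "controlling the two space-time integrals via the decay of $\tilde u$". But decay of $\tilde u$ only makes $I_1$ and $I_4$ finite, not small: the $\sigma$-mismatch term $I_4=\int_0^\infty\!\int_\Omega(\tilde\sigma-\sigma)\nabla\tilde u\cdot\nabla w$ carries no small parameter at all when you merely prescribe the Cauchy data of $w$ at $t=0$, and even the natural low-frequency choice $w=e^{\mathrm{i}\rho_0 t}v$ (which kills $I_1$ since $\partial_{tt}w=-\rho_0^2w$) leaves $I_4$ of size $O(1)$ as $\rho_0\to0$. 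So the quotient identities $f/c^2=\tilde f/\tilde c^2$ and $h/c^2=\tilde h/\tilde c^2$ cannot be extracted before $\sigma$ is known; the paper never establishes them and indeed only invokes the low-frequency time-domain limit \emph{after} $\sigma=\tilde\sigma$, $f=\tilde f$, $h=\tilde h$ have been proved. Your Step 3 then hopes that the coupling of the unknown $\sigma$-mismatch to the $c$-mismatch is "genuinely lower order"; in the relevant scaling it is the opposite. In the frequency-domain identity \eqref{eq:fd}, with CGO test functions the $\sigma$-term scales like $(s/k)M_s$, the $f$-term like $kM_s$, the $h$-term like $M_s$ and the remainder term like $k^{-1}M_s$, so the $\sigma$-term is the \emph{dominant} contribution. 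This is exactly why the paper recovers $\sigma$ \emph{first}: it chooses anisotropic CGO phases $\eta=(\mathrm{i}\lambda,s,\mathrm{i}\mu_{k,\lambda})$ with the exponential parameter $s=s_{k,\lambda}$ growing faster than $k^2$, normalizes by $(s/k)M_s^{(1)}$, and uses the source non-degeneracy (Admissibility~\ref{ass:general_source_nondeg}) to lower-bound the $x_2$-weighted factor $\int\tilde f'(x_2)e^{sx_2}\,\mathrm{d}x_2$, so that the surviving leading term factorizes and Fourier inversion in $x_1$ gives $\sigma=\tilde\sigma$. Only then are $f$ and $h$ recovered (with growth directions $x_1$ and $x_2$ respectively), and $c$ is obtained last, not from the $k^2\mathbf R$ term at high frequency as you propose, but from the reduced time-domain identity with a low-frequency test solution $e^{\mathrm{i}\rho_0 t}v_{\rho_0,\xi}$, passing to the zero-frequency CGO limit and reading off first the $h$-weighted and then the $f$-weighted identities for $1/\tilde c^2-1/c^2$ (again using the non-degeneracy of $f_0$ to divide out the $x_2$-factor).

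In short, your toolkit (the two identities, CGOs, high-frequency asymptotics, one-dimensional Fourier inversion) is the right one, and your Step 4 for $\sigma$ is essentially the paper's Step 4 displaced to the end, but the proposed order $f,h\to c\to\sigma$ cannot be implemented: without $\sigma$ the time-domain identity does not decouple, and without $f,h,\sigma$ the $k^2$-remainder term you want to use for $c$ is buried under strictly larger terms. The balance of the two large parameters that you correctly identify as the crux is resolved in the paper precisely by taking $s/k^2\to\infty$ so that the $\sigma$-term leads, which forces the opposite recovery order $\sigma\to f\to h\to c$.
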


\begin{remark}
As in the TAT/PAT case, Theorem~\ref{thm:general} extends to all 
\(n\geq 3\). However, the situation is more delicate: four unknown quantities are coupled, and the decoupling strategy may depend on the dimension, as the availability of transverse variables for separating the unknowns changes with $n$. A detailed analysis of these higher-dimensional variants is therefore not pursued here.
\end{remark}

The proof of Theorem~\ref{thm:general} follows the same unified framework. The new ingredient is the decoupling of the four unknown parameters \(f\), \(h\), \(c\), and \(\sigma\), achieved by combining the two integral identities with high-frequency Fourier analysis and time-dependent CGO solutions. The detailed proof is given in Section~\ref{sec:generalhyperbolic}.

\medskip
\subsubsection{Finite-time measurements}
Under an additional structural assumption on the medium coefficients, we also show that the infinite-time measurement requirement can be reduced to a finite observation interval.
More precisely, for a class of coefficients satisfying the strong Huygens principle together with suitable propagation assumptions, we prove that the boundary observation data on $\partial\Omega\times\mathbb R_+$ can be replaced by measurements on $\partial\Omega\times(0,T)$ for some sufficiently large but finite $T>0$.
This finite-time reduction is also one of the issues addressed in the present work.
For more general coefficient classes, however, establishing uniqueness and recovery from finite-time boundary measurements remains a substantially more delicate problem, and will be investigated in our future work.

The following assumption ensures that the infinite-time boundary observation \(\partial\Omega\times\mathbb R_+\) in Theorems~\ref{thm0}--\ref{thm:general} can be replaced by a finite-time measurement \(\partial\Omega\times(0,T)\) for some sufficiently large \(T>0\).

\begin{admissible}\label{ass:strong_huygens}
A configuration $(c,\sigma)$ satisfies the strong Huygens principle if for every compactly supported $(f,h)$, the corresponding solution $u$ to \eqref{eq:1} has no tail after the propagating front passes. Moreover, all bicharacteristics emanating from \(\operatorname{supp}(f,h)\) leave the observation region in a uniform finite time.
\end{admissible}

\begin{theorem}\label{thm:finite-time-measurement}
Let \((c,\sigma)\) and \((\tilde c,\tilde\sigma)\) be two configurations satisfying Assumption~\ref{ass:strong_huygens}. Then there exists \(T>0\) sufficiently large, such that, in Theorems~\ref{thm0}--\ref{thm:general}, the boundary measurements on \(\partial\Omega\times\mathbb R_+\) in \eqref{eq:2} are determined by their restrictions to \(\partial\Omega\times(0,T)\).
\end{theorem}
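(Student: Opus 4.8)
\textbf{Proof proposal for Theorem~\ref{thm:finite-time-measurement}.}

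The plan is to show that, under the strong Huygens principle (Assumption~\ref{ass:strong_huygens}), the full infinite-time boundary Cauchy data $\mathcal{M}_{f,h,c,\sigma}$ on $\partial\Omega\times\mathbb{R}_+$ is already encoded in its restriction to a finite window $\partial\Omega\times(0,T)$, and then to invoke the uniqueness results of Theorems~\ref{thm0}--\ref{thm:general} with this finite dataset in place of the infinite one. The key point is a \emph{finite-speed-of-propagation plus no-tail} argument: since $\mathrm{supp}(f)\cup\mathrm{supp}(h)\subset\Omega$ is compact and $\Omega$ is bounded, and since the speeds $c,\sigma$ (and $\tilde c,\tilde\sigma$) are bounded above and below by \eqref{eq:general_bounds}, there is a uniform $T_0>0$ after which the propagating wavefront emanating from the source has swept past all of $\partial\Omega$. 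The strong Huygens principle then guarantees that the solution leaves \emph{no residual tail} behind the front: for $t>T_0$ the field $u(\cdot,t)$ and $\partial_\nu u(\cdot,t)$ vanish identically on $\partial\Omega$. Hence $\mathcal{M}_{f,h,c,\sigma}$ is supported in $\partial\Omega\times(0,T_0)$, and any $T>T_0$ works.

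I would carry this out in the following steps. First, I would make the finite-speed-of-propagation bound explicit: by a standard energy/domain-of-dependence estimate for the operator $c(x)^{-2}\partial_t^2-\nabla\cdot(\sigma\nabla\,\cdot\,)$, the support of $u(\cdot,t)$ is contained in an $O(t)$-neighbourhood of $\mathrm{supp}(f,h)$, with the implicit constant controlled by $\sup(c^2\sigma)$ over $\Omega$ together with the unit speed outside $\Omega$; this gives the existence of a uniform $T_0=T_0(\Omega,c_\pm,\sigma_\pm)$, and since the same bounds hold for $(\tilde c,\tilde\sigma)$ by \eqref{eq:general_bounds}, one may take a common $T_0$ for both configurations. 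Second, I would invoke the strong Huygens principle: Assumption~\ref{ass:strong_huygens} states precisely that the solution has no tail after the front passes, and that all bicharacteristics from $\mathrm{supp}(f,h)$ exit the observation region in uniform finite time; combining these, for $t>T_0$ the Cauchy data $(u,\sigma\partial_\nu u)|_{\partial\Omega}$ vanish. Third, I would conclude that knowledge of $\mathcal{M}_{f,h,c,\sigma}$ on $\partial\Omega\times(0,T)$ for any $T>T_0$ determines it on all of $\partial\Omega\times\mathbb{R}_+$ (the complementary part being zero), and therefore the hypotheses of Theorems~\ref{thm0}--\ref{thm:general} are met; the recovery conclusions follow verbatim. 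For the TAT/PAT statements (Theorems~\ref{thm0}, \ref{thm:unique_f_c}, \ref{thm:unique_f_c_piecewise}), the same reasoning applies with $h\equiv 0$, $\sigma\equiv 1$.

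The main obstacle is making the strong Huygens hypothesis interact correctly with the \emph{variable} coefficients inside $\Omega$. The classical Huygens principle (sharp propagation, no tail) is guaranteed for the constant-coefficient wave equation in odd space dimensions, but for a general variable metric it can fail; this is exactly why Assumption~\ref{ass:strong_huygens} is imposed as a hypothesis rather than derived. So the real content of the proof is \emph{bookkeeping}: one must verify that the hypothesis, as stated, is strong enough to kill the boundary tail uniformly for both $(c,\sigma)$ and $(\tilde c,\tilde\sigma)$ simultaneously, and that the "uniform finite time'' for bicharacteristics to leave the observation region can be taken independent of the particular (admissible) source $(f,h)$ — this uniformity is what lets a single $T$ serve in all the theorems at once. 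A secondary technical point is that the boundary data are Cauchy data (both $u$ and its co-normal derivative), so one needs the no-tail property not just for $u$ in the interior but for its trace and co-normal trace on $\partial\Omega$; this follows from the propagation-of-singularities picture encoded in the bicharacteristic condition of Assumption~\ref{ass:strong_huygens}, together with interior regularity of $u$ away from the front, but it should be stated carefully. Once these uniformity and trace issues are dispatched, the reduction is immediate.
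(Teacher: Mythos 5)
Your proposal follows essentially the same route as the paper: the paper's own (brief) argument is exactly that the strong Huygens hypothesis kills the tail behind the front and the uniform bicharacteristic exit time forces the boundary Cauchy data to vanish for $t>T$, so the data on $\partial\Omega\times(0,T)$ determine the full dataset trivially. Your additional bookkeeping on finite speed of propagation, uniformity of $T$ over both configurations, and the trace/co-normal-trace issue is a reasonable fleshing-out of what the paper leaves to the cited references, but it is not a different approach.
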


Indeed, under Assumption~\ref{ass:strong_huygens}, the solution has no tail behind the advancing wave front.
Since all bicharacteristics issued from the support of the initial datum leave the observation region in a uniform finite time, the boundary observation vanishes for all sufficiently large \(T\); cf.\ \cite{Friedlander1975,SonegoFaraoni1992,BelgerSchimmingWuensch1997}.
Hence the full observation on \(\partial\Omega\times\mathbb R_+\) is completely determined by its restriction to \(\partial\Omega\times(0,T)\).

\begin{remark}
Theorem~\ref{thm:finite-time-measurement} shows that, under Assumption~\ref{ass:strong_huygens}, the measurement interval reduces from \((0,\infty)\) to \((0,T)\). For general variable coefficients, explicit conditions guaranteeing the strong Huygens principle are delicate and geometry-dependent; see \cite{BelgerSchimmingWuensch1997}.
\end{remark}

%----------------------------------------------------------------------------------------
\medskip
\subsection{Extensions to other evolutionary equations}
The unified framework established above also applies to other evolutionary equations, including parabolic and Schrödinger-type models. We present two representative extensions; sketches of the proofs are given in Section~\ref{sec:parabolic}.

\medskip
\subsubsection{Parabolic model.}
Consider the parabolic equation
\begin{equation}\label{eq:parabolic-isotropic}
\begin{cases}
\displaystyle \frac{1}{\mu(x)}\,\partial_t u(x,t)-\Delta u(x,t)=0, & (x,t)\in\mathbb R^n\times\mathbb R_+,\\[2mm]
u(x,0)=f(x), & x\in\mathbb R^n.
\end{cases}
\end{equation}
We assume that the source \(f\) and the inhomogeneity \(\mu-1\) are compactly supported in a bounded Lipschitz domain \(\Omega\subset\mathbb R^n\), and that \(\mu\) is uniformly positive and bounded in \(\Omega\):
\[
\operatorname{supp}(f)\cup\operatorname{supp}(1-\mu)\Subset\Omega,\qquad
0<\mu_-\le\mu(x)\le\mu_+<\infty\quad\text{for }x\in\Omega.
\]
The passive boundary observation is the Cauchy data pair
\[
\mathcal P_{f,\mu}:=(u,\partial_\nu u)|_{\partial\Omega\times\mathbb R_+}.
\]

\begin{extension}\label{ext:parabolic-model}
Let \((f,\mu)\) and \((\tilde f,\tilde\mu)\) be two configurations for the parabolic system \eqref{eq:parabolic-isotropic} satisfying the support condition above, suitable decay and regularity conditions, and a separating-variable condition analogous to Admissibility~\ref{ass:x1_only_fc}. If they generate the same passive boundary dataset,
\[
\mathcal P_{f,\mu}(x,t)=\mathcal P_{\tilde f,\tilde\mu}(x,t),\qquad (x,t)\in\partial\Omega\times\mathbb R_+,
\]
then
\[
f(x)=\tilde f(x)\quad\text{for a.e. }x\in\Omega,\qquad
\mu(x)=\tilde\mu(x)\quad\text{for a.e. }x\in\operatorname{supp}(f).
\]

\end{extension}

\medskip
\subsubsection{Schr\"odinger model.}
Consider the Schr\"odinger equation with an external potential:
\begin{equation}\label{eq:schrodinger-potential}
\begin{cases}
\displaystyle \mathrm{i}\partial_t\psi(x,t)+\Delta\psi(x,t)-V(x)\psi(x,t)=0,
& (x,t)\in\mathbb R^n\times\mathbb R_+,\\[2mm]
\psi(x,0)=f(x),
& x\in\mathbb R^n.
\end{cases}
\end{equation}
We assume that the initial state \(f\) and the real-valued potential \(V\) are compactly supported in a bounded Lipschitz domain \(\Omega\subset\mathbb R^n\). The passive boundary observation is
\[
\mathcal S_{f,V}:=(\psi,\partial_\nu\psi)|_{\partial\Omega\times\mathbb R_+}.
\]

\begin{extension}\label{ext:schrodinger-model}
Let \((f,V)\) and \((\tilde f,\tilde V)\) be two configurations for the Schrödinger system \eqref{eq:schrodinger-potential} satisfying the support condition above, suitable decay and regularity conditions, and a separating-variable condition analogous to Admissibility~\ref{ass:x1_only_fc}. If they generate the same passive boundary dataset,
\[
\mathcal S_{f,V}(x,t)=\mathcal S_{\tilde f,\tilde V}(x,t),\qquad (x,t)\in\partial\Omega\times\mathbb R_+,
\]
then
\[
f(x)=\tilde f(x)\quad\text{for a.e. }x\in\Omega,\qquad
V(x)=\tilde V(x)\quad\text{for a.e. }x\in\operatorname{supp}(f).
\]
\end{extension}

The proof sketches in Section~\ref{sec:parabolic} follow the same logic as the hyperbolic framework. Equality of passive Cauchy data yields an integral identity. After a frequency-side transformation, CGO-type test functions separate the source contribution from the coefficient contribution. The separated-variable mechanism then converts the identities into Fourier uniqueness statements.

% =========================================================
\section{Mathematical setting and hyperbolic forward problems}\label{sec:setup}

This section establishes the analytical framework for our study.
We first recall a standard well-posedness result for the hyperbolic forward problem \eqref{eq:1}, and then introduce the temporal Fourier transform, which underlies the frequency-domain analysis.

\subsection{Well-posedness of the forward problem}\label{subsec Forward Hy}

Since the initial data and the coefficient perturbations are compactly supported in \(\Omega\), we only need the corresponding local regularity of the full-space solution inside \(\Omega\).
We begin with a standard global well-posedness result for \eqref{eq:1}; see, for instance, \cite{kachalov2001inverse,evans2010partial}.

\begin{lemma}\label{lem fhy}
Suppose that $f(x)\in H^{m+1}(\Omega)$, $h(x)\in H^m(\Omega)$, $c\in W^{m-1,\infty}(\Omega)$, and $\sigma\in W^{m,\infty}(\Omega)$, with $m\in\mathbb N$, and that the following $m$-th order compatibility conditions hold:
\begin{equation*}\label{eq compatibility}
\begin{cases}
f_0:=f\in H_0^1(\Omega),\ h_1:=h\in H_0^1(\Omega), \cdots,\\
f_{2l}:=-c^2\Delta_\sigma f_{2l-2}\in H_0^1(\Omega),\ (\text{if }m=2l),\\
h_{2l+1}:=-c^2\Delta_\sigma h_{2l-1}\in H_0^1(\Omega),\ (\text{if }m=2l+1).
\end{cases}
\end{equation*}
Then the hyperbolic system \eqref{eq:1} has a unique solution
\begin{equation*}\label{eq f hyp n1}
u(x,t)\in C([0,+\infty);H^{m+1}(\Omega))\cap C^{m+1}([0,+\infty);L^2(\Omega)).
\end{equation*}
\end{lemma}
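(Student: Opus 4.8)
The plan is to treat \eqref{eq:1} by the classical energy method: reduce the Cauchy problem on $\mathbb R^n$ to one on a fixed bounded domain, solve it at the level of finite energy, and then bootstrap regularity by differentiating in time and invoking elliptic estimates for the divergence-form operator. This is essentially the argument of \cite[Ch.~7]{evans2010partial} and \cite{kachalov2001inverse}, adapted to the stated coefficient regularity.

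First I would localize. Since $f,h$ and $1-c,1-\sigma$ are supported in $\Omega$ and $c\equiv\sigma\equiv 1$ outside, finite speed of propagation (with speed controlled by $c_+\sqrt{\sigma_+}$) implies that on any interval $[0,T]$ the solution inside $\Omega$ agrees with the solution of the problem posed on a fixed ball $B\supset\overline\Omega$ with homogeneous Dirichlet data on $\partial B$; hence it suffices to solve on $B$. On $B$, existence and uniqueness of a finite-energy weak solution $u\in C([0,\infty);H_0^1(B))\cap C^1([0,\infty);L^2(B))$ follow from a Galerkin approximation in the eigenfunctions of the operator $-\nabla\cdot(\sigma\nabla\cdot)$, together with the energy identity obtained by testing the equation with $\partial_t u$,
\[
\frac{\mathrm d}{\mathrm dt}\Big(\tfrac12\!\int_B \tfrac{1}{c^2}|\partial_t u|^2 + \sigma|\nabla u|^2\,\mathrm dx\Big)=0,
\]
which yields the a priori bound and, by linearity, uniqueness. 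This handles the base case $m=0$.

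For $m\ge 1$ I would differentiate the equation $j$ times in $t$. Writing $u^{(j)}:=\partial_t^j u$, each $u^{(j)}$ solves the same equation with initial pair $\bigl(u^{(j)}(\cdot,0),u^{(j+1)}(\cdot,0)\bigr)$, which is precisely the recursively defined $(f_j,h_j)$ in the statement, obtained by repeatedly substituting $\partial_t^2 u=c^2\nabla\cdot(\sigma\nabla u)$. The $m$-th order compatibility conditions guarantee that these data lie in $H_0^1\times L^2$ with support in $\overline\Omega$, so the base energy estimate applies to each $u^{(j)}$ and propagates it: $\partial_t^j u\in C([0,\infty);H^1)\cap C^1([0,\infty);L^2)$ for $0\le j\le m$. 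To upgrade the spatial regularity I would use the equation at each fixed time as an elliptic relation, $\nabla\cdot(\sigma\nabla u)=c^{-2}\partial_t^2 u$, and apply elliptic regularity for the divergence-form operator with $\sigma\in W^{m,\infty}$, which converts $H^{k}$-regularity of the right-hand side into $H^{k+2}$-regularity of $u(\cdot,t)$, the gain being limited exactly by the coefficient smoothness $W^{m-1,\infty}$ for $c$ and $W^{m,\infty}$ for $\sigma$. Iterating this bootstrap, starting from $u\in C([0,\infty);H^1)$ and feeding in the time-differentiated estimates, yields $u\in C([0,\infty);H^{m+1}(\Omega))$, while the estimate on $u^{(m+1)}$ gives $u\in C^{m+1}([0,\infty);L^2(\Omega))$.

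The main obstacle is the bookkeeping in this last step: one must verify that at each stage of the elliptic bootstrap the regularity demanded of $\sigma$ and $c$ never exceeds $W^{m,\infty}$ and $W^{m-1,\infty}$ respectively, and that the recursively defined data $(f_j,h_j)$ are admissible precisely under the stated compatibility conditions, with no spurious boundary layers created when passing back from $B$ to $\Omega$. Since $c=\sigma=1$ near $\partial B$, the localization introduces no genuine boundary difficulty, so these are routine accounting points rather than conceptual ones, and the estimates themselves are standard.
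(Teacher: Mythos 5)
Your argument is correct and is exactly the standard energy-method proof (Galerkin/energy estimates for the base case, time differentiation under the stated compatibility conditions, then elliptic bootstrap limited by the coefficient regularity) that the paper itself does not reproduce but simply defers to \cite{evans2010partial} and \cite{kachalov2001inverse}. So you have essentially supplied the proof the paper cites, with the localization via finite speed of propagation being the only extra (and harmless) step needed to pass from the Cauchy problem on $\mathbb R^n$ to a bounded domain.
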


As noted earlier, the initial data and the inhomogeneities are supported inside \(\Omega\).
By enlarging the domain if necessary, we may assume the strict containment
\begin{equation*}\label{eq:ssn1}
\mathrm{supp}(f),\ \mathrm{supp}(h),\ \mathrm{supp}(c-1),\ \mathrm{supp}(\sigma-1) \Subset \Omega,
\end{equation*}
while maintaining the regularity requirement $(f,h,c,\sigma)\in\mathcal H^m$, where
\begin{equation*}\label{eq H}
\mathcal H^m:=H^{m+1}(\mathbb R^n)\times H^m(\mathbb R^n)\times W^{m-1,\infty}(\mathbb R^n)\times W^{m,\infty}(\mathbb R^n).
\end{equation*}
The well-posedness result above provides the forward solution framework needed in what follows.
For the subsequent frequency-domain analysis, we further impose the local decay condition used in the general hyperbolic setting.

%---------------------------------------------------------------------------------------
\subsection{Temporal Fourier transform and high-frequency ansatz}\label{subsec:temporal_FT}

To pass to the frequency domain, we employ the half-line temporal Fourier transform.
Its proper definition requires a local decay property of the forward solution.
Accordingly, throughout this subsection we assume that the corresponding configuration satisfies the local decay condition imposed in the general hyperbolic setting, so that the forward solution decays in every fixed bounded region as \(t\to\infty\).
This framework is standard; see, for example, \cite{eskin2011lectures}.

In particular, the local decay condition guarantees that, for every fixed \(k>0\), the half-line temporal Fourier transform is well defined in the sense required for the subsequent local and frequency-domain analysis.
More precisely, we set
\begin{equation}\label{eq FT}
\hat u(x,k)
=
\int_0^\infty u(x,t)e^{-\mathrm{i}kt}\,\mathrm{d}t,
\qquad (x,k)\in\mathbb R^n\times\mathbb R_+,
\end{equation}
where \(k\in\mathbb R_+\) denotes the frequency.
The integral in \eqref{eq FT} may be understood in the sense of the limiting absorption principle, namely as the boundary value
\[
\hat u(x,k)=\lim_{\varepsilon\to0^+}\int_0^\infty u(x,t)e^{-(\varepsilon+\mathrm{i}k)t}\,\mathrm{d}t,
\qquad k>0,
\]
which justifies the integrations by parts in \(t\); see \cite{MR1054376,eskin2011lectures} for related discussion.

Using the local decay condition to control the endpoint terms at \(t=\infty\), we take the temporal Fourier transform \eqref{eq FT} in \eqref{eq:1} and integrate by parts in \(t\) to obtain the frequency-domain equation
\begin{equation}\label{FourierTransform}
-\Delta_\sigma\hat{u}(x,k)-\frac{k^2}{c(x)^2}\hat{u}(x,k)=\frac{1}{c(x)^2}\bigl(\mathrm{i}kf(x)+h(x)\bigr),
\quad x\in\mathbb R^n.
\end{equation}
Thus, the temporal Fourier transform relates the time-domain wave equation to a family of Helmholtz-type equations in the frequency domain.

Under our admissibility assumptions, the coefficient perturbations \(\sigma-1\) and \(c-1\), as well as the forcing term
\[
g=\frac{1}{c^2}\bigl(\mathrm{i}kf+h\bigr),
\]
are compactly supported in \(\Omega\).
Hence the frequency-domain analysis below is localized to the inhomogeneous region \(\Omega\), where the boundary observation dataset provides the corresponding Cauchy data for \(\hat u\).

For the later high-frequency Fourier analysis, we shall use the stronger admissibility conditions \eqref{eq:nontrap_sigma_c_1} and \eqref{eq:general_nontrap}.
These conditions are motivated by variable-coefficient resolvent estimates for heterogeneous Helmholtz equations; see \cite{graham2019helmholtz}.

In the high-frequency regime, we further employ the asymptotic ansatz
\begin{equation}\label{eq v2} 
\hat{u}
=
\sum_{j=0}^N
\left[
(-1)^{j+1}\mathrm{i}(c^2\Delta_\sigma)^j f\,\frac{1}{k^{2j+1}}
+
(-1)^{j+1}(c^2\Delta_\sigma)^j h\,\frac{1}{k^{2(j+1)}}
\right]
+\mathbf{A}_{N+1}.
\end{equation}
Here, \(\mathbf{A}_{N+1}\) is a higher-order remainder term.
The corresponding high-frequency cut-off resolvent bounds used in our analysis will be established later; see, in particular, Lemmas~\ref{lem Agmon},~\ref{lem:R_L2_N1} and~\ref{lem:R_Linfty_bound}.
This expansion will be used together with the fundamental integral identities to carry out the subsequent decoupling and identification arguments in the general hyperbolic setting.

%=======================================================================================

\section{Joint recovery in TAT/PAT}\label{sec:cgo}

In this section, we prove Theorems~\ref{thm0} and~\ref{thm:unique_f_c}, namely the determination of the quotient \(f/c^2\) and the simultaneous recovery of \(f\) and \(c\) from a single passive boundary measurement in the TAT/PAT setting.
As noted in Remark~\ref{eq rem1}, we present the proof in the physically relevant case \(n=3\), although the same argument extends to all \(n\geq 3\) under the corresponding one-directional structural assumptions.

The two proofs are carried out within the same unified framework: the time-domain and frequency-domain formulations are used in a complementary way, together with oscillatory test functions and high-frequency Fourier analysis.

Throughout this section, the separated-variable assumptions are understood in the sense stated in Theorems~\ref{thm0} and~\ref{thm:unique_f_c}. 
The genuinely unknown parts are the transverse factors \(q,\tilde q,p,\tilde p\), which depend only on \(x'=(x_1,x_2)\), while the dependence on the distinguished variable \(x_3\) is prescribed by the known profiles \(\phi\) and \(\tilde\phi\). 
Thus the global Sobolev regularity imposed in the statements is imposed on the unknown transverse factors, together with the separately prescribed regularity of the known \(x_3\)-profiles. 
For notational simplicity, in the arguments below we regard the resulting product functions as having the corresponding Sobolev regularity on \(\Omega\).

%---------------------------------------------------------------------------------
\subsection{Step 1: Time-domain identity}

In this subsection, we begin with the proof of Theorem~\ref{thm0}.
We start from the corresponding time-domain integral identity.
Let \(w\) be a sufficiently regular solution of
\begin{equation}\label{eq:wave_w}
\frac{1}{c(x)^2}\partial_t^2 w-\Delta w=0
\quad\text{in }\Omega\times(0,\infty),
\end{equation}
satisfying
\begin{equation}\label{eq:w_bounded_simple}
\sup_{t\ge 0}\Bigl(\|w(\cdot,t)\|_{L^\infty(\Omega)}
+\|\partial_t w(\cdot,t)\|_{L^\infty(\Omega)}\Bigr)<\infty.
\end{equation}
We also assume that \(w\) has the regularity and trace properties needed to justify the integrations by parts below; this is the case for the CGO test waves constructed in the next subsection.
The central point is to choose a suitable class of test waves $w\in\mathscr W$, where $\mathscr W$ is a subspace of the function space spanned by the solutions of \eqref{eq:wave_w} satisfying \eqref{eq:w_bounded_simple}, so that $w$ simultaneously isolates the decoupling quantity $f/c^2$ and provides the probing mechanism for the subsequent uniqueness argument.

Under the admissibility assumptions of Theorem~\ref{thm0} and the regularity convention stated above, the forward problems associated with \((f,c)\) and \((\tilde f,\tilde c)\) admit unique energy solutions \(u\) and \(\tilde u\) such that
\[
u,\tilde u\in C([0,\infty);H^1(\Omega)),
\qquad
\partial_tu,\partial_t\tilde u\in C([0,\infty);L^2(\Omega)),
\]
and the equations hold in \(H^{-1}(\Omega)\) for a.e.\ \(t>0\); see Lemma~\ref{lem fhy}.

\begin{lemma}\label{lem:integral_identity}
Let \((f,c)\) and \((\tilde f,\tilde c)\) be two TAT/PAT configurations satisfying Admissibility condition~\ref{ass:local_decay_fc}. Let \(u\) and \(\tilde u\) be the corresponding solutions to \eqref{eq:simple_hyper_eqns}. Assume that the passive observation dataset coincide in the sense of \eqref{eq:condition}. Then for any test function \(w\in\mathscr W\) satisfying \eqref{eq:wave_w} and \eqref{eq:w_bounded_simple}, with the regularity specified above, the following integral identity holds:
\begin{equation}\label{eq:iid1}
\int_0^\infty\!\!\int_\Omega
\left(\frac{1}{\tilde c(x)^2}-\frac{1}{c(x)^2}\right)\tilde u(x,t)\,\partial_t^2 w(x,t)\,\mathrm{d}x\,\mathrm{d}t
-
\int_\Omega
\left(\frac{f(x)}{c(x)^2}-\frac{\tilde f(x)}{\tilde c(x)^2}\right)\partial_t w(x,0)\,\mathrm{d}x
=0.
\end{equation}
\end{lemma}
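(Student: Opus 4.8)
The plan is to derive the identity \eqref{eq:iid1} by a standard integration-by-parts (Green's identity) argument in the space-time cylinder $\Omega\times(0,\infty)$, applied to the difference of the two wave equations, tested against $w$. The only genuinely non-routine points are the justification of the integrations by parts in the low-regularity energy setting, and the vanishing of the boundary contributions on $\partial\Omega\times(0,\infty)$ (which uses the matching Cauchy data \eqref{eq:condition}) and at $t=\infty$ (which uses the local decay condition, Admissibility~\ref{ass:local_decay_fc}).

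First I would write down the two equations satisfied by $u$ and $\tilde u$: $c^{-2}\partial_t^2 u=\Delta u$ and $\tilde c^{-2}\partial_t^2\tilde u=\Delta\tilde u$ in $\Omega\times(0,\infty)$, with $u(\cdot,0)=f$, $\tilde u(\cdot,0)=\tilde f$ and $\partial_t u(\cdot,0)=\partial_t\tilde u(\cdot,0)=0$. Multiplying the $\tilde u$-equation by $w$, integrating over $\Omega\times(0,T)$, and integrating by parts twice in $t$ on the term $\int\tilde c^{-2}\partial_t^2\tilde u\,w$ produces $\int\tilde c^{-2}\tilde u\,\partial_t^2 w$ plus the temporal boundary terms at $t=0$ and $t=T$; integrating by parts twice in $x$ (Green's identity) on $\int w\,\Delta\tilde u$ produces $\int\tilde u\,\Delta w$ plus spatial boundary terms on $\partial\Omega$. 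I would then perform the symmetric manipulation with $u$ (i.e.\ test $c^{-2}\partial_t^2 u=\Delta u$ against the \emph{same} $w$, which solves $c^{-2}\partial_t^2 w=\Delta w$), and subtract the two resulting identities. The interior terms $\int\tilde u\,\Delta w$ and $\int u\,\Delta w$ cancel against the contributions coming from $\partial_t^2 w=c^2\Delta w$ after one regroups, leaving precisely the coefficient-difference term $\int_0^T\!\!\int_\Omega(\tilde c^{-2}-c^{-2})\tilde u\,\partial_t^2 w$ together with the collected boundary terms.

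Next I would dispose of the boundary terms. The spatial boundary terms on $\partial\Omega\times(0,T)$ are of the form $\int(w\,\partial_\nu\tilde u-\tilde u\,\partial_\nu w)$ and $\int(w\,\partial_\nu u-u\,\partial_\nu w)$; subtracting, they combine into $\int_0^T\!\!\int_{\partial\Omega}\bigl(w\,\partial_\nu(u-\tilde u)-(u-\tilde u)\,\partial_\nu w\bigr)$, which vanishes because $\mathcal{M}_{f,c}=\mathcal{M}_{\tilde f,\tilde c}$ forces $u=\tilde u$ and $\partial_\nu u=\partial_\nu u$ (hence $\partial_\nu u=\partial_\nu\tilde u$, since outside $\Omega$ both $c,\tilde c\equiv 1$ and one may invoke unique continuation / the exterior homogeneous problem to identify the normal derivatives) on $\partial\Omega\times(0,\infty)$. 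The temporal boundary terms at $t=0$ read $\int_\Omega\bigl(\tilde c^{-2}(\partial_t\tilde u\,w-\tilde u\,\partial_t w)-c^{-2}(\partial_t u\,w-u\,\partial_t w)\bigr)\big|_{t=0}$; since $\partial_t u(\cdot,0)=\partial_t\tilde u(\cdot,0)=0$, $u(\cdot,0)=f$, $\tilde u(\cdot,0)=\tilde f$, this collapses to $-\int_\Omega(\tilde c^{-2}\tilde f-c^{-2}f)\,\partial_t w(\cdot,0)$, which is the second term in \eqref{eq:iid1}. Finally, the terms at $t=T$ are controlled by $\|w(\cdot,T)\|_{L^\infty}$, $\|\partial_t w(\cdot,T)\|_{L^\infty}$ (bounded by \eqref{eq:w_bounded_simple}) times $\|\tilde u(\cdot,T)\|_{L^2}+\|\partial_t\tilde u(\cdot,T)\|_{L^2}+\|u(\cdot,T)\|_{L^2}+\|\partial_t u(\cdot,T)\|_{L^2}\le C\eta(T)(\|f\|_{H^1}+\|\tilde f\|_{H^1})$, which tends to $0$ as $T\to\infty$ by Admissibility~\ref{ass:local_decay_fc}; moreover $\eta\in L^1(0,\infty)$ guarantees that the space-time integral $\int_0^\infty\!\!\int_\Omega(\tilde c^{-2}-c^{-2})\tilde u\,\partial_t^2 w$ converges absolutely, so letting $T\to\infty$ yields \eqref{eq:iid1}.

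The main obstacle, and the step requiring the most care, is the rigorous justification of the double integration by parts in $x$ when $u,\tilde u$ only lie in $C([0,\infty);H^1(\Omega))$ with $\partial_t^2 u\in C([0,\infty);H^{-1}(\Omega))$: the equation holds only in $H^{-1}(\Omega)$, so $\Delta u$ is not a function and Green's identity on $\partial\Omega$ must be interpreted via the $H^1$--$H^{-1}$ duality pairing together with the normal-trace operator on $\{v\in L^2(\Omega):\Delta v\in L^2(\Omega)\}$ (or, more cleanly, a density argument approximating $(f,\tilde f)$ by smoother data via Lemma~\ref{lem fhy} and passing to the limit using continuous dependence). A companion subtlety is that $w$ need only satisfy the equation in $\Omega\times(0,\infty)$ and is merely bounded with bounded time derivative, so one should first carry out the computation for $w$ in a dense regular subclass of $\mathscr{W}$ and then extend by the same approximation/limiting procedure; this is routine but must be stated. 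Once these approximation arguments are in place, every remaining manipulation is the elementary Green-type bookkeeping sketched above.
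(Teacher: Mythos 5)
Your proposal is correct and follows essentially the same route as the paper: test against $w$, integrate by parts in space and time, use $\partial_t^2 w=c^2\Delta w$, the initial conditions, the local decay condition to kill the $t=\infty$ endpoint terms, and the matching Cauchy data to cancel the $\partial\Omega$ terms. The only difference is organizational — you test both equations symmetrically against $w$ and subtract, whereas the paper manipulates the $\tilde u$-equation first and then converts the remaining boundary flux via a Green identity for the pair $(u,w)$ — which amounts to the same computation.
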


\begin{proof}
We multiply the equation for $\tilde u$ by $w$ and integrate over $\Omega\times(0,\infty)$.
An integration by parts in the spatial variable yields
\begin{multline}\label{eq:stepA_rewrite}
\int_0^\infty\!\!\int_\Omega \frac{1}{\tilde c(x)^2}\,\partial_t^2\tilde u(x,t)\,w(x,t)\,\mathrm{d}x\,\mathrm{d}t\\
-
\int_0^\infty\!\!\int_\Omega \tilde u(x,t)\,\Delta w(x,t)\,\mathrm{d}x\,\mathrm{d}t
+
\int_0^\infty\!\!\int_{\partial\Omega}\Bigl(\tilde u\,\partial_\nu w-(\partial_\nu\tilde u)\,w\Bigr)\,\mathrm{d}S\,\mathrm{d}t
=0.
\end{multline}
Since $w$ satisfies \eqref{eq:wave_w}, we have $\Delta w=\frac{1}{c(x)^2}\partial_t^2 w$ for a.e.\ $t>0$.
Substituting this into \eqref{eq:stepA_rewrite} gives
\begin{multline}\label{eq:stepB_rewrite}
\int_0^\infty\!\!\int_\Omega \frac{1}{\tilde c(x)^2}\,\partial_t^2\tilde u\,w\,\mathrm{d}x\,\mathrm{d}t\\
-
\int_0^\infty\!\!\int_\Omega \frac{1}{c(x)^2}\,\tilde u\,\partial_t^2 w\,\mathrm{d}x\,\mathrm{d}t
+
\int_0^\infty\!\!\int_{\partial\Omega}\Bigl(\tilde u\,\partial_\nu w-(\partial_\nu\tilde u)\,w\Bigr)\,\mathrm{d}S\,\mathrm{d}t
=0.
\end{multline}

Next, we integrate by parts twice in time in the first spacetime integral in \eqref{eq:stepB_rewrite}.
For a.e.\ $x\in\Omega$, we have
\[
\int_0^\infty \partial_t^2\tilde u(x,t)\,w(x,t)\,\mathrm{d}t
=
\left[\partial_t\tilde u(x,t)\,w(x,t)-\tilde u(x,t)\,\partial_t w(x,t)\right]_{t=0}^{t=\infty}
+
\int_0^\infty \tilde u(x,t)\,\partial_t^2 w(x,t)\,\mathrm{d}t.
\]
Multiplying by $\tilde c(x)^{-2}$ and integrating over $\Omega$, the endpoint contribution at $t=\infty$ vanishes by the boundedness of $w$ in \eqref{eq:w_bounded_simple}, the uniform positivity of $c$ and $\tilde c$ in \eqref{eq:general_bounds}, and the local decay condition~\ref{ass:local_decay_fc}, where the decay rate satisfies \(\eta(t)\to0\) as \(t\to\infty\) and \(\eta\in L^1(0,\infty)\).
Using the initial conditions $\tilde u(\cdot,0)=\tilde f$ and $\partial_t\tilde u(\cdot,0)=0$, we obtain
\begin{equation}\label{eq:time_twice_rewrite}
\int_0^\infty\!\!\int_\Omega \frac{1}{\tilde c^2}\,\partial_t^2\tilde u\,w\,\mathrm{d}x\,\mathrm{d}t
=
\int_0^\infty\!\!\int_\Omega \frac{1}{\tilde c^2}\,\tilde u\,\partial_t^2 w\,\mathrm{d}x\,\mathrm{d}t
+
\int_\Omega \frac{\tilde f(x)}{\tilde c(x)^2}\,\partial_t w(x,0)\,\mathrm{d}x.
\end{equation}
Substituting \eqref{eq:time_twice_rewrite} into \eqref{eq:stepB_rewrite} yields
\begin{multline}\label{eq:stepC_rewrite}
\int_0^\infty\!\!\int_\Omega
\left(\frac{1}{\tilde c(x)^2}-\frac{1}{c(x)^2}\right)\tilde u(x,t)\,\partial_t^2 w(x,t)\,\mathrm{d}x\,\mathrm{d}t\\
+
\int_\Omega \frac{\tilde f(x)}{\tilde c(x)^2}\,\partial_t w(x,0)\,\mathrm{d}x
+
\int_0^\infty\!\!\int_{\partial\Omega}\Bigl(\tilde u\,\partial_\nu w-(\partial_\nu\tilde u)\,w\Bigr)\,\mathrm{d}S\,\mathrm{d}t
=0.
\end{multline}

It remains to rewrite the boundary flux term.
Since $u$ and $w$ satisfy the same wave equation with coefficient $c$, we have in $\Omega\times(0,\infty)$
\[
\frac{1}{c^2}\partial_t^2 u-\Delta u=0,
\qquad
\frac{1}{c^2}\partial_t^2 w-\Delta w=0.
\]
We multiply the first equation by $w$, the second by $u$, subtract, and integrate over $\Omega$.
After integrating by parts in $x$, we obtain that
\begin{multline*}
    \frac{\mathrm{d}}{\mathrm{d}t}\int_\Omega \frac{1}{c(x)^2}\Bigl(\partial_t u(x,t)\,w(x,t)-u(x,t)\,\partial_t w(x,t)\Bigr)\,\mathrm{d}x
=\\
\int_{\partial\Omega}\Bigl(\partial_\nu u(x,t)\,w(x,t)-u(x,t)\,\partial_\nu w(x,t)\Bigr)\,\mathrm{d}S\qquad \text{for a.e.\ $t>0$}.
\end{multline*}

Integrating this identity over $(0,\infty)$, the endpoint contribution at $t=\infty$ again vanishes by \eqref{eq:w_bounded_simple} together with the local decay condition~\ref{ass:local_decay_fc}.
Using $u(\cdot,0)=f$ and $\partial_t u(\cdot,0)=0$, we obtain
\begin{equation}\label{eq:boundary_time_integral_rewrite}
\int_0^\infty\!\!\int_{\partial\Omega}\Bigl(u\,\partial_\nu w-(\partial_\nu u)\,w\Bigr)\,\mathrm{d}S\,\mathrm{d}t
=
-\int_\Omega \frac{f(x)}{c(x)^2}\,\partial_t w(x,0)\,\mathrm{d}x.
\end{equation}

Finally, by the equality of boundary observations in \eqref{eq:condition} we have $u=\tilde u$ and $\partial_\nu u=\partial_\nu\tilde u$ on $\partial\Omega\times(0,\infty)$ in the sense of traces.
Therefore,
\[
\int_0^\infty\!\!\int_{\partial\Omega}\Bigl(\tilde u\,\partial_\nu w-(\partial_\nu\tilde u)\,w\Bigr)\,\mathrm{d}S\,\mathrm{d}t
=
\int_0^\infty\!\!\int_{\partial\Omega}\Bigl(u\,\partial_\nu w-(\partial_\nu u)\,w\Bigr)\,\mathrm{d}S\,\mathrm{d}t.
\]
Substituting \eqref{eq:boundary_time_integral_rewrite} into \eqref{eq:stepC_rewrite} yields \eqref{eq:iid1}.
\end{proof}
%--------------------------------------------------------------------------------------------
\subsection{Step 2: First CGO construction}

We now take the test wave $w\in\mathscr W$ in the complex geometric optics (CGO) form
\begin{equation}\label{w_ansatz}
w(x,t)=e^{\mathrm{i}(\rho_0 t+\rho'\cdot x)}\bigl(1+\psi_{\rho}(x)\bigr),
\end{equation}
where $\rho=(\rho_0,\rho')\in\mathbb C\times\mathbb C^3$, and the corrector $\psi_{\rho}$ depends only on $x$.
A direct computation gives
\[
\frac{1}{c(x)^2}\partial_t^2 w
=
-\frac{\rho_0^2}{c(x)^2}\,w.
\]
Moreover, using $\nabla e^{\mathrm{i}\rho'\cdot x}=\mathrm{i}\rho'e^{\mathrm{i}\rho'\cdot x}$, we obtain
\[
\Delta w
=
e^{\mathrm{i}(\rho_0 t+\rho'\cdot x)}
\left(
\Delta\psi_{\rho}
+2\mathrm{i}\rho'\cdot\nabla\psi_{\rho}
-(\rho'\cdot\rho')(1+\psi_{\rho})
\right).
\]
Substituting these expressions into \eqref{eq:wave_w} and dividing by
$e^{\mathrm{i}(\rho_0 t+\rho'\cdot x)}$ leads to the equation
\begin{equation}\label{eq:psi_rho}
\Delta\psi_{\rho}+2\mathrm{i}\rho'\cdot\nabla\psi_{\rho}
=
\left(\rho'\cdot\rho'-\frac{\rho_0^2}{c(x)^2}\right)(1+\psi_{\rho}).
\end{equation}

To solve \eqref{eq:psi_rho}, we introduce a Fourier-multiplier right inverse $G_\zeta$ for the conjugated operator $\Delta+2\mathrm{i}\zeta\cdot\nabla$ on the reference box $Q$.
Its $L^2(Q)\to H^2(Q)$ mapping property provides the required control of the remainder term in \eqref{w_ansatz}; cf.\ \cite{colton2019inverse}.

\begin{lemma}\label{lem G zeta}
Let
\[
\zeta=(\zeta_{1}, \zeta_{2}, 0) + \mathrm{i} \left(0, 0, \left(\sum_{i=1}^{2}\zeta_{i}^{2}\right)^{\frac{1}{2}}\right)\in\mathbb{C}^{3},
\qquad
(\zeta_{1}, \zeta_{2}, 0)\in \left\{\beta-\left(0, \frac{1}{2}, 0\right): \beta \in \mathbb{Z}^3\right\}.
\]
Define
\begin{equation}\label{op_G_zeta}
G_\zeta f:=-\sum_{\alpha \in \widetilde{\mathbb{Z}}^3} \frac{\hat{f}_\alpha}{\alpha \cdot \alpha+2 \zeta \cdot \alpha} e_\alpha,
\end{equation}
where
\[
\widetilde{\mathbb{Z}}^3:=\left\{\alpha=\beta-\left(0, 0, \frac{1}{2}\right): \beta \in \mathbb{Z}^3\right\},
\qquad
Q=[-\pi,\pi]^3,
\]
and
\[
e_\alpha(x)=\frac{1}{(2\pi)^{3/2}}e^{\mathrm{i}\alpha\cdot x},
\qquad
\hat{f}_\alpha = \int_{Q} f(x)\,\overline{e_\alpha(x)}\,\mathrm{d}x
=
\frac{1}{(2 \pi)^{3/2}}\int_{Q} f(x)e^{-\mathrm{i} \alpha \cdot x}\,\mathrm{d}x.
\]
Then $\{e_\alpha\}_{\alpha\in\widetilde{\mathbb Z}^3}$ is an orthonormal basis of $L^2(Q)$, and \eqref{op_G_zeta} defines a linear operator $G_\zeta:L^2(Q)\to H^2(Q)$ satisfying
\begin{equation}\label{bound_G_zeta}
\left\|G_\zeta f\right\|_{L^2(Q)} \leq 2\|f\|_{L^2(Q)}
\end{equation}
and
\begin{equation}\label{inverse_G_zeta}
\Delta G_\zeta f+2 \mathrm{i} \zeta \cdot \nabla G_\zeta f=f
\end{equation}
in the weak sense for all $f\in L^2(Q)$.
\end{lemma}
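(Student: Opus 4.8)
The plan is to prove every assertion by direct Fourier-series computation on the box $Q=[-\pi,\pi]^3$, the whole argument resting on a single estimate: a uniform lower bound on the symbol $m(\alpha):=\alpha\cdot\alpha+2\zeta\cdot\alpha$, $\alpha\in\widetilde{\mathbb Z}^3$, which simultaneously makes the Fourier multiplier $G_\zeta$ well defined, yields the constant $2$ in \eqref{bound_G_zeta}, and produces the two-derivative gain needed for $G_\zeta\colon L^2(Q)\to H^2(Q)$.

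First I would dispatch the orthonormal-basis claim. For $\alpha,\alpha'\in\widetilde{\mathbb Z}^3$ the difference $\alpha-\alpha'$ lies in $\mathbb Z^3$, since the $-\tfrac12$ offsets in the third coordinate cancel; hence $\int_Q e_\alpha\overline{e_{\alpha'}}\,\mathrm dx=(2\pi)^{-3}\prod_{j=1}^3\int_{-\pi}^{\pi}e^{\mathrm i(\alpha_j-\alpha'_j)x_j}\,\mathrm dx_j=\delta_{\alpha\alpha'}$, which gives orthonormality. Completeness follows by writing $e_\alpha(x)=e^{-\mathrm i x_3/2}\,(2\pi)^{-3/2}e^{\mathrm i\beta\cdot x}$ with $\beta=\alpha+(0,0,\tfrac12)\in\mathbb Z^3$: multiplication by the unimodular function $e^{-\mathrm i x_3/2}$ is a unitary operator on $L^2(Q)$ that carries the standard exponential basis $\{(2\pi)^{-3/2}e^{\mathrm i\beta\cdot x}\}_{\beta\in\mathbb Z^3}$ onto $\{e_\alpha\}_{\alpha\in\widetilde{\mathbb Z}^3}$, so the latter is again an orthonormal basis.

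The heart of the matter is the symbol bound. Put $s:=(\zeta_1^2+\zeta_2^2)^{1/2}$, so $\zeta=(\zeta_1,\zeta_2,\mathrm i s)$ and $\zeta\cdot\zeta=0$. Separating real and imaginary parts,
\begin{align*}
\operatorname{Re}m(\alpha)&=|\alpha|^2+2\zeta_1\alpha_1+2\zeta_2\alpha_2=(\alpha_1+\zeta_1)^2+(\alpha_2+\zeta_2)^2+\alpha_3^2-s^2,\\
\operatorname{Im}m(\alpha)&=2s\alpha_3.
\end{align*}
Since $(\zeta_1,\zeta_2,0)\in\mathbb Z^3-(0,\tfrac12,0)$ we have $\zeta_2\in\mathbb Z-\tfrac12$, hence $s\ge|\zeta_2|\ge\tfrac12$; since $\alpha_3\in\mathbb Z-\tfrac12$ we have $|\alpha_3|\ge\tfrac12$. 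Therefore $|m(\alpha)|\ge|\operatorname{Im}m(\alpha)|=2s|\alpha_3|\ge\tfrac12$ for every $\alpha\in\widetilde{\mathbb Z}^3$, so the series \eqref{op_G_zeta} converges in $L^2(Q)$ and, by Parseval, $\|G_\zeta f\|_{L^2(Q)}^2=\sum_\alpha|m(\alpha)|^{-2}|\hat f_\alpha|^2\le4\|f\|_{L^2(Q)}^2$, which is \eqref{bound_G_zeta}. For the $H^2$ mapping property it suffices to check $\sup_{\alpha\in\widetilde{\mathbb Z}^3}(1+|\alpha|^2)^2|m(\alpha)|^{-2}<\infty$: from $\operatorname{Re}m(\alpha)\ge|\alpha|^2-2s|\alpha|$ one gets $\operatorname{Re}m(\alpha)\ge\tfrac12|\alpha|^2$ whenever $|\alpha|\ge4s$, so the quotient is bounded there, while for the finitely many remaining $\alpha$ it is bounded using $|m(\alpha)|\ge\tfrac12$; hence $G_\zeta$ maps $L^2(Q)$ boundedly into $H^2(Q)$. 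Finally, since $(\Delta+2\mathrm i\zeta\cdot\nabla)e_\alpha=-m(\alpha)e_\alpha$, applying $\Delta+2\mathrm i\zeta\cdot\nabla$ termwise to the series defining $G_\zeta f$ reproduces $\sum_\alpha\hat f_\alpha e_\alpha=f$; because $G_\zeta f\in H^2(Q)$, both $\Delta G_\zeta f$ and $\nabla G_\zeta f$ are genuine $L^2$ functions, so \eqref{inverse_G_zeta} holds as an identity in $L^2(Q)$, a fortiori in the weak sense.

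\textbf{Main obstacle.} There is no deep difficulty here; the one point requiring care is the non-resonance estimate $|m(\alpha)|\ge\tfrac12$, which would fail on the full integer lattice and is arranged to hold precisely by the half-integer offsets built into the second coordinate of $\zeta$ and the third coordinate of $\widetilde{\mathbb Z}^3$. Once this is in hand, \eqref{bound_G_zeta} and the $L^2\to H^2$ continuity are immediate from Parseval, and the inversion identity \eqref{inverse_G_zeta} — which is exactly what will allow the corrector equation \eqref{eq:psi_rho} to be solved by a fixed-point argument — is pure bookkeeping.
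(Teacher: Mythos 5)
Your proof is correct and follows essentially the same route as the paper: the lower bound $|\alpha\cdot\alpha+2\zeta\cdot\alpha|\ge 2s|\alpha_3|\ge\tfrac12$ via the half-integer offsets, Parseval for the constant $2$ in \eqref{bound_G_zeta}, the split into $|\alpha|\gtrsim s$ plus a finite set for the $L^2\to H^2$ bound, and termwise differentiation for \eqref{inverse_G_zeta}. The only (harmless) differences are that you also spell out the orthonormal-basis claim via the unitary factor $e^{-\mathrm i x_3/2}$, which the paper leaves implicit, and you use the real part of the symbol where the paper uses the triangle inequality with $|\zeta|$.
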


\begin{proof}
Let $t=\left(\zeta_1^2+\zeta_2^2\right)^{\frac{1}{2}}>0$ denote the positive square root.
Since $\zeta_2\in\mathbb Z+\frac{1}{2}$, we have $t\geq |\zeta_2|\geq \frac{1}{2}$.
Since $|\alpha_{3}|\geq\frac{1}{2}$ for all $\alpha = (\alpha_{1}, \alpha_{2}, \alpha_{3}) \in \widetilde{\mathbb{Z}}^3$, we obtain
\begin{equation*}
|\alpha \cdot \alpha+2 \zeta \cdot \alpha|
\geq |\Im(\alpha \cdot \alpha+2 \zeta \cdot \alpha)|=2t|\alpha_{3}|\geq t\geq \frac{1}{2}.
\end{equation*}
Hence $G_{\zeta}:L^2(Q)\to L^2(Q)$ is well defined.
Moreover, by Parseval's identity,
\begin{equation*}
\left\|G_\zeta f\right\|_{L^2(Q)}^2
=
\sum_{\alpha \in \widetilde{\mathbb{Z}}^3}
\left|\frac{\hat{f}_\alpha}{\alpha \cdot \alpha+2 \zeta \cdot \alpha}\right|^2
\leq
4\|f\|_{L^2(Q)}^2,
\end{equation*}
which yields \eqref{bound_G_zeta}.

We next verify the $H^2$ mapping property.
There exists a constant $C_\zeta>0$ such that
\begin{equation}\label{eq:H2_multiplier_bound}
\left|\frac{1+\alpha \cdot \alpha}{\alpha \cdot \alpha+2 \zeta \cdot \alpha}\right| \leq C_\zeta,
\qquad
\alpha \in \widetilde{\mathbb{Z}}^3.
\end{equation}
Indeed, since
\[
\left|\alpha \cdot \alpha+2 \zeta \cdot \alpha\right|
\geq \left||\alpha|^{2}-2|\zeta|\,|\alpha|\right|,
\]
we have $\left|\alpha \cdot \alpha+2 \zeta \cdot \alpha\right|\geq \frac{1}{2}|\alpha|^{2}$ for all $|\alpha|\geq 4|\zeta|$.
Consequently,
\[
\left|\frac{1+|\alpha|^{2}}{\alpha \cdot \alpha+2 \zeta \cdot \alpha}\right|
\leq 2\left(1+\frac{1}{|\alpha|^2}\right)
\leq 4
\qquad\text{for all }|\alpha|\geq 4|\zeta|.
\]
For the remaining set $\{\alpha\in\widetilde{\mathbb Z}^3:|\alpha|<4|\zeta|\}$, which is finite, the supremum of the left-hand side is finite as well.
Taking $C_\zeta$ as the maximum of these bounds yields \eqref{eq:H2_multiplier_bound}.
Therefore $G_\zeta f\in H^{2}(Q)$ and $\|G_\zeta f\|_{H^{2}(Q)}\leq C_\zeta\|f\|_{L^{2}(Q)}$.

Finally, for any test function $\varphi\in C^\infty(Q)$, using the Fourier expansion of $G_\zeta f$ and termwise differentiation, we compute
\begin{equation*}
\Delta G_\zeta f+2 \mathrm{i} \zeta \cdot \nabla G_\zeta f
=
-\sum_{\alpha \in \widetilde{\mathbb{Z}}^3}(\alpha \cdot \alpha+2 \zeta \cdot \alpha)\,\widehat{G_\zeta f}_\alpha\, e_\alpha
=
\sum_{\alpha \in \widetilde{\mathbb{Z}}^3} \hat{f}_\alpha e_\alpha
=
f,
\end{equation*}
which proves \eqref{inverse_G_zeta} in the weak sense.
\end{proof}

With the Fourier-multiplier right inverse $G_\zeta$ from Lemma~\ref{lem G zeta} at hand, we can solve \eqref{eq:psi_rho} for the corrector $\psi_\rho$ by rewriting it as a fixed-point problem on $L^2(Q)$.

\begin{lemma}\label{lem rho_0}
Let $c\in L^\infty(Q)$ satisfy \eqref{eq:general_bounds}.
Let $\rho'\in\mathbb{C}^{3}$ be such that $\rho'=\zeta$ for some $\zeta$ of the form in Lemma~\ref{lem G zeta}, and in particular $\rho'\cdot\rho'=0$.
Assume that
\begin{equation}\label{eq:rho0_smallness_new}
\left\|\frac{\rho_{0}^{2}}{c^{2}}\right\|_{L^\infty(Q)}\leq\frac{1}{4}.
\end{equation}
Then there exists $\psi_{\rho}\in H^{2}(Q)$ solving \eqref{eq:psi_rho} in the weak sense.
Moreover,
\begin{equation}\label{estimate_remainder_term_new}
\|\psi_{\rho}\|_{L^{2}(Q)}
\leq
4|\rho_{0}|^{2}\left\|\frac{1}{c^{2}}\right\|_{L^{2}(Q)}.
\end{equation}
\end{lemma}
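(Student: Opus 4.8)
The plan is to recast \eqref{eq:psi_rho} as a fixed-point equation on $L^2(Q)$ and invoke the Banach fixed-point theorem, with the Fourier-multiplier right inverse $G_\zeta$ of Lemma~\ref{lem G zeta} serving as the solution operator. Since $\rho'=\zeta$ has $\rho'\cdot\rho'=0$, equation \eqref{eq:psi_rho} reduces to $\Delta\psi_\rho+2\mathrm{i}\zeta\cdot\nabla\psi_\rho=-\tfrac{\rho_0^2}{c^2}(1+\psi_\rho)$, so by the weak right-inverse identity \eqref{inverse_G_zeta} it suffices to produce $\psi_\rho\in L^2(Q)$ solving the integral equation $\psi_\rho=-G_\zeta\bigl(\tfrac{\rho_0^2}{c^2}(1+\psi_\rho)\bigr)$. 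Once such a $\psi_\rho$ is found, the $L^2(Q)\to H^2(Q)$ mapping property of $G_\zeta$ recorded in Lemma~\ref{lem G zeta} automatically places it in $H^2(Q)$, and \eqref{inverse_G_zeta} shows that it solves \eqref{eq:psi_rho} in the weak sense.

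Concretely, I would define $T\colon L^2(Q)\to L^2(Q)$ by $T\psi:=-G_\zeta\bigl(\tfrac{\rho_0^2}{c^2}(1+\psi)\bigr)$. This is well defined: from $\inf_Q c>0$ and $c\in L^\infty(Q)$ one has $c^{-2}\in L^\infty(Q)$, and since $Q$ has finite measure the constant function $1$ belongs to $L^2(Q)$, so $\tfrac{\rho_0^2}{c^2}(1+\psi)\in L^2(Q)$ for every $\psi\in L^2(Q)$. Using the operator bound \eqref{bound_G_zeta} together with the smallness hypothesis \eqref{eq:rho0_smallness_new},
\[
\|T\psi_1-T\psi_2\|_{L^2(Q)}
=\Bigl\|G_\zeta\Bigl(\tfrac{\rho_0^2}{c^2}(\psi_1-\psi_2)\Bigr)\Bigr\|_{L^2(Q)}
\le 2\Bigl\|\tfrac{\rho_0^2}{c^2}\Bigr\|_{L^\infty(Q)}\|\psi_1-\psi_2\|_{L^2(Q)}
\le\tfrac12\|\psi_1-\psi_2\|_{L^2(Q)},
\]
so $T$ is a contraction of ratio $\tfrac12$ and has a unique fixed point $\psi_\rho\in L^2(Q)\cap H^2(Q)$, which solves \eqref{eq:psi_rho} weakly. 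For the quantitative bound I would apply the contraction estimate to the pair $(\psi_\rho,0)$ and use \eqref{bound_G_zeta} once more:
\[
\|\psi_\rho\|_{L^2(Q)}
\le\|T\psi_\rho-T0\|_{L^2(Q)}+\|T0\|_{L^2(Q)}
\le\tfrac12\|\psi_\rho\|_{L^2(Q)}+2|\rho_0|^2\Bigl\|\tfrac1{c^2}\Bigr\|_{L^2(Q)},
\]
and absorbing the first term on the right-hand side yields the claimed estimate \eqref{estimate_remainder_term_new}.

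I do not expect a serious obstacle here; the argument is a routine contraction/Neumann-series scheme, and its only delicate point is the bookkeeping of the two quantitative inputs. One must check that $\|G_\zeta\|_{L^2(Q)\to L^2(Q)}\le 2$ from \eqref{bound_G_zeta}, multiplied by the threshold $\tfrac14$ in \eqref{eq:rho0_smallness_new}, produces a contraction constant strictly below $1$ (here exactly $\tfrac12$), and one uses $\rho'\cdot\rho'=0$ to guarantee that the forcing term of the fixed-point map is precisely $-\tfrac{\rho_0^2}{c^2}(1+\psi)$, which then lies in $L^2(Q)$ because $c^{-2}\in L^\infty(Q)$. If one wished to relax the pointwise smallness hypothesis \eqref{eq:rho0_smallness_new}, the genuinely delicate step would instead be to establish a sharp operator norm for $G_\zeta$ or a sharper multiplier estimate; but under \eqref{eq:rho0_smallness_new} the constants recorded in Lemma~\ref{lem G zeta} are already sufficient.
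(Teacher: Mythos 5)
Your proposal is correct and follows essentially the same route as the paper: the same contraction map $T\psi=-G_\zeta\bigl(\tfrac{\rho_0^2}{c^2}(1+\psi)\bigr)$ on $L^2(Q)$, the same use of \eqref{bound_G_zeta} and \eqref{eq:rho0_smallness_new} to get contraction ratio $\tfrac12$ and the absorbed estimate \eqref{estimate_remainder_term_new}, and the same bootstrap via the $L^2\to H^2$ mapping property and \eqref{inverse_G_zeta} to conclude $\psi_\rho\in H^2(Q)$ solves \eqref{eq:psi_rho} weakly. No gaps.
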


\begin{proof}
Set $\zeta=\rho'$ and define $T:L^{2}(Q)\to L^{2}(Q)$ by
\begin{equation*}\label{eq:T_mapping_new}
T(\phi):=-G_\zeta\left(\frac{\rho_{0}^{2}}{c^{2}}(\phi+1)\right).
\end{equation*}
For any $\phi,\varphi\in L^{2}(Q)$, Lemma~\ref{lem G zeta} and \eqref{bound_G_zeta} yield
\[
\|T(\phi)-T(\varphi)\|_{L^{2}(Q)}
=
\left\|G_\zeta\left(\frac{\rho_{0}^{2}}{c^{2}}(\phi-\varphi)\right)\right\|_{L^{2}(Q)}
\leq
2\left\|\frac{\rho_{0}^{2}}{c^{2}}\right\|_{L^\infty(Q)}\|\phi-\varphi\|_{L^{2}(Q)}.
\]
Thus $T$ is a contraction on $L^{2}(Q)$ by \eqref{eq:rho0_smallness_new}.
By the contraction mapping theorem, there exists a unique $\phi\in L^{2}(Q)$ such that
\begin{equation}\label{eq:fixed_point_new}
\phi=T(\phi),
\qquad\text{equivalently}\qquad
\phi+G_\zeta\left(\frac{\rho_{0}^{2}}{c^{2}}(\phi+1)\right)=0.
\end{equation}

Taking $L^{2}(Q)$ norms in \eqref{eq:fixed_point_new} and using \eqref{bound_G_zeta}, we obtain
\[
\|\phi\|_{L^{2}(Q)}
=
\left\|G_\zeta\left(\frac{\rho_{0}^{2}}{c^{2}}(\phi+1)\right)\right\|_{L^{2}(Q)}
\leq
2\left\|\frac{\rho_{0}^{2}}{c^{2}}\right\|_{L^\infty(Q)}\|\phi\|_{L^{2}(Q)}
+
2|\rho_{0}|^{2}\left\|\frac{1}{c^{2}}\right\|_{L^{2}(Q)}.
\]
By \eqref{eq:rho0_smallness_new}, we can absorb the first term on the right-hand side into the left-hand side, which gives
\[
\|\phi\|_{L^{2}(Q)}
\leq
4|\rho_{0}|^{2}\left\|\frac{1}{c^{2}}\right\|_{L^{2}(Q)}.
\]
Since $\phi\in L^{2}(Q)$ implies $\frac{\rho_0^2}{c^2}(\phi+1)\in L^{2}(Q)$ and $G_\zeta:L^{2}(Q)\to H^{2}(Q)$ by Lemma~\ref{lem G zeta}, it follows from \eqref{eq:fixed_point_new} that $\phi\in H^{2}(Q)$.

Applying $\Delta+2\mathrm{i}\zeta\cdot\nabla$ to \eqref{eq:fixed_point_new} and using \eqref{inverse_G_zeta}, we obtain in the weak sense
\[
(\Delta+2\mathrm{i}\zeta\cdot\nabla)\phi+\frac{\rho_{0}^{2}}{c^{2}}(\phi+1)=0.
\]
Finally, set $\psi_{\rho}=\phi$ and $\rho'=\zeta$.
Since $\rho'\cdot\rho'=0$, the above equation is equivalent to \eqref{eq:psi_rho}, and \eqref{estimate_remainder_term_new} follows from the $L^{2}(Q)$ estimate proved above.
\end{proof}

%-----------------------------------------------------------------------------------------
\subsection{Step 3: Proof of Theorem~\ref{thm0}}
Under Assumption~\ref{ass:x3_invariance_cf}, the ratios $f/c^{2}$ and $\tilde f/\tilde c^{2}$ admit separated-variable representations along a fixed unit direction $\omega\in\mathbb S^{2}$ with the same known $\phi$.

By applying a rigid rotation of the spatial coordinates, we may choose a coordinate system in which the direction $\omega$ becomes the third coordinate axis.
Since the Laplacian is invariant under such rotations, this change of variables does not affect the form of the wave equation.
Relabeling the rotated coordinates again by $(x_1,x_2,x_3)$ for simplicity, we may assume without loss of generality that
\[
\frac{f(x_1,x_2,x_3)}{c(x_1,x_2,x_3)^2}=q_0(x_1,x_2)\phi(x_3),
\qquad
\frac{\tilde f(x_1,x_2,x_3)}{\tilde c(x_1,x_2,x_3)^2}=\tilde q_0(x_1,x_2)\phi(x_3)
\]
for a.e.\ $(x_1,x_2,x_3)\in\Omega$.
By Admissibility~\ref{ass:x3_invariance_cf}, understood after zero extension to \(Q\), we have
\[
\frac{f(x)}{c(x)^2}=q_0(x')\phi(x_3),
\qquad
\frac{\tilde f(x)}{\tilde c(x)^2}=\tilde q_0(x')\phi(x_3)
\]
for a.e.\ \(x=(x',x_3)\in Q\).
Hence, for
\[
g:=\frac{f}{c^2}-\frac{\tilde f}{\tilde c^2},
\qquad
g_0:=q_0-\tilde q_0,
\]
the zero extension of \(g\) satisfies
\[
g(x)=g_0(x')\phi(x_3)
\qquad\text{for a.e. }x\in Q.
\]

In this setting, the identity \eqref{eq:iid1} takes the form
\begin{equation*}
\int_0^{\infty}\!\!\int_{\Omega}
\left(\frac{1}{\tilde{c}^2}-\frac{1}{c^2}\right)\tilde{u}(x,t)\,\partial_t^2 w(x,t)\,\mathrm{d}x\,\mathrm{d}t
=
\int_{\Omega}\left(\frac{f}{c^2}-\frac{\tilde{f}}{\tilde{c}^2}\right)\partial_t w(x,0)\,\mathrm{d}x.
\end{equation*}
We take the test wave $w\in\mathscr W$ in the form \eqref{w_ansatz} with a purely imaginary temporal frequency
\[
\rho_0=\mathrm{i}\tau,
\qquad
\tau>0,
\]
so that $e^{\mathrm{i}\rho_0 t}=e^{-\tau t}$ and hence $w$ decays as $t\to\infty$.
By Lemma~\ref{lem G zeta} and Lemma~\ref{lem rho_0}, we may choose a time-independent corrector $\psi_\rho\in H^2(Q)$ solving \eqref{eq:psi_rho} and satisfying \eqref{estimate_remainder_term_new}.
In particular,
\[
\partial_t^2 w(x,t)=-\rho_0^{2}w(x,t),
\qquad
\partial_t w(x,0)
=
\mathrm{i}\rho_{0}e^{\mathrm{i}\rho'\cdot x}\left(1+\psi_{\rho}(x)\right).
\]
Substituting these identities into \eqref{eq:iid1} and dividing by $\rho_0$, we obtain
\[
-\rho_0\int_0^\infty\!\!\int_\Omega
\left(\frac{1}{\tilde c^2}-\frac{1}{c^2}\right)\tilde u(x,t)w(x,t)\,\mathrm{d}x\,\mathrm{d}t
=
\mathrm{i}\int_\Omega g(x)e^{\mathrm{i}\rho'\cdot x}\,\mathrm{d}x
+
\mathrm{i}\int_\Omega g(x)e^{\mathrm{i}\rho'\cdot x}\psi_\rho(x)\,\mathrm{d}x.
\]
The spacetime term on the left tends to \(0\) as \(\tau\to0^+\), by the decay of \(\tilde u\) in \eqref{eq:td1} and the uniform-in-time boundedness of \(w\).
For fixed \(\rho'\), the second term on the right tends to \(0\) by Lemma~\ref{lem rho_0} and the Cauchy--Schwarz inequality:
\[
\left|\int_\Omega g(x)e^{\mathrm{i}\rho'\cdot x}\psi_\rho(x)\,\mathrm{d}x\right|
\le \|e^{\mathrm{i}\rho'\cdot x}\|_{L^\infty(\Omega)}\|g\|_{L^2(\Omega)}\|\psi_\rho\|_{L^2(\Omega)}
\le C_{\rho'}\|g\|_{L^2(\Omega)}\,|\rho_0|^2.
\]
Letting \(\tau\to0^+\), we obtain
\begin{equation}\label{eq:Fourier_g}
\int_{\Omega} g(x)e^{\mathrm{i}\rho'\cdot x}\,\mathrm{d}x=0.
\end{equation}

Here $\rho'=(\rho_{1}, \rho_{2}, 0) + \mathrm{i}\left(0, 0, \left(\rho_{1}^{2}+\rho_{2}^{2}\right)^{1/2}\right)\in\mathbb{C}^{3}$ and
\[
(\rho_1,\rho_2,0)\in \left\{\beta-\left(0,\frac{1}{2},0\right):\beta\in\mathbb Z^3\right\}.
\]
In particular, $|\rho_2|\ge \frac12$, and hence $\kappa=(\rho_1^2+\rho_2^2)^{1/2}>0$.
Therefore \eqref{eq:Fourier_g} is equivalent to
\begin{equation}\label{eq:weighted_2D}
\int_{\Omega} g(x)\,e^{-\kappa x_3}e^{\mathrm{i}(\rho_1 x_1+\rho_2 x_2)}\,\mathrm{d}x=0.
\end{equation}

Since the coefficients coincide with the background in $\mathbb R^3\setminus\Omega$, the function \(g\) vanishes a.e.\ in \(Q\setminus\Omega\), and hence the integral over \(\Omega\) may be replaced by the integral over \(Q=[-\pi,\pi]^3\).
We use the separated representation
\[
g(x_1,x_2,x_3)=g_0(x_1,x_2)\phi(x_3)
\qquad\text{for a.e. }(x_1,x_2,x_3)\in Q.
\]
Applying Fubini's theorem to \eqref{eq:weighted_2D}, we obtain
\[
0
=
\left(\int_{-\pi}^{\pi}\phi(x_3)e^{-\kappa x_3}\,\mathrm{d}x_3\right)
\int_{[-\pi,\pi]^2} g_0(x_1,x_2)e^{\mathrm{i}(\rho_1x_1+\rho_2x_2)}\,\mathrm{d}x_1\,\mathrm{d}x_2.
\]
By the non-degeneracy condition \eqref{eq:nonde}, the first factor is nonzero.
Therefore,
\begin{equation}\label{eq:2D_modes}
\int_{[-\pi,\pi]^2} g_0(x_1,x_2)e^{\mathrm{i}(\rho_1x_1+\rho_2x_2)}\,\mathrm{d}x_1\,\mathrm{d}x_2=0
\end{equation}
for all admissible \((\rho_1,\rho_2)\).
By completeness of the corresponding Fourier modes on $[-\pi,\pi]^2$, \eqref{eq:2D_modes} implies that
\[
g_0=0
\qquad\text{a.e.\ in }[-\pi,\pi]^2,
\]
and hence
\[
g=0
\qquad\text{a.e.\ in }Q.
\]
Therefore,
\[
\frac{f}{c^2}=\frac{\tilde f}{\tilde c^2}
\qquad\text{a.e.\ in }\Omega.
\]

The proof is complete.
%========================================================================================
\subsection{Step 4: High-frequency Fourier analysis}\label{sec:prooff}

We now turn to the proof of Theorem~\ref{thm:unique_f_c}, namely the simultaneous recovery of both the source \(f\) and the wave speed \(c\) from a single passive boundary measurement.
The proof combines a high-frequency Fourier expansion with the frequency-domain integral identity, and uses this combination to achieve the required decoupling and identification.

Rather than continuing with the time-domain identity \eqref{eq:iid1} after the first decoupling step, we change the test functions and pass to the frequency domain.
This allows us to extract a different decoupling quantity, better adapted to separating \(c\) from the already determined quotient \(f/c^2\).

We begin by reducing the time-dependent problem \eqref{eq:simple_hyper_eqns} to the frequency domain via the temporal Fourier transform.
Under the regularity convention above and the decay assumptions of Theorem~\ref{thm:unique_f_c}, the Fourier transform \eqref{eq FT} is well defined, and the integration by parts in time is justified.
Consequently, for each fixed \(k>0\), the transformed field \(\hat u(\cdot,k)\) satisfies
\begin{equation}\label{eq:PDE_hat_u_three}
-\Delta \hat{u}(x,k)-\frac{k^2}{c(x)^2}\hat{u}(x,k)=\frac{\mathrm{i}k}{c(x)^2}f(x)
\quad\text{in }\mathbb{R}^3.
\end{equation}

\begin{lemma}\label{lem Agmon}
Let $\Omega\subset\mathbb R^3$ be bounded.
Assume that \(c\in W^{2,\infty}(\Omega)\) and that its background extension to \(\mathbb R^3\) satisfies \eqref{eq:general_support} and Admissibility condition~\ref{ass:nontrap_fc}.
Then there exist $k_0\ge 1$ and $C>0$ such that for all $k\ge k_0$ and all $\varphi\in L^2(\mathbb R^3)$ supported in $\Omega$,
the unique outgoing solution $\hat u$ to
\begin{equation*}\label{eq:Agmon_eqn}
-\Delta \hat u(x)-\frac{k^2}{c(x)^2}\hat u(x)=\varphi(x)
\quad\text{in }\mathbb R^3
\end{equation*}
satisfies
\begin{equation}\label{eq agmon}
\|\hat u\|_{L^2(\Omega)}
\le
C\,k^{-1}\|\varphi\|_{L^2(\Omega)}.
\end{equation}
\end{lemma}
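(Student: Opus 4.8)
The plan is to derive \eqref{eq agmon} from a Morawetz--Rellich multiplier identity for the heterogeneous Helmholtz operator, in the spirit of the a priori bounds of \cite{graham2019helmholtz}, with Admissibility~\ref{ass:nontrap_fc} supplying the quantitative non-trapping that produces the $k^{-1}$ gain. Write $v:=\hat u$ and $m:=c^{-2}$ (local notation, not to be confused with the Sobolev index), so that $v$ is the outgoing solution of $-\Delta v-k^{2}m\,v=\varphi$ in $\mathbb R^{3}$, with $m\equiv 1$ and $\varphi\equiv 0$ outside $\Omega$; since $c\in W^{2,\infty}$ and $\varphi\in L^{2}$, interior elliptic regularity gives $v\in H^{2}_{\mathrm{loc}}(\mathbb R^{3})$, which suffices to integrate by parts below.

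\textbf{Multiplier identity.} Pair the equation with $\overline{\mathcal M v}$, where $\mathcal M v:=x\cdot\nabla v+v-\mathrm i k|x|\,v$ (the zeroth-order coefficient being $(d-1)/2$ for $\mathbb R^{d}$, here $d=3$), take twice the real part, and integrate over a ball $B_{R}\supset\overline\Omega$. Integrating by parts and regrouping, the principal part yields a surface flux on $\partial B_{R}$ and---thanks to the dimensional balance built into $\mathcal M$---leaves no interior gradient term of unfavourable sign, while transferring $x\cdot\nabla$ off the zeroth-order term produces the decisive bulk contribution $k^{2}\int_{B_{R}}\bigl(2m+x\cdot\nabla m\bigr)|v|^{2}$. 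By Admissibility~\ref{ass:nontrap_fc}, $2m+x\cdot\nabla m\ge\mu$ a.e., so this term dominates $\mu k^{2}\|v\|_{L^{2}(B_{R})}^{2}\ge\mu k^{2}\|v\|_{L^{2}(\Omega)}^{2}$.

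\textbf{Boundary terms and the limit $R\to\infty$.} The role of the $-\mathrm i k|x|v$ part of the multiplier is to reorganize the oscillatory flux on $\partial B_{R}$ so that, after completing the square, the leading surface contributions appear with a nonnegative sign or decay to zero as $R\to\infty$, by the Sommerfeld radiation condition \eqref{eq SRC}; this is the standard Rellich--Sommerfeld computation (cf.\ \cite[Ch.~30]{eskin2011lectures}, \cite{graham2019helmholtz}) and is where outgoingness is genuinely used. Passing to the limit, we are left with
\[
\mu k^{2}\|v\|_{L^{2}(\Omega)}^{2}
\;\le\;
2\Bigl|\Re\!\int_{\mathbb R^{3}}\varphi\,\overline{\mathcal M v}\,\mathrm dx\Bigr|
\;\le\;
C\,\|\varphi\|_{L^{2}(\Omega)}\bigl(\|\nabla v\|_{L^{2}(\Omega)}+k\,\|v\|_{L^{2}(\Omega)}\bigr),
\]
using $\operatorname{supp}\varphi\subset\Omega$ and $|x|\le L$ on $\Omega$.

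\textbf{Closing the estimate.} It remains to control $\|\nabla v\|_{L^{2}(\Omega)}$. Pairing the equation with $\bar v\chi^{2}$ for a fixed cutoff $\chi\equiv 1$ near $\overline\Omega$ and integrating by parts (a Caccioppoli-type bound) gives $\|\nabla v\|_{L^{2}(\Omega)}\le C\bigl(k\|v\|_{L^{2}(B)}+\|\varphi\|_{L^{2}(\Omega)}\bigr)$ for a fixed ball $B\supset\overline\Omega$, and the bulk lower bound of the first step in fact controls $\|v\|_{L^{2}(B)}$ (not merely $\|v\|_{L^{2}(\Omega)}$) by the same right-hand side, so $\|v\|_{L^{2}(B)}$ may be absorbed into $\|v\|_{L^{2}(\Omega)}$. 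Substituting and using Young's inequality to absorb $\|\nabla v\|_{L^{2}(\Omega)}$ and $k\|v\|_{L^{2}(\Omega)}$ against the factor $k^{2}$ on the left, we obtain, for $k\ge k_{0}$ with $k_{0}$ large,
\[
\mu k^{2}\|v\|_{L^{2}(\Omega)}^{2}\le C\,k\,\|\varphi\|_{L^{2}(\Omega)}\|v\|_{L^{2}(\Omega)}+C\,\|\varphi\|_{L^{2}(\Omega)}^{2},
\]
whence $\|v\|_{L^{2}(\Omega)}\le C k^{-1}\|\varphi\|_{L^{2}(\Omega)}$, which is \eqref{eq agmon}. The main obstacle is the careful bookkeeping of the $\partial B_{R}$ flux terms: one must verify that the non-sign-definite pieces are all either absorbed by the completed square or sent to zero by \eqref{eq SRC}, which is precisely where the quantitative non-trapping condition \eqref{eq:nontrap_sigma_c_1} keeps the $k^{2}$-weighted bulk term dominant and thereby secures the sharp exponent $k^{-1}$, rather than the weaker bounds that occur in the presence of trapping.
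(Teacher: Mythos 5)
Your overall strategy -- a Morawetz/Rellich multiplier argument in which Admissibility~\ref{ass:nontrap_fc} produces a $k^{2}$-weighted bulk term, the Sommerfeld condition \eqref{eq SRC} handles the far-field flux, and a Caccioppoli bound plus absorption closes the estimate -- is indeed the mechanism behind the bound, and your final absorption step is fine once the bulk inequality is in hand. The paper, however, does not carry out this argument at all: its proof is a one-line appeal to Theorem~2.19(ii) of \cite{graham2019helmholtz}, observing that the present setting is the special case $A\equiv I$, $n=c^{-2}$, and that \eqref{eq:nontrap_sigma_c_1} is exactly the nontrapping hypothesis there; that theorem already gives $\|\nabla\hat u\|_{L^2(\Omega)}+k\|\hat u\|_{L^2(\Omega)}\le C\|\varphi\|_{L^2(\Omega)}$, from which \eqref{eq agmon} is immediate. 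So you are in effect re-proving the cited result rather than using it.

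The difficulty is that the one step you assert without computation -- the multiplier identity -- is the entire content, and as stated it does not check out. With $\mathcal Mv=x\cdot\nabla v+\tfrac{d-1}{2}v-\mathrm ik|x|v$ in $d=3$, pairing with the equation gives a zeroth-order bulk coefficient $k^{2}\bigl(m+x\cdot\nabla m\bigr)$, not $k^{2}\bigl(2m+x\cdot\nabla m\bigr)$ (the latter corresponds to the weight $(d-2)/2$, which in turn removes the interior gradient term you later need), and the hypothesis $2m+x\cdot\nabla m\ge\mu$ gives no sign information on $m+x\cdot\nabla m$. More seriously, the $-\mathrm ik|x|v$ part of the multiplier does not only reorganize the $\partial B_R$ flux: it generates the \emph{interior} term $2k\Im\int \bar v\,\partial_r v\,\mathrm dx$, which cannot be ``sent to zero by \eqref{eq SRC}''; rewriting it as $2k\Im\int\bar v(\partial_r v-\mathrm ikv)+2k^{2}\int|v|^{2}$ changes the bookkeeping again and still leaves non-sign-definite contributions. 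Taming these terms simultaneously (via the tangential/radial splitting in the Morawetz--Ludwig identity) is precisely what the proof of \cite[Theorem~2.19]{graham2019helmholtz} does, so your sketch, as written, assumes the hard part. The gap is repairable either by carrying out that identity in full or, as the paper does, by citing the GPS resolvent estimate directly.
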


\begin{proof}
In the present TAT/PAT setting, one has $A\equiv 1$ and $n(x)=c(x)^{-2}$.
Moreover, Admissibility condition~\ref{ass:nontrap_fc} is exactly
\[
2n(x)+x\cdot\nabla n(x)\ge \mu.
\]
Therefore, Theorem~2.19(ii) in \cite{graham2019helmholtz} yields the $k$-explicit resolvent estimate
\[
\|\nabla \hat u\|_{L^2(\Omega)}+k\|\hat u\|_{L^2(\Omega)}
\le C\|\varphi\|_{L^2(\Omega)}
\]
for all $k\ge k_0$.
In particular, \eqref{eq agmon} follows immediately.
\end{proof}

% --- high-frequency ansatz (general N) ---

Motivated by the balance of powers of $k$ in \eqref{eq:PDE_hat_u_three}, we introduce the following $N$th-order high-frequency ansatz as the TAT/PAT specialization of the general expansion \eqref{eq v2}:
\begin{equation}\label{eq:HF_ansatz_TATPAT_N}
\hat u(x,k)
=
\sum_{j=0}^{N} (-1)^{j+1}\mathrm{i}\,(c(x)^2\Delta)^{j} f(x)\,\frac{1}{k^{2j+1}}
+\mathbf A_{N+1}(x,k),
\end{equation}
where $(c(x)^2\Delta)^j$ denotes the $j$-fold composition of the operator $c(x)^2\Delta$, and $\mathbf A_{N+1}(\cdot,k)$ collects the higher-order terms.

We next retain the first two terms of the expansion \eqref{eq:HF_ansatz_TATPAT_N}, that is, we take $N=1$ and write
\begin{equation*}\label{eq:HF_split_N1}
\hat u(x,k)
=
-\mathrm{i}\frac{f(x)}{k}
+\mathrm{i}\frac{c(x)^2\Delta f(x)}{k^{3}}
+\mathbf R_1(x,k),
\end{equation*}
where $\mathbf R_1(\cdot,k)$ is the corresponding truncation error.
In the subsequent argument, however, we estimate the leading-order remainder
\begin{equation}\label{eq RR}
\mathbf R(x,k):=\hat u(x,k)+\mathrm{i}\frac{f(x)}{k},
\end{equation}
since this is the quantity entering the frequency-domain integral identity.

\begin{lemma}\label{lem:R_L2_N1}
Assume that \(c\in W^{2,\infty}(\Omega)\), and that its background extension satisfies \eqref{eq:general_bounds} and Admissibility condition~\ref{ass:nontrap_fc}.
Let \(f\in H^4(\Omega)\) with compact support in \(\Omega\), understood through its zero extension in the full-space equation.
For each $k>0$, let $\hat u(\cdot,k)$ be the outgoing solution to \eqref{eq:PDE_hat_u_three}, and let $\mathbf R(\cdot,k)$ be the remainder term defined by \eqref{eq RR}.
Then there exist $k_0\ge 1$ and $C>0$ such that for all $k\ge k_0$,
\begin{equation}\label{eq:R_L2_bound_N1}
\|\mathbf R(\cdot,k)\|_{L^2(\Omega)}
\le
C\,k^{-3}\|f\|_{H^4(\Omega)}.
\end{equation}
The constant \(C\) depends only on \(\Omega\), \(c_\pm\), \(\|c\|_{W^{2,\infty}(\Omega)}\), and the constant in Lemma~\ref{lem Agmon}.
\end{lemma}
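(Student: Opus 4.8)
The plan is to derive an inhomogeneous Helmholtz-type equation for the remainder $\mathbf R$, peel off one further term of the ansatz \eqref{eq:HF_ansatz_TATPAT_N}, and then invoke the $k$-explicit resolvent bound of Lemma~\ref{lem Agmon}. First I would insert $\hat u(\cdot,k)=-\mathrm{i}f/k+\mathbf R(\cdot,k)$ into \eqref{eq:PDE_hat_u_three}. A direct computation shows that the $O(k)$ terms $\frac{\mathrm{i}k}{c^2}f$ cancel, leaving
\begin{equation*}
-\Delta\mathbf R(x,k)-\frac{k^2}{c(x)^2}\mathbf R(x,k)=-\frac{\mathrm{i}}{k}\Delta f(x)\qquad\text{in }\mathbb R^3 .
\end{equation*}
Since $f$ is supported in $\Omega$, $\mathbf R$ coincides with $\hat u$ on $\mathbb R^3\setminus\operatorname{supp}(f)$ and is therefore the unique outgoing solution of this equation, while $\Delta f\in H^2(\mathbb R^3)$ is supported in $\Omega$ because $f\in H^4(\mathbb R^3)$. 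Applying Lemma~\ref{lem Agmon} directly with $\varphi=-\frac{\mathrm{i}}{k}\Delta f$ would give only $\|\mathbf R(\cdot,k)\|_{L^2(\Omega)}\le Ck^{-2}\|f\|_{H^2(\Omega)}$, which is one power of $k$ short; hence one more peeling step is needed.

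Next I would write $\mathbf R=\frac{\mathrm{i}\,c^2\Delta f}{k^3}+\mathbf R_1$, where the subtracted term is precisely the $j=1$ contribution in \eqref{eq:HF_ansatz_TATPAT_N}. Substituting this into the equation for $\mathbf R$ and using $-\frac{k^2}{c^2}\cdot\frac{\mathrm{i}\,c^2\Delta f}{k^3}=-\frac{\mathrm{i}}{k}\Delta f$, which matches the right-hand side exactly, I obtain
\begin{equation*}
-\Delta\mathbf R_1(x,k)-\frac{k^2}{c(x)^2}\mathbf R_1(x,k)=\frac{\mathrm{i}}{k^3}\,\Delta\!\bigl(c(x)^2\Delta f(x)\bigr)\qquad\text{in }\mathbb R^3 .
\end{equation*}
Here $c^2\Delta f\in H^2(\mathbb R^3)$, being a product of a $W^{2,\infty}$ function and an $H^2$ function, and is supported in $\operatorname{supp}(f)\Subset\Omega$; consequently $\Delta(c^2\Delta f)\in L^2(\mathbb R^3)$ is supported in $\Omega$ with $\|\Delta(c^2\Delta f)\|_{L^2(\Omega)}\le C\|c\|_{W^{2,\infty}(\mathbb R^3)}^2\|f\|_{H^4(\Omega)}$. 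As before, $\mathbf R_1=\hat u$ outside $\operatorname{supp}(f)$, so $\mathbf R_1$ is the unique outgoing solution of its equation.

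Finally I would apply Lemma~\ref{lem Agmon} to $\mathbf R_1$ with source $\varphi=\frac{\mathrm{i}}{k^3}\Delta(c^2\Delta f)$: for all $k\ge k_0$,
\begin{equation*}
\|\mathbf R_1(\cdot,k)\|_{L^2(\Omega)}\le Ck^{-1}\cdot k^{-3}\|\Delta(c^2\Delta f)\|_{L^2(\Omega)}\le Ck^{-4}\|f\|_{H^4(\Omega)} .
\end{equation*}
Combining this with the triangle inequality and the elementary bound $\bigl\|\frac{\mathrm{i}\,c^2\Delta f}{k^3}\bigr\|_{L^2(\Omega)}\le k^{-3}c_+^2\|f\|_{H^2(\Omega)}$, and absorbing the $k^{-4}$ term into the $k^{-3}$ term since $k_0\ge1$, gives $\|\mathbf R(\cdot,k)\|_{L^2(\Omega)}\le Ck^{-3}\|f\|_{H^4(\Omega)}$ for all $k\ge k_0$, with the constant depending only on $\Omega$, $c_\pm$, $\|c\|_{W^{2,\infty}(\mathbb R^3)}$, and the constant in Lemma~\ref{lem Agmon}, as claimed. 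I do not expect a genuine obstacle here: the argument is essentially a WKB/Neumann-iteration bookkeeping. The point requiring care is the regularity accounting --- one must check that $\Delta(c^2\Delta f)$ is meaningful in $L^2$, which is exactly where the hypotheses $f\in H^4$ and $c\in W^{2,\infty}$ enter, and that the compact-support assumption keeps each peeled remainder outgoing with source supported in $\Omega$, so that Lemma~\ref{lem Agmon} remains applicable at each step. The only mildly subtle point is that a single use of Lemma~\ref{lem Agmon} on $\mathbf R$ does not suffice, and one must descend to the second order of the ansatz \eqref{eq:HF_ansatz_TATPAT_N} to reach the rate $k^{-3}$.
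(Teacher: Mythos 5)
Your proposal is correct and follows essentially the same route as the paper: the same two-term peeling $\mathbf R=\mathrm{i}\,c^2\Delta f/k^{3}+\mathbf R_1$, the same outgoing Helmholtz equation for $\mathbf R_1$ with source $\tfrac{\mathrm i}{k^{3}}\Delta(c^2\Delta f)$, the same application of Lemma~\ref{lem Agmon}, and the same regularity bookkeeping and triangle inequality at the end. The only cosmetic difference is that you first record the equation for $\mathbf R$ before subtracting the $k^{-3}$ term, whereas the paper computes the action of the operator on the two-term ansatz directly; the estimates are identical.
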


\begin{proof}
Set
\[
\mathbf R_1(x,k):=\hat u(x,k)+\mathrm{i}\frac{f(x)}{k}-\mathrm{i}\frac{c(x)^2\Delta f(x)}{k^3}.
\]
Then
\[
\mathbf R(x,k)=\mathrm{i}\frac{c(x)^2\Delta f(x)}{k^3}+\mathbf R_1(x,k).
\]

We first derive an equation for $\mathbf R_1(\cdot,k)$.
A direct computation gives
\[
\left(-\Delta-\frac{k^2}{c^2}\right)\left(-\mathrm{i}\frac{f}{k}\right)
=
\mathrm{i}\frac{\Delta f}{k}
+\mathrm{i}\frac{k}{c^2}f,
\]
and
\[
\left(-\Delta-\frac{k^2}{c^2}\right)\left(\mathrm{i}\frac{c^2\Delta f}{k^3}\right)
=
-\mathrm{i}\frac{\Delta(c^2\Delta f)}{k^3}
-\mathrm{i}\frac{\Delta f}{k}.
\]
Adding these two identities yields
\[
\left(-\Delta-\frac{k^2}{c^2}\right)\left(-\mathrm{i}\frac{f}{k}+\mathrm{i}\frac{c^2\Delta f}{k^3}\right)
=
\mathrm{i}\frac{k}{c^2}f
-\mathrm{i}\frac{\Delta(c^2\Delta f)}{k^3}.
\]
Comparing this with \eqref{eq:PDE_hat_u_three}, we conclude that $\mathbf R_1(\cdot,k)$ satisfies
\[
-\Delta \mathbf R_1(x,k)-\frac{k^2}{c(x)^2}\mathbf R_1(x,k)
=
\mathrm{i}\frac{\Delta(c(x)^2\Delta f(x))}{k^3}
\quad\text{in }\mathbb R^3.
\]
Moreover, $\mathbf R_1(\cdot,k)$ is outgoing, since $\hat u(\cdot,k)$ is outgoing and the correction terms are compactly supported in $\Omega$.

We now apply Lemma~\ref{lem Agmon} with
\[
\varphi=\mathrm{i}\frac{\Delta(c^2\Delta f)}{k^3}.
\]
Since \(c\in W^{2,\infty}(\Omega)\) and \(f\in H^4(\Omega)\), one has \(\Delta(c^2\Delta f)\in L^2(\Omega)\) and
\[
\|\Delta(c^2\Delta f)\|_{L^2(\Omega)}
\le C\|f\|_{H^4(\Omega)},
\]
where \(C\) depends only on \(\Omega\) and \(\|c\|_{W^{2,\infty}(\Omega)}\).
Therefore, for all $k\ge k_0$,
\[
\|\mathbf R_1(\cdot,k)\|_{L^2(\Omega)}
\le
C\,k^{-1}\left\|\mathrm{i}\frac{\Delta(c^2\Delta f)}{k^3}\right\|_{L^2(\Omega)}
\le
C\,k^{-4}\|f\|_{H^4(\Omega)}.
\]

Finally, from
\[
\mathbf R(x,k)=\mathrm{i}\frac{c(x)^2\Delta f(x)}{k^3}+\mathbf R_1(x,k)
\]
we obtain
\[
\|\mathbf R(\cdot,k)\|_{L^2(\Omega)}
\le
\frac{1}{k^3}\|c^2\Delta f\|_{L^2(\Omega)}+\|\mathbf R_1(\cdot,k)\|_{L^2(\Omega)}.
\]
Since \(c\in L^\infty(\Omega)\) and \(f\in H^4(\Omega)\),
\[
\|c^2\Delta f\|_{L^2(\Omega)}
\le C\|\Delta f\|_{L^2(\Omega)}
\le C\|f\|_{H^4(\Omega)}.
\]
Combining the above estimates yields, for all $k\ge k_0$,
\[
\|\mathbf R(\cdot,k)\|_{L^2(\Omega)}
\le
C\,k^{-3}\|f\|_{H^4(\Omega)}+C\,k^{-4}\|f\|_{H^4(\Omega)}
\le
C\,k^{-3}\|f\|_{H^4(\Omega)}.
\]
This proves \eqref{eq:R_L2_bound_N1}.
\end{proof}

%----------------------------------------------------------------------------------------

%---------------------------------------------------------------------------------
\subsection{Step 5: Frequency-domain identity}
We consider test waves \(w_k\in \mathscr W_k\) satisfying the free wave equation
\begin{equation}\label{eq w}
\partial_t^2 w_k(x,t)-\Delta w_k(x,t)=0
\quad\text{in }\Omega\times\mathbb R_+,
\end{equation}
where \(\mathscr W_k\) is a subspace of the space spanned by such test waves.
Here we choose the temporal part of \(w_k\) to be the Fourier mode \(e^{-\mathrm{i}kt}\), so that
\begin{equation}\label{eq SS}
	w_k(x,t)=e^{-\mathrm{i}kt}v(x),
\end{equation}
with \(v\) satisfying 
\begin{equation}\label{eq:v_helmholtz}
-\Delta v(x)-k^2 v(x)=0
\quad\text{in }\Omega.
\end{equation}
This choice is precisely what leads, via the temporal Fourier transform \eqref{eq FT}, from the time-dependent problem \eqref{eq:simple_hyper_eqns} to its frequency-domain formulation.

Under the regularity convention stated at the beginning of this section, Lemma~\ref{lem fhy} applies to the TAT/PAT system \eqref{eq:simple_hyper_eqns}.
Consequently, the corresponding solutions \(u\) and \(\tilde u\) satisfy
\begin{equation}\label{eq:regualrity u}
u,\tilde u\in C([0,+\infty);H^{4}(\Omega))\cap C^{4}([0,+\infty);L^2(\Omega)).
\end{equation}

We now derive the following frequency-domain integral identity.

\begin{lemma}\label{lem:second_identity}
Assume that \(c,\tilde c\in W^{2,\infty}(\Omega)\) and \(f,\tilde f\in H^4(\Omega)\), with the zero/background extensions described above satisfying conditions \eqref{eq:general_support} and \eqref{eq:general_bounds}.
Let $(f,c)$ and $(\tilde f,\tilde c)$ be two configurations satisfying Admissibility condition~\ref{ass:local_decay_fc}.
Let $u$ and $\tilde u$ be the corresponding solutions to \eqref{eq:simple_hyper_eqns} associated with $(f,c)$ and $(\tilde f,\tilde c)$, respectively.
For a fixed $k>0$, let $w_k\in\mathscr W_k$ be given by \eqref{eq SS}, where $v$ satisfies \eqref{eq:v_helmholtz}.
Assume that the corresponding boundary measurements coincide in the sense of \eqref{eq:condition}.
Then the following integral identity holds:
\begin{equation}\label{eq:aux_est_R_identity}
k\int_\Omega \mathrm{i}\bigl(f(x)-\tilde f(x)\bigr)v(x)\,\mathrm{d}x
=
k^2\int_\Omega\left[\left(1-\frac{1}{c(x)^2}\right)\mathbf R(x,k)-\left(1-\frac{1}{\tilde c(x)^2}\right)\tilde{\mathbf R}(x,k)\right]v(x)\,\mathrm{d}x,
\end{equation}
where $\mathbf R$ and $\tilde{\mathbf R}$ are the corresponding remainder terms defined by \eqref{eq RR}.
\end{lemma}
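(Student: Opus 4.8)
I would prove Lemma~\ref{lem:second_identity} by repeating the testing argument of Lemma~\ref{lem:integral_identity}, but now with the oscillatory test wave $w_k(x,t)=e^{-\mathrm{i}kt}v(x)$ of \eqref{eq SS}, which solves the \emph{free} wave equation \eqref{eq w} rather than the equation with coefficient $c$. The plan is: multiply $c^{-2}\partial_t^2 u-\Delta u=0$ by $w_k$, integrate over $\Omega\times(0,\infty)$, integrate by parts once in $x$ and twice in $t$, do the same with $(\tilde f,\tilde c,\tilde u)$, subtract using the equality of passive Cauchy data, and finally substitute the high‑frequency remainder splitting \eqref{eq RR}.

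Concretely, first I would integrate by parts in $x$ in $\int_0^\infty\!\!\int_\Omega\Delta u\,w_k$, using $\Delta v=-k^2v$ in $\Omega$ so that $\Delta w_k=-k^2w_k$; this produces $k^2\int_0^\infty\!\!\int_\Omega u\,w_k$ together with the boundary flux $\int_0^\infty\!\!\int_{\partial\Omega}\bigl(u\,\partial_\nu w_k-w_k\,\partial_\nu u\bigr)\,\mathrm{d}S\,\mathrm{d}t$. Next I would integrate by parts twice in $t$ in $\int_0^\infty\!\!\int_\Omega c^{-2}\partial_t^2 u\,w_k$: the $t=\infty$ endpoint vanishes because $w_k,\partial_t w_k$ are uniformly bounded on $\Omega\times\mathbb{R}_+$ (as $|e^{-\mathrm{i}kt}|=1$ and $v$ is bounded on $\overline{\Omega}$) while $\|u(\cdot,t)\|_{L^2(\Omega)}+\|\partial_t u(\cdot,t)\|_{L^2(\Omega)}\to 0$ by the local decay condition~\ref{ass:local_decay_fc}; the $t=0$ endpoint contributes $-\mathrm{i}k\int_\Omega c^{-2}fv\,\mathrm{d}x$ via $u(\cdot,0)=f$, $\partial_t u(\cdot,0)=0$, $w_k(\cdot,0)=v$, $\partial_t w_k(\cdot,0)=-\mathrm{i}kv$; and the residual spacetime term equals $-k^2\int_\Omega c^{-2}v\,\hat u(\cdot,k)\,\mathrm{d}x$ once one recognizes $\int_0^\infty u(x,t)e^{-\mathrm{i}kt}\,\mathrm{d}t=\hat u(x,k)$, the temporal Fourier transform \eqref{eq FT}, with $\hat u(\cdot,k)$ the outgoing solution of \eqref{eq:PDE_hat_u_three}. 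Using the PDE to cancel the two copies of $k^2\int_\Omega u\,w_k$ into a single $k^2\int_\Omega(1-c^{-2})\hat u\,v$, one obtains
\begin{equation*}
-\mathrm{i}k\int_\Omega\frac{f}{c^2}\,v\,\mathrm{d}x
+k^2\int_\Omega\Bigl(1-\frac{1}{c^2}\Bigr)\hat u\,v\,\mathrm{d}x
=-\int_0^\infty\!\!\int_{\partial\Omega}\bigl(u\,\partial_\nu w_k-w_k\,\partial_\nu u\bigr)\,\mathrm{d}S\,\mathrm{d}t,
\end{equation*}
and the same identity with $(\tilde f,\tilde c,\widehat{\tilde u},\tilde u)$. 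By \eqref{eq:condition} one has $u=\tilde u$ and $\partial_\nu u=\partial_\nu\tilde u$ on $\partial\Omega\times\mathbb{R}_+$, so the two boundary fluxes coincide; subtracting the two identities gives
\begin{equation*}
-\mathrm{i}k\int_\Omega\Bigl(\frac{f}{c^2}-\frac{\tilde f}{\tilde c^2}\Bigr)v\,\mathrm{d}x
+k^2\int_\Omega\Bigl[\Bigl(1-\frac{1}{c^2}\Bigr)\hat u-\Bigl(1-\frac{1}{\tilde c^2}\Bigr)\widehat{\tilde u}\Bigr]v\,\mathrm{d}x=0.
\end{equation*}

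Finally I would insert $\hat u=-\mathrm{i}f/k+\mathbf R$ and $\widehat{\tilde u}=-\mathrm{i}\tilde f/k+\tilde{\mathbf R}$ from \eqref{eq RR}. Since $k^2(1-c^{-2})\hat u=-\mathrm{i}k(1-c^{-2})f+k^2(1-c^{-2})\mathbf R$ and $-\mathrm{i}k\,c^{-2}f-\mathrm{i}k(1-c^{-2})f=-\mathrm{i}kf$ identically, the $f$‑terms (and likewise the $\tilde f$‑terms) collapse, and the last display rearranges to exactly \eqref{eq:aux_est_R_identity}; this algebraic step uses only the definition of $\mathbf R$ and not Theorem~\ref{thm0}. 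The only genuine care needed is the analytic bookkeeping — justifying the integrations by parts and the vanishing of the $t=\infty$ endpoint terms from $\eta\in L^1(0,\infty)$ together with the uniform boundedness of $w_k$, invoking the $H^4$ regularity of $u,\tilde u$ from Lemma~\ref{lem fhy} and \eqref{eq:regualrity u} so that every trace and spatial integral is well defined, and confirming via the limiting‑absorption interpretation of \eqref{eq FT} that $\int_0^\infty u\,e^{-\mathrm{i}kt}\,\mathrm{d}t$ is indeed the outgoing Helmholtz solution of \eqref{eq:PDE_hat_u_three}; the remaining algebra is routine.
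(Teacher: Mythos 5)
Your proposal is correct and follows essentially the same route as the paper: test against the Fourier-mode free-wave solution $w_k=e^{-\mathrm{i}kt}v$, integrate by parts in $x$ and twice in $t$ (using the local decay condition~\ref{ass:local_decay_fc} to kill the $t=\infty$ endpoints), recognize the temporal Fourier transform, and substitute the splitting \eqref{eq RR} so that the $f/c^2$-terms cancel. The only cosmetic difference is bookkeeping: the paper first forms $U=u-\tilde u$, whose Cauchy data vanish so the boundary terms disappear in the Green identity, whereas you test each equation separately and cancel the two identical boundary fluxes at the end — the substance of the argument is the same.
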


\begin{proof}
We first derive an intermediate identity in the time domain.
Rewriting \eqref{eq:simple_hyper_eqns}, we have
\begin{equation}\label{eq:modified}
\partial_t^2 u(x,t)-\Delta u(x,t)
=
\left(1-\frac{1}{c(x)^2}\right)\partial_t^2 u(x,t),
\qquad (x,t)\in \Omega\times\mathbb{R}_+.
\end{equation}
Similarly,
\begin{equation}\label{eq:modified2}
\partial_t^2 \tilde u(x,t)-\Delta \tilde u(x,t)
=
\left(1-\frac{1}{\tilde c(x)^2}\right)\partial_t^2 \tilde u(x,t),
\qquad (x,t)\in \Omega\times\mathbb{R}_+.
\end{equation}
Subtracting \eqref{eq:modified2} from \eqref{eq:modified}, and setting \(U:=u-\tilde u\), we obtain
\begin{equation}\label{eq:diff}
U_{tt}-\Delta U
=
\left(1-\frac{1}{c(x)^2}\right)u_{tt}
-
\left(1-\frac{1}{\tilde c(x)^2}\right)\tilde u_{tt}
\quad \text{in }\Omega\times\mathbb R_+.
\end{equation}
Multiplying \eqref{eq:diff} by \(w_k\) and integrating over \(\Omega\times(0,\infty)\), we obtain
\begin{multline}\label{eq:aux_int0}
\int_0^\infty\int_\Omega w_k\,(U_{tt}-\Delta U)\,\mathrm{d}x\,\mathrm{d}t\\
=
\int_0^\infty\int_\Omega
\left(1-\frac{1}{c(x)^2}\right)w_k\,u_{tt}
-
\left(1-\frac{1}{\tilde c(x)^2}\right)w_k\,\tilde u_{tt}
\,\mathrm{d}x\,\mathrm{d}t.
\end{multline}

Since \(w_k\) satisfies \eqref{eq w} and the boundary measurements coincide \eqref{eq:condition}, we have
\[
U=\partial_\nu U=0
\qquad\text{on }\partial\Omega\times\mathbb R_+.
\]
Applying the space--time Green identity to the pair \((U,w_k)\) over \(\Omega\times(0,\infty)\), we obtain
\begin{equation}\label{eq:aux_int1}
\int_0^\infty\int_\Omega w_k\,(U_{tt}-\Delta U)\,\mathrm{d}x\,\mathrm{d}t
=
\Big[\int_\Omega \big(w_k U_t-w_{k,t}U\big)\,\mathrm{d}x\Big]_{t=0}^{t=\infty}.
\end{equation}
By the local energy decay \eqref{eq:td1}, the contribution at \(t=\infty\) vanishes.
Using the initial conditions 
\[
U(x,0)=f(x)-\tilde f(x),\qquad U_t(x,0)=0,
\]
we infer from \eqref{eq:aux_int1} that
\begin{equation}\label{eq:aux_int2}
\int_0^\infty\int_\Omega w_k\,(U_{tt}-\Delta U)\,\mathrm{d}x\,\mathrm{d}t
=
\int_\Omega (f-\tilde f)\,\partial_t w_k(x,0)\,\mathrm{d}x.
\end{equation}
Combining \eqref{eq:aux_int0} with \eqref{eq:aux_int2} yields
\begin{multline}\label{eq:aux_int3}
\int_\Omega (f-\tilde f)\,\partial_t w_k(x,0)\,\mathrm{d}x\\
=
\int_0^\infty\int_\Omega
\left(1-\frac{1}{c(x)^2}\right)w_k(x,t)\,\partial_t^2 u(x,t)
-
\left(1-\frac{1}{\tilde c(x)^2}\right)w_k(x,t)\,\partial_t^2 \tilde u(x,t)
\,\mathrm{d}x\,\mathrm{d}t.
\end{multline}

We next integrate by parts twice in time on the right-hand side of \eqref{eq:aux_int3}.
Using again the decay \eqref{eq:td1} at \(t=\infty\), together with
\[
u_t(\cdot,0)=\tilde u_t(\cdot,0)=0,
\qquad
u(\cdot,0)=f,
\qquad
\tilde u(\cdot,0)=\tilde f,
\]
we obtain
\begin{multline}\label{eq:aux_int4}
\int_0^\infty\int_\Omega\left(1-\frac{1}{c(x)^2}\right)w_k\,\partial_t^2 u\,\mathrm{d}x\,\mathrm{d}t
\\=
\int_0^\infty\int_\Omega\left(1-\frac{1}{c(x)^2}\right)u\,\partial_t^2 w_k\,\mathrm{d}x\,\mathrm{d}t
+\int_\Omega\left(1-\frac{1}{c(x)^2}\right)f\,\partial_t w_k(x,0)\,\mathrm{d}x,
\end{multline}
and similarly,
\begin{multline}\label{eq:aux_int5}
\int_0^\infty\int_\Omega\left(1-\frac{1}{\tilde c(x)^2}\right)w_k\,\partial_t^2 \tilde u\,\mathrm{d}x\,\mathrm{d}t
\\=
\int_0^\infty\int_\Omega\left(1-\frac{1}{\tilde c(x)^2}\right)\tilde u\,\partial_t^2 w_k\,\mathrm{d}x\,\mathrm{d}t
+\int_\Omega\left(1-\frac{1}{\tilde c(x)^2}\right)\tilde f\,\partial_t w_k(x,0)\,\mathrm{d}x.
\end{multline}
Substituting \eqref{eq:aux_int4} and \eqref{eq:aux_int5} into \eqref{eq:aux_int3}, we arrive at
\begin{multline}\label{eq:aux_int6}
\int_\Omega \left(\frac{f}{c(x)^2}-\frac{\tilde f}{\tilde c(x)^2}\right)\partial_t w_k(x,0)\,\mathrm{d}x
\\=
\int_0^\infty\int_\Omega\left(1-\frac{1}{c(x)^2}\right)u\,\partial_t^2 w_k\,\mathrm{d}x\,\mathrm{d}t
-\int_0^\infty\int_\Omega\left(1-\frac{1}{\tilde c(x)^2}\right)\tilde u\,\partial_t^2 w_k\,\mathrm{d}x\,\mathrm{d}t.
\end{multline}

\medskip

We now pass to the frequency domain.
Using \eqref{eq SS}, we have
\[
\partial_t w_k(x,0)=-\mathrm{i}k\,v(x),
\qquad
\partial_t^2 w_k(x,t)=-k^2 e^{-\mathrm{i}kt}v(x).
\]
Applying the temporal Fourier transform \eqref{eq FT} to \(u\) and \(\tilde u\), and using \eqref{eq:PDE_hat_u_three}, we obtain the associated outgoing fields \(\hat u(\cdot,k)\) and \(\widehat{\tilde u}(\cdot,k)\).
Then \eqref{eq:aux_int6} becomes
\begin{multline*}
\int_\Omega \left(\frac{f}{c(x)^2}-\frac{\tilde f}{\tilde c(x)^2}\right)(-\mathrm{i}k)\,v(x)\,\mathrm{d}x
\\=
\int_\Omega\left(1-\frac{1}{c(x)^2}\right)\hat u(x,k)(-k^2)\,v(x)\,\mathrm{d}x
-
\int_\Omega\left(1-\frac{1}{\tilde c(x)^2}\right)\widehat{\tilde u}(x,k)(-k^2)\,v(x)\,\mathrm{d}x.
\end{multline*}

Substituting the high-frequency expansion \eqref{eq RR} into the right-hand side yields
\begin{multline*}
-\int_\Omega \left(\frac{f}{c(x)^2}-\frac{\tilde f}{\tilde c(x)^2}\right)\mathrm{i}k\,v(x)\,\mathrm{d}x
\\=
\int_\Omega\left(1-\frac{1}{c(x)^2}\right)\left(\mathrm{i}k f(x)-k^2\mathbf R(x,k)\right)v(x)\,\mathrm{d}x
\\-
\int_\Omega\left(1-\frac{1}{\tilde c(x)^2}\right)\left(\mathrm{i}k \tilde f(x)-k^2\tilde{\mathbf R}(x,k)\right)v(x)\,\mathrm{d}x.
\end{multline*}
Using
\[
\left(1-\frac{1}{c(x)^2}\right)f(x)=f(x)-\frac{f(x)}{c(x)^2},
\qquad
\left(1-\frac{1}{\tilde c(x)^2}\right)\tilde f(x)=\tilde f(x)-\frac{\tilde f(x)}{\tilde c(x)^2},
\]
we see that the contributions involving \(f/c(x)^2\) and \(\tilde f/\tilde c(x)^2\) cancel with the left-hand side.
Hence we obtain \eqref{eq:aux_est_R_identity}.

The proof is complete.
\end{proof}
%----------------------------------------------------------------------------------------------
\subsection{Step 6: Proof of Theorem~\ref{thm:unique_f_c}}
We now use the frequency-domain identity \eqref{eq:aux_est_R_identity} to prove Theorem~\ref{thm:unique_f_c}.
It remains to estimate the remainder term in \eqref{eq:aux_est_R_identity}, namely
\[
k^2\int_\Omega\left[\left(1-\frac{1}{c(x)^2}\right)\mathbf R(x,k)-\left(1-\frac{1}{\tilde c(x)^2}\right)\tilde{\mathbf R}(x,k)\right]v(x)\,\mathrm{d}x.
\]
By \eqref{eq:aux_est_R_identity}, the triangle inequality, and the Cauchy--Schwarz inequality in $L^2(\Omega)$, we obtain
\begin{equation*}
\begin{aligned}
\left|k\int_\Omega \mathrm{i}(f(x)-\tilde f(x))v(x)\,\mathrm{d}x\right|
\leq &
\,k^2\left(
\left\|1-\frac{1}{c^2}\right\|_{L^\infty(\Omega)}\|\mathbf R(\cdot,k)\|_{L^2(\Omega)}\right.\\
&\left.\qquad\qquad
+
\left\|1-\frac{1}{\tilde c^2}\right\|_{L^\infty(\Omega)}\|\tilde{\mathbf R}(\cdot,k)\|_{L^2(\Omega)}
\right)\|v\|_{L^2(\Omega)}.
\end{aligned}
\end{equation*}
Since $v(x)=e^{\mathrm{i}x\cdot\xi}$ with $|\xi|=k$, we have $|v(x)|=1$ in $\Omega$, and hence
\[
\|v\|_{L^2(\Omega)}=|\Omega|^{1/2}.
\]
Moreover, from $0<c_-\le c,\tilde c\le c_+$ a.e.\ in $\Omega$ given by \eqref{eq:general_bounds}, it follows that
\[
\left\|1-\frac{1}{c^2}\right\|_{L^\infty(\Omega)}\le 1+c_-^{-2},
\qquad
\left\|1-\frac{1}{\tilde c^2}\right\|_{L^\infty(\Omega)}\le 1+c_-^{-2}.
\]
Therefore,
\[
\left|k\int_\Omega \mathrm{i}(f(x)-\tilde f(x))v(x)\,\mathrm{d}x\right|
\le
k^2(1+c_-^{-2})|\Omega|^{1/2}
\bigl(\|\mathbf R(\cdot,k)\|_{L^2(\Omega)}+\|\tilde{\mathbf R}(\cdot,k)\|_{L^2(\Omega)}\bigr).
\]

We now invoke Lemma~\ref{lem:R_L2_N1} for the two configurations $(f,c)$ and $(\tilde f,\tilde c)$.
Under the regularity convention stated at the beginning of this section, together with \eqref{eq:regualrity u} and the decay estimate \eqref{eq:td1}, for each fixed $k>0$ the temporal Fourier transforms $\hat u(\cdot,k)$ and $\widehat{\tilde u}(\cdot,k)$ are well defined and satisfy the spatial regularity
\begin{equation*}\label{eq:reg_hat_u}
\hat u(\cdot,k),\ \widehat{\tilde u}(\cdot,k)\in H^{4}(\Omega).
\end{equation*}
In particular, the assumptions of Lemma~\ref{lem:R_L2_N1} are satisfied for both pairs, and the corresponding remainder terms are well defined.
Thus there exist $k_0\ge 1$ and constants $C,\tilde C>0$, independent of $k$, such that for all $k\ge k_0$,
\[
\|\mathbf R(\cdot,k)\|_{L^2(\Omega)}\le C\,k^{-3},
\qquad
\|\tilde{\mathbf R}(\cdot,k)\|_{L^2(\Omega)}\le \tilde C\,k^{-3}.
\]
Substituting these bounds into the previous inequality yields, for all $k\ge k_0$,
\[
\left|k\int_\Omega \mathrm{i}\bigl(f(x)-\tilde f(x)\bigr)v(x)\,\mathrm{d}x\right|
\le
(1+c_-^{-2})|\Omega|^{1/2}\,(C+\tilde C)\,k^{-1}.
\]
Consequently, recalling that $|\xi|=k$,
\begin{equation}\label{eq:limit_plane_wave_integral}
\left|\int_\Omega \mathrm{i}\bigl(f(x)-\tilde f(x)\bigr)e^{\mathrm{i}x\cdot\xi}\,\mathrm{d}x\right|
\le
(1+c_-^{-2})|\Omega|^{1/2}\,(C+\tilde C)\,\frac{1}{|\xi|^2}
\longrightarrow 0
\qquad\text{as }|\xi|=k\to\infty.
\end{equation}

Extending $f-\tilde f$ by zero to $Q\setminus\Omega$, we have \(f\equiv \tilde f\equiv 0\) a.e.\ in \(Q\setminus\Omega\), and therefore
\[
\int_{\Omega} \mathrm{i}\bigl(f(x)-\tilde f(x)\bigr)e^{\mathrm{i}x\cdot\xi}\,\mathrm{d}x
=
\int_{Q} \mathrm{i}\bigl(f(x)-\tilde f(x)\bigr)e^{\mathrm{i}x\cdot\xi}\,\mathrm{d}x.
\]
By Admissibility~\ref{ass:x1_only_fc}, understood after zero extension to \(Q\), we have
\[
f(x)=p(x')\tilde\phi(x_3),
\qquad
\tilde f(x)=\tilde p(x')\tilde\phi(x_3)
\]
for a.e.\ \(x=(x',x_3)\in Q\).
Set \(g(x'):=p(x')-\tilde p(x')\). Then
\[
f(x)-\tilde f(x)=g(x')\tilde\phi(x_3)
\qquad\text{for a.e. }x\in Q.
\]

Write $x=(x',x_3)$ with $x'=(x_1,x_2)$ and $\xi=(\xi',\xi_3)$ with $\xi'=(\xi_1,\xi_2)$.
By Fubini's theorem and the decomposition $x\cdot\xi=x'\cdot\xi'+x_3\xi_3$, we have
\begin{align}
\int_Q \mathrm{i}\bigl(f(x)-\tilde f(x)\bigr)e^{\mathrm{i}x\cdot\xi}\,\mathrm{d}x
&=
\mathrm{i}\int_{[-\pi,\pi]^2}\int_a^b
g(x')\tilde\phi(x_3)e^{\mathrm{i}(x'\cdot\xi'+x_3\xi_3)}
\,\mathrm{d}x_3\,\mathrm{d}x' \notag\\
&=
\mathrm{i}\left(\int_{[-\pi,\pi]^2} g(x')e^{\mathrm{i}x'\cdot\xi'}\,\mathrm{d}x'\right)
\left(\int_a^b\tilde\phi(x_3)e^{\mathrm{i}x_3\xi_3}\,\mathrm{d}x_3\right).
\label{eq:factorization_x3_new}
\end{align}

Set
\[
C_*:=(1+c_-^{-2})|\Omega|^{1/2}\,(C+\tilde C).
\]
Combining \eqref{eq:limit_plane_wave_integral} and \eqref{eq:factorization_x3_new}, we obtain
\begin{equation}\label{eq:ghat_bound_pre_new}
\left|
\int_{[-\pi,\pi]^2} g(x')e^{\mathrm{i}x'\cdot\xi'}\,\mathrm{d}x'
\right|
\left|
\int_a^b\tilde\phi(x_3)e^{\mathrm{i}x_3\xi_3}\,\mathrm{d}x_3
\right|
\le
\frac{C_*}{|\xi|^2},
\qquad\text{for }|\xi| \text{ sufficiently large}.
\end{equation}

Fix \((m_1,m_2)\in\mathbb Z^2\).
Choose a sequence \(\{\xi_3^{(j)}\}_{j=1}^\infty\) such that
\[
|\xi_3^{(j)}|\to\infty
\qquad\text{as }j\to\infty .
\]
By Admissibility condition~\ref{ass:x1_only_fc}, for all sufficiently large \(j\),
\[
\int_a^b\tilde\phi(x_3)e^{\mathrm{i}x_3\xi_3^{(j)}}\,\mathrm{d}x_3\neq 0.
\]
Substituting
\[
\xi_1=m_1,\qquad \xi_2=m_2,\qquad \xi_3=\xi_3^{(j)}
\]
into \eqref{eq:ghat_bound_pre_new}, we obtain
\[
\left|
\int_{[-\pi,\pi]^2} g(x')e^{\mathrm{i}(m_1x_1+m_2x_2)}\,\mathrm{d}x'
\right|
\le
\frac{C_*}{m_1^2+m_2^2+|\xi_3^{(j)}|^2}
\left|
\int_a^b\tilde\phi(x_3)e^{\mathrm{i}x_3\xi_3^{(j)}}\,\mathrm{d}x_3
\right|^{-1}.
\]
Letting \(j\to\infty\), and using \eqref{eq:chn2}, we have
\[
\frac{1}{m_1^2+m_2^2+|\xi_3^{(j)}|^2}
\left|
\int_a^b\tilde\phi(x_3)e^{\mathrm{i}x_3\xi_3^{(j)}}\,\mathrm{d}x_3
\right|^{-1}
\to 0.
\]
Therefore,
\[
\int_{[-\pi,\pi]^2} g(x')e^{\mathrm{i}(m_1x_1+m_2x_2)}\,\mathrm{d}x'=0
\qquad\text{for all }(m_1,m_2)\in\mathbb Z^2.
\]
Since \(g\in L^2([-\pi,\pi]^2)\), the vanishing of all its Fourier coefficients implies
\[
g\equiv 0
\qquad\text{a.e. in }[-\pi,\pi]^2.
\]
Therefore \(p=\tilde p\) a.e.\ on \([-\pi,\pi]^2\).
Using the representation
\[
f(x)=p(x')\tilde\phi(x_3),
\qquad
\tilde f(x)=\tilde p(x')\tilde\phi(x_3),
\]
we obtain
\[
f\equiv \tilde f
\qquad\text{a.e. in }\Omega.
\]

Having established the unique determination of \(f\), we combine this with \eqref{eq:fc_ratio_unique} and obtain
\[
\frac{f(x)}{c(x)^2}=\frac{f(x)}{\tilde c(x)^2}
\qquad\text{for a.e. }x\in\Omega.
\]
Hence
\[
c(x)=\tilde c(x)
\qquad\text{for a.e. }x\in \operatorname{supp}(f).
\]

The proof is complete.

%=========================================================================================
\section{Joint recovery of piecewise configurations}\label{sec:piecewise}

In Theorems~\ref{thm0} and~\ref{thm:unique_f_c}, the uniqueness analysis was carried out under global regularity assumptions.
We now extend the result to piecewise configurations.

We first consider piecewise-regular configurations with product-type support components.
Subsection~\ref{subsec pice1} treats the single-component case, and Subsection~\ref{subsec pice2} extends the argument to finitely many components.
This part uses Admissibility~\ref{ass:piecewise_jump}, which specifies the product geometry, local coefficient regularity, and jump conditions near exposed lateral interfaces.

We then treat the piecewise constant case in Subsection~\ref{subsec:piecewise-constant}.
In that setting the coefficients are constant on each component, while the components are allowed to be general Lipschitz domains.
This part uses Admissibility~\ref{ass:piecewise_constant_general_domains}, which specifies the general Lipschitz component geometry and the piecewise constant coefficient structure.

%----------------------------------------------------------------------------
\subsection{Step 1: The single-component case}\label{subsec pice1}

\begin{proof}[Proof of Theorem~\ref{thm:unique_f_c_piecewise}, Step 1]
We first treat the single-component case \(N=M=1\) and prove that
\[
\Omega_1=\widetilde\Omega_1.
\]
We work in the frequency domain and write $\hat u(x,k):=\mathcal F_tu(x,t)$, $k>0$, as in \eqref{FourierTransform}.
In the single-component case \(N=M=1\), the configurations reduce to
\[
f=f_1\chi_{\Omega_1},
\qquad
c=1+(c_1-1)\chi_{\Omega_1},
\]
and
\[
\tilde f=\tilde f_1\chi_{\widetilde\Omega_1},
\qquad
\tilde c=1+(\tilde c_1-1)\chi_{\widetilde\Omega_1}.
\]
In the three-dimensional case with $\sigma\equiv1$ and $h\equiv0$, the frequency-domain equations become
\begin{equation}\label{FourierTransformPiecewise}
\begin{cases}
-\Delta \hat u(x,k)-k^2\hat u(x,k)=0,
& x\in \mathbb R^3\setminus \Omega_1,\\
-\Delta \hat u(x,k)-\dfrac{k^2}{c(x)^2}\hat u(x,k)
=\mathrm{i}k\dfrac{f(x)}{c(x)^2},
& x\in \Omega_1,
\end{cases}
\end{equation}
and, similarly,
\begin{equation}\label{FourierTransformPiecewiseTilde}
\begin{cases}
-\Delta \hat{\tilde u}(x,k)-k^2\hat{\tilde u}(x,k)=0,
& x\in \mathbb R^3\setminus \widetilde\Omega_1,\\
-\Delta \hat{\tilde u}(x,k)-\dfrac{k^2}{\tilde c(x)^2}\hat{\tilde u}(x,k)
=\mathrm{i}k\dfrac{\tilde f(x)}{\tilde c(x)^2},
& x\in \widetilde\Omega_1.
\end{cases}
\end{equation}

By the measurement identity \eqref{eq measure 2}, the two frequency-domain solutions have the same Cauchy data on $\partial\Omega$.
We define
\begin{equation}\label{eq ww}
w:=\hat{\tilde u}-\hat u.
\end{equation}
Then
\[
w=0,
\qquad
\partial_\nu w=0
\qquad
\text{on }\partial\Omega,\quad k\in\mathbb R_+.
\]
Moreover, by \eqref{FourierTransformPiecewise} and \eqref{FourierTransformPiecewiseTilde}, the function $w$ satisfies
\[
-\Delta w-k^2w=0
\qquad
\text{in } \mathbb R^3\setminus \overline{\Omega_1\cup\widetilde\Omega_1}.
\]
By the unique continuation principle for the Helmholtz equation and the exterior connectedness assumption in Admissibility~\ref{ass:piecewise_jump}, we obtain
\[
w=0
\]
in the connected exterior component of
\[
\mathbb R^3\setminus \overline{\Omega_1\cup\widetilde\Omega_1}
\]
that is connected to \(\partial\Omega\).

We first record a simple geometric observation used to choose the local comparison point.

\begin{lemma}\label{lem:lateral-exposed-point}
Let
\[
D=\omega\times I,
\qquad
\widetilde D=\widetilde\omega\times\widetilde I,
\]
where \(\omega,\widetilde\omega\subset\mathbb R^2\) are bounded simply connected \(C^{1,1}\) domains and \(I,\widetilde I\subset\mathbb R\) are bounded open intervals.
If
\[
D\neq \widetilde D,
\]
then at least one of the two lateral exposed sets is nonempty:
\[
(\partial\omega\times I)\setminus\overline{\widetilde D}\neq\emptyset
\]
or
\[
(\partial\widetilde\omega\times \widetilde I)\setminus\overline D\neq\emptyset.
\]
\end{lemma}

\begin{proof}
If \(I\neq\widetilde I\), then either
\[
I\setminus\overline{\widetilde I}\neq\emptyset
\]
or
\[
\widetilde I\setminus\overline I\neq\emptyset.
\]
In the first case, choose \(x_3^*\in I\setminus\overline{\widetilde I}\) and \(x'^*\in\partial\omega\).
Then
\[
(x'^*,x_3^*)\in(\partial\omega\times I)\setminus\overline{\widetilde D}.
\]
Hence
\[
(\partial\omega\times I)\setminus\overline{\widetilde D}\neq\emptyset.
\]
The second case gives
\[
(\partial\widetilde\omega\times\widetilde I)\setminus\overline D\neq\emptyset
\]
in the same way.

It remains to consider the case \(I=\widetilde I\).
Since \(D\neq\widetilde D\), we have \(\omega\neq\widetilde\omega\).
We claim that at least one of
\[
\partial\omega\setminus\overline{\widetilde\omega},
\qquad
\partial\widetilde\omega\setminus\overline\omega
\]
is nonempty.
Otherwise,
\[
\partial\omega\subset\overline{\widetilde\omega},
\qquad
\partial\widetilde\omega\subset\overline\omega.
\]
Since \(\omega\) and \(\widetilde\omega\) are bounded simply connected \(C^{1,1}\) domains, these two inclusions force
\[
\partial\omega=\partial\widetilde\omega.
\]
As \(C^{1,1}\) domains are regular open sets, this implies
\[
\omega=\widetilde\omega,
\]
a contradiction.

If
\[
\partial\omega\setminus\overline{\widetilde\omega}\neq\emptyset,
\]
choose \(x'^*\in\partial\omega\setminus\overline{\widetilde\omega}\) and any \(x_3^*\in I\).
Then
\[
(x'^*,x_3^*)\in(\partial\omega\times I)\setminus\overline{\widetilde D}.
\]
The case
\[
\partial\widetilde\omega\setminus\overline\omega\neq\emptyset
\]
is identical.
The proof is complete.
\end{proof}

We claim that
\begin{equation}\label{eq single domain}
\Omega_1=\widetilde\Omega_1.
\end{equation}
Assume, to the contrary, that
\[
\Omega_1\neq\widetilde\Omega_1.
\]
By Admissibility condition~\ref{ass:piecewise_jump}, after a rigid change of coordinates, we may write
\[
\Omega_1=\omega_1\times I_1,
\qquad
\widetilde\Omega_1=\widetilde\omega_1\times\widetilde I_1,
\]
where \(\omega_1,\widetilde\omega_1\subset\mathbb R^2\) are bounded simply connected \(C^{1,1}\) domains and \(I_1,\widetilde I_1\subset\mathbb R\) are bounded open intervals.
By Lemma~\ref{lem:lateral-exposed-point}, at least one of the two components has an exposed lateral boundary point relative to the other.
Interchanging the two configurations if necessary, we may assume that
\[
(\partial\widetilde\omega_1\times \widetilde I_1)\setminus\overline{\Omega_1}\neq\emptyset.
\]
Hence we may choose an exposed lateral boundary point
\[
x^*=(x'^*,x_3^*)\in
(\partial\widetilde\omega_1\times \widetilde I_1)\setminus\overline{\Omega_1}
\]
such that the exterior side of the local boundary patch belongs to the connected component of
\[
\mathbb R^3\setminus\overline{\Omega_1\cup\widetilde\Omega_1}
\]
which is connected to \(\partial\Omega\).
For \(\varepsilon>0\) sufficiently small, we have
\[
B_\varepsilon(x^*)\cap \overline{\Omega_1}=\emptyset.
\]
In particular, \(\hat u\) satisfies the homogeneous Helmholtz equation
\[
-\Delta\hat u-k^2\hat u=0
\]
in \(B_\varepsilon(x^*)\).

We now describe the local geometry near \(x^*\).
By the choice of \(x^*\), after shrinking \(\varepsilon\) if necessary, the relevant boundary patch is a lateral cylindrical patch:
\[
\widetilde\Omega_1\cap B_\varepsilon(x^*)
=
(\widetilde\omega_1\times \widetilde I_1)\cap B_\varepsilon(x^*),
\]
and
\[
\partial\widetilde\Omega_1\cap B_\varepsilon(x^*)
=
(\partial\widetilde\omega_1\times \widetilde I_1)\cap B_\varepsilon(x^*).
\]
In particular, the top and bottom faces of the cylindrical component do not occur in this local patch.

We next pass the exterior vanishing of \(w\) to the local lateral boundary patch.
Let \(U\) be a neighbourhood of \(x^*\) such that
\[
U\cap\overline{\Omega_1}=\emptyset.
\]
Since the principal part is the Laplacian, the equations for \(\hat u\) and \(\hat{\tilde u}\) can be viewed locally in \(U\) as elliptic equations with bounded coefficients and piecewise-regular lower-order terms. 
By standard local elliptic regularity and the usual bootstrap argument, see for instance
\cite[Chapters~1--2 and Chapter~4]{grisvard1985elliptic},
\cite[Theorem~9.11]{gilbarg2001elliptic}, we obtain, for every \(U'\Subset U\),
\[
\hat u,\hat{\tilde u}\in W^{2,p}(U')
\qquad
\text{for every }1<p<\infty.
\]
Choosing \(p>3\) and applying the Sobolev embedding theorem gives
\[
w=\hat{\tilde u}-\hat u\in C^{1,\alpha}(\overline{U'})
\qquad
\text{for some }\alpha\in(0,1).
\]
By the choice of \(x^*\), the exterior side of
\(\partial\widetilde\Omega_1\cap U'\) lies in the common exterior component where \(w=0\).
The continuity of \(w\) and \(\nabla w\) therefore implies, after shrinking \(U'\) if necessary, that
\begin{equation}\label{eq local cauchy}
w(x,k)=0,
\qquad
\partial_\nu w(x,k)=0,
\qquad
x\in \partial\widetilde\Omega_1\cap U',\ \forall k\in\mathbb R_+,
\end{equation}
on the lateral boundary patch, in the trace sense.
Shrinking \(\varepsilon>0\) if necessary, we may assume that
\[
B_\varepsilon(x^*)\Subset U'.
\]
Hence \eqref{eq local cauchy} holds on
\[
\partial\widetilde\Omega_1\cap B_\varepsilon(x^*).
\]

Taking the difference between \eqref{FourierTransformPiecewise} and
\eqref{FourierTransformPiecewiseTilde} in \(\widetilde\Omega_1\cap B_\varepsilon(x^*)\), we obtain
\begin{equation}\label{eq F-Ftilde}
\Delta w+k^2 w
=
\left(1-\frac{1}{\tilde c^2}\right)k^2\hat{\tilde u}
-\mathrm{i}k\frac{\tilde f}{\tilde c^2}
\qquad
\text{in }\widetilde\Omega_1\cap B_\varepsilon(x^*).
\end{equation}
Taking imaginary parts of \eqref{eq F-Ftilde}, and setting
\[
W:=\operatorname{Im}w,
\]
we obtain
\begin{equation}\label{eq Im F-Ftilde}
\Delta W+k^2 W
=
H(x,k)
\qquad
\text{in }\widetilde\Omega_1\cap B_\varepsilon(x^*),
\end{equation}
where
\begin{equation}\label{eq:H-local}
H(x,k):=
\left(1-\frac{1}{\tilde c(x)^2}\right)k^2\operatorname{Im}\hat{\tilde u}(x,k)
-k\frac{\tilde f(x)}{\tilde c(x)^2}.
\end{equation}
Moreover, \eqref{eq local cauchy} gives
\begin{equation}\label{eq Im local zero cauchy}
W=0,
\qquad
\partial_\nu W=0
\qquad
\text{on }\partial\widetilde\Omega_1\cap B_\varepsilon(x^*).
\end{equation}

We next record the local free-boundary inputs used below.
The first lemma gives the local \(C^{1,1}\) regularity and the one-sidedness.
The second lemma is the Caffarelli-type step, which upgrades the local free boundary to a \(C^1\) boundary and gives \(C^2\) regularity of the solution.
The third lemma is the Kinderlehrer--Nirenberg higher-regularity step.

\begin{lemma}\label{lem local one sign}
Let \(k>0\) be fixed.
Let \(D\subset\mathbb R^3\) be a domain and let \(x^*\in\partial D\).
Let \(W\) be a real-valued function satisfying
\begin{equation}\label{eq:onesign-eq}
\Delta W+k^2W=H
\qquad
\text{in }D\cap B_\varepsilon(x^*),
\end{equation}
and
\begin{equation}\label{eq:onesign-cauchy}
W=\partial_\nu W=0
\qquad
\text{on }\partial D\cap B_\varepsilon(x^*).
\end{equation}
Assume that the zero extension of \(W\) across the exterior side of the local boundary patch belongs to the corresponding local \(H^2\) class.
Assume further that
\begin{equation}\label{eq:onesign-negative-H}
H\leq-\gamma<0
\qquad
\text{in }D\cap B_\varepsilon(x^*)
\end{equation}
for some \(\gamma>0\).
Then, after possibly shrinking \(\varepsilon\),
\[
W\in C^{1,1}(\overline{D\cap B_\varepsilon(x^*)})
\]
and
\[
W<0
\qquad
\text{in }D\cap B_\varepsilon(x^*).
\]
\end{lemma}

\begin{proof}
The \(C^{1,1}\) regularity follows from the local volume-potential regularity argument in \cite[Proposition~5.1 and Remark~5.2]{CakoniCPAM2023}, applied to the zero extension of \(W\) and to
\[
\Delta W=H-k^2W.
\]
The one-sidedness follows from the local version of the argument in \cite[Proposition~5.3]{CakoniCPAM2023}.
Indeed, that proposition is stated for
\[
\Delta w+k^2nw=-k^2(n-1)\operatorname{Re}v,
\]
but its proof applies to local equations of the form
\[
\Delta U+a(x)U=b(x)
\]
whenever the forcing term \(b\) has a fixed sign near the free-boundary point.
Here \(a(x)=k^2\) and \(b(x)=H(x,k)\).
The condition \eqref{eq:onesign-negative-H} therefore yields \(W<0\) in a smaller neighbourhood.
\end{proof}

\begin{lemma}\label{lem CPAM 33}
Let \(D\subset\mathbb R^3\) be a domain and let \(x^*\in\partial D\).
Assume that
\begin{equation}\label{eq:caff-boundary-lip}
\partial D\cap B_\varepsilon(x^*)
\quad\text{is Lipschitz}.
\end{equation}
Let \(W\) be a real-valued function satisfying
\begin{equation}\label{eq:caff-eq}
\Delta W=G
\qquad
\text{in }D\cap B_\varepsilon(x^*),
\end{equation}
and
\begin{equation}\label{eq:caff-cauchy}
W=\partial_\nu W=0
\qquad
\text{on }\partial D\cap B_\varepsilon(x^*).
\end{equation}
Assume that
\begin{equation}\label{eq:caff-C11}
W\in C^{1,1}(\overline{D\cap B_\varepsilon(x^*)}),
\end{equation}
that
\begin{equation}\label{eq:caff-W-negative}
W\leq0
\qquad
\text{in }D\cap B_\varepsilon(x^*),
\end{equation}
and that \(G\) admits a \(C^1\) extension \(G^*\) to a neighbourhood of
\[
\overline{D\cap B_\varepsilon(x^*)}
\]
such that
\begin{equation}\label{eq:caff-G-negative}
G^*\leq-\alpha<0
\end{equation}
for some constant \(\alpha>0\).
Then there exists \(0<\varepsilon'<\varepsilon\) such that
\[
\partial D\cap B_{\varepsilon'}(x^*)
\]
is of class \(C^1\), and
\[
W\in C^2(\overline{D\cap B_{\varepsilon'}(x^*)}).
\]
\end{lemma}

\begin{proof}
This is the local form of the Caffarelli free-boundary regularity step used in
\cite[Theorem~3.3]{CakoniCPAM2023}.
The theorem is local, and therefore applies after translating \(x^*\) to the origin and restricting to a sufficiently small neighbourhood of the boundary point.
\end{proof}

\begin{lemma}\label{lem CPAM}
Let \(D\subset\mathbb R^3\) be a domain and let \(x^*\in\partial D\).
Assume that
\begin{equation}\label{eq:KN-boundary-C1}
\partial D\cap B_\varepsilon(x^*)
\quad\text{is of class }C^1,
\end{equation}
and that
\begin{equation}\label{eq:KN-W-C2}
W\in C^2(\overline{D\cap B_\varepsilon(x^*)}).
\end{equation}
Suppose that
\begin{equation}\label{eq:KN-eq}
\Delta W+a(x)W=B(x)
\qquad
\text{in }D\cap B_\varepsilon(x^*),
\end{equation}
and
\begin{equation}\label{eq:KN-cauchy}
W=\partial_\nu W=0
\qquad
\text{on }\partial D\cap B_\varepsilon(x^*).
\end{equation}
Assume that
\begin{equation}\label{eq:KN-coeff-reg}
a,B\in C^{m,\mu}(\overline{D\cap B_\varepsilon(x^*)})
\end{equation}
for some \(m\geq1\) and \(0<\mu<1\), and that the interior trace satisfies
\begin{equation}\label{eq:KN-nondeg}
B(x^*)\neq0.
\end{equation}
Then there exists \(0<\varepsilon'<\varepsilon\) such that
\[
\partial D\cap B_{\varepsilon'}(x^*)
\]
is of class \(C^{m+1,\mu}\).
\end{lemma}

\begin{proof}
This follows from the Kinderlehrer--Nirenberg free-boundary regularity theorem
\cite[Theorem~1]{KinderlehrerNirenberg1977}, in the local formulation used in
\cite[Theorem~3.1]{CakoniCPAM2023}.
The assumptions above correspond to the equation
\[
\Delta W+a(x)W=B(x),
\]
with \(a,B\in C^{m,\mu}\), zero Cauchy data on the free boundary, and the non-degeneracy condition \(B(x^*)\neq0\).
\end{proof}

We now verify the hypotheses needed to apply Lemmas~\ref{lem local one sign}, \ref{lem CPAM 33}, and~\ref{lem CPAM} in sequence.
Recall that \eqref{eq Im F-Ftilde} has the form
\begin{equation}\label{eq Im F-Ftilde CPAM}
\Delta W+k^2 W=H(x,k)
\qquad
\text{in }\widetilde\Omega_1\cap B_\varepsilon(x^*),
\end{equation}
where
\[
H(x,k):=
\left(1-\frac{1}{\tilde c(x)^2}\right)k^2\operatorname{Im}\hat{\tilde u}(x,k)
-k\frac{\tilde f(x)}{\tilde c(x)^2}.
\]
Moreover, by \eqref{eq Im local zero cauchy}, we have
\begin{equation}\label{eq cpam_cauchy_zero}
W=\partial_\nu W=0
\qquad
\text{on }\partial\widetilde\Omega_1\cap B_\varepsilon(x^*).
\end{equation}

We first verify the hypotheses of Lemma~\ref{lem local one sign}.
The equation \eqref{eq:onesign-eq} is exactly \eqref{eq Im F-Ftilde CPAM}, and the zero Cauchy condition \eqref{eq:onesign-cauchy} is given by \eqref{eq cpam_cauchy_zero}.
The local \(H^2\) condition for the zero extension follows from the preceding local \(W^{2,p}\) regularity of \(w\), hence of \(W=\operatorname{Im}w\), together with the zero Cauchy data \eqref{eq cpam_cauchy_zero} on the lateral boundary patch.

It remains to verify the sign condition \eqref{eq:onesign-negative-H}.
Near the exposed lateral point \(x^*\), the coefficients and the source have the interior regularity stated in Admissibility~\ref{ass:piecewise_jump}. 
The high-frequency expansion used in Section~\ref{sec:cgo}, together with the local elliptic regularity in the present interior patch, gives the leading asymptotic
\[
\operatorname{Im}\hat{\tilde u}(x,k)
=
-\frac{\tilde f(x)}{k}
+
O(k^{-2})
\]
locally uniformly for
\[
x\in \widetilde\Omega_1\cap B_\varepsilon(x^*)
\]
as \(k\to\infty\). 
Here the asymptotic is understood in terms of the interior representative from \(\widetilde\Omega_1\).

Substituting this estimate into the definition of \(H\), we obtain
\[
\begin{aligned}
H(x,k)
&=
\left(1-\frac{1}{\tilde c(x)^2}\right)k^2
\left(
-\frac{\tilde f(x)}{k}
+
O(k^{-2})
\right)
-
k\frac{\tilde f(x)}{\tilde c(x)^2}  \\
&=
-k\left(1-\frac{1}{\tilde c(x)^2}\right)\tilde f(x)
-
k\frac{\tilde f(x)}{\tilde c(x)^2}
+
O(1) \\
&=
-k\tilde f(x)+O(1),
\end{aligned}
\]
where the \(O(1)\)-term is uniform in
\(\widetilde\Omega_1\cap B_\varepsilon(x^*)\).

By Admissibility condition~\ref{ass:piecewise_jump}, after possibly shrinking \(\varepsilon\), the interior representative of the source satisfies
\[
\tilde f_{\mathrm{int}}(x)\geq c_f>0
\qquad
\text{in }\widetilde\Omega_1\cap B_\varepsilon(x^*).
\]
Hence, for all sufficiently large \(k\),
\begin{equation}\label{eq H negative}
H(x,k)\leq -\frac{k c_f}{2}<0
\qquad
\text{in }\widetilde\Omega_1\cap B_\varepsilon(x^*).
\end{equation}
Thus \eqref{eq:onesign-negative-H} holds with
\[
\gamma=\frac{k c_f}{2}.
\]
In particular, the interior trace satisfies
\begin{equation}\label{eq k^2kf}
H(x^*,k)\neq0.
\end{equation}

Hence Lemma~\ref{lem local one sign}, applied with \(D=\widetilde\Omega_1\), yields, after possibly shrinking \(\varepsilon\),
\begin{equation}\label{eq W C11}
W\in C^{1,1}(\overline{\widetilde\Omega_1\cap B_\varepsilon(x^*)})
\end{equation}
and
\begin{equation}\label{eq W negative}
W(x,k)<0
\qquad
\text{in }\widetilde\Omega_1\cap B_\varepsilon(x^*).
\end{equation}

We next verify the hypotheses of Lemma~\ref{lem CPAM 33}.
The Lipschitz condition \eqref{eq:caff-boundary-lip} holds because \(x^*\) lies on a lateral \(C^{1,1}\) boundary patch.
Set
\[
G:=H-k^2W.
\]
Then \eqref{eq Im F-Ftilde CPAM} gives \eqref{eq:caff-eq}.
The zero Cauchy condition \eqref{eq:caff-cauchy} is precisely \eqref{eq cpam_cauchy_zero}.
The \(C^{1,1}\) condition \eqref{eq:caff-C11} follows from \eqref{eq W C11}, and the sign condition \eqref{eq:caff-W-negative} follows from \eqref{eq W negative}.

It remains to verify \eqref{eq:caff-G-negative}.
By \eqref{eq W C11} and \eqref{eq cpam_cauchy_zero}, there exists \(C>0\) such that
\[
|W(x,k)|
\leq
C\,\operatorname{dist}(x,\partial\widetilde\Omega_1)^2
\qquad
\text{for }x\in \widetilde\Omega_1\cap B_\varepsilon(x^*).
\]
Fix \(k>0\) sufficiently large so that \eqref{eq H negative} holds.
Shrinking \(\varepsilon\) once more, we may assume that
\[
k^2|W(x,k)|
\leq
\frac{k c_f}{4}
\qquad
\text{in }\widetilde\Omega_1\cap B_\varepsilon(x^*).
\]
Combining this estimate with \eqref{eq H negative}, we obtain
\[
G(x,k)
=
H(x,k)-k^2W(x,k)
\leq
-\frac{k c_f}{2}
+
\frac{k c_f}{4}
=
-\frac{k c_f}{4}
\qquad
\text{in }\widetilde\Omega_1\cap B_\varepsilon(x^*).
\]
Using
\[
\operatorname{Im}\hat{\tilde u}=W+\operatorname{Im}\hat u,
\]
the required \(C^1\) extension of \(G\) follows from \eqref{eq W C11}, the local \(C^{m,\mu}\) extensions of \(\tilde c\) and \(\tilde f\), and the smoothness of \(\hat u\) in \(B_\varepsilon(x^*)\).
Therefore \(G\) admits, after possibly shrinking the neighbourhood, a \(C^1\) extension \(G^*\) satisfying
\[
G^*\leq-\alpha<0,
\qquad
\alpha:=\frac{k c_f}{4}.
\]
Thus \eqref{eq:caff-G-negative} is verified.
Lemma~\ref{lem CPAM 33} now gives, after possibly shrinking \(\varepsilon\),
\begin{equation}\label{eq boundary C1 after caff}
\partial\widetilde\Omega_1\cap B_\varepsilon(x^*)\in C^1
\end{equation}
and
\begin{equation}\label{eq W C2 after caff}
W\in C^2(\overline{\widetilde\Omega_1\cap B_\varepsilon(x^*)}).
\end{equation}

We finally verify the hypotheses of Lemma~\ref{lem CPAM}.
Since
\[
\operatorname{Im}\hat{\tilde u}
=
W+\operatorname{Im}\hat u,
\]
we can rewrite \eqref{eq Im F-Ftilde CPAM} as
\begin{equation}\label{eq KN rewritten}
\Delta W+\frac{k^2}{\tilde c(x)^2}W=B(x,k)
\qquad
\text{in }\widetilde\Omega_1\cap B_\varepsilon(x^*),
\end{equation}
where
\[
B(x,k):=
\left(1-\frac{1}{\tilde c(x)^2}\right)k^2\operatorname{Im}\hat u(x,k)
-k\frac{\tilde f(x)}{\tilde c(x)^2}.
\]
The boundary condition \eqref{eq:KN-boundary-C1} follows from \eqref{eq boundary C1 after caff}, and the regularity condition \eqref{eq:KN-W-C2} follows from \eqref{eq W C2 after caff}.
The equation \eqref{eq:KN-eq} is exactly \eqref{eq KN rewritten}, with
\[
a(x)=\frac{k^2}{\tilde c(x)^2}.
\]
The zero Cauchy condition \eqref{eq:KN-cauchy} is \eqref{eq cpam_cauchy_zero}.

It remains to verify \eqref{eq:KN-coeff-reg} and \eqref{eq:KN-nondeg}.
Since
\[
B_\varepsilon(x^*)\cap\overline{\Omega_1}=\emptyset,
\]
the function \(\hat u\) satisfies the homogeneous Helmholtz equation in \(B_\varepsilon(x^*)\), and hence
\[
\hat u\in C^\infty(B_\varepsilon(x^*)).
\]
By Admissibility condition~\ref{ass:piecewise_jump}, we have
\[
\tilde c^{-2},\ \tilde f/\tilde c^2
\in C^{m,\mu}(\overline{\widetilde\Omega_1\cap B_\varepsilon(x^*)})
\]
on the interior side.
Consequently,
\[
a,\ B(\cdot,k)
\in C^{m,\mu}(\overline{\widetilde\Omega_1\cap B_\varepsilon(x^*)}),
\]
which verifies \eqref{eq:KN-coeff-reg}.
Moreover, since \(W=0\) on the boundary patch, we have
\[
B(x^*,k)=H(x^*,k).
\]
Thus, by \eqref{eq k^2kf},
\[
B(x^*,k)\neq0,
\]
which verifies \eqref{eq:KN-nondeg}.
Therefore Lemma~\ref{lem CPAM} gives, after possibly shrinking \(\varepsilon\), that
\[
\partial\widetilde\Omega_1\cap B_\varepsilon(x^*)\in C^{m+1,\mu}.
\]
Since \(x^*\) is an exposed lateral boundary point, this means that the corresponding transverse boundary \(\partial\widetilde\omega_1\) is \(C^{m+1,\mu}\) near \(x'^*\).
This contradicts the transverse nonsmoothness condition in Admissibility~\ref{ass:piecewise_jump}.
Therefore the assumption \(\Omega_1\neq\widetilde\Omega_1\) is false.
Hence
\[
\Omega_1=\widetilde\Omega_1.
\]
This proves \eqref{eq single domain}.

\end{proof}

%-------------------------------------------------------------------------------------------
%----------------------------------------------------------------------------
\subsection{Step 2: Multiple components}\label{subsec pice2}

\begin{proof}[Proof of Theorem~\ref{thm:unique_f_c_piecewise}, Step 2]
We now extend the argument to finitely many components and complete the proof.
Set
\[
D:=\bigcup_{i=1}^N \Omega_i,
\qquad
\widetilde D:=\bigcup_{j=1}^M \widetilde\Omega_j.
\]
By the piecewise representations \eqref{PiecewiseForm}--\eqref{PiecewiseFormTilde}, we have
\[
f=0,\qquad c=1
\qquad \text{in } \Omega\setminus \overline D,
\]
and
\[
\tilde f=0,\qquad \tilde c=1
\qquad \text{in } \Omega\setminus \overline{\widetilde D}.
\]

We first identify the total supports.
We claim that
\[
D=\widetilde D.
\]
Suppose otherwise.
By the same exposed-point argument as in Lemma~\ref{lem:lateral-exposed-point}, applied to an accessible component of the symmetric difference, after interchanging the two configurations if necessary, there exist \(j\) and an accessible exposed lateral boundary point
\[
x^*\in\partial\widetilde\Omega_j\setminus\overline D
\]
such that the exterior side of the corresponding local lateral patch belongs to the connected common exterior component where \(w=0\).

Choosing \(\varepsilon>0\) sufficiently small, we have
\[
B_\varepsilon(x^*)\cap\overline D=\emptyset,
\]
and the boundary patch
\[
\partial\widetilde\Omega_j\cap B_\varepsilon(x^*)
\]
is a lateral patch of \(\widetilde\Omega_j\). Moreover, by the choice of \(x^*\), no other component of \(\widetilde D\) meets the exterior side of this local patch. Hence the local argument in Subsection~\ref{subsec pice1} applies in the one-sided neighbourhood
\[
\widetilde\Omega_j\cap B_\varepsilon(x^*),
\]
and yields
\[
\partial\widetilde\Omega_j\cap B_\varepsilon(x^*)\in C^{m+1,\mu}.
\]
Since \(x^*\) is an exposed lateral boundary point, this implies that the corresponding transverse boundary \(\partial\widetilde\omega_j\) is \(C^{m+1,\mu}\) near \(x'^*\).
This contradicts the transverse nonsmoothness condition in Admissibility~\ref{ass:piecewise_jump}.
Therefore
\[
D=\widetilde D.
\]

Since the components in each family are pairwise disjoint connected domains, equality of the total supports gives equality of the connected components after relabelling.
Thus
\[
N=M,
\qquad
\Omega_i=\widetilde\Omega_i,\quad i=1,\ldots,N.
\]

Having identified the support componentwise, we next show that the Cauchy data coincide on each component boundary.
Since \(\overline D\subset\Omega\), define
\[
G:=\Omega\setminus \overline D.
\]
By Admissibility~\ref{ass:piecewise_jump}, \(G\) is connected.
Since both configurations coincide with the background medium in \(G\), the function \(w\) defined in \eqref{eq ww} satisfies
\[
\Delta w+k^2w=0
\qquad
\text{in }G.
\]
Moreover, the boundary measurement identity \eqref{eq measure 2} gives
\[
\hat u=\hat{\tilde u},
\qquad
\partial_\nu \hat u=\partial_\nu \hat{\tilde u}
\qquad
\text{on }\partial\Omega,\quad k>0.
\]
Hence, by elliptic unique continuation from Cauchy data, we obtain
\[
w=0
\qquad
\text{in }G.
\]
Consequently,
\[
\hat u(x,k)=\hat{\tilde u}(x,k),
\qquad
\partial_\nu \hat u(x,k)=\partial_\nu \hat{\tilde u}(x,k)
\qquad
\text{on } \partial\Omega_i,\quad k>0,\quad i=1,\ldots,N,
\]
where \(\nu\) denotes the unit normal with respect to the exterior region \(G\).
By the uniqueness of the inverse time Fourier transform, this implies
\[
u(x,t)=\tilde u(x,t),
\qquad
\partial_\nu u(x,t)=\partial_\nu \tilde u(x,t)
\qquad
\text{on } \partial\Omega_i\times\mathbb R_+,
\quad i=1,\ldots,N.
\]

We now apply, on each component \(\Omega_i\), the local recovery argument used in the proofs of Theorems~\ref{thm0} and~\ref{thm:unique_f_c}. 
Indeed, by the global admissibility conditions imposed in Theorem~\ref{thm:unique_f_c_piecewise}, the zero extensions of \(f/c^2\), \(\tilde f/\tilde c^2\), \(f\), and \(\tilde f\) satisfy the separated-variable structures required in Admissibilities~\ref{ass:x3_invariance_cf} and~\ref{ass:x1_only_fc}. 
After the supports have been identified, restricting these identities to each common component \(\Omega_i=\widetilde\Omega_i\) preserves the same separated-variable structure. 
Since the Cauchy data agree on \(\partial\Omega_i\times\mathbb R_+\), the local integral-identity and Fourier arguments from the proofs of Theorems~\ref{thm0} and~\ref{thm:unique_f_c} apply on \(\Omega_i\).
Consequently,
\[
f=\tilde f \quad\text{in }\Omega_i,
\]
and
\[
c=\tilde c \quad\text{on }\operatorname{supp}(f)\cap\Omega_i.
\]

We next upgrade the equality of the sound speeds from the non-vanishing set of the source to the whole component. 
Let
\[
O_i:=\{x\in\Omega_i:\ f_i(x)\neq0\}.
\]
By the regularity assumptions on \(f_i\), the set \(O_i\) is open. 
Moreover, since \(\operatorname{supp}(f_i)=\overline{\Omega_i}\), we have
\[
\overline{O_i}=\overline{\Omega_i}.
\]
From the quotient identity and the already established equality \(f=\tilde f\), we have
\[
f(x)\left(\frac1{c(x)^2}-\frac1{\tilde c(x)^2}\right)=0
\qquad\text{for a.e. }x\in\Omega_i.
\]
Hence
\[
\frac1{c(x)^2}-\frac1{\tilde c(x)^2}=0
\qquad\text{for a.e. }x\in O_i.
\]
Since \(c_i,\tilde c_i\in W^{2,\infty}(\Omega_i)\), the function
\[
D_i(x):=\frac1{c_i(x)^2}-\frac1{\tilde c_i(x)^2}
\]
has a continuous representative in \(\Omega_i\). 
Therefore \(D_i=0\) everywhere in \(O_i\). 
By the density of \(O_i\) in \(\Omega_i\), we obtain \(D_i=0\) in \(\Omega_i\). 
Since \(c_i,\tilde c_i>0\), it follows that
\[
c=\tilde c
\qquad\text{for a.e. }x\in\Omega_i.
\]

Together with \(f=\tilde f\) in \(\Omega_i\), this gives
\[
f=\tilde f,\qquad c=\tilde c
\qquad\text{for a.e. }x\in\Omega_i.
\]
This completes the proof of Theorem~\ref{thm:unique_f_c_piecewise}.
\end{proof}

%---------------------------------------------------------------------------
\subsection{The piecewise constant case}\label{subsec:piecewise-constant}

We now prove Theorem~\ref{thm:unique_f_c_piecewise_constant}, which gives the corresponding result for piecewise constant configurations on general Lipschitz domains.

\subsubsection{Step 1: Support identification in the piecewise constant case}

\begin{proof}[Proof of Theorem~\ref{thm:unique_f_c_piecewise_constant}, Step 1]
\emph{Support identification.}

We first identify the supports.
The proof follows the same local argument as in Subsections~\ref{subsec pice1}--\ref{subsec pice2}, with the cylindrical exposed-point argument replaced by the exposed boundary point available in the present geometry.

Suppose, to the contrary, that
\[
\bigcup_{i=1}^N\Omega_i\neq
\bigcup_{j=1}^M\widetilde\Omega_j.
\]
After interchanging the two configurations if necessary, we may choose \(j\) and an accessible exposed boundary point
\[
x^*\in\partial\widetilde\Omega_j
\setminus
\overline{\bigcup_{i=1}^N\Omega_i}
\]
such that the exterior side of the corresponding local boundary patch belongs to the connected common exterior component where \(w=0\), where \(w:=\hat{\tilde u}-\hat u\).
By Admissibility condition~\ref{ass:piecewise_constant_general_domains},
\(\partial\widetilde\Omega_j\) is not of class \(C^{m+1,\mu}\) in any neighbourhood of \(x^*\).

With \(W:=\operatorname{Im}w\), after shrinking \(\varepsilon>0\) if necessary, the same UCP and trace argument gives
\[
W=\partial_\nu W=0
\qquad
\text{on }\partial\widetilde\Omega_j\cap B_\varepsilon(x^*),
\]
and
\[
\Delta W+k^2W=H(x,k)
\qquad
\text{in }\widetilde\Omega_j\cap B_\varepsilon(x^*).
\]
Since \(\tilde f_j\) and \(\tilde c_j\) are constants, the leading high-frequency asymptotic used above gives, locally uniformly near \(x^*\) from the interior side of \(\widetilde\Omega_j\),
\[
\operatorname{Im}\hat{\tilde u}(x,k)
=
-\frac{\tilde f_j}{k}+O(k^{-2}).
\]
Substituting this into the definition of \(H\), we obtain
\[
H(x,k)=-k\tilde f_j+O(1).
\]
Since \(\tilde f_j\neq0\), set
\[
V:=\operatorname{sgn}(\tilde f_j)W.
\]
Then
\[
V=\partial_\nu V=0
\qquad
\text{on }\partial\widetilde\Omega_j\cap B_\varepsilon(x^*),
\]
and
\[
\Delta V+k^2V
=
\operatorname{sgn}(\tilde f_j)H(x,k)
=
-k|\tilde f_j|+O(1)<0
\]
for all sufficiently large \(k\).
Hence Lemmas~\ref{lem local one sign}, \ref{lem CPAM 33}, and~\ref{lem CPAM} apply to \(V\) as before and yield
\[
\partial\widetilde\Omega_j\cap B_\varepsilon(x^*)\in C^{m+1,\mu}.
\]
This contradicts Admissibility condition~\ref{ass:piecewise_constant_general_domains}.
Therefore
\[
\bigcup_{i=1}^N\Omega_i=
\bigcup_{j=1}^M\widetilde\Omega_j.
\]
After relabelling the connected components, we obtain
\[
N=M,\qquad
\Omega_i=\widetilde\Omega_i,\quad i=1,\ldots,N.
\]
\end{proof}

\subsubsection{Step 2: Determination of the piecewise constants}

\begin{proof}[Proof of Theorem~\ref{thm:unique_f_c_piecewise_constant}, Step 2]
We next use the integral identity in Lemma~\ref{lem:integral_identity}.
Since the supports have been identified, after relabelling we have
\[
N=M,\qquad \Omega_i=\widetilde\Omega_i,\quad i=1,\ldots,N.
\]
By the unique-continuation argument used above, the Cauchy data of \(u\) and \(\tilde u\) agree on the boundary of each identified component, in the trace sense from the exterior region. 
Therefore the time-domain identity of Lemma~\ref{lem:integral_identity} may be applied on each component separately. 
For a fixed component \(D:=\Omega_1=\widetilde\Omega_1\), where \(f=f_1\), \(\tilde f=\tilde f_1\), \(c=c_1\), and \(\tilde c=\tilde c_1\), this gives
\begin{equation}\label{eq:constant_identity_single}
\left(\frac{1}{c_1^2}-\frac{1}{\tilde c_1^2}\right)
\int_0^\infty\!\!\int_D
\tilde u(x,t)\,\partial_t^2 w(x,t)\,\mathrm{d}x\,\mathrm{d}t
=
\left(\frac{\tilde f_1}{\tilde c_1^2}-\frac{f_1}{c_1^2}\right)
\int_D
\partial_t w(x,0)\,\mathrm{d}x.
\end{equation}
Here the test functions are only required to solve the corresponding constant-coefficient wave equation in \(D\).

We first prove that \(c_1=\tilde c_1\).
For this purpose, we choose a test function whose initial time derivative has zero average over \(D\).
Let \(a,b>0\) be two positive constants and let \(\omega\in\mathbb S^2\).
For \(s>0\), define
\[
\Phi_{s,\omega}(x)
=
\frac{e^{\frac{s}{c_1}\omega\cdot x}}
{\displaystyle\int_D e^{\frac{s}{c_1}\omega\cdot y}\,\mathrm{d}y}.
\]
Then
\begin{equation}\label{eq:Phi-normalization}
\int_D \Phi_{s,\omega}(x)\,\mathrm{d}x=1.
\end{equation}
Moreover,
\[
\Delta\Phi_{s,\omega}
=
\frac{s^2}{c_1^2}\Phi_{s,\omega}.
\]
Hence
\[
\partial_t^2(e^{-st}\Phi_{s,\omega})
-
c_1^2\Delta(e^{-st}\Phi_{s,\omega})
=
0.
\]
Now set
\[
v_{a,b,\omega}(t,x)
=
e^{-at}\Phi_{a,\omega}(x)
-
\frac{a}{b}e^{-bt}\Phi_{b,\omega}(x).
\]
By linearity, \(v_{a,b,\omega}\) satisfies the test wave equation
\begin{equation}\label{eq:vab-wave}
\partial_t^2 v_{a,b,\omega}
-
c_1^2\Delta v_{a,b,\omega}
=
0
\qquad
\text{in }D\times(0,\infty).
\end{equation}
Thus \(v_{a,b,\omega}\) is an admissible test function for the localized identity on \(D\).

The choice of the coefficient \(a/b\) ensures cancellation of the averaged initial time derivative.
Indeed,
\[
\partial_t v_{a,b,\omega}(0,x)
=
-a\Phi_{a,\omega}(x)
+
a\Phi_{b,\omega}(x),
\]
and therefore, by \eqref{eq:Phi-normalization},
\begin{equation}\label{eq:vab-initial-derivative-zero}
\int_D\partial_t v_{a,b,\omega}(0,x)\,\mathrm{d}x
=
-a\int_D\Phi_{a,\omega}(x)\,\mathrm{d}x
+
a\int_D\Phi_{b,\omega}(x)\,\mathrm{d}x
=
0.
\end{equation}
Taking
\[
w=v_{a,b,\omega}
\]
in \eqref{eq:constant_identity_single}, and using \eqref{eq:vab-initial-derivative-zero}, the right-hand side of \eqref{eq:constant_identity_single} vanishes.
Consequently,
\begin{equation}\label{eq:constant_identity_vab}
\left(\frac{1}{c_1^2}-\frac{1}{\tilde c_1^2}\right)
\int_0^\infty\!\!\int_D
\tilde u(x,t)\,\partial_t^2 v_{a,b,\omega}(t,x)\,\mathrm{d}x\,\mathrm{d}t
=
0.
\end{equation}

It remains to show that the integral factor in \eqref{eq:constant_identity_vab} is nonzero for a suitable choice of the parameters.
This is the purpose of the following elementary lemma.

\begin{lemma}\label{lem:nonzero_test_factor}
Assume that
\[
\tilde u(x,0)=\tilde f_1
\qquad
\text{in }D,
\]
where \(\tilde f_1\neq0\).
For \(s>0\) and \(\omega\in\mathbb S^2\), define
\[
\Phi_{s,\omega}(x)
=
\frac{e^{\frac{s}{c_1}\omega\cdot x}}
{\displaystyle\int_D e^{\frac{s}{c_1}\omega\cdot y}\,\mathrm{d}y}.
\]
Then there exist \(a,b>0\) and \(\omega\in\mathbb S^2\) such that
\begin{equation}\label{eq:nonzero_test_factor}
\int_0^\infty\!\!\int_D
\tilde u(x,t)\,\partial_t^2 v_{a,b,\omega}(t,x)\,\mathrm{d}x\,\mathrm{d}t
\neq0,
\end{equation}
where
\[
v_{a,b,\omega}(t,x)
=
e^{-at}\Phi_{a,\omega}(x)
-
\frac{a}{b}e^{-bt}\Phi_{b,\omega}(x).
\]
\end{lemma}

\begin{proof}
Fix \(\omega\in\mathbb S^2\), and set
\[
F_\omega(s):=
\int_0^\infty\!\!\int_D
\tilde u(x,t)e^{-st}\Phi_{s,\omega}(x)\,\mathrm{d}x\,\mathrm{d}t.
\]
A direct calculation gives
\[
\partial_t^2 v_{a,b,\omega}
=
a^2e^{-at}\Phi_{a,\omega}
-
ab e^{-bt}\Phi_{b,\omega}.
\]
Hence
\[
\int_0^\infty\!\!\int_D
\tilde u\,\partial_t^2 v_{a,b,\omega}\,\mathrm{d}x\,\mathrm{d}t
=
a^2F_\omega(a)-abF_\omega(b)
=
a\left(aF_\omega(a)-bF_\omega(b)\right).
\]
It is therefore enough to show that the function \(sF_\omega(s)\) is not constant on \((0,\infty)\).

For large \(s\), the Laplace transform is governed by the initial value of \(\tilde u\). 
Since the initial value is the constant \(\tilde f_1\) in \(D\), and since \(\Phi_{s,\omega}\) is normalized by \eqref{eq:Phi-normalization}, the initial-value theorem for the Laplace transform yields
\[
\lim_{s\to+\infty}sF_\omega(s)
=
\tilde f_1.
\]
By the normalization of \(\Phi_{s,\omega}\),
\[
\int_D\Phi_{s,\omega}(x)\,\mathrm{d}x=1.
\]
Thus
\[
F_\omega(s)
=
\frac{\tilde f_1}{s}
+O(s^{-2})
\qquad
\text{as }s\to+\infty,
\]
and consequently
\[
\lim_{s\to+\infty}sF_\omega(s)=\tilde f_1\neq0.
\]

On the other hand, by the decay assumption on \(\tilde u\), the quantity \(F_\omega(s)\) has a finite limit as \(s\to0^+\).
Therefore
\[
\lim_{s\to0^+}sF_\omega(s)=0.
\]
Hence \(sF_\omega(s)\) cannot be constant on \((0,\infty)\).
Consequently, there exist \(a,b>0\) such that
\[
aF_\omega(a)\neq bF_\omega(b).
\]
For this choice of \(a,b\), we obtain \eqref{eq:nonzero_test_factor}.
The proof is complete.
\end{proof}

By Lemma~\ref{lem:nonzero_test_factor}, we choose \(a,b>0\) and \(\omega\in\mathbb S^2\) such that
\[
\int_0^\infty\!\!\int_D
\tilde u(x,t)\,\partial_t^2 v_{a,b,\omega}(t,x)\,\mathrm{d}x\,\mathrm{d}t
\neq0.
\]
Then \eqref{eq:constant_identity_vab} implies
\[
\frac{1}{c_1^2}-\frac{1}{\tilde c_1^2}=0.
\]
Since \(c_1,\tilde c_1>0\), we conclude that
\[
c_1=\tilde c_1.
\]

Substituting \(c_1=\tilde c_1\) into \eqref{eq:constant_identity_single}, we get
\[
\frac{\tilde f_1-f_1}{c_1^2}
\int_D \partial_t v(x,0)\,\mathrm{d}x
=0
\]
for every admissible test function \(v\) for the localized identity on \(D\).
It remains to choose \(v\) so that the remaining integral factor is nonzero.
For \(s>0\) and \(\omega\in\mathbb S^2\), set
\[
v_{s,\omega}(t,x)
=
e^{-st}\Phi_{s,\omega}(x),
\]
where
\[
\Phi_{s,\omega}(x)
=
\frac{e^{\frac{s}{c_1}\omega\cdot x}}
{\displaystyle\int_D e^{\frac{s}{c_1}\omega\cdot y}\,\mathrm{d}y}.
\]
Then
\[
\int_D \Phi_{s,\omega}(x)\,\mathrm{d}x=1
\]
and
\[
\partial_t^2 v_{s,\omega}
-
c_1^2\Delta v_{s,\omega}
=
0.
\]
Hence \(v_{s,\omega}\in\mathscr W\).
Moreover,
\[
\int_D\partial_t v_{s,\omega}(x,0)\,\mathrm{d}x
=
-s\int_D\Phi_{s,\omega}(x)\,\mathrm{d}x
=
-s\neq0.
\]
Therefore
\[
\tilde f_1-f_1=0,
\]
and hence
\[
f_1=\tilde f_1.
\]

Repeating the same argument for \(D=\Omega_i=\widetilde\Omega_i\), \(i=1,\ldots,N\), gives
\[
c_i=\tilde c_i,\qquad f_i=\tilde f_i,
\qquad i=1,\ldots,N.
\]
Hence \(c=\tilde c\) and \(f=\tilde f\) in \(\Omega\).

\end{proof}
%====================================================================================
\section{Simultaneous recovery for general second-order hyperbolic equations}\label{sec:generalhyperbolic}

In this section, we consider the general second-order hyperbolic system \eqref{eq:1}, where the unknown quantities are the initial displacement \(f\), the initial velocity \(h\), the wave speed \(c\), and the conductivity coefficient \(\sigma\).
The measurement is given by the passive boundary Cauchy data \eqref{eq:2}.
We prove that these four quantities are uniquely determined by a single passive measurement.
The proof remains within the unified time-domain and frequency-domain framework used for Theorems~\ref{thm0} and~\ref{thm:unique_f_c}, but the decoupling argument is now carried out in a more strongly coupled four-parameter setting.

\begin{proof}[Proof of Theorem~\ref{thm:general}]
We combine the two integral identities with suitable test functions to decouple the four unknown quantities.
The proof first identifies \(\sigma\), \(f\), and \(h\) separately, and then uses the quotient identities involving \(f/c^2\) and \(h/c^2\) to determine \(c\).

\subsection{Step 1: Time-domain identity}
\begin{lemma}\label{lem:first_identity}
Let $(f,h,c,\sigma)$ and $(\tilde f,\tilde h,\tilde c,\tilde\sigma)$ be two configurations satisfying the relevant assumptions of Theorem~\ref{thm:general}.
Let $u$ and $\tilde u$ be the corresponding solutions of \eqref{eq:1}.
Assume that the measurements coincide in the sense of \eqref{eq:2}.
Let $w$ be a sufficiently regular solution to
\[
\frac{1}{c(x)^2}\partial_t^2 w(x,t)-\nabla\cdot(\sigma(x)\nabla w(x,t))=0
\quad\text{in }\Omega\times(0,\infty).
\]
Assume that \(u,\tilde u,w\) have enough regularity and time decay so that all integrations by parts below are justified and all spacetime integrals are finite.
Then
\begin{equation}\label{eq:first_identity}
\begin{aligned}
&\int_\Omega\int_0^\infty
\left(\frac{1}{\tilde{c}^2}-\frac{1}{c^2}\right)\tilde{u}(x,t)\,\partial_{tt}w(x,t)\,\mathrm{d}t\mathrm{d}x
+\int_\Omega\left(\frac{\tilde{f}}{\tilde{c}^2}-\frac{f}{c^2}\right)\partial_t w(x,0)\,\mathrm{d}x\\
&\quad
-\int_\Omega\left(\frac{\tilde{h}}{\tilde{c}^2}-\frac{h}{c^2}\right)w(x,0)\,\mathrm{d}x
+\int_\Omega\int_0^\infty
(\tilde{\sigma}-\sigma)(x)\,\nabla\tilde{u}(x,t)\cdot\nabla w(x,t)\,\mathrm{d}t\mathrm{d}x
=0.
\end{aligned}
\end{equation}
\end{lemma}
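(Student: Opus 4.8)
The plan is to follow exactly the scheme of the proof of Lemma~\ref{lem:integral_identity}, now carrying along the two additional contributions produced by the conductivity $\sigma$ and by the initial velocity $h$. First I would multiply the equation for $\tilde u$ by $w$ and integrate over $\Omega\times(0,\infty)$; an integration by parts in $x$ against the divergence-form operator $\nabla\!\cdot\!(\tilde\sigma\nabla\,\cdot\,)$ yields the bulk term $\int\!\!\int\tilde\sigma\,\nabla\tilde u\cdot\nabla w$ together with the co-normal boundary term $\int\!\!\int_{\partial\Omega}\tilde\sigma(\partial_\nu\tilde u)\,w$. Writing $\tilde\sigma=(\tilde\sigma-\sigma)+\sigma$ isolates the coefficient-mismatch term $\int\!\!\int(\tilde\sigma-\sigma)\nabla\tilde u\cdot\nabla w$; on the remaining piece $\int\!\!\int\sigma\,\nabla\tilde u\cdot\nabla w$ I would integrate by parts back onto $w$, using the governing equation \eqref{eq:tdtf} for $w$ to replace $\nabla\!\cdot\!(\sigma\nabla w)$ by $c^{-2}\partial_t^2 w$; this generates the term $-\int\!\!\int c^{-2}\tilde u\,\partial_t^2 w$ and a second boundary term $\int\!\!\int_{\partial\Omega}\tilde u\,\sigma(\partial_\nu w)$.

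Next I would integrate by parts twice in $t$ in the term $\int\!\!\int \tilde c^{-2}\,\partial_t^2\tilde u\,w$. The endpoint contribution at $t=\infty$ vanishes by Admissibility~\ref{ass:general_decay} (which provides $\eta(t)\to0$ and $\eta\in L^1(0,\infty)$), together with the assumed uniform-in-time bounds on $w$ and $\partial_t w$ and the uniform positivity \eqref{eq:general_bounds} of $c,\tilde c$; at $t=0$ the initial conditions $\tilde u|_{t=0}=\tilde f$ and $\partial_t\tilde u|_{t=0}=\tilde h$ produce precisely the terms $\int_\Omega(\tilde f/\tilde c^2)\,\partial_t w(x,0)\,\mathrm{d}x$ and $-\int_\Omega(\tilde h/\tilde c^2)\,w(x,0)\,\mathrm{d}x$. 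At this stage the identity reads as the claimed \eqref{eq:first_identity} plus the combined boundary-flux integral $\int\!\!\int_{\partial\Omega}\bigl(\tilde u\,\sigma\partial_\nu w-\tilde\sigma(\partial_\nu\tilde u)\,w\bigr)$, so it remains only to show that this flux term vanishes.

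To handle the flux I would invoke the equality of passive boundary observations \eqref{eq:2}, which by definition gives $u=\tilde u$ and $\sigma\partial_\nu u=\tilde\sigma\partial_\nu\tilde u$ on $\partial\Omega\times\mathbb R_+$ in the trace sense, so the flux equals $\int\!\!\int_{\partial\Omega}\bigl(u\,\sigma\partial_\nu w-\sigma(\partial_\nu u)\,w\bigr)$. Since $u$ and $w$ solve the same equation with coefficients $(c,\sigma)$ in $\Omega\times(0,\infty)$, multiplying the $u$-equation by $w$, the $w$-equation by $u$, subtracting and integrating over $\Omega$, Green's second identity for $\nabla\!\cdot\!(\sigma\nabla\,\cdot\,)$ turns the spatial part into this very boundary flux, while the time part assembles (using that $c$ is independent of $t$) into $\frac{\mathrm{d}}{\mathrm{d}t}\int_\Omega c^{-2}\bigl(\partial_t u\,w-u\,\partial_t w\bigr)\,\mathrm{d}x$. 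Integrating in $t$ over $(0,\infty)$, the $t=\infty$ endpoint again vanishes by the local decay, and the $t=0$ endpoint with $u|_{t=0}=f$, $\partial_t u|_{t=0}=h$ shows that the flux integral equals $-\int_\Omega(f/c^2)\,\partial_t w(x,0)\,\mathrm{d}x+\int_\Omega(h/c^2)\,w(x,0)\,\mathrm{d}x$. Substituting this back, the flux-generated terms combine with the $\tilde f,\tilde h$ terms into the differences $(\tilde f/\tilde c^2-f/c^2)$ and $(\tilde h/\tilde c^2-h/c^2)$, which is exactly \eqref{eq:first_identity}.

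The main obstacle is purely a matter of endpoint bookkeeping: there are three integrations by parts in time, and each one's $t=\infty$ boundary term must be killed, which is precisely where the $L^1$-in-time local decay of Admissibility~\ref{ass:general_decay} together with the a priori boundedness hypotheses on $w,\partial_t w$ enter; the lemma's blanket regularity and decay assumption is what licenses treating all the spacetime integrals as absolutely convergent and all the integrations by parts — including the Lipschitz-domain Green identities for the divergence-form operator, which hold in the usual weak formulation — as legitimate. No analytic input beyond that already used in the TAT/PAT case of Lemma~\ref{lem:integral_identity} is required; the only genuinely new step is tracking the $h$-term and the $(\tilde\sigma-\sigma)$-term through the two integrations by parts.
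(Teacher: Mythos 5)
Your proposal is correct and follows essentially the same route as the paper's proof: the same splitting $\tilde\sigma=(\tilde\sigma-\sigma)+\sigma$ with integration by parts back onto $w$ using its governing equation, the same double time-integration by parts producing the $\tilde f/\tilde c^2$ and $\tilde h/\tilde c^2$ terms, and the same treatment of the boundary flux via the measurement equality and the pair $(u,w)$ (your explicit Green's-identity/total-time-derivative computation is exactly the argument the paper invokes by reference to its TAT/PAT lemma). The sign bookkeeping for the flux term, giving $-\int_\Omega(f/c^2)\partial_t w(x,0)+\int_\Omega(h/c^2)w(x,0)$, matches the paper's, so no gap remains.
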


\begin{proof}
The function $\tilde u$ satisfies
\[
\frac{1}{\tilde c(x)^2}\partial_t^2\tilde u(x,t)-\nabla\cdot(\tilde\sigma(x)\nabla\tilde u(x,t))=0
\quad\text{in }\Omega\times(0,\infty),
\]
together with the initial conditions
\[
\tilde u(\cdot,0)=\tilde f,
\qquad
\partial_t\tilde u(\cdot,0)=\tilde h.
\]
Multiplying the equation for $\tilde u$ by $w$ and integrating over $\Omega\times(0,\infty)$, we obtain
\begin{equation}\label{eq:first_id_step1}
\int_0^\infty\int_\Omega \frac{1}{\tilde c^2}\tilde u_{tt}\,w\,\mathrm{d}x\,\mathrm{d}t
+\int_0^\infty\int_\Omega \tilde\sigma\nabla\tilde u\cdot\nabla w\,\mathrm{d}x\,\mathrm{d}t
-\int_0^\infty\int_{\partial\Omega}(\nu\cdot\tilde\sigma\nabla\tilde u)\,w\,\mathrm{d}S\,\mathrm{d}t
=0.
\end{equation}

We decompose the second volume integral as
\[
\int_0^\infty\int_\Omega \tilde\sigma\nabla\tilde u\cdot\nabla w\,\mathrm{d}x\,\mathrm{d}t
=
\int_0^\infty\int_\Omega \sigma\nabla\tilde u\cdot\nabla w\,\mathrm{d}x\,\mathrm{d}t
+\int_0^\infty\int_\Omega (\tilde\sigma-\sigma)\nabla\tilde u\cdot\nabla w\,\mathrm{d}x\,\mathrm{d}t.
\]
For the first term on the right-hand side, integration by parts in $x$ gives
\[
\int_0^\infty\int_\Omega \sigma\nabla\tilde u\cdot\nabla w\,\mathrm{d}x\,\mathrm{d}t
=
-\int_0^\infty\int_\Omega \tilde u\,\nabla\cdot(\sigma\nabla w)\,\mathrm{d}x\,\mathrm{d}t
+\int_0^\infty\int_{\partial\Omega}\tilde u\,(\nu\cdot\sigma\nabla w)\,\mathrm{d}S\,\mathrm{d}t.
\]
Since $w$ solves
\[
\frac{1}{c(x)^2}w_{tt}-\nabla\cdot(\sigma(x)\nabla w)=0,
\]
we have
\[
\nabla\cdot(\sigma\nabla w)=\frac{1}{c^2}w_{tt}.
\]
Hence
\begin{equation}\label{eq:first_id_step2}
\int_0^\infty\int_\Omega \sigma\nabla\tilde u\cdot\nabla w\,\mathrm{d}x\,\mathrm{d}t
=
-\int_0^\infty\int_\Omega \frac{1}{c^2}\tilde u\,w_{tt}\,\mathrm{d}x\,\mathrm{d}t
+\int_0^\infty\int_{\partial\Omega}\tilde u\,(\nu\cdot\sigma\nabla w)\,\mathrm{d}S\,\mathrm{d}t.
\end{equation}
Substituting \eqref{eq:first_id_step2} into \eqref{eq:first_id_step1}, we arrive at
\begin{equation}\label{eq:first_id_step3}
\int_0^\infty\int_\Omega \frac{1}{\tilde c^2}\tilde u_{tt}\,w\,\mathrm{d}x\,\mathrm{d}t
-\int_0^\infty\int_\Omega \frac{1}{c^2}\tilde u\,w_{tt}\,\mathrm{d}x\,\mathrm{d}t
+\int_0^\infty\int_\Omega (\tilde\sigma-\sigma)\nabla\tilde u\cdot\nabla w\,\mathrm{d}x\,\mathrm{d}t
+\mathcal B=0,
\end{equation}
where
\[
\mathcal B
:=
\int_0^\infty\int_{\partial\Omega}
\Big(\tilde u\,(\nu\cdot\sigma\nabla w)-(\nu\cdot\tilde\sigma\nabla\tilde u)\,w\Big)\,\mathrm{d}S\,\mathrm{d}t.
\]

Next, we integrate by parts twice in $t$ in the first term of \eqref{eq:first_id_step3}.
For each fixed $x\in\Omega$,
\[
\int_0^\infty \tilde u_{tt}(x,t)\,w(x,t)\,\mathrm{d}t
=
\big[\tilde u_t(x,t)w(x,t)-\tilde u(x,t)w_t(x,t)\big]_{t=0}^{t=\infty}
+\int_0^\infty \tilde u(x,t)\,w_{tt}(x,t)\,\mathrm{d}t.
\]
Since $\tilde u$ satisfies the Admissibility Condition~\ref{ass:general_decay}, while $w$ and $w_t$ are bounded in time, the boundary contribution at $t=\infty$ vanishes.
Using also $\tilde u(\cdot,0)=\tilde f$ and $\tilde u_t(\cdot,0)=\tilde h$, we obtain
\begin{equation}\label{eq:first_id_timeparts}
\int_0^\infty\int_\Omega \frac{1}{\tilde c^2}\tilde u_{tt}\,w\,\mathrm{d}x\,\mathrm{d}t
=
\int_0^\infty\int_\Omega \frac{1}{\tilde c^2}\tilde u\,w_{tt}\,\mathrm{d}x\,\mathrm{d}t
+\int_\Omega \frac{\tilde f}{\tilde c^2}\,w_t(x,0)\,\mathrm{d}x
-\int_\Omega \frac{\tilde h}{\tilde c^2}\,w(x,0)\,\mathrm{d}x.
\end{equation}
Substituting \eqref{eq:first_id_timeparts} into \eqref{eq:first_id_step3}, we get
\begin{multline}\label{eq:first_id_step4}
\int_0^\infty\int_\Omega \left(\frac{1}{\tilde c^2}-\frac{1}{c^2}\right)\tilde u\,w_{tt}\,\mathrm{d}x\,\mathrm{d}t
+\int_\Omega \frac{\tilde f}{\tilde c^2}\,w_t(x,0)\,\mathrm{d}x
-\int_\Omega \frac{\tilde h}{\tilde c^2}\,w(x,0)\,\mathrm{d}x\\
+\int_0^\infty\int_\Omega (\tilde\sigma-\sigma)\nabla\tilde u\cdot\nabla w\,\mathrm{d}x\,\mathrm{d}t
+\mathcal B=0.
\end{multline}

It remains to compute the boundary term $\mathcal B$.
By the measurement identity on $\partial\Omega\times\mathbb R_+$, we have
\[
\tilde u=u,
\qquad
\nu\cdot\tilde\sigma\nabla\tilde u=\nu\cdot\sigma\nabla u
\quad\text{on }\partial\Omega\times\mathbb R_+.
\]
Therefore,
\[
\mathcal B
=
\int_0^\infty\int_{\partial\Omega}
\Big(u\,(\nu\cdot\sigma\nabla w)-(\nu\cdot\sigma\nabla u)\,w\Big)\,\mathrm{d}S\,\mathrm{d}t.
\]

We now use the fact that both $u$ and $w$ solve
\[
\frac{1}{c(x)^2}\partial_t^2 v-\nabla\cdot(\sigma(x)\nabla v)=0
\quad\text{in }\Omega\times(0,\infty).
\]
Repeating the same integration-by-parts argument as above for the pair $(u,w)$, we obtain
\[
\mathcal B
=
-\int_\Omega \frac{f}{c^2}\,w_t(x,0)\,\mathrm{d}x
+\int_\Omega \frac{h}{c^2}\,w(x,0)\,\mathrm{d}x.
\]
Here we have used the initial conditions
\[
u(\cdot,0)=f,
\qquad
\partial_t u(\cdot,0)=h,
\]
together with the Admissibility Conditions~\ref{ass:general_decay} for $u$ and the boundedness in time of $w$ and $w_t$.

Substituting this expression for $\mathcal B$ into \eqref{eq:first_id_step4}, we conclude that
\[
\int_0^\infty\int_\Omega \left(\frac{1}{\tilde c^2}-\frac{1}{c^2}\right)\tilde u\,w_{tt}\,\mathrm{d}x\,\mathrm{d}t
+\int_\Omega \left(\frac{\tilde f}{\tilde c^2}-\frac{f}{c^2}\right)w_t(x,0)\,\mathrm{d}x
-\int_\Omega \left(\frac{\tilde h}{\tilde c^2}-\frac{h}{c^2}\right)w(x,0)\,\mathrm{d}x
\]
\[
\qquad
+\int_0^\infty\int_\Omega (\tilde\sigma-\sigma)\nabla\tilde u\cdot\nabla w\,\mathrm{d}x\,\mathrm{d}t
=0,
\]
which is exactly \eqref{eq:first_identity}.
\end{proof}

%------------------------------------------------------------------------------------
\subsection{Step 2: High-frequency Fourier analysis}
We next apply the temporal Fourier transform to the hyperbolic system \eqref{eq:1}.
Under the regularity and decay assumptions in Theorem~\ref{thm:general}, the temporal Fourier transformed field $\hat u(\cdot,k)$ defined by \eqref{eq FT} satisfies the following reduced equation in the frequency domain:
\begin{equation}\label{eq:FourierTransform_recalled}
-\Delta_\sigma\hat{u}(x,k)-\frac{k^2}{c(x)^2}\hat{u}(x,k)
=
\frac{1}{c(x)^2}\bigl(\mathrm{i}k f(x)+h(x)\bigr),
\quad\text{in }\mathbb{R}^3.
\end{equation}

We shall use the following high-frequency expansion.

\begin{lemma}\label{lem:R_Linfty_bound}
Under the assumptions of Theorem~\ref{thm:general} and the Admissibility Condition~\ref{ass:general_nontrap}, let \(\hat u\) be the unique outgoing solution to \eqref{eq:FourierTransform_recalled}.
Define
\begin{equation}\label{eq R}
    \mathbf R(x,k):=\hat u(x,k)+\mathrm{i}\frac{f(x)}{k}+\frac{h(x)}{k^2}.
\end{equation}
Then there exist \(k_0\ge 1\) and \(C>0\) such that, for all \(k\ge k_0\),
\[
\|\mathbf R(\cdot,k)\|_{L^\infty(\Omega)}
+
\|\nabla \mathbf R(\cdot,k)\|_{L^\infty(\Omega)}
\le
Ck^{-3}.
\]
\end{lemma}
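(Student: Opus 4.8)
The plan is to mirror the proof of Lemma~\ref{lem:R_L2_N1}, but carried out one order higher in the high-frequency expansion and then upgraded from an $L^2$ bound to a $W^{1,\infty}$ bound by elliptic bootstrapping. The point to keep in mind throughout is that the regularity budget of Theorem~\ref{thm:general}, namely $f,h\in H^7(\Omega)$ and $c,\sigma\in W^{6,\infty}(\Omega)$, is exactly what is needed to subtract two nontrivial correction terms from the ansatz \eqref{eq v2} and, in the bootstrap, to absorb the two factors of $k^2$ that the zeroth-order term of the Helmholtz operator forces on us.

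\emph{Step 1: truncate the ansatz at order $N=2$.} First I would set $\hat u_2(x,k)$ equal to the partial sum of \eqref{eq v2} with $N=2$. Since $f,h\in H^7$ and $c,\sigma\in W^{6,\infty}$, every term $(c^2\Delta_\sigma)^j f$, $(c^2\Delta_\sigma)^j h$ with $j\le 2$ is well defined, compactly supported in $\overline\Omega$, and lies in $H^{7-2j}(\Omega)$. The telescoping structure of the ansatz --- exactly the computation underlying Lemma~\ref{lem:R_L2_N1} --- shows that $-\Delta_\sigma\hat u_2-k^2c^{-2}\hat u_2=c^{-2}(\mathrm{i}kf+h)+G_2$, where $G_2$ consists of the two leftover top-order terms, proportional to $k^{-5}\Delta_\sigma(c^2\Delta_\sigma)^2 f$ and $k^{-6}\Delta_\sigma(c^2\Delta_\sigma)^2 h$. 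Since $(c^2\Delta_\sigma)^2 f,(c^2\Delta_\sigma)^2 h\in H^3$, both $\Delta_\sigma(c^2\Delta_\sigma)^2 f$ and $\Delta_\sigma(c^2\Delta_\sigma)^2 h$ lie in $H^1$, so $\|G_2(\cdot,k)\|_{H^1(\Omega)}\le Ck^{-5}$. Moreover $\mathbf A_3:=\hat u-\hat u_2$ is the unique outgoing solution of $-\Delta_\sigma\mathbf A_3-k^2c^{-2}\mathbf A_3=G_2$ in $\mathbb R^3$ (up to an irrelevant sign), and by \eqref{eq R} together with the definition of $\hat u_2$ one has $\mathbf R(\cdot,k)=\mathrm{i}c^2\Delta_\sigma f\,k^{-3}+c^2\Delta_\sigma h\,k^{-4}-\mathrm{i}(c^2\Delta_\sigma)^2 f\,k^{-5}-(c^2\Delta_\sigma)^2 h\,k^{-6}+\mathbf A_3(\cdot,k)$.

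\emph{Step 2: resolvent estimate and elliptic bootstrap for $\mathbf A_3$.} Next I would invoke the $k$-explicit resolvent bound for $-\Delta_\sigma-k^2c^{-2}$, i.e.\ the anisotropic analogue of Lemma~\ref{lem Agmon}, which holds under Admissibility~\ref{ass:general_nontrap} by \cite[Theorem~2.19]{graham2019helmholtz} (with $A=\sigma$, $n=c^{-2}$; the two inequalities in \eqref{eq:general_nontrap} are precisely the required Morawetz-multiplier conditions, and the estimate already controls a neighbourhood of $\overline\Omega$, not just $\Omega$). This yields $\|\nabla\mathbf A_3\|_{L^2}+k\|\mathbf A_3\|_{L^2}\le C\|G_2\|_{L^2}\le Ck^{-5}$, hence $\|\mathbf A_3\|_{L^2}\le Ck^{-6}$ and $\|\mathbf A_3\|_{H^1}\le Ck^{-5}$. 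Writing $-\Delta_\sigma=-\sigma\Delta-\nabla\sigma\cdot\nabla$, so that $-\sigma\Delta\mathbf A_3=G_2+k^2c^{-2}\mathbf A_3+\nabla\sigma\cdot\nabla\mathbf A_3$ with $1/\sigma\in W^{6,\infty}$ by \eqref{eq:general_bounds}, I would apply interior $H^2$ and then $H^3$ regularity for $-\Delta$ on a slightly enlarged domain. The first step uses $\|G_2\|_{L^2}\lesssim k^{-5}$, $\|k^2c^{-2}\mathbf A_3\|_{L^2}\lesssim k^2k^{-6}=k^{-4}$, $\|\nabla\sigma\cdot\nabla\mathbf A_3\|_{L^2}\lesssim k^{-5}$, giving $\|\mathbf A_3\|_{H^2}\le Ck^{-4}$; the second uses $\|G_2\|_{H^1}\lesssim k^{-5}$, $\|k^2c^{-2}\mathbf A_3\|_{H^1}\lesssim k^2\|\mathbf A_3\|_{H^1}\lesssim k^{-3}$, $\|\nabla\sigma\cdot\nabla\mathbf A_3\|_{H^1}\lesssim\|\mathbf A_3\|_{H^2}\lesssim k^{-4}$, giving $\|\mathbf A_3\|_{H^3}\le Ck^{-3}$. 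Since $H^3(\mathbb R^3)\hookrightarrow C^{1,\alpha}$, this gives $\|\mathbf A_3\|_{W^{1,\infty}(\Omega)}\le Ck^{-3}$.

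\emph{Step 3: the explicit terms, conclusion, and the main obstacle.} Each of the four explicit terms of $\mathbf R$ involves $(c^2\Delta_\sigma)^j$ with $j\in\{1,2\}$ applied to $f$ or $h$, hence lies in $H^{7-2j}(\Omega)\subset C^{1}(\overline\Omega)$ with $W^{1,\infty}$ norm bounded by $C(\|f\|_{H^7}+\|h\|_{H^7})$ times powers of $\|c\|_{W^{6,\infty}}$ and $\|\sigma\|_{W^{6,\infty}}$, and the attached power of $k^{-1}$ is at least $3$. Adding these to the bound for $\mathbf A_3$ from Step~2 gives $\|\mathbf R(\cdot,k)\|_{L^\infty(\Omega)}+\|\nabla\mathbf R(\cdot,k)\|_{L^\infty(\Omega)}\le Ck^{-3}$ for all $k\ge k_0$, which is the assertion. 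The delicate point is the power counting in the two bootstrap steps: each elliptic-regularity step reintroduces the zeroth-order term $k^2c^{-2}\mathbf A_3$, multiplying the current bound by $k^2$, and this survives only because the truncation order $N=2$ makes $\|\mathbf A_3\|_{L^2}\lesssim k^{-6}$ and $\|\mathbf A_3\|_{H^1}\lesssim k^{-5}$, so that $k^2\cdot k^{-5}=k^{-3}$ lands exactly on the target decay; $N=2$ is in turn the largest truncation compatible with $G_2\in H^1$ given only $f,h\in H^7$. The remaining, routine points are verifying that the resolvent estimate of \cite{graham2019helmholtz} transfers verbatim to the anisotropic operator under \eqref{eq:general_nontrap}, and that the passage from the support region to the slightly larger domain used in the bootstrap is covered by that same resolvent bound.
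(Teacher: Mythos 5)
Your proposal is correct and follows essentially the same route as the paper: truncate the ansatz \eqref{eq v2} at $N=2$, apply the $k$-explicit resolvent estimate of \cite{graham2019helmholtz} under Admissibility~\ref{ass:general_nontrap} to the remainder $\mathbf A_3$, run the same two-step elliptic bootstrap $H^1\to H^2\to H^3$ with identical power counting ($k^{-5},k^{-6}\Rightarrow k^{-4}\Rightarrow k^{-3}$), and conclude via Sobolev embedding plus the $W^{1,\infty}$ bounds on the explicit correction terms. The only cosmetic difference is that you rewrite $\Delta_\sigma$ as $\sigma\Delta+\nabla\sigma\cdot\nabla$ and bootstrap with the Laplacian, whereas the paper bootstraps directly in divergence form; this changes nothing in the estimates.
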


\begin{proof}
We use the high-frequency expansion with \(N=2\) defined by \eqref{eq v2}.
In the following full-space computation, all sources and coefficients are understood through the zero/background extensions fixed in the admissible setting, so that the differentiations below are justified in the usual weak sense.

Set
\[
\mathbf A_3(x,k)
=
\hat u(x,k)
-
\sum_{j=0}^2
\left[
(-1)^{j+1}\mathrm{i}(c^2\Delta_\sigma)^j f(x)\frac{1}{k^{2j+1}}
+
(-1)^{j+1}(c^2\Delta_\sigma)^j h(x)\frac{1}{k^{2(j+1)}}
\right].
\]
Then, by the definition of \(\mathbf R\),
\[
\mathbf R(x,k)
=
\mathrm{i}(c^2\Delta_\sigma)f(x)\frac{1}{k^3}
+(c^2\Delta_\sigma)h(x)\frac{1}{k^4}
-\mathrm{i}(c^2\Delta_\sigma)^2f(x)\frac{1}{k^5}
-(c^2\Delta_\sigma)^2h(x)\frac{1}{k^6}
+\mathbf A_3(x,k).
\]

A direct computation shows that \(\mathbf A_3\) satisfies
\[
\Delta_\sigma\mathbf A_3+\frac{k^2}{c^2}\mathbf A_3=F_3(x,k),
\]
where
\[
F_3(x,k)
=
\frac{1}{c^2}(c^2\Delta_\sigma)^3
\left(\mathrm{i}f\frac{1}{k^5}+h\frac{1}{k^6}\right).
\]
Equivalently,
\[
\nabla\cdot(\sigma\nabla \mathbf A_3)+k^2c^{-2}\mathbf A_3=F_3
\qquad\text{in }\mathbb R^3.
\]

By the regularity assumptions in Theorem~\ref{thm:general} and the extension convention above, the forcing term \(F_3(\cdot,k)\) is well defined, supported in \(\overline\Omega\), and satisfies
\[
\|F_3(\cdot,k)\|_{H^1(\Omega)}\le Ck^{-5},
\qquad
\|F_3(\cdot,k)\|_{L^2(\mathbb R^3)}\le Ck^{-5}.
\]

Under Assumption~\ref{ass:general_nontrap}, the cut-off resolvent estimate in
\cite[(1.3)--(1.5), Remark~2.21]{graham2019helmholtz} applied to the outgoing problem gives the following local estimate.
For every fixed ball \(B_R\subset\mathbb R^3\) with \(\overline\Omega\subset B_R\), there exist \(k_0\ge1\) and \(C_R>0\) such that
\[
\|\nabla \mathbf A_3(\cdot,k)\|_{L^2(B_R)}
+
k\|\mathbf A_3(\cdot,k)\|_{L^2(B_R)}
\le
C_R\|F_3(\cdot,k)\|_{L^2(\mathbb R^3)}
\]
for all \(k\ge k_0\).
Hence
\[
\|\nabla \mathbf A_3(\cdot,k)\|_{L^2(B_R)}
\le Ck^{-5},
\qquad
\|\mathbf A_3(\cdot,k)\|_{L^2(B_R)}
\le Ck^{-6},
\]
and therefore
\[
\|\mathbf A_3(\cdot,k)\|_{H^1(B_R)}
\le Ck^{-5}.
\]

We now bootstrap by the standard local elliptic regularity near \(\overline\Omega\).
For simplicity of notation, we write the resulting estimates over \(\Omega\).
Since \(\sigma,c^{-2}\in W^{4,\infty}(\Omega)\), the equation
\[
-\nabla\cdot(\sigma\nabla\mathbf A_3)
=
\frac{k^2}{c^2}\mathbf A_3-F_3
\qquad\text{in }\Omega
\]
implies
\[
\|\mathbf A_3(\cdot,k)\|_{H^2(\Omega)}
\le
C\left(
\|\mathbf A_3\|_{H^1(B_R)}
+
\left\|\frac{k^2}{c^2}\mathbf A_3-F_3\right\|_{L^2(\Omega)}
\right).
\]
Using the bounds above, we get
\[
\|\mathbf A_3(\cdot,k)\|_{H^2(\Omega)}
\le
C(k^{-5}+k^2\cdot k^{-6}+k^{-5})
\le
Ck^{-4}.
\]

Similarly,
\[
\|\mathbf A_3(\cdot,k)\|_{H^3(\Omega)}
\le
C\left(
\|\mathbf A_3\|_{H^1(B_R)}
+
\left\|\frac{k^2}{c^2}\mathbf A_3-F_3\right\|_{H^1(\Omega)}
\right).
\]
Since \(c^{-2}\in W^{2,\infty}(\Omega)\), we have
\[
\left\|\frac{k^2}{c^2}\mathbf A_3\right\|_{H^1(\Omega)}
\le
Ck^2\|\mathbf A_3\|_{H^1(\Omega)}
\le
Ck^{-3}.
\]
Together with
\[
\|F_3(\cdot,k)\|_{H^1(\Omega)}\le Ck^{-5},
\]
this gives
\[
\|\mathbf A_3(\cdot,k)\|_{H^3(\Omega)}
\le
C(k^{-5}+k^{-3}+k^{-5})
\le
Ck^{-3}.
\]

By the Sobolev embeddings \(H^2(\Omega)\hookrightarrow L^\infty(\Omega)\) and
\(H^3(\Omega)\hookrightarrow W^{1,\infty}(\Omega)\),
\[
\|\mathbf A_3(\cdot,k)\|_{L^\infty(\Omega)}
\le Ck^{-4},
\qquad
\|\nabla\mathbf A_3(\cdot,k)\|_{L^\infty(\Omega)}
\le Ck^{-3}.
\]

Substituting these bounds into the representation of \(\mathbf R\), we obtain
\[
\begin{aligned}
\|\mathbf R(\cdot,k)\|_{L^\infty(\Omega)}
\le\;&
k^{-3}\left(
\|(c^2\Delta_\sigma)f\|_{L^\infty(\Omega)}
+k^{-1}\|(c^2\Delta_\sigma)h\|_{L^\infty(\Omega)}
\right)\\
&\quad
+k^{-5}\left(
\|(c^2\Delta_\sigma)^2f\|_{L^\infty(\Omega)}
+k^{-1}\|(c^2\Delta_\sigma)^2h\|_{L^\infty(\Omega)}
\right)\\
&\quad
+\|\mathbf A_3(\cdot,k)\|_{L^\infty(\Omega)},
\end{aligned}
\]
and
\[
\begin{aligned}
\|\nabla \mathbf R(\cdot,k)\|_{L^\infty(\Omega)}
\le\;&
k^{-3}\left(
\|\nabla((c^2\Delta_\sigma)f)\|_{L^\infty(\Omega)}
+k^{-1}\|\nabla((c^2\Delta_\sigma)h)\|_{L^\infty(\Omega)}
\right)\\
&\quad
+k^{-5}\left(
\|\nabla((c^2\Delta_\sigma)^2f)\|_{L^\infty(\Omega)}
+k^{-1}\|\nabla((c^2\Delta_\sigma)^2h)\|_{L^\infty(\Omega)}
\right)\\
&\quad
+\|\nabla \mathbf A_3(\cdot,k)\|_{L^\infty(\Omega)}.
\end{aligned}
\]
Under the regularity assumptions of Theorem~\ref{thm:general}, together with the Sobolev embedding in dimension three, the explicit coefficient-source terms on the right-hand sides are bounded in \(L^\infty(\Omega)\).
Consequently,
\[
\|\mathbf R(\cdot,k)\|_{L^\infty(\Omega)}
+
\|\nabla \mathbf R(\cdot,k)\|_{L^\infty(\Omega)}
\le
Ck^{-3}.
\]
This completes the proof.
\end{proof}

%--------------------------------------------------------------------------------------------

\subsection{Step 3: Frequency-domain identity}

Unlike the TAT/PAT analysis, where the first decoupling step is performed at the time-domain level, here the first decoupling step is carried out through the frequency-domain identity.
This illustrates the flexibility of the unified framework: the same general mechanism can be implemented through different test functions and different integral identities, depending on which decoupling quantity is best suited to the problem at hand.

\begin{lemma}\label{lem:second_identity_sigma}
Let $u$ and $\tilde u$ be the solutions to \eqref{eq:1} associated with
$(f,h,c,\sigma)$ and $(\tilde f,\tilde h,\tilde c,\tilde\sigma)$, respectively.
Assume that the boundary measurements coincide on $\partial\Omega\times\mathbb R_+$ in the sense of \eqref{eq:2}, namely,
\[
u=\tilde u,
\qquad
\nu\cdot\sigma\nabla u=\nu\cdot\tilde\sigma\nabla\tilde u
\quad\text{on }\partial\Omega\times\mathbb R_+.
\]
Assume that both configurations satisfy the relevant assumptions of Theorem~\ref{thm:general}, so that the temporal Fourier transforms in \eqref{eq FT} are well defined and the integrations by parts below are justified.
Let
\[
w_k(x,t)=e^{-\mathrm{i}kt}v_k(x),
\]
where \(v_k\) is a sufficiently regular solution of
\[
-\nabla\cdot(\sigma(x)\nabla v_k(x))-k^2v_k(x)=0
\quad\text{in }\Omega.
\]
Equivalently, $w_k$ satisfies
\begin{equation}\label{eq:free_wave_sigma2}
\partial_t^2 w_k(x,t)-\nabla\cdot(\sigma(x)\nabla w_k(x,t))=0
\quad\text{in }\Omega\times\mathbb R_+.
\end{equation}
Let \(\mathbf R\) and \(\tilde{\mathbf R}\) be the corresponding high-frequency remainders defined by \eqref{eq R} for the two configurations. Then, for each fixed \(k>0\), the following identity holds:
\begin{equation}\label{eq:second_identity_sigma}
\begin{aligned}
&\int_\Omega (\sigma-\tilde\sigma)\,\nabla\widehat{\tilde u}(x,k)\cdot\nabla v_k(x)\,\mathrm{d}x
-\mathrm{i}k\int_\Omega (f-\tilde f)\,v_k(x)\,\mathrm{d}x
-\int_\Omega (h-\tilde h)\,v_k(x)\,\mathrm{d}x\\
&\qquad
+k^2\int_\Omega
\left[
\left(1-\frac{1}{c(x)^2}\right)\mathbf R(x,k)
-
\left(1-\frac{1}{\tilde c(x)^2}\right)\tilde{\mathbf R}(x,k)
\right]v_k(x)\,\mathrm{d}x
=0.
\end{aligned}
\end{equation}
\end{lemma}

\begin{proof}
Set $U:=u-\tilde u$.
From
\[
\frac{1}{c^2}u_{tt}-\nabla\cdot(\sigma\nabla u)=0,
\qquad
\frac{1}{\tilde c^2}\tilde u_{tt}-\nabla\cdot(\tilde\sigma\nabla\tilde u)=0
\quad\text{in }\Omega\times\mathbb R_+,
\]
we obtain
\[
u_{tt}-\nabla\cdot(\sigma\nabla u)=\left(1-\frac{1}{c^2}\right)u_{tt},
\qquad
\tilde u_{tt}-\nabla\cdot(\tilde\sigma\nabla\tilde u)=\left(1-\frac{1}{\tilde c^2}\right)\tilde u_{tt}.
\]
Subtracting these two identities yields
\[
U_{tt}-\nabla\cdot(\sigma\nabla u)+\nabla\cdot(\tilde\sigma\nabla\tilde u)
=
\left(1-\frac{1}{c^2}\right)u_{tt}
-
\left(1-\frac{1}{\tilde c^2}\right)\tilde u_{tt}.
\]
Multiplying by $w_k$ and integrating over $\Omega\times(0,\infty)$, an integration by parts in $x$ gives
\begin{equation}\label{eq:step_space_parts_sigma}
\begin{aligned}
&\int_0^\infty\!\!\int_\Omega w_k U_{tt}\,\mathrm{d}x\,\mathrm{d}t
+\int_0^\infty\!\!\int_\Omega \sigma\nabla u\cdot\nabla w_k\,\mathrm{d}x\,\mathrm{d}t
-\int_0^\infty\!\!\int_\Omega \tilde\sigma\nabla\tilde u\cdot\nabla w_k\,\mathrm{d}x\,\mathrm{d}t\\
&\qquad
=
\int_0^\infty\!\!\int_\Omega
\left(1-\frac{1}{c^2}\right)w_k u_{tt}
-
\left(1-\frac{1}{\tilde c^2}\right)w_k\tilde u_{tt}\,\mathrm{d}x\,\mathrm{d}t,
\end{aligned}
\end{equation}
where the boundary term vanishes by the conormal flux equality in the measurement identity.
Using
\[
\sigma\nabla u-\tilde\sigma\nabla\tilde u
=
\sigma\nabla U+(\sigma-\tilde\sigma)\nabla\tilde u,
\]
we can rewrite the second and third terms on the left-hand side of \eqref{eq:step_space_parts_sigma} as
\[
\int_0^\infty\!\!\int_\Omega \sigma\nabla U\cdot\nabla w_k\,\mathrm{d}x\,\mathrm{d}t
+
\int_0^\infty\!\!\int_\Omega (\sigma-\tilde\sigma)\nabla\tilde u\cdot\nabla w_k\,\mathrm{d}x\,\mathrm{d}t.
\]
For the \(U\)-terms, integrating by parts in \(t\) and \(x\), using the local decay of \(U\), the boundary condition \(U=0\) on \(\partial\Omega\times\mathbb R_+\), and \eqref{eq:free_wave_sigma2}, we obtain
\[
\int_0^\infty\!\!\int_\Omega w_k U_{tt}\,\mathrm{d}x\,\mathrm{d}t
+
\int_0^\infty\!\!\int_\Omega \sigma\nabla U\cdot\nabla w_k\,\mathrm{d}x\,\mathrm{d}t
=
\int_\Omega (f-\tilde f)\,(w_k)_t(\cdot,0)\,\mathrm{d}x
-
\int_\Omega (h-\tilde h)\,w_k(\cdot,0)\,\mathrm{d}x.
\]
Substituting this into \eqref{eq:step_space_parts_sigma} gives
\begin{equation}\label{eq:pre_fourier_sigma}
\begin{aligned}
&\int_0^\infty\!\!\int_\Omega (\sigma-\tilde\sigma)\nabla\tilde u\cdot\nabla w_k\,\mathrm{d}x\,\mathrm{d}t
+\int_\Omega (f-\tilde f)\,(w_k)_t(\cdot,0)\,\mathrm{d}x
-\int_\Omega (h-\tilde h)\,w_k(\cdot,0)\,\mathrm{d}x\\
&\qquad
=
\int_0^\infty\!\!\int_\Omega
\left(1-\frac{1}{c^2}\right)w_k u_{tt}
-
\left(1-\frac{1}{\tilde c^2}\right)w_k\tilde u_{tt}\,\mathrm{d}x\,\mathrm{d}t.
\end{aligned}
\end{equation}
Now
\[
(w_k)_t(\cdot,0)=-\mathrm{i}k\,v_k,
\qquad
w_k(\cdot,0)=v_k,
\qquad
\int_0^\infty \nabla\tilde u(x,t)\cdot\nabla w_k(x,t)\,\mathrm{d}t
=
\nabla\widehat{\tilde u}(x,k)\cdot\nabla v_k(x).
\]
Using the temporal Fourier transform \eqref{eq FT} and the corresponding identities established earlier,
\[
\int_0^\infty u_{tt}(x,t)e^{-\mathrm{i}kt}\,\mathrm{d}t
=
-k^2\hat u(x,k)-\mathrm{i}k f(x)-h(x),
\]
\[
\int_0^\infty \tilde u_{tt}(x,t)e^{-\mathrm{i}kt}\,\mathrm{d}t
=
-k^2\widehat{\tilde u}(x,k)-\mathrm{i}k\tilde f(x)-\tilde h(x),
\]
we deduce from \eqref{eq:pre_fourier_sigma} that
\begin{equation*}\label{eq:after_fourier_sigma}
\begin{aligned}
&\int_\Omega (\sigma-\tilde\sigma)\nabla\widehat{\tilde u}(x,k)\cdot\nabla v_k(x)\,\mathrm{d}x
-\mathrm{i}k\int_\Omega (f-\tilde f)\,v_k(x)\,\mathrm{d}x
-\int_\Omega (h-\tilde h)\,v_k(x)\,\mathrm{d}x\\
&\qquad
=
\int_\Omega \left(1-\frac{1}{c^2}\right)\Bigl(-k^2\hat u-\mathrm{i}k f-h\Bigr)v_k\,\mathrm{d}x
-\int_\Omega \left(1-\frac{1}{\tilde c^2}\right)\Bigl(-k^2\widehat{\tilde u}-\mathrm{i}k\tilde f-\tilde h\Bigr)v_k\,\mathrm{d}x.
\end{aligned}
\end{equation*}
Finally, by \eqref{eq R} and the analogous definition for \(\widehat{\tilde u}\), we have
\[
\hat u=-\mathrm{i}\frac{f}{k}-\frac{h}{k^2}+\mathbf R,
\qquad
\widehat{\tilde u}=-\mathrm{i}\frac{\tilde f}{k}-\frac{\tilde h}{k^2}+\tilde{\mathbf R}.
\]
Hence
\[
-k^2\hat u-\mathrm{i}k f-h=-k^2\mathbf R,
\qquad
-k^2\widehat{\tilde u}-\mathrm{i}k\tilde f-\tilde h=-k^2\tilde{\mathbf R},
\]
and \eqref{eq:second_identity_sigma} follows.
\end{proof}

The terms in \eqref{eq:second_identity_sigma} have different orders in \(k\).
In the following steps, we use this separation of orders to identify \(\sigma\), \(f\), and \(h\), and then recover \(c\) from the quotient identities involving \(f/c^2\) and \(h/c^2\).

%-----------------------------------------------------------------------------------

\subsection{Step 4: Recovery of $\sigma$}

\subsubsection{The first test function}

To recover \(\sigma\), we apply the frequency-domain integral identity \eqref{eq:second_identity_sigma} with the first test function \(w_k\) given by 
\begin{equation}\label{eq:test_function_w1}
w^{(1)}(x,t)=e^{-\mathrm{i}kt}v^{(1)}(x),
\qquad
x=(x_1,x_2,x_3)\in\mathbb R^3,
\end{equation}
where \(v^{(1)}\) solves
\begin{equation}\label{eq:vk_sigma_helmholtz}
-\nabla\cdot(\sigma(x)\nabla v^{(1)}(x))-k^2v^{(1)}(x)=0
\quad\text{in }\Omega.
\end{equation}

Under the Liouville transform
\begin{equation*}\label{eq:Liouville_transform}
U_1(x):=\sigma(x)^{1/2}v^{(1)}(x),
\end{equation*}
\eqref{eq:vk_sigma_helmholtz} reduces to the Schr\"odinger equation
\begin{equation}\label{eq:Schrodinger_transformed}
-\Delta U_1(x)+q_k(x)U_1(x)=0
\quad\text{in }\Omega,
\end{equation}
where
\begin{equation}\label{eq:qk_definition}
q_k(x):=q_\sigma(x)-\frac{k^2}{\sigma(x)},
\qquad
q_\sigma(x):=\frac{\Delta \sigma(x)^{1/2}}{\sigma(x)^{1/2}}.
\end{equation}

Fix \(\lambda\in\mathbb R\) and choose \(p>3\).
Here \(p\) is the Sobolev exponent appearing in the CGO existence theory, and is regarded as a free analytical parameter.
Since \(p>3\), one can choose \(\tilde p\in(1,2)\) such that
\begin{equation}\label{eq:pptilde_choice}
\frac{1}{2}+\frac{1}{p}\le \frac{1}{\tilde p}<\frac{2}{3}+\frac{1}{p}.
\end{equation}
Moreover, under the regularity assumptions on \(\sigma\) in Theorem~\ref{thm:general}, the coefficient \(q_k\) has the regularity required for the CGO construction.
Let
\begin{equation}\label{eq:delta_choice_cgo}
\delta:=2-3\left(\frac{1}{\tilde p}-\frac{1}{p}\right)>0.
\end{equation}
First choose an auxiliary sequence \(t_k\to\infty\) such that
\[
\frac{t_k}{k^2}\to\infty,
\qquad
(1+k^2)t_k^{-\delta}\to0
\qquad\text{as }k\to\infty.
\]
A concrete choice is
\begin{equation}\label{eq:s_k_power_choice}
t_k=k^m,
\qquad
m>\max\left\{2,\frac{2}{\delta}\right\}.
\end{equation}
For each \(k\), choose \(\mu_{k,\lambda}\in[t_k,t_k+2\pi/(b_3-a_3)]\) such that
\begin{equation}\label{eq:mu_choice_half_period}
\left|\sin\left(\frac{\mu_{k,\lambda}(b_3-a_3)}{2}\right)\right|=1
\qquad\text{for all sufficiently large }k.
\end{equation}
Then define
\begin{equation}\label{eq:s_k_def}
s_{k,\lambda}:=\sqrt{\mu_{k,\lambda}^2+\lambda^2}.
\end{equation}
Since \(\mu_{k,\lambda}\asymp t_k\), we have
\[
s_{k,\lambda}\asymp \mu_{k,\lambda}\asymp t_k.
\]
Therefore \(s=s_{k,\lambda}\) satisfies \(s_{k,\lambda}>|\lambda|\) so that
\begin{equation}\label{eq:s_k_growth_condition}
s=s_{k,\lambda}\to\infty,
\qquad
\frac{s_{k,\lambda}}{k^2}\to\infty,
\qquad
s_{k,\lambda}\ge C_*(1+k^2)^{1/\delta}
\quad\text{for all sufficiently large }k,
\end{equation}
where \(C_*>0\) is chosen large enough, depending only on the constants in Lemma~\ref{lem:cgo_sigma_estimate}.

Define \(\eta_1=\eta_{\lambda,s}\) by
\begin{equation}\label{eq:eta_lambda_s_front}
\eta_{\lambda,s}:=(\mathrm{i}\lambda,\ s,\ \mathrm{i}\mu_{k,\lambda}).
\end{equation}
Then
\begin{equation}\label{eq:eta_isotropic_size}
\eta_1\cdot\eta_1=0,
\qquad
|\eta_1|\asymp s.
\end{equation}
Hence, 
\[
|\eta_1|\ge C_*(1+k^2)^{1/\delta}
\]
for all sufficiently large \(k\).

The next lemma gives the required CGO solution for \eqref{eq:Schrodinger_transformed}.

\begin{lemma}\label{lem:cgo_sigma_estimate}
Assume that \(\sigma\in W^{6,\infty}(\Omega)\) satisfies the bounds in \eqref{eq:general_bounds}.
For $\delta$ given in \eqref{eq:delta_choice_cgo}, there exist constants \(C,C_\sigma,C_{\Omega,p,\sigma}>0\), independent of \(k\) and \(\eta\), such that, for every \(k>0\) and \(\eta:=\eta_1\) as given in \eqref{eq:eta_lambda_s_front} satisfying 
\begin{equation}\label{eq:eta_threshold_cgo}
|\eta|\ge \tau_*(k):=\max\left\{1,\left(2CC_\sigma(1+k^2)\right)^{1/\delta}\right\},
\qquad
\eta\cdot\eta=0,
\end{equation} 
the equation \eqref{eq:Schrodinger_transformed} admits a solution \(U_1\) of the form
\begin{equation*}\label{eq:Uk_Schrodinger_lemma_explicit}
U_1(x)=e^{\eta\cdot x}(1+r_1(x))
\end{equation*}
such that
\begin{equation}\label{eq:r_H2p_estimate_explicit}
\|r_1\|_{H^{2,p}(\Omega)}
\le
2CC_\sigma(1+k^2)|\eta|^{-\delta}.
\end{equation}
Moreover,
\begin{equation}\label{eq:r_C1_estimate_explicit}
\|r_1\|_{L^\infty(\Omega)}+\|\nabla r_1\|_{L^\infty(\Omega)}
\le
C_{\Omega,p,\sigma}(1+k^2)|\eta|^{-\delta},
\end{equation}
and
\begin{equation}\label{eq:r_grad_scaled_estimate_explicit}
\left\|\frac{\nabla r_1}{|\eta|}\right\|_{L^\infty(\Omega)}
\le
C_{\Omega,p,\sigma}(1+k^2)|\eta|^{-\delta-1}.
\end{equation}
\end{lemma}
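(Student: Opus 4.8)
The plan is to construct $U_1$ by the Sylvester--Uhlmann complex geometric optics ansatz and to reduce the whole statement to a single quantitative bound for the solution operator of the conjugated Laplacian, after which the three remainder estimates follow from a Banach fixed point and Sobolev embedding in $\mathbb R^3$. First I would record that, under the Liouville substitution $U_1=\sigma^{1/2}v^{(1)}$, the Helmholtz-type equation \eqref{eq:vk_sigma_helmholtz} for $v^{(1)}$ is equivalent to the Schr\"odinger form \eqref{eq:Schrodinger_transformed} with $q_k$ as in \eqref{eq:qk_definition}; since $\sigma\in W^{6,\infty}(\Omega)$ is bounded below by \eqref{eq:general_bounds}, we have $q_\sigma\in W^{4,\infty}(\Omega)$ and hence $\|q_k\|_{L^\infty(\Omega)}\le C_\sigma(1+k^2)$, so in particular $\|q_k\|_{L^{\tilde p}(\Omega)}\le C_\sigma(1+k^2)$. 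Substituting $U_1=e^{\eta\cdot x}(1+r_1)$ and using $\eta\cdot\eta=0$ converts \eqref{eq:Schrodinger_transformed} into
\[
(\Delta+2\eta\cdot\nabla)r_1=q_k(1+r_1)\qquad\text{in }\Omega .
\]
Because $q_k$ is not compactly supported (it equals $-k^2$ on $\mathbb R^3\setminus\Omega$) while the equation is needed only inside $\Omega$, I would replace $q_k$ by $q_k\chi_\Omega$, which is bounded, compactly supported, and has the same $L^{\tilde p}$ norm, solve the resulting equation on $\mathbb R^3$, and restrict to $\Omega$ at the end.

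The heart of the proof is the standard refined complex geometric optics estimate for $\Delta+2\eta\cdot\nabla$ with $\eta\cdot\eta=0$ and $|\eta|$ large: there is a bounded right inverse $G_\eta$, $(\Delta+2\eta\cdot\nabla)G_\eta=\mathrm{Id}$, satisfying, for compactly supported $f$ and with the exponents $\tilde p,p$ related as in \eqref{eq:pptilde_choice},
\[
\|G_\eta f\|_{H^{2,p}(\Omega)}\le C\,|\eta|^{-\delta}\,\|f\|_{L^{\tilde p}(\mathbb R^3)},
\]
where $\delta$ is exactly the exponent \eqref{eq:delta_choice_cgo}. This is the Kenig--Ruiz--Sogge-type $L^p$ estimate for the conjugated Laplacian, whose characteristic variety is a sphere of radius $\asymp|\eta|$, upgraded to two derivatives by Calder\'on--Zygmund theory; the admissible Lebesgue range \eqref{eq:pptilde_choice} is precisely the range on which these restriction/extension estimates are valid in $\mathbb R^3$, and it is what forces $\delta>0$.

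Granting this estimate, the remaining steps are routine. I would set up the fixed-point equation $r_1=G_\eta\bigl(q_k\chi_\Omega(1+r_1)\bigr)$ in $H^{2,p}(\Omega)$. By H\"older's inequality on the bounded domain $\Omega$ together with the embedding $H^{2,p}(\Omega)\hookrightarrow C(\overline\Omega)$ (valid since $p>3$ and $\Omega\subset\mathbb R^3$), one gets $\|q_k\chi_\Omega(r-r')\|_{L^{\tilde p}(\Omega)}\le C_\sigma(1+k^2)\|r-r'\|_{H^{2,p}(\Omega)}$, so the map $r\mapsto G_\eta(q_k\chi_\Omega(1+r))$ has Lipschitz constant at most $CC_\sigma(1+k^2)|\eta|^{-\delta}$, which is $\le\tfrac12$ as soon as $|\eta|\ge\tau_*(k)$ from \eqref{eq:eta_threshold_cgo}. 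The contraction mapping theorem then yields a unique $r_1\in H^{2,p}(\Omega)$ with
\[
\|r_1\|_{H^{2,p}(\Omega)}\le 2\,\|G_\eta(q_k\chi_\Omega)\|_{H^{2,p}(\Omega)}\le 2C|\eta|^{-\delta}\|q_k\|_{L^{\tilde p}(\Omega)}\le 2CC_\sigma(1+k^2)|\eta|^{-\delta},
\]
which is \eqref{eq:r_H2p_estimate_explicit}, and $U_1=e^{\eta\cdot x}(1+r_1)$ then solves \eqref{eq:Schrodinger_transformed} in $\Omega$. Finally, the embedding $H^{2,p}(\Omega)\hookrightarrow C^1(\overline\Omega)$ (again because $p>3$) gives $\|r_1\|_{L^\infty(\Omega)}+\|\nabla r_1\|_{L^\infty(\Omega)}\le C_{\Omega,p}\|r_1\|_{H^{2,p}(\Omega)}$, which is \eqref{eq:r_C1_estimate_explicit} with $C_{\Omega,p,\sigma}:=2CC_\sigma C_{\Omega,p}$, and dividing the gradient part by $|\eta|$ gives \eqref{eq:r_grad_scaled_estimate_explicit}.

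The only non-routine step, and the one I expect to be the main obstacle, is the quantitative bound for $G_\eta$ with the sharp power $|\eta|^{-\delta}$ over the Lebesgue range \eqref{eq:pptilde_choice}: one must average the H\"ormander restriction estimate over the characteristic sphere of $\Delta+2\eta\cdot\nabla$ and combine it with a Calder\'on--Zygmund bootstrap to reach $H^{2,p}$, keeping all constants uniform in $\eta$ and independent of $k$. Once this is in hand, the Liouville reduction, the contraction argument, and the Sobolev embeddings are entirely standard.
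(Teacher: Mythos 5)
Your overall architecture (Liouville reduction, CGO ansatz, fixed point for $r_1$, Sobolev embedding for the $C^1$ bounds) matches the paper, but the single estimate you lean on is not available, and this is a genuine gap rather than a technicality. You ask for a right inverse $G_\eta$ of $\Delta+2\eta\cdot\nabla$ satisfying $\|G_\eta f\|_{H^{2,p}(\Omega)}\le C|\eta|^{-\delta}\|f\|_{L^{\tilde p}(\mathbb R^3)}$, i.e.\ a gain of \emph{two derivatives and} the factor $|\eta|^{-\delta}$ from merely $L^{\tilde p}$ data, uniformly in $\eta$. The Kenig--Ruiz--Sogge-type bounds give $\|G_\eta f\|_{L^p}\le C|\eta|^{-\delta}\|f\|_{L^{\tilde p}}$, but derivatives of $G_\eta f$ cost powers of $|\eta|$ (the symbol is concentrated near the characteristic sphere $|\xi|\asymp|\eta|$): one only has $\|\nabla G_\eta f\|_{L^p}\lesssim|\eta|^{1-\delta}\|f\|_{L^{\tilde p}}$ and $\|D^2G_\eta f\|_{L^p}\lesssim|\eta|^{2-\delta}\|f\|_{L^{\tilde p}}$, and your claimed estimate would force $\|G_\eta f\|_{L^p}\lesssim|\eta|^{-2-\delta}\|f\|_{L^{\tilde p}}$ on bump functions adapted to that sphere, contradicting the known sharpness of the $|\eta|^{-\delta}$ decay. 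No Calder\'on--Zygmund bootstrap can repair this, since $\Delta G_\eta f=f-2\eta\cdot\nabla G_\eta f$ reintroduces the factor $|\eta|$. Consequently your contraction in $H^{2,p}$, and with it \eqref{eq:r_H2p_estimate_explicit} and the $C^1$ bounds, does not close.

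The paper keeps the $|\eta|^{-\delta}$ decay in $H^{2,p}$ differently: since $G_\eta$ is a Fourier multiplier it commutes with derivatives, so the $L^{\tilde p}\to L^p$ bound upgrades for free to $\|G_\eta f\|_{H^{2,p}}\le C|\eta|^{-\delta}\|f\|_{H^{2,\tilde p}}$ (this is the estimate quoted from \cite{cakoni2020corner}); the price is that the data must itself carry two derivatives. This is why the paper does \emph{not} truncate by $\chi_\Omega$ as you do, but extends $q_k$ by a Sobolev extension followed by a smooth cutoff, producing $\widetilde q_k\in W^{2,\infty}(\mathbb R^3)$ compactly supported with $\|\widetilde q_k\|_{W^{2,\infty}}\le C_\sigma(1+k^2)$, so that multiplication by $\widetilde q_k$ maps $H^{2,p}\to H^{2,\tilde p}$ with norm $C_\sigma(1+k^2)$ and the fixed point runs in $H^{2,p}$ exactly under the threshold \eqref{eq:eta_threshold_cgo}. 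Your truncation $q_k\chi_\Omega$ is incompatible with this route: since $q_k\approx -k^2/\sigma\neq0$ on $\partial\Omega$, the product $q_k\chi_\Omega$ fails to lie in $H^{2,\tilde p}(\mathbb R^3)$, so you cannot simply substitute the correct operator bound into your scheme. If you replace the truncation by the paper's smooth extension and the $L^{\tilde p}$-based multiplier bound by the $H^{2,\tilde p}\to H^{2,p}$ one (which also requires using the $W^{2,\infty}$ bound on $q_k$, not just $\|q_k\|_{L^\infty}\le C_\sigma(1+k^2)$), the rest of your argument — contraction, restriction to $\Omega$, and the embeddings giving \eqref{eq:r_C1_estimate_explicit} and \eqref{eq:r_grad_scaled_estimate_explicit} — goes through as written.
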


\begin{proof}

By \eqref{eq:qk_definition} and the regularity of \(\sigma\), we have
\[
q_k\in W^{2,\infty}(\Omega),
\qquad
\|q_k\|_{W^{2,\infty}(\Omega)}\le C_\sigma(1+k^2),
\]
where \(C_\sigma\) depends only on the a priori bounds for \(\sigma\) in \(W^{6,\infty}(\Omega)\) and \eqref{eq:general_bounds}.

We now extend \(q_k\) from \(\Omega\) to a compactly supported function on \(\mathbb R^3\).
By the extension theorem for Sobolev spaces on Lipschitz domains, there exists \(E q_k\in W^{2,\infty}(\mathbb R^3)\) such that
\[
E q_k=q_k
\qquad\text{in }\Omega.
\]
Choose \(\chi\in C_c^\infty(\mathbb R^3)\) such that \(\chi\equiv1\) on an open neighborhood of \(\overline\Omega\), and define
\[
\widetilde q_k:=\chi\, E q_k.
\]
Then
\[
\widetilde q_k\in W^{2,\infty}(\mathbb R^3),
\qquad
\operatorname{supp}(\widetilde q_k)\ \text{is compact},
\qquad
\widetilde q_k=q_k \quad\text{in }\Omega,
\]
and
\[
\|\widetilde q_k\|_{W^{2,\infty}(\mathbb R^3)}\le C_\sigma(1+k^2).
\]
Since \(\widetilde q_k\in W^{2,\infty}(\mathbb R^3)\) and has compact support, we have \(\widetilde q_k\in H^{2,\tilde p}(\mathbb R^3)\).

Since this is the first CGO construction used in the paper, we spell out the standard argument.
By \cite[Lemma 3.1 and the proof of Proposition 3.1]{cakoni2020corner}, for each \(\eta\) given by \eqref{eq:eta_lambda_s_front}, there exists a bounded operator \(G_\eta\) such that
\[
\|G_\eta f\|_{H^{2,p}(\mathbb R^3)}
\le
C|\eta|^{-\delta}\|f\|_{H^{2,\tilde p}(\mathbb R^3)}.
\]
Moreover, multiplication by \(\widetilde q_k\) defines a bounded operator
\[
\mathcal{M}_{\widetilde q_k}:H^{2,p}(\mathbb R^3)\to H^{2,\tilde p}(\mathbb R^3),
\]
with
\[
\|\widetilde q_k f\|_{H^{2,\tilde p}(\mathbb R^3)}
\le
C_\sigma(1+k^2)\|f\|_{H^{2,p}(\mathbb R^3)}.
\]
Hence
\[
\|G_\eta(\widetilde q_k f)\|_{H^{2,p}(\mathbb R^3)}
\le
CC_\sigma(1+k^2)|\eta|^{-\delta}\|f\|_{H^{2,p}(\mathbb R^3)}.
\]

Therefore, if \(|\eta|\) satisfies \eqref{eq:eta_threshold_cgo}, 
\[
CC_\sigma(1+k^2)|\eta|^{-\delta}\le \frac12,
\]
then \(I-G_\eta \mathcal{M}_{\widetilde q_k}\) is invertible on \(H^{2,p}(\mathbb R^3)\).
Define
\[
r_1:=(I-G_\eta \mathcal{M}_{\widetilde q_k})^{-1}G_\eta(\widetilde q_k).
\]
Then \(r_1\in H^{2,p}(\mathbb R^3)\) satisfies
\[
(\Delta+2\eta\cdot\nabla)r_1=\widetilde q_k(1+r_1)
\qquad\text{in }\mathbb R^3,
\]
and therefore
\[
U_1(x):=e^{\eta\cdot x}(1+r_1(x))
\]
solves
\[
-\Delta U_1(x)+\widetilde q_k(x)U_1(x)=0
\qquad\text{in }\mathbb R^3.
\]
Since \(\widetilde q_k=q_k\) in \(\Omega\), the restriction of \(U_1\) to \(\Omega\) solves \eqref{eq:Schrodinger_transformed}.

Furthermore,
\[
\|(I-G_\eta \mathcal{M}_{\widetilde q_k})^{-1}\|_{H^{2,p}\to H^{2,p}}\le 2,
\]
and thus
\[
\|r_1\|_{H^{2,p}(\mathbb R^3)}
\le
2\|G_\eta(\widetilde q_k)\|_{H^{2,p}(\mathbb R^3)}
\le
2CC_\sigma(1+k^2)|\eta|^{-\delta}.
\]
Restricting to \(\Omega\), we obtain \eqref{eq:r_H2p_estimate_explicit}.

Finally, since \(p>3\), the Sobolev embedding
\[
H^{2,p}(\Omega)\hookrightarrow C^{1,\alpha}(\overline\Omega),
\qquad
\alpha=1-\frac{3}{p}\in(0,1),
\]
yields
\[
\|r_1\|_{L^\infty(\Omega)}+\|\nabla r_1\|_{L^\infty(\Omega)}
\le
C_{\Omega,p}\|r_1\|_{H^{2,p}(\Omega)}
\le
C_{\Omega,p,\sigma}(1+k^2)|\eta|^{-\delta},
\]
which gives \eqref{eq:r_C1_estimate_explicit}.
The bound \eqref{eq:r_grad_scaled_estimate_explicit} follows immediately by dividing the gradient estimate by \(|\eta|\).
\end{proof}

\subsubsection{High-frequency decoupling}

We first combine the high-frequency expansions with the first test function introduced above.
Throughout the rest of this proof, \(C>0\) denotes a generic constant independent of \(k\), \(\lambda\), and \(s\), which may vary from line to line.

Recall from \eqref{eq R} that
\[
\hat u(x,k)
=
-\mathrm{i}\frac{f(x)}{k}
-\frac{h(x)}{k^2}
+
\mathbf R(x,k),
\]
and, analogously,
\[
\widehat{\tilde u}(x,k)
=
-\mathrm{i}\frac{\tilde f(x)}{k}
-\frac{\tilde h(x)}{k^2}
+
\tilde{\mathbf R}(x,k).
\]
Differentiating the latter identity gives
\[
\nabla\widehat{\tilde u}(x,k)
=
-\mathrm{i}\frac{\nabla\tilde f(x)}{k}
-\frac{\nabla\tilde h(x)}{k^2}
+
\nabla\tilde{\mathbf R}(x,k).
\]
Since \(\tilde f,\tilde h\) satisfy the regularity assumptions in Theorem~\ref{thm:general}, we have
\[
\|\nabla\tilde f\|_{L^\infty(\Omega)}+\|\nabla\tilde h\|_{L^\infty(\Omega)}\le C.
\]
Together with Lemma~\ref{lem:R_Linfty_bound}, this yields
\begin{equation}\label{eq:u_hat_grad_order}
\|\nabla \widehat{\tilde u}(\cdot,k)\|_{L^\infty(\Omega)}
\le
Ck^{-1}
\qquad\text{for all sufficiently large }k.
\end{equation}
Moreover, Lemma~\ref{lem:R_Linfty_bound} also gives
\begin{equation}\label{eq:RtildeR_decay}
\|\mathbf R(\cdot,k)\|_{L^\infty(\Omega)}
+
\|\tilde{\mathbf R}(\cdot,k)\|_{L^\infty(\Omega)}
\le
Ck^{-3},
\end{equation}
and
\begin{equation}\label{eq:u_hat_high_freq_grad}
\|\nabla\mathbf R(\cdot,k)\|_{L^\infty(\Omega)}
+
\|\nabla\tilde{\mathbf R}(\cdot,k)\|_{L^\infty(\Omega)}
\le
Ck^{-3}
\end{equation}
for all sufficiently large \(k\).

At the same time, the solution $v^{(1)}$ to \eqref{eq:vk_sigma_helmholtz} admits the representation
\begin{equation}\label{eq:vkls_structure_clean}
v^{(1)}(x)=\sigma(x)^{-1/2}e^{\eta_1\cdot x}(1+r_1(x)),
\end{equation}
with
\begin{equation}\label{eq:r_explicit_here}
\|r_1\|_{L^\infty(\Omega)}+\|\nabla r_1\|_{L^\infty(\Omega)}
\le
C(1+k^2)s_{k,\lambda}^{-\delta},
\end{equation}
by Lemma~\ref{lem:cgo_sigma_estimate}.
By \eqref{eq:s_k_growth_condition}, the right-hand side tends to zero as \(k\to\infty\).

Consequently, we derive bounds for \(v^{(1)}\) and \(\nabla v^{(1)}\).
Since
\[
|e^{\eta_1\cdot x}|=e^{sx_2},
\qquad
|\eta_1|\asymp s,
\]
and \(\sigma^{-1/2}\in W^{1,\infty}(\Omega)\), it follows from \eqref{eq:vkls_structure_clean} and \eqref{eq:r_explicit_here} that
\[
|v^{(1)}(x)|
\le
\|\sigma^{-1/2}\|_{L^\infty(\Omega)}e^{sx_2}(1+\|r_1\|_{L^\infty(\Omega)})
\le
Ce^{sx_2},
\]
for all \(x\in\Omega\) and all sufficiently large \(k\).

For the gradient, differentiating \eqref{eq:vkls_structure_clean} gives
\[
\nabla v^{(1)}
=
\nabla(\sigma^{-1/2})\,e^{\eta_1\cdot x}(1+r_1)
+
\sigma^{-1/2}e^{\eta_1\cdot x}\eta_1(1+r_1)
+
\sigma^{-1/2}e^{\eta_1\cdot x}\nabla r_1.
\]
Hence
\[
\begin{aligned}
|\nabla v^{(1)}(x)|
\le\;&
e^{sx_2}\Big(
|\nabla(\sigma^{-1/2})(x)|(1+|r_1(x)|)
\\
&\qquad\quad
+
|\sigma(x)^{-1/2}|\,|\eta_1|(1+|r_1(x)|)
+
|\sigma(x)^{-1/2}|\,|\nabla r_1(x)|
\Big).
\end{aligned}
\]
Using \eqref{eq:r_explicit_here}, the boundedness of \(\sigma^{-1/2}\) and \(\nabla(\sigma^{-1/2})\), and \(|\eta_1|\asymp s\), we obtain
\[
\begin{aligned}
|\nabla v^{(1)}(x)|
\le\;&
Cse^{sx_2}
\\
&\quad
+
C(1+k^2)s^{-\delta}e^{sx_2}
+
Cs(1+k^2)s^{-\delta}e^{sx_2}.
\end{aligned}
\]
Since \((1+k^2)s^{-\delta}\to0\), all lower-order terms are absorbed into the \(Cse^{sx_2}\) term for sufficiently large \(k\), and therefore
\begin{equation}\label{eq:vkls_weighted_bounds}
|v^{(1)}(x)|\le Ce^{sx_2},
\qquad
|\nabla v^{(1)}(x)|\le Cse^{sx_2},
\qquad x\in\Omega,
\end{equation}
for all sufficiently large \(k\).

We now estimate the four terms in \eqref{eq:second_identity_sigma}.
Set
\begin{equation}\label{eq:I1}
I_1(k,\lambda,s):=
\int_\Omega (\sigma-\tilde\sigma)\,\nabla\widehat{\tilde u}(x,k)\cdot\nabla v^{(1)}(x)\,\mathrm{d}x,
\end{equation}
\begin{equation}\label{eq:I2}
I_2(k,\lambda,s):=
-\mathrm{i}k\int_\Omega (f-\tilde f)\,v^{(1)}(x)\,\mathrm{d}x,
\end{equation}
\begin{equation}\label{eq:I3}
I_3(k,\lambda,s):=
-\int_\Omega (h-\tilde h)\,v^{(1)}(x)\,\mathrm{d}x,
\end{equation}
and
\begin{equation}\label{eq:I4}
I_4(k,\lambda,s):=
k^2\int_\Omega
\left[
\left(1-\frac{1}{c(x)^2}\right)\mathbf R(x,k)
-
\left(1-\frac{1}{\tilde c(x)^2}\right)\tilde{\mathbf R}(x,k)
\right]v^{(1)}(x)\,\mathrm{d}x.
\end{equation}
We also write
\begin{equation}\label{eq:Ms_def}
M^{(1)}_{s}:=\int_\Omega e^{s x_2}\,\mathrm{d}x.
\end{equation}

Using \eqref{eq:u_hat_grad_order} and \eqref{eq:vkls_weighted_bounds}, we obtain
\[
|I_1(k,\lambda,s)|
\le
\|\sigma-\tilde\sigma\|_{L^\infty(\Omega)}
\|\nabla\widehat{\tilde u}(\cdot,k)\|_{L^\infty(\Omega)}
\int_\Omega |\nabla v^{(1)}(x)|\,\mathrm{d}x
\le
C\frac{s}{k}M^{(1)}_{s}.
\]
Hence
\begin{equation*}\label{eq:I1_weighted_bound}
|I_1(k,\lambda,s)|\le C\frac{s}{k}M^{(1)}_{s}.
\end{equation*}
Similarly, by \eqref{eq:vkls_weighted_bounds} and \eqref{eq:RtildeR_decay}, we have
\begin{equation}\label{eq:I234_weighted_bound}
\begin{aligned}
|I_2(k,\lambda,s)|
&\le
k\|f-\tilde f\|_{L^\infty(\Omega)}
\int_\Omega |v^{(1)}(x)|\,\mathrm{d}x
\le
Ck\,M^{(1)}_{s},
\\
|I_3(k,\lambda,s)|
&\le
\|h-\tilde h\|_{L^\infty(\Omega)}
\int_\Omega |v^{(1)}(x)|\,\mathrm{d}x
\le
CM^{(1)}_{s},
\\
|I_4(k,\lambda,s)|
&\le
Ck^2
\left(
\|\mathbf R(\cdot,k)\|_{L^\infty(\Omega)}
+
\|\tilde{\mathbf R}(\cdot,k)\|_{L^\infty(\Omega)}
\right)
\int_\Omega |v^{(1)}(x)|\,\mathrm{d}x
\le
Ck^{-1}M^{(1)}_{s}.
\end{aligned}
\end{equation}
We now normalize \eqref{eq:second_identity_sigma} by the quantity \((s/k)M^{(1)}_{s}\), which is the natural scale of \(I_1\).
This gives
\[
\frac{I_1(k,\lambda,s)}{(s/k)M^{(1)}_{s}}
+
\frac{I_2(k,\lambda,s)}{(s/k)M^{(1)}_{s}}
+
\frac{I_3(k,\lambda,s)}{(s/k)M^{(1)}_{s}}
+
\frac{I_4(k,\lambda,s)}{(s/k)M^{(1)}_{s}}
=0.
\]
By \eqref{eq:I234_weighted_bound}, one has the following estimates with some positive constants:
\[
\left|
\frac{I_2(k,\lambda,s)}{(s/k)M^{(1)}_{s}}
\right|
\le
C\frac{k^2}{s},
\qquad
\left|
\frac{I_3(k,\lambda,s)}{(s/k)M^{(1)}_{s}}
\right|
\le
C\frac{k}{s},
\qquad
\left|
\frac{I_4(k,\lambda,s)}{(s/k)M^{(1)}_{s}}
\right|
\le
C\frac{1}{s}.
\]
Since \(s/k^2\to\infty\), all three quantities tend to zero as \(k\to\infty\).
Consequently,
\begin{equation}\label{eq:sigma_term_first_level}
\frac{I_1(k,\lambda,s)}{(s/k)M^{(1)}_{s}}\to0
\qquad\text{as }k\to\infty.
\end{equation}

This is the first asymptotic reduction.
After normalization by the natural scale \((s/k)M^{(1)}_{s}\), the terms \(I_2\), \(I_3\), and \(I_4\) are negligible, and hence the only remaining \(O(1)\)-term \(I_1\) must also vanish in the limit.
Consequently, from this point onward, we can focus exclusively on \(I_1\) for this step of the proof. 
The lower-order terms \(I_2,I_3\), and \(I_4\) have already been eliminated at the dominant scale \((s/k)M_s^{(1)}\), and hence they do not contribute to the leading asymptotic channel through which the coefficient \(\sigma-\tilde\sigma\) is probed. 
The refinement below is therefore performed only on the leading component of \(I_1\).

Using the expansion of \(\nabla\widehat{\tilde u}\) above, we write
\[
\nabla\widehat{\tilde u}(x,k)
=
-\mathrm{i}\frac{\nabla\tilde f(x)}{k}
+
\mathcal E_k(x),
\]
where
\[
\mathcal E_k(x):=
-\frac{\nabla\tilde h(x)}{k^2}
+
\nabla\tilde{\mathbf R}(x,k).
\]
Since \(\tilde h\in H^7(\Omega)\), we have \(\nabla\tilde h\in L^\infty(\Omega)\), and thus, by \eqref{eq:u_hat_high_freq_grad}, we have
\[
\|\mathcal E_k\|_{L^\infty(\Omega)}
\le
\frac{\|\nabla\tilde h\|_{L^\infty(\Omega)}}{k^2}
+
\|\nabla\tilde{\mathbf R}(\cdot,k)\|_{L^\infty(\Omega)}\le
Ck^{-2}
\qquad\text{for all sufficiently large }k.
\]
Accordingly,
\[
I_1(k,\lambda,s)
=
-\frac{\mathrm{i}}{k}
\int_\Omega (\sigma-\tilde\sigma)\,\nabla\tilde f(x)\cdot\nabla v^{(1)}(x)\,\mathrm{d}x
+
R_1(k,\lambda,s),
\]
where
\[
R_1(k,\lambda,s):=
\int_\Omega (\sigma-\tilde\sigma)\,\mathcal E_k(x)\cdot\nabla v^{(1)}(x)\,\mathrm{d}x.
\]
Using \eqref{eq:vkls_weighted_bounds}, we estimate
\[
|R_1(k,\lambda,s)|
\le
\|\sigma-\tilde\sigma\|_{L^\infty(\Omega)}
\|\mathcal E_k\|_{L^\infty(\Omega)}
\int_\Omega |\nabla v^{(1)}(x)|\,\mathrm{d}x
\le
Ck^{-2}\int_\Omega |\nabla v_{k,\lambda,s}(x)|\,\mathrm{d}x
\le
C\frac{s}{k^2} M^{(1)}_{s}.
\]
Hence
\[
\left|
\frac{R_1(k,\lambda,s)}{(s/k)M^{(1)}_{s}}
\right|
\le
Ck^{-1}\to0
\qquad\text{as }k\to\infty.
\]
Combining this with \eqref{eq:sigma_term_first_level}, we conclude that
\begin{equation}\label{eq:sigma_term_first_level_dual_with_M}
\frac{1}{sM^{(1)}_{s}}
\int_\Omega (\sigma-\tilde\sigma)\,\nabla\tilde f(x)\cdot\nabla v^{(1)}(x)\,\mathrm{d}x
\to0
\qquad\text{as }k\to\infty,
\end{equation}
for each fixed \(\lambda\in\mathbb R\).

\subsubsection{Unique recovery of \(\sigma\)}

Starting from \eqref{eq:sigma_term_first_level_dual_with_M}, we now focus exclusively on the remaining leading term and refine it further.

Under Assumption~\ref{ass:general_structure}, we have $\tilde f=\tilde f(x_2)$ and $\sigma=\sigma(x_1)$, and so
\[
\nabla\tilde f(x)=\bigl(0,\tilde f'(x_2),0\bigr).
\]
Therefore \eqref{eq:sigma_term_first_level_dual_with_M} becomes
\begin{equation}\label{eq:sigma_first_level_x2_reduced}
\frac{1}{sM^{(1)}_{s}}
\int_\Omega (\sigma(x_1)-\tilde\sigma(x_1))\,\tilde f'(x_2)\,
\partial_{x_2}v^{(1)}(x)\,\mathrm{d}x
\to 0
\qquad\text{as }k\to\infty,
\end{equation}
for each fixed \(\lambda\in\mathbb R\), where \(s\) satisfies \eqref{eq:s_k_growth_condition}.

By \eqref{eq:vkls_structure_clean}, we have
\[
v^{(1)}(x)
=
\sigma(x_1)^{-1/2}e^{\eta_1\cdot x}(1+r_1(x)).
\]
Moreover, \eqref{eq:r_explicit_here} gives
\begin{equation}\label{eq:r_C1_bound_dual}
\|r_1\|_{L^\infty(\Omega)}
+
\|\nabla r_1\|_{L^\infty(\Omega)}
\le
C(1+k^2)s^{-\delta}
\to0
\qquad\text{as }k\to\infty.
\end{equation}

Since \(\sigma=\sigma(x_1)\), we have \(\partial_{x_2}(\sigma^{-1/2})=0\), and hence
\[
\partial_{x_2}v^{(1)}(x)
=
\sigma(x_1)^{-1/2}e^{\eta_1\cdot x}
\bigl(s(1+r_1(x))+\partial_{x_2}r_1(x)\bigr).
\]
Rewriting the factor \(\frac{1}{sM^{(1)}_{s}}\partial_{x_2}v^{(1)}(x)\), we obtain
\begin{equation}\label{eq:dx2_v_over_s_reduced}
\frac{\partial_{x_2}v^{(1)}(x)}{sM^{(1)}_{s}}
=
\frac{1}{M^{(1)}_{s}}\sigma(x_1)^{-1/2}e^{\eta_1\cdot x}
+
\mathcal E_{k,\lambda}(x),
\end{equation}
where
\begin{equation}\label{eq:error_term_sigma_reduced_dual}
\mathcal E_{k,\lambda}(x)
:=
\frac{1}{M^{(1)}_{s}}\sigma(x_1)^{-1/2}e^{\eta_1\cdot x}r_1(x)
+
\frac{1}{M^{(1)}_{s}}\sigma(x_1)^{-1/2}e^{\eta_1\cdot x}
\frac{\partial_{x_2}r_1(x)}{s}.
\end{equation}

Substituting \eqref{eq:dx2_v_over_s_reduced} into \eqref{eq:sigma_first_level_x2_reduced}, we obtain
\begin{equation}\label{eq:sigma_main_plus_error_reduced_dual}
\frac{1}{M^{(1)}_{s}}\int_\Omega
(\sigma(x_1)-\tilde\sigma(x_1))\,\tilde f'(x_2)\,\sigma(x_1)^{-1/2}
e^{\eta_1\cdot x}\,\mathrm{d}x
+
E_{k,\lambda}
\to 0
\qquad\text{as }k\to\infty,
\end{equation}
where
\begin{equation}\label{eq:Eklambdas_def_reduced}
E_{k,\lambda}
:=
\int_\Omega
(\sigma(x_1)-\tilde\sigma(x_1))\,\tilde f'(x_2)\,
\mathcal E_{k,\lambda}(x)\,\mathrm{d}x.
\end{equation}

Set
\begin{equation}\label{eq:As_def}
A_{k,\lambda}
:=
\int_{a_2}^{b_2}\tilde f'(x_2)e^{sx_2}\,\mathrm{d}x_2,
\end{equation}
\begin{equation}\label{eq:Blambdas_def}
B_{k,\lambda}
:=
\int_{a_3}^{b_3}e^{\mathrm{i}\mu_{k,\lambda}x_3}\,\mathrm{d}x_3,
\end{equation}
and
\[
G(x_1):=
(\sigma(x_1)-\tilde\sigma(x_1))\,\sigma(x_1)^{-1/2}.
\]
Since
\[
e^{\eta_1\cdot x}
=
e^{\mathrm{i}\lambda x_1}e^{sx_2}e^{\mathrm{i}\mu_{k,\lambda}x_3},
\]
the principal term in \eqref{eq:sigma_main_plus_error_reduced_dual} factorizes as
\[
\frac{A_{k,\lambda}B_{k,\lambda}}{M^{(1)}_{s}}
\int_{a_1}^{b_1}G(x_1)e^{\mathrm{i}\lambda x_1}\,\mathrm{d}x_1.
\]
Therefore \eqref{eq:sigma_main_plus_error_reduced_dual} becomes
\begin{equation}\label{eq:sigma_weighted_1d_identity_dual}
\frac{A_{k,\lambda}B_{k,\lambda}}{M^{(1)}_{s}}
\int_{a_1}^{b_1}G(x_1)e^{\mathrm{i}\lambda x_1}\,\mathrm{d}x_1
+
E_{k,\lambda}
\to 0
\qquad\text{as }k\to\infty.
\end{equation}

\begin{lemma}\label{lem:As_lower_bound}
Let \(A_{k,\lambda}\) be defined by \eqref{eq:As_def}.
Under the assumptions of Theorem~\ref{thm:general}, there exists a constant \(c_A>0\) such that
\begin{equation}\label{eq:As_lower_final}
|A_{k,\lambda}|
\ge
c_A\,\frac{e^{s b_2}}{s}
\qquad\text{for all sufficiently large }k.
\end{equation}
\end{lemma}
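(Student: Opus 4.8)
The plan is a one-sided Laplace-type estimate: as $s\to\infty$ the weight $e^{sx_2}$ concentrates at the right endpoint $x_2=b_2$, so that $A_{k,\lambda}$ is controlled entirely by the behaviour of $\tilde f'$ near $b_2$, and there the source non-degeneracy condition (Admissibility~\ref{ass:general_source_nondeg}) supplies a quantitative lower bound with no cancellation. Since $(\tilde f,\tilde h,\tilde c,\tilde\sigma)$ satisfies the hypotheses of Theorem~\ref{thm:general}, there are constants $\delta_0>0$ and $c_f>0$ such that the profile $\tilde f_0$ obeys $|\tilde f_0'(x_2)|\ge c_f$ on $(b_2-\delta_0,b_2)$ with $\tilde f_0'$ of constant sign there; moreover, by the structural condition (Admissibility~\ref{ass:general_structure}) one has $\tilde f'(x_2)=\tilde f_0'(x_2)$ on $(a_2,b_2)$, and $\tilde f\in H^7$ gives $\tilde f_0'\in L^\infty(a_2,b_2)$.

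First I would split $A_{k,\lambda}=\int_{a_2}^{b_2-\delta_0}\tilde f_0'(x_2)e^{sx_2}\,\mathrm{d}x_2+\int_{b_2-\delta_0}^{b_2}\tilde f_0'(x_2)e^{sx_2}\,\mathrm{d}x_2$. On the second interval the integrand has constant sign, so
\[
\left|\int_{b_2-\delta_0}^{b_2}\tilde f_0'(x_2)e^{sx_2}\,\mathrm{d}x_2\right|\ge c_f\int_{b_2-\delta_0}^{b_2}e^{sx_2}\,\mathrm{d}x_2=\frac{c_f}{s}\bigl(e^{sb_2}-e^{s(b_2-\delta_0)}\bigr).
\]
On the first interval, the trivial bound gives
\[
\left|\int_{a_2}^{b_2-\delta_0}\tilde f_0'(x_2)e^{sx_2}\,\mathrm{d}x_2\right|\le\|\tilde f_0'\|_{L^\infty(a_2,b_2)}\,\frac{e^{s(b_2-\delta_0)}}{s}.
\]
Combining these via the triangle inequality yields
\[
|A_{k,\lambda}|\ge\frac{e^{sb_2}}{s}\Bigl[c_f-\bigl(c_f+\|\tilde f_0'\|_{L^\infty(a_2,b_2)}\bigr)e^{-s\delta_0}\Bigr].
\]

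It remains only to absorb the exponentially small error. By \eqref{eq:s_k_growth_condition} one has $s=s_{k,\lambda}\ge\mu_{k,\lambda}\ge t_k=k^m\to\infty$ as $k\to\infty$, uniformly in $\lambda$; hence there is $k_0$, independent of $\lambda$, such that $\bigl(c_f+\|\tilde f_0'\|_{L^\infty(a_2,b_2)}\bigr)e^{-s\delta_0}\le c_f/2$ for all $k\ge k_0$, which gives \eqref{eq:As_lower_final} with $c_A=c_f/2$. The computation is routine; the only genuine content — and the sole place where the theorem's hypotheses enter — is the constant sign of $\tilde f_0'$ near $b_2$ together with its quantitative non-degeneracy, which is precisely what prevents the dominant endpoint contribution from cancelling and forces $A_{k,\lambda}$ to be comparable to $e^{sb_2}/s$ rather than of smaller order. (I write $\delta_0$ here for the half-width in Admissibility~\ref{ass:general_source_nondeg} to avoid a clash with the CGO exponent $\delta$ fixed in \eqref{eq:delta_choice_cgo}.)
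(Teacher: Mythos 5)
Your proof is correct and follows essentially the same route as the paper: the same splitting of the integral at $b_2-\delta$, the same use of the constant sign and lower bound $c_f$ of $\tilde f_0'$ near $b_2$ (applied to the tilde configuration), the same trivial bound on the remaining interval, and the same absorption of the exponentially small term using $s=s_{k,\lambda}\to\infty$, yielding $c_A=c_f/2$. The only differences are cosmetic (your explicit constant bookkeeping and the harmless renaming of $\delta$ to $\delta_0$).
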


\begin{proof}
By \eqref{eq:As_def},
\[
A_{k,\lambda}
=
\int_{a_2}^{b_2}\tilde f'(x_2)e^{sx_2}\,\mathrm{d}x_2.
\]
Decompose
\[
A_{k,\lambda}=I_{k,\lambda}^{(1)}+I_{k,\lambda}^{(2)},
\]
where
\[
I_{k,\lambda}^{(1)}
:=
\int_{a_2}^{b_2-\delta_f}\tilde f'(x_2)e^{sx_2}\,\mathrm{d}x_2,
\qquad
I_{k,\lambda}^{(2)}
:=
\int_{b_2-\delta_f}^{b_2}\tilde f'(x_2)e^{sx_2}\,\mathrm{d}x_2.
\]

Since the tilde configuration also satisfies Assumption~\ref{ass:general_source_nondeg}, we have
\[
|\tilde f'(x_2)|\ge c_f
\qquad\text{for }x_2\in (b_2-\delta_f,b_2),
\]
and \(\tilde f'\) does not change sign on \((b_2-\delta_f,b_2)\).
Hence
\[
|I_{k,\lambda}^{(2)}|
=
\int_{b_2-\delta_f}^{b_2}|\tilde f'(x_2)|e^{sx_2}\,\mathrm{d}x_2
\ge
c_f\int_{b_2-\delta_f}^{b_2}e^{sx_2}\,\mathrm{d}x_2.
\]
Therefore
\[
|I_{k,\lambda}^{(2)}|
\ge
c_f\,\frac{e^{sb_2}-e^{s(b_2-\delta_f)}}{s}
=
c_f\,\frac{e^{sb_2}}{s}(1-e^{-s\delta_f}).
\]
On the other hand,
\[
|I_{k,\lambda}^{(1)}|
\le
\|\tilde f'\|_{L^\infty(a_2,b_2)}
\int_{a_2}^{b_2-\delta_f}e^{sx_2}\,\mathrm{d}x_2
\le
C\,\frac{e^{s(b_2-\delta_f)}}{s}
=
C\,\frac{e^{sb_2}}{s}e^{-s\delta_f}.
\]
It follows that
\[
|A_{k,\lambda}|
\ge
|I_{k,\lambda}^{(2)}|-|I_{k,\lambda}^{(1)}|
\ge
\frac{e^{sb_2}}{s}
\left(
c_f(1-e^{-s\delta_f})-Ce^{-s\delta_f}
\right).
\]
Since \(s=s_{k,\lambda}\to\infty\) as \(k\to\infty\), the factor in parentheses converges to \(c_f\).
Thus, for all sufficiently large \(k\),
\[
c_f(1-e^{-s\delta_f})-Ce^{-s\delta_f}
\ge
\frac{c_f}{2}.
\]
Consequently,
\[
|A_{k,\lambda}|
\ge
\frac{c_f}{2}\frac{e^{sb_2}}{s}.
\]
This proves \eqref{eq:As_lower_final} with \(c_A=c_f/2\).
\end{proof}

\begin{lemma}\label{lem:B_lower_bound}
Let \(B_{k,\lambda}\) be defined by \eqref{eq:Blambdas_def}.
Then, for each fixed $\lambda\in\mathbb{R}$, there exists a constant \(c_{B,\lambda}>0\) such that
\begin{equation}\label{eq:B_lower_final}
|B_{k,\lambda}|
\ge
c_{B,\lambda}s_{k,\lambda}^{-1}
\qquad\text{for all sufficiently large }k.
\end{equation}
\end{lemma}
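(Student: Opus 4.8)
The plan is to evaluate $B_{k,\lambda}$ in closed form and then read off the lower bound directly from the half-period normalization of $\mu_{k,\lambda}$. First I would compute the elementary integral
\[
B_{k,\lambda}=\int_{a_3}^{b_3}e^{\mathrm{i}\mu_{k,\lambda}x_3}\,\mathrm{d}x_3
=\frac{e^{\mathrm{i}\mu_{k,\lambda}b_3}-e^{\mathrm{i}\mu_{k,\lambda}a_3}}{\mathrm{i}\mu_{k,\lambda}},
\]
which is legitimate since $\mu_{k,\lambda}\ge t_k>0$ for all large $k$. Factoring out the midpoint phase $e^{\mathrm{i}\mu_{k,\lambda}(a_3+b_3)/2}$ gives
\[
|B_{k,\lambda}|
=\frac{2}{\mu_{k,\lambda}}\,\Bigl|\sin\!\Bigl(\tfrac{\mu_{k,\lambda}(b_3-a_3)}{2}\Bigr)\Bigr|.
\]

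Next I would invoke the defining property \eqref{eq:mu_choice_half_period}: the frequency $\mu_{k,\lambda}$ was selected in the short interval $[t_k,t_k+2\pi/(b_3-a_3)]$ precisely so that $|\sin(\mu_{k,\lambda}(b_3-a_3)/2)|=1$ for all sufficiently large $k$, such a choice being admissible because the set where $|\sin(t(b_3-a_3)/2)|=1$ is $2\pi/(b_3-a_3)$-periodic in $t$ and hence meets every interval of that length. Consequently $|B_{k,\lambda}|=2/\mu_{k,\lambda}$ for all large $k$. It then remains only to compare $\mu_{k,\lambda}$ with $s_{k,\lambda}$: from \eqref{eq:s_k_def} one has $\mu_{k,\lambda}\le\sqrt{\mu_{k,\lambda}^2+\lambda^2}=s_{k,\lambda}$, so
\[
|B_{k,\lambda}|=\frac{2}{\mu_{k,\lambda}}\ge\frac{2}{s_{k,\lambda}},
\]
which is \eqref{eq:B_lower_final} with $c_{B,\lambda}=2$ (in fact a constant independent of $\lambda$).

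I do not expect any genuine obstacle here: the statement is a direct computation, and the only points requiring care are bookkeeping — verifying that the crest-locking in \eqref{eq:mu_choice_half_period} is realizable inside the prescribed interval, and recording the comparison $\mu_{k,\lambda}\le s_{k,\lambda}\le\sqrt2\,\mu_{k,\lambda}$ valid once $\mu_{k,\lambda}\ge|\lambda|$, which holds for large $k$ because $\mu_{k,\lambda}\ge t_k\to\infty$. The substantive content is simply that, even though $B_{k,\lambda}$ comes from a pure oscillatory Fourier mode, it cannot decay faster than $s_{k,\lambda}^{-1}$, and this is exactly what the choice \eqref{eq:mu_choice_half_period} secures. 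Combined with the lower bound for $A_{k,\lambda}$ from Lemma~\ref{lem:As_lower_bound}, this keeps the product $A_{k,\lambda}B_{k,\lambda}$ non-negligible in \eqref{eq:sigma_weighted_1d_identity_dual} and thus enables the recovery of $\sigma$.
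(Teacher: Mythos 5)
Your proposal is correct and follows essentially the same route as the paper: explicit evaluation of the integral, factoring out the midpoint phase, invoking the half-period choice \eqref{eq:mu_choice_half_period} to get $|B_{k,\lambda}|=2/\mu_{k,\lambda}$, and then comparing $\mu_{k,\lambda}$ with $s_{k,\lambda}$ (the paper uses $\mu_{k,\lambda}\asymp s_{k,\lambda}$, while your direct observation $\mu_{k,\lambda}\le s_{k,\lambda}$ gives the slightly cleaner constant $c_{B,\lambda}=2$).
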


\begin{proof}
By \eqref{eq:Blambdas_def},
\[
B_{k,\lambda}
=
\frac{e^{\mathrm{i}\mu_{k,\lambda}b_3}-e^{\mathrm{i}\mu_{k,\lambda}a_3}}{\mathrm{i}\mu_{k,\lambda}}
=
e^{\mathrm{i}\mu_{k,\lambda}(a_3+b_3)/2}
\frac{2\sin(\mu_{k,\lambda}(b_3-a_3)/2)}{\mu_{k,\lambda}}.
\]
Hence, by \eqref{eq:mu_choice_half_period},
\[
|B_{k,\lambda}|=\frac{2}{|\mu_{k,\lambda}|}.
\]
Since \(\mu_{k,\lambda}\asymp s_{k,\lambda}\), the estimate \eqref{eq:B_lower_final} follows.
\end{proof}

\begin{lemma}\label{lem:error_ratio_sigma_dual}
Suppose that the assumptions of Theorem~\ref{thm:general} hold.
Assume further that \eqref{eq:r_C1_bound_dual}, \eqref{eq:As_lower_final}, and \eqref{eq:B_lower_final} hold.
Then, for each fixed \(\lambda\in\mathbb R\),
\begin{equation}\label{eq:error_ratio_sigma_conclusion_dual_new}
\frac{E_{k,\lambda}}{(A_{k,\lambda}B_{k,\lambda})/M^{(1)}_{s}}\to0
\qquad\text{as }k\to\infty.
\end{equation}
\end{lemma}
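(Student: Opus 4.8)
The plan is to bound $E_{k,\lambda}$ from above and the normalising quantity $(A_{k,\lambda}B_{k,\lambda})/M^{(1)}_s$ from below, and then to close the estimate by invoking the growth condition \eqref{eq:s_k_growth_condition}. The denominator is the easy half: combining the lower bounds of Lemmas~\ref{lem:As_lower_bound} and~\ref{lem:B_lower_bound} with the elementary identity $M^{(1)}_s=(b_1-a_1)(b_3-a_3)\,(e^{sb_2}-e^{sa_2})/s$ on the box $\Omega=(a_1,b_1)\times(a_2,b_2)\times(a_3,b_3)$, one obtains
\[
\left|\frac{A_{k,\lambda}B_{k,\lambda}}{M^{(1)}_s}\right|\ \ge\ \tilde c_\lambda\,s_{k,\lambda}^{-1}\qquad\text{for all sufficiently large }k,
\]
with $\tilde c_\lambda=c_Ac_{B,\lambda}/\bigl((b_1-a_1)(b_3-a_3)\bigr)>0$.

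For $E_{k,\lambda}$ I would split it, following \eqref{eq:error_term_sigma_reduced_dual}, into the part $E^{(a)}_{k,\lambda}$ carrying the factor $r_1$ and the part $E^{(b)}_{k,\lambda}$ carrying the factor $s^{-1}\partial_{x_2}r_1$. The term $E^{(b)}_{k,\lambda}$ is harmless: using $|e^{\eta_1\cdot x}|=e^{sx_2}$, the normalisation $\int_\Omega e^{sx_2}\,\mathrm dx=M^{(1)}_s$, the boundedness of $\sigma-\tilde\sigma$, $\sigma^{-1/2}$ and $\tilde f'$, and the remainder bound \eqref{eq:r_C1_bound_dual}, a crude estimate gives $|E^{(b)}_{k,\lambda}|\le C(1+k^2)s_{k,\lambda}^{-\delta-1}$, so that after division by the denominator this contributes $O\bigl((1+k^2)s_{k,\lambda}^{-\delta}\bigr)$, which vanishes as $k\to\infty$ by \eqref{eq:r_C1_bound_dual}.

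The crux is the term $E^{(a)}_{k,\lambda}$. Here the same crude $L^\infty$ estimate gives only $|E^{(a)}_{k,\lambda}|\le C(1+k^2)s_{k,\lambda}^{-\delta}$, which, after division by the small denominator $\asymp s_{k,\lambda}^{-1}$, becomes $O\bigl((1+k^2)s_{k,\lambda}^{1-\delta}\bigr)$ — and this does \emph{not} tend to zero, since the CGO parameter $\delta$ of \eqref{eq:delta_choice_cgo}--\eqref{eq:pptilde_choice} satisfies $\delta\le\tfrac12<1$. To recover the missing power $s_{k,\lambda}^{-1}$ I would exploit the oscillation $e^{\mathrm i\mu_{k,\lambda}x_3}$ contained in $e^{\eta_1\cdot x}$: since $\mu_{k,\lambda}\asymp s_{k,\lambda}\to\infty$, a single integration by parts in $x_3$ over $(a_3,b_3)$ produces a prefactor $\mu_{k,\lambda}^{-1}\asymp s_{k,\lambda}^{-1}$, with boundary terms controlled by $\|r_1\|_{L^\infty(\Omega)}$ and an interior term controlled by $\|\nabla r_1\|_{L^\infty(\Omega)}$; both are legitimate because $r_1\in H^{2,p}(\Omega)\hookrightarrow C^{1,\alpha}(\overline\Omega)$ for the chosen $p>3$. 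Performing the remaining $(x_1,x_2)$-integration against $|G(x_1)\tilde f'(x_2)|e^{sx_2}$ and using that $\int_{a_2}^{b_2}e^{sx_2}\,\mathrm dx_2$ is proportional to $M^{(1)}_s$ (up to the transverse volume factors) upgrades the bound to $|E^{(a)}_{k,\lambda}|\le C(1+k^2)s_{k,\lambda}^{-\delta-1}$, hence again an $O\bigl((1+k^2)s_{k,\lambda}^{-\delta}\bigr)$ contribution after division.

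Adding the two contributions yields
\[
\Bigl|\frac{E_{k,\lambda}}{(A_{k,\lambda}B_{k,\lambda})/M^{(1)}_s}\Bigr|\ \le\ C\,(1+k^2)\,s_{k,\lambda}^{-\delta},
\]
and the right-hand side tends to $0$ as $k\to\infty$ by the growth condition \eqref{eq:s_k_growth_condition} (this is exactly the smallness statement \eqref{eq:r_C1_bound_dual}), which proves \eqref{eq:error_ratio_sigma_conclusion_dual_new}. The main obstacle is precisely the mismatch between the crude $O\bigl((1+k^2)s_{k,\lambda}^{-\delta}\bigr)$ bound on $E_{k,\lambda}$ and the fact that the normalising denominator is itself only of size $\asymp s_{k,\lambda}^{-1}$; the integration by parts in $x_3$, made available because the $x_3$-frequency $\mu_{k,\lambda}$ of the first CGO solution $v^{(1)}$ grows like $|\eta_1|$, is what restores the balance. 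A minor technical subtlety is that $r_1$ is only of class $H^{2,p}$, so the interior term produced by that integration by parts must be handled through the $L^\infty$ control of $\nabla r_1$ coming from the Sobolev embedding rather than through a pointwise bound on $\nabla^2 r_1$.
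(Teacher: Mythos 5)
Your proposal is correct and follows essentially the same route as the paper's proof: the same splitting of $E_{k,\lambda}$ into the $r_1$-part and the $s^{-1}\partial_{x_2}r_1$-part, the same integration by parts in $x_3$ (exploiting $\mu_{k,\lambda}\asymp s_{k,\lambda}$) to gain the extra factor $s_{k,\lambda}^{-1}$ on the $r_1$-part, and the same use of Lemmas~\ref{lem:As_lower_bound}--\ref{lem:B_lower_bound} together with \eqref{eq:r_C1_bound_dual} to conclude. The only difference is bookkeeping — you normalize everything by $M^{(1)}_{s}$ using the box structure and a denominator bound $\gtrsim s_{k,\lambda}^{-1}$, whereas the paper keeps explicit $e^{sb_2}/s^{2}$ factors — and your explicit remark that the crude bound fails because $\delta\le\tfrac12<1$ is a correct (and welcome) motivation the paper leaves implicit.
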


\begin{proof}
Set
\[
a_0(x_1,x_2):=(\sigma(x_1)-\tilde\sigma(x_1))\,\tilde f'(x_2)\,\sigma(x_1)^{-1/2}.
\]
Then \(a_0\in L^\infty((a_1,b_1)\times(a_2,b_2))\), and by \eqref{eq:Eklambdas_def_reduced} and \eqref{eq:error_term_sigma_reduced_dual},
\[
E_{k,\lambda}=E_{k,\lambda}^{(1)}+E_{k,\lambda}^{(2)},
\]
where
\[
E_{k,\lambda}^{(1)}
:=
\frac{1}{M^{(1)}_{s}}\int_\Omega a_0(x_1,x_2)e^{\eta_1\cdot x}r_1(x)\,\mathrm{d}x,
\]
and
\[
E_{k,\lambda}^{(2)}
:=
\frac{1}{M^{(1)}_{s}}\int_\Omega a_0(x_1,x_2)e^{\eta_1\cdot x}
\frac{\partial_{x_2}r_1(x)}{s}\,\mathrm{d}x,
\]
where we recall from \eqref{eq:eta_isotropic_size},
\[
e^{\eta_1\cdot x}
=
e^{\mathrm{i}\lambda x_1}e^{sx_2}e^{\mathrm{i}\mu_{k,\lambda}x_3}.
\]

We first estimate \(E_{k,\lambda}^{(1)}\). 
Since \(a_0(x_1,x_2)e^{\mathrm{i}\lambda x_1}e^{sx_2}\) is independent of \(x_3\), an integration by parts in the \(x_3\)-variable gives
\[
E_{k,\lambda}^{(1)}
=
E_{k,\lambda}^{(1,\mathrm{bulk})}
+
E_{k,\lambda}^{(1,\mathrm{bdry})},
\]
where
\[
E_{k,\lambda}^{(1,\mathrm{bulk})}
=
-\frac{1}{\mathrm{i}\mu_{k,\lambda}M^{(1)}_{s}}
\int_\Omega
a_0(x_1,x_2)e^{\mathrm{i}\lambda x_1}e^{sx_2}
e^{\mathrm{i}\mu_{k,\lambda}x_3}
\partial_{x_3}r_1(x)\,\mathrm{d}x,
\]
and
\[
E_{k,\lambda}^{(1,\mathrm{bdry})}
=
\frac{1}{\mathrm{i}\mu_{k,\lambda}M^{(1)}_{s}}
\int_{a_1}^{b_1}\int_{a_2}^{b_2}
a_0(x_1,x_2)e^{\mathrm{i}\lambda x_1}e^{sx_2}
\Bigl[e^{\mathrm{i}\mu_{k,\lambda}x_3}r_1(x)\Bigr]\Big|_{x_3=a_3}^{b_3}
\,\mathrm{d}x_2\mathrm{d}x_1.
\]

Since \(|\mu_{k,\lambda}|\asymp s\), we obtain
\[
|E_{k,\lambda}^{(1,\mathrm{bulk})}|
\le
\frac{C}{M^{(1)}_{s}}\frac{1}{s}
\|\partial_{x_3}r_1\|_{L^\infty(\Omega)}
\int_\Omega |a_0(x_1,x_2)|e^{sx_2}\,\mathrm{d}x.
\]
Because \(a_0\) contains the factor \(\tilde f'(x_2)\), we have
\[
\int_\Omega |a_0(x_1,x_2)|e^{sx_2}\,\mathrm{d}x
\le
C\frac{e^{sb_2}}{s}.
\]
Hence
\[
|E_{k,\lambda}^{(1,\mathrm{bulk})}|
\le
\frac{C}{M^{(1)}_{s}}
\|\partial_{x_3}r_1\|_{L^\infty(\Omega)}
\frac{e^{sb_2}}{s^2}.
\]

For the boundary term, we use
\[
\left|
\Bigl[e^{\mathrm{i}\mu_{k,\lambda}x_3}r_1(x)\Bigr]\Big|_{x_3=a_3}^{b_3}
\right|
\le
2\|r_1\|_{L^\infty(\Omega)}.
\]
Therefore,
\[
|E_{k,\lambda}^{(1,\mathrm{bdry})}|
\le
\frac{C}{M^{(1)}_{s}}\frac{1}{|\mu_{k,\lambda}|}
\|r_1\|_{L^\infty(\Omega)}
\int_{a_1}^{b_1}\int_{a_2}^{b_2}|a_0(x_1,x_2)|e^{sx_2}\,\mathrm{d}x_2\mathrm{d}x_1,
\]
Using again \(|\mu_{k,\lambda}|\asymp s\) and
\[
\int_{a_1}^{b_1}\int_{a_2}^{b_2}|a_0(x_1,x_2)|e^{sx_2}\,\mathrm{d}x_2\mathrm{d}x_1
\le
C\frac{e^{sb_2}}{s_{k,\lambda}},
\]
we obtain
\[
|E_{k,\lambda}^{(1,\mathrm{bdry})}|
\le
\frac{C}{M^{(1)}_{s}}
\|r_1\|_{L^\infty(\Omega)}
\frac{e^{sb_2}}{s^2}.
\]

Combining the estimates for the bulk and boundary terms, we conclude that
\begin{equation}\label{eq:E1_total_bound}
|E_{k,\lambda}^{(1)}|
\le
\frac{C}{M^{(1)}_{s}}
\left(
\|r_1\|_{L^\infty(\Omega)}
+
\|\partial_{x_3}r_1\|_{L^\infty(\Omega)}
\right)
\frac{e^{sb_2}}{s^2}.
\end{equation}

Next, for \(E_{k,\lambda}^{(2)}\), we directly estimate
\[
|E_{k,\lambda}^{(2)}|
\le
\frac{1}{M^{(1)}_{s}}
\left\|\frac{\partial_{x_2}r_1}{s}\right\|_{L^\infty(\Omega)}
\int_\Omega |a_0(x_1,x_2)|e^{sx_2}\,\mathrm{d}x,
\]
which gives
\begin{equation}\label{eq:E2_total_bound}
|E_{k,\lambda}^{(2)}|
\le
\frac{C}{M^{(1)}_{s}}
\|\partial_{x_2}r_1\|_{L^\infty(\Omega)}
\frac{e^{sb_2}}{s^2}.
\end{equation}

On the other hand, by \eqref{eq:As_lower_final} and \eqref{eq:B_lower_final},
\[
\left|\frac{A_{k,\lambda}B_{k,\lambda}}{M^{(1)}_{s}}\right|
\ge
\frac{c_Ac_{B,\lambda}}{M^{(1)}_{s}}
\frac{e^{sb_2}}{s^2}
\]
for all sufficiently large \(k\).
Therefore, \eqref{eq:E1_total_bound} implies
\[
\left|
\frac{E_{k,\lambda}^{(1)}}{(A_{k,\lambda}B_{k,\lambda})/M^{(1)}_{s}}
\right|
\le
C\left(
\|r_1\|_{L^\infty(\Omega)}
+
\|\partial_{x_3}r_1\|_{L^\infty(\Omega)}
\right)
\to0
\qquad\text{as }k\to\infty,
\]
where we used \eqref{eq:r_C1_bound_dual}.
Similarly, \eqref{eq:E2_total_bound} gives
\[
\left|
\frac{E_{k,\lambda}^{(2)}}{(A_{k,\lambda}B_{k,\lambda})/M^{(1)}_{s}}
\right|
\le
C\|\partial_{x_2}r_1\|_{L^\infty(\Omega)}
\to0
\qquad\text{as }k\to\infty.
\]
Combining the above two estimates proves \eqref{eq:error_ratio_sigma_conclusion_dual_new}.
\end{proof}

Fix \(\lambda\in\mathbb R\).
Applying Lemmas~\ref{lem:As_lower_bound}, \ref{lem:B_lower_bound}, and \ref{lem:error_ratio_sigma_dual}, and dividing \eqref{eq:sigma_weighted_1d_identity_dual} by \((A_{k,\lambda}B_{k,\lambda})/M^{(1)}_{s}\), we obtain
\[
\int_{a_1}^{b_1}
G(x_1)e^{\mathrm{i}\lambda x_1}\,\mathrm{d}x_1
+
\frac{E_{k,\lambda}}{(A_{k,\lambda}B_{k,\lambda})/M^{(1)}_{s}}
\to 0
\qquad\text{as }k\to\infty.
\]
By \eqref{eq:error_ratio_sigma_conclusion_dual_new}, the second term tends to zero. Since the first term is independent of $k$,
\begin{equation}\label{eq:sigma_fixed_lambda_relation}
\int_{a_1}^{b_1}
G(x_1)e^{\mathrm{i}\lambda x_1}\,\mathrm{d}x_1
=0
\qquad\text{for every }\lambda\in\mathbb R.
\end{equation}

Recalling that
\[
G(x_1)=
(\sigma(x_1)-\tilde\sigma(x_1))\,\sigma(x_1)^{-1/2}.
\]
We extend \(G\) by zero outside \((a_1,b_1)\).
Then \(G\in L^1(\mathbb R)\), and \eqref{eq:sigma_fixed_lambda_relation} shows that its one-dimensional Fourier transform vanishes identically on \(\mathbb R\).
By the injectivity of the Fourier transform on \(L^1(\mathbb R)\), it follows that
\[
G(x_1)=0
\qquad\text{for a.e. }x_1\in(a_1,b_1).
\]
Since \(\sigma(x_1)>0\), we conclude that
\[
\sigma(x_1)=\tilde\sigma(x_1)
\qquad\text{for a.e. }x_1\in(a_1,b_1).
\]
Equivalently,
\begin{equation}\label{eq uniq sigma}
\sigma=\tilde\sigma
\qquad\text{a.e. in }\Omega.
\end{equation}
%--------------------------------------------------------------------------------------------
\subsection{Step 5: Recovery of \(f\)}
\medskip
\subsubsection{The second test function}

We now introduce the second test function.
For each fixed \(\lambda\in\mathbb R\), we choose $\mu_{k,\lambda}$ as in \eqref{eq:mu_choice_half_period} and define $s=s_{k,\lambda}>|\lambda|$ similarly as in \eqref{eq:s_k_def}. Define 
\begin{equation*}\label{eq:eta_choice_f}
\eta_2:=
(s,\ \mathrm{i}\lambda,\ \mathrm{i}\mu_{k,\lambda})\in\mathbb C^3.
\end{equation*}
Observe that, in contrast to $\eta_1$, the growth direction is now taken along the \(x_1\)-axis.
By the choice of $s$ and $\mu_{k,\lambda}$, $\eta_2$ satisfies
\[
\eta_2\cdot\eta_2
=
s^2+(\mathrm{i}\lambda)^2+(\mathrm{i}\mu_{k,\lambda})^2
=
s^2-\lambda^2-\mu_{k,\lambda}^2
=
0.
\]

Accordingly, our second test function is taken as
\begin{equation*}\label{eq:test_function_w2}
w^{(2)}(x,t):=e^{-\mathrm{i}kt}v^{(2)}(x),
\end{equation*}
where 
\begin{equation}\label{eq:vkls_structure_f}
v^{(2)}(x)
=
\sigma(x_1)^{-1/2}e^{\eta_2\cdot x}(1+r_2(x)),
\end{equation}
following the same CGO construction as in Lemma~\ref{lem:cgo_sigma_estimate}.
The proof of Lemma~\ref{lem:cgo_sigma_estimate} applies verbatim with \(\eta_1\) replaced by \(\eta_2\).
Furthermore, the remainder \(r_2\) satisfies the same estimate as in \eqref{eq:r_C1_estimate_explicit}. In particular,
\begin{equation}\label{eq:r_explicit_f}
\|r_2\|_{L^\infty(\Omega)}
+
\|\nabla r_2\|_{L^\infty(\Omega)}
\le
C(1+k^2)|\eta_2|^{-\delta}
\le
C(1+k^2)s^{-\delta}.
\end{equation}
Similarly, \eqref{eq:vkls_structure_f} and \eqref{eq:r_explicit_f} imply that
\begin{equation}\label{eq:v2_weighted_bound}
|v^{(2)}(x)|\le Ce^{s x_1}
\qquad\text{for all sufficiently large }k.
\end{equation}

Since \(\sigma=\tilde\sigma\) a.e.\ in \(\Omega\) has already been proved, the first term in \eqref{eq:second_identity_sigma} vanishes.
Accordingly, defining \(I_2(k,\lambda,s)\), \(I_3(k,\lambda,s)\), and \(I_4(k,\lambda,s)\) as in \eqref{eq:I2}--\eqref{eq:I4}, but with \(v^{(1)}\) replaced by \(v^{(2)}\), \eqref{eq:second_identity_sigma} reduces to
\begin{equation}\label{eq:second_identity_after_sigma}
I_2(k,\lambda,s)+I_3(k,\lambda,s)+I_4(k,\lambda,s)=0.
\end{equation}
This is the second decoupling step in the proof.
We now isolate the leading contribution associated with \(f-\tilde f\).
Since the growth direction is now along the \(x_1\)-axis, we normalize by
\begin{equation}\label{eq:Ms_def_f}
M^{(2)}_s:=\int_\Omega e^{s x_1}\,\mathrm{d}x.
\end{equation}
Using \eqref{eq:Ms_def_f} together with the bound \eqref{eq:v2_weighted_bound}, we obtain
\[
|I_2(k,\lambda,s)|\le CkM^{(2)}_s,\qquad
|I_3(k,\lambda,s)|\le CM^{(2)}_s,
\qquad
|I_4(k,\lambda,s)|\le Ck^{-1}M^{(2)}_s
\]
for all sufficiently large \(k\).
Dividing \eqref{eq:second_identity_after_sigma} by \(kM^{(2)}_s\), we arrive at
\[
\frac{I_2(k,\lambda,s)}{kM^{(2)}_s}
+
\frac{I_3(k,\lambda,s)}{kM^{(2)}_s}
+
\frac{I_4(k,\lambda,s)}{kM^{(2)}_s}
=0.
\]
Therefore,
\[
\left|\frac{I_3(k,\lambda,s)}{kM^{(2)}_s}\right|
\le
\frac{C}{k},
\qquad
\left|\frac{I_4(k,\lambda,s)}{kM^{(2)}_s}\right|
\le
\frac{C}{k^2},
\]
and hence
\[
\frac{I_3(k,\lambda,s)}{kM^{(2)}_s}\to0,
\qquad
\frac{I_4(k,\lambda,s)}{kM^{(2)}_s}\to0
\qquad\text{as }k\to\infty.
\]
Arguing as in the previous subsection, it follows that the leading term \(I_2\) satisfies
\begin{equation}\label{eq:I2_second_decoupling}
\frac{I_2(k,\lambda,s)}{kM^{(2)}_s}\to0
\qquad\text{as }k\to\infty.
\end{equation}
This is the second asymptotic reduction. 
The terms \(I_3\) and \(I_4\) have been eliminated at the dominant scale \(kM_s^{(2)}\), and hence they do not contribute to the leading asymptotic channel through which the source difference \(f-\tilde f\) is probed. 
The refinement below is therefore performed only on the leading component of \(I_2\).

Since
\[
I_2(k,\lambda,s)
=
-\mathrm{i}k\int_\Omega (f-\tilde f)\,v^{(2)}(x)\,\mathrm{d}x,
\]
combining with \eqref{eq:I2_second_decoupling}, we obtain
\begin{equation}\label{eq:f_term_normalized_vanish}
\frac{1}{M^{(2)}_s}\int_\Omega (f-\tilde f)\,v^{(2)}(x)\,\mathrm{d}x \to 0
\qquad\text{as }k\to\infty.
\end{equation}

\medskip
\subsubsection{Unique recovery of \(f\)}

Substituting \eqref{eq:vkls_structure_f} into \eqref{eq:f_term_normalized_vanish} and expanding
\[
e^{\eta_2\cdot x}
=
e^{s x_1}e^{\mathrm{i}\lambda x_2}e^{\mathrm{i}\mu_{k,\lambda}x_3},
\]
we obtain
\begin{equation}\label{eq:f_term_after_cgo_substitution}
\frac{1}{M^{(2)}_s}\int_\Omega
(f-\tilde f)(x_2)\sigma(x_1)^{-1/2}
e^{s x_1}e^{\mathrm{i}\lambda x_2}e^{\mathrm{i}\mu_{k,\lambda}x_3}
(1+r_2(x))
\,\mathrm{d}x
\to 0
\qquad\text{as }k\to\infty.
\end{equation}

We next separate the principal term from the remainder term.
Expanding the factor \(1+r_2\), we rewrite \eqref{eq:f_term_after_cgo_substitution} as
\begin{equation}\label{eq:f_term_main_remainder_split}
\begin{aligned}
&\frac{1}{M^{(2)}_s}\int_\Omega
(f-\tilde f)(x_2)\sigma(x_1)^{-1/2}
e^{s x_1}e^{\mathrm{i}\lambda x_2}e^{\mathrm{i}\mu_{k,\lambda}x_3}
\,\mathrm{d}x
\\
&\qquad
+
\frac{1}{M^{(2)}_s}\int_\Omega
(f-\tilde f)(x_2)\sigma(x_1)^{-1/2}
e^{s x_1}e^{\mathrm{i}\lambda x_2}e^{\mathrm{i}\mu_{k,\lambda}x_3}
r_2(x)
\,\mathrm{d}x
\to 0
\qquad\text{as }k\to\infty.
\end{aligned}
\end{equation}

Define
\begin{equation}\label{eq:Atilde_f_def}
\widetilde A_{k,\lambda}
:=
\int_{a_1}^{b_1}\sigma(x_1)^{-1/2}e^{s x_1}\,\mathrm{d}x_1,
\end{equation}
and recall
\begin{equation*}\label{eq:Btilde_f_def}
B_{k,\lambda}
:=
\int_{a_3}^{b_3}e^{\mathrm{i}\mu_{k,\lambda}x_3}\,\mathrm{d}x_3.
\end{equation*}
Since \(f-\tilde f\) depends only on \(x_2\), the principal term in \eqref{eq:f_term_main_remainder_split} factorizes as
\[
\frac{\widetilde A_{k,\lambda}B_{k,\lambda}}{M^{(2)}_{s}}
\int_{a_2}^{b_2}(f-\tilde f)(x_2)e^{\mathrm{i}\lambda x_2}\,\mathrm{d}x_2.
\]

We also record a uniform lower bound for \(\widetilde A_{k,\lambda}/M^{(2)}_{s}\).
Since \(\sigma(x_1)\le \sigma_+\) for \(x_1\in(a_1,b_1)\) by \eqref{eq:general_bounds}, it follows from \eqref{eq:Atilde_f_def} that
\[
\widetilde A_{k,\lambda}
\ge
\sigma_+^{-1/2}\int_{a_1}^{b_1}e^{s x_1}\,\mathrm{d}x_1.
\]
On the other hand, by the product structure of \(\Omega\),
\[
M^{(2)}_{s}
=
\int_\Omega e^{s x_1}\,\mathrm{d}x
=
(b_2-a_2)(b_3-a_3)\int_{a_1}^{b_1}e^{s x_1}\,\mathrm{d}x_1.
\]
Therefore,
\[
\frac{\widetilde A_{k,\lambda}}{M^{(2)}_{s}}
\ge
\frac{\sigma_+^{-1/2}}{(b_2-a_2)(b_3-a_3)}
>0.
\]

We also recall from Lemma~\ref{lem:B_lower_bound} that
\begin{equation}\label{eq:Btilde_nonvanishing_choice}
|B_{k,\lambda}|
\ge
c_{B,\lambda}s^{-1}
\qquad\text{for all sufficiently large }k,
\end{equation}
for some constant \(c_{B,\lambda}>0\).

We next estimate the remainder contribution in \eqref{eq:f_term_main_remainder_split} which we denote by
\begin{equation*}\label{eq:Etilde_f_def}
\widetilde E_{k,\lambda}
:=
\frac{1}{M^{(2)}_{s}}\int_\Omega
(f-\tilde f)(x_2)\sigma(x_1)^{-1/2}
e^{s x_1}e^{\mathrm{i}\lambda x_2}e^{\mathrm{i}\mu_{k,\lambda}x_3}
r_2(x)
\,\mathrm{d}x.
\end{equation*}
Set
\[
a(x_1,x_2):=(f-\tilde f)(x_2)\sigma(x_1)^{-1/2}e^{s x_1}e^{\mathrm{i}\lambda x_2}.
\]
Then
\[
\widetilde E_{k,\lambda}
=
\frac{1}{M^{(2)}_{s}}\int_\Omega a(x_1,x_2)e^{\mathrm{i}\mu_{k,\lambda}x_3}r_2(x)\,\mathrm{d}x.
\]
Since \(a(x_1,x_2)\) is independent of \(x_3\), integrating by parts in \(x_3\) yields
\[
\widetilde E_{k,\lambda}
=
\widetilde E_{k,\lambda}^{(\mathrm{bdry})}
+
\widetilde E_{k,\lambda}^{(\mathrm{bulk})},
\]
where
\[
\widetilde E_{k,\lambda}^{(\mathrm{bdry})}
:=
\frac{1}{\mathrm{i}\mu_{k,\lambda}M^{(2)}_{s}}
\int_{a_1}^{b_1}\int_{a_2}^{b_2}
a(x_1,x_2)
\Bigl[e^{\mathrm{i}\mu_{k,\lambda}x_3}r_2(x)\Bigr]\Big|_{x_3=a_3}^{b_3}
\,\mathrm{d}x_2\,\mathrm{d}x_1,
\]
and
\[
\widetilde E_{k,\lambda}^{(\mathrm{bulk})}
:=
-\frac{1}{\mathrm{i}\mu_{k,\lambda}M^{(2)}_{s}}
\int_\Omega
a(x_1,x_2)e^{\mathrm{i}\mu_{k,\lambda}x_3}\partial_{x_3}r_2(x)\,\mathrm{d}x.
\]
We now estimate each term separately.
The boundary contribution involves the difference of the integrand evaluated at \(x_3=b_3\) and \(x_3=a_3\):
\[
\bigl[e^{\mathrm{i}\mu x_3}r_2(x)\bigr]_{x_3=a_3}^{b_3}
= e^{\mathrm{i}\mu b_3}r_2(x_1,x_2,b_3) - e^{\mathrm{i}\mu a_3}r_2(x_1,x_2,a_3).
\]
Taking absolute values and using \(|e^{\mathrm{i}\mu x_3}|=1\) gives
\[
\Bigl|\bigl[e^{\mathrm{i}\mu x_3}r_2(x)\bigr]_{x_3=a_3}^{b_3}\Bigr|
\le |r_2(x_1,x_2,b_3)| + |r_2(x_1,x_2,a_3)|
\le 2\|r_2\|_{L^\infty(\Omega)}.
\]
Thus,
\[
|\widetilde E_{k,\lambda}^{(\mathrm{bdry})}|
\le
\frac{1}{|\mu_{k,\lambda}| M^{(2)}_{s}}
2\|r_2\|_{L^\infty(\Omega)}
\int_{a_1}^{b_1}\int_{a_2}^{b_2} |a(x_1,x_2)|\,\mathrm{d}x_2\,\mathrm{d}x_1.
\]
Now compute the integral of \(|a|\):
\[
\int_{a_1}^{b_1}\int_{a_2}^{b_2} |a(x_1,x_2)|\,\mathrm{d}x_2\,\mathrm{d}x_1
=
\int_{a_1}^{b_1} \sigma(x_1)^{-1/2} e^{s x_1}
\Bigl(\int_{a_2}^{b_2} |(f-\tilde f)(x_2)|\,\mathrm{d}x_2\Bigr)
dx_1.
\]
Since \(f,\tilde f\in L^\infty(\Omega)\), there exists a constant \(C>0\), independent of \(x_1\), such that
\[
\int_{a_2}^{b_2}|(f-\tilde f)(x_2)|\,dx_2\le C.
\]
Then
\[
\int_{a_1}^{b_1}\int_{a_2}^{b_2} |a(x_1,x_2)|\,\mathrm{d}x_2\,\mathrm{d}x_1
\leq C \int_{a_1}^{b_1} \sigma(x_1)^{-1/2} e^{s x_1} dx_1
= C \,\widetilde A_{k,\lambda},
\]
Hence,
\[
|\widetilde E_{k,\lambda}^{(\mathrm{bdry})}|
\le
\frac{C}{|\mu_{k,\lambda}|}
\|r_2\|_{L^\infty(\Omega)}
\frac{\widetilde A_{k,\lambda}}{M^{(2)}_{s}}.
\]
Similarly,
\[
|\widetilde E_{k,\lambda}^{(\mathrm{bulk})}|
\le
\frac{C}{|\mu_{k,\lambda}|}
\|\partial_{x_3}r_2\|_{L^\infty(\Omega)}
\frac{\widetilde A_{k,\lambda}}{M^{(2)}_{s}}.
\]
Thus,
\begin{equation}\label{eq:Etilde_bound}
|\widetilde E_{k,\lambda}|
\le
\frac{C}{|\mu_{k,\lambda}|}
\left(
\|r_2\|_{L^\infty(\Omega)}
+
\|\partial_{x_3}r_2\|_{L^\infty(\Omega)}
\right)
\frac{\widetilde A_{k,\lambda}}{M^{(2)}_{s}}.
\end{equation}

Using \eqref{eq:Etilde_bound} and \eqref{eq:Btilde_nonvanishing_choice}, and recalling that \(\mu_{k,\lambda}\asymp s\), for each fixed $\lambda\in\mathbb{R}$, we obtain
\begin{equation}\label{eq:error_ratio_f_conclusion}
\left|
\frac{\widetilde E_{k,\lambda}}{(\widetilde A_{k,\lambda}B_{k,\lambda})/M^{(2)}_{s}}
\right|
\le
C
\left(
\|r_2\|_{L^\infty(\Omega)}
+
\|\partial_{x_3}r_2\|_{L^\infty(\Omega)}
\right)
\to 0
\qquad\text{as }k\to\infty.
\end{equation}

Returning to \eqref{eq:f_term_main_remainder_split}, dividing by
\[
\frac{\widetilde A_{k,\lambda}B_{k,\lambda}}{M^{(2)}_{s}},
\]
we arrive at
\[
\int_{a_2}^{b_2}(f-\tilde f)(x_2)e^{\mathrm{i}\lambda x_2}\,\mathrm{d}x_2
+
\frac{\widetilde E_{k,\lambda}}{(\widetilde A_{k,\lambda}B_{k,\lambda})/M^{(2)}_{s}}
\to 0
\qquad\text{as }k\to\infty.
\]
Using \eqref{eq:error_ratio_f_conclusion} and since the first term is independent of $k$, we obtain
\[
\int_{a_2}^{b_2}(f-\tilde f)(x_2)e^{\mathrm{i}\lambda x_2}\,\mathrm{d}x_2=0
\qquad\text{for each fixed }\lambda\in\mathbb R.
\]

Since the one-dimensional \(f_0-\tilde f_0\) belongs to \(L^1(\mathbb R)\) after being extended by zero outside \((a_2,b_2)\), the injectivity of the Fourier transform yields
\[
f_0(x_2)=\tilde f_0(x_2)
\qquad\text{for a.e. }x_2\in(a_2,b_2).
\]
Equivalently,
\begin{equation}\label{eq uniq f}
f=\tilde f
\qquad\text{a.e. in }\Omega.
\end{equation}

%----------------------------------------------------------------------------------------
\subsection{Step 6: Recovery of \(h\)}

Since \(\sigma=\tilde\sigma\) a.e.\ in \(\Omega\) and \(f=\tilde f\) a.e.\ in \(\Omega\), the first two terms in \eqref{eq:second_identity_sigma} vanish:
\begin{equation}\label{eq:second_identity_after_f}
-\int_\Omega (h-\tilde h)\,v^{(1)}(x)\,\mathrm{d}x
+k^2\int_\Omega
\left[
\left(1-\frac{1}{c(x)^2}\right)\mathbf R(x,k)
-
\left(1-\frac{1}{\tilde c(x)^2}\right)\tilde{\mathbf R}(x,k)
\right]v^{(1)}(x)\,\mathrm{d}x
=0.
\end{equation}

To recover \(h\), we return to the first test function $w^{(1)}$ in \eqref{eq:test_function_w1}.
Accordingly, we use the normalization as given in \eqref{eq:Ms_def}
\begin{equation*}\label{eq:Ms_def_h}
M^{(1)}_s=\int_\Omega e^{s x_2}\,\mathrm{d}x.
\end{equation*}

Fix \(\lambda\in\mathbb R\).
By \eqref{eq:vkls_structure_clean} and \eqref{eq:r_explicit_here}, we have
\[
v^{(1)}(x)
=
\sigma(x_1)^{-1/2}e^{\eta_1\cdot x}(1+r_1(x)),\qquad\eta_1=(\mathrm{i}\lambda,\ s,\ \mathrm{i}\mu_{k,\lambda}),
\]
and
\[
\|r_1\|_{L^\infty(\Omega)}+\|\nabla r_1\|_{L^\infty(\Omega)}
\le
C(1+k^2)s^{-\delta}.
\]
Hence, exactly as in \eqref{eq:vkls_weighted_bounds},
\[
|v^{(1)}(x)|\le Ce^{s x_2}
\qquad\text{for all sufficiently large }k.
\]
Using this bound together with \eqref{eq:RtildeR_decay}, we first obtain the rough estimate
\[
k^2\int_\Omega
\left[
\left(1-\frac{1}{c(x)^2}\right)\mathbf R(x,k)
-
\left(1-\frac{1}{\tilde c(x)^2}\right)\tilde{\mathbf R}(x,k)
\right]v^{(1)}(x)\,\mathrm{d}x
=
O(k^{-1}M^{(1)}_s).
\]
For the Fourier extraction below, we use the corresponding refined estimate. 
Since \(v^{(1)}\) contains the oscillatory factor \(e^{\mathrm{i}\mu_{k,\lambda}x_3}\), a single integration by parts in the \(x_3\)-direction, together with
\eqref{eq:RtildeR_decay}, \eqref{eq:u_hat_high_freq_grad}, and
\(|\mu_{k,\lambda}|\asymp s\), gives
\[
k^2\int_\Omega
\left[
\left(1-\frac{1}{c(x)^2}\right)\mathbf R(x,k)
-
\left(1-\frac{1}{\tilde c(x)^2}\right)\tilde{\mathbf R}(x,k)
\right]v^{(1)}(x)\,\mathrm{d}x
=
O(k^{-1}s^{-1}M^{(1)}_s).
\]
Therefore, \eqref{eq:second_identity_after_f} yields the sharper relation
\begin{equation}\label{eq:h_term_normalized_vanish}
\frac{1}{M^{(1)}_s}\int_\Omega (h-\tilde h)\,v^{(1)}(x)\,\mathrm{d}x
=
O(k^{-1}s^{-1})
\qquad\text{as }k\to\infty.
\end{equation}

Under Assumption~\ref{ass:general_structure},
\[
h=h(x_1),
\qquad
\tilde h=\tilde h(x_1),
\qquad
\sigma=\sigma(x_1).
\]
Substituting the CGO representation of \(v^{(1)}\) into \eqref{eq:h_term_normalized_vanish}, we obtain
\begin{equation}\label{eq:h_term_after_cgo}
\frac{1}{M^{(1)}_s}\int_\Omega
(h-\tilde h)(x_1)\sigma(x_1)^{-1/2}
e^{\mathrm{i}\lambda x_1}e^{s x_2}e^{\mathrm{i}\mu_{k,\lambda}x_3}
(1+r_1(x))
\,\mathrm{d}x
\to 0
\qquad\text{as }k\to\infty.
\end{equation}

Separating the principal term from the remainder term, we rewrite \eqref{eq:h_term_after_cgo} as
\begin{equation}\label{eq:h_term_main_remainder_split}
\begin{aligned}
&\frac{1}{M^{(1)}_s}\int_\Omega
(h-\tilde h)(x_1)\sigma(x_1)^{-1/2}
e^{\mathrm{i}\lambda x_1}e^{s x_2}e^{\mathrm{i}\mu_{k,\lambda}x_3}
\,\mathrm{d}x\\
&\qquad
+
\frac{1}{M^{(1)}_s}\int_\Omega
(h-\tilde h)(x_1)\sigma(x_1)^{-1/2}
e^{\mathrm{i}\lambda x_1}e^{s x_2}e^{\mathrm{i}\mu_{k,\lambda}x_3}
r_1(x)
\,\mathrm{d}x
\to 0
\qquad\text{as }k\to\infty.
\end{aligned}
\end{equation}

Define
\begin{equation*}\label{eq:Bhat_h_def}
B_{k,\lambda}=
\int_{a_3}^{b_3}e^{\mathrm{i}\mu_{k,\lambda}x_3}\,\mathrm{d}x_3
\end{equation*}
as in \eqref{eq:Blambdas_def}.
Then, by Lemma \ref{lem:B_lower_bound},
\begin{equation}\label{eq:Bhat_nonvanishing_choice}
| B_{k,\lambda}|
\ge
 c_{B,\lambda}s^{-1}
\qquad\text{for all sufficiently large }k.
\end{equation}

Also define
\begin{equation*}\label{eq:Q_h_def}
Q(x_1):=(h-\tilde h)(x_1)\sigma(x_1)^{-1/2}.
\end{equation*}
Since \(Q\) depends only on \(x_1\), the principal term in \eqref{eq:h_term_main_remainder_split} becomes
\[
\frac{ B_{k,\lambda}}{(b_1-a_1)(b_3-a_3)}
\int_{a_1}^{b_1}Q(x_1)e^{\mathrm{i}\lambda x_1}\,\mathrm{d}x_1,
\]
because
\[
M^{(1)}_{s}
=
\int_\Omega e^{s x_2}\,\mathrm{d}x
=
(b_1-a_1)(b_3-a_3)\int_{a_2}^{b_2}e^{s x_2}\,\mathrm{d}x_2.
\]

We denote the remainder contribution by
\begin{equation*}\label{eq:Ehat_h_def}
\widehat E_{k,\lambda}
:=
\frac{1}{M^{(1)}_{s}}\int_\Omega
Q(x_1)e^{\mathrm{i}\lambda x_1}e^{s x_2}e^{\mathrm{i}\mu_{k,\lambda}x_3}
r_1(x)
\,\mathrm{d}x.
\end{equation*}
Then \eqref{eq:h_term_main_remainder_split} and \eqref{eq:h_term_normalized_vanish} give
\begin{equation}\label{eq:h_weighted_1d_identity}
\frac{B_{k,\lambda}}{(b_1-a_1)(b_3-a_3)}
\int_{a_1}^{b_1}Q(x_1)e^{\mathrm{i}\lambda x_1}\,\mathrm{d}x_1
+
\widehat E_{k,\lambda}
=
O(k^{-1}s^{-1})
\qquad\text{as }k\to\infty.
\end{equation}

We next estimate \(\widehat E_{k,\lambda}\) by an integration by parts in the \(x_3\)-direction.
Since $Q$ is bounded and 
\[
Q(x_1)e^{\mathrm{i}\lambda x_1}e^{s x_2}
\]
is independent of \(x_3\), the same argument as in the proof of Lemma~\ref{lem:error_ratio_sigma_dual} gives
\[
|\widehat E_{k,\lambda}|
\le
\frac{C}{|\mu_{k,\lambda}|}
\left(
\|r_1\|_{L^\infty(\Omega)}
+
\|\partial_{x_3}r_1\|_{L^\infty(\Omega)}
\right).
\]
Therefore, since $ \mu_{k,\lambda}\asymp s$, for each fixed $\lambda\in\mathbb{R}$, 
\begin{equation}\label{eq:EB}
    \left|\frac{\widehat E_{k,\lambda}}{B_{k,\lambda}}\right|\to0
\qquad\text{as }k\to\infty.
\end{equation}

Dividing \eqref{eq:h_weighted_1d_identity} by \(B_{k,\lambda}\), and using \eqref{eq:Bhat_nonvanishing_choice} and \eqref{eq:EB},
we obtain
\[
\int_{a_1}^{b_1}Q(x_1)e^{\mathrm{i}\lambda x_1}\,\mathrm{d}x_1=0.
\]
Extending \(Q\) by zero outside \((a_1,b_1)\), we have \(Q\in L^1(\mathbb R)\).
Hence the one-dimensional Fourier transform of \(Q\) vanishes identically, and the injectivity of the Fourier transform gives
\[
Q(x_1)=0
\qquad\text{for a.e. }x_1\in(a_1,b_1).
\]
Since \(\sigma(x_1)>0\), it follows that
\[
h(x_1)=\tilde h(x_1)
\qquad\text{for a.e. }x_1\in(a_1,b_1).
\]
Equivalently,
\begin{equation}\label{eq uniq h}
h=\tilde h
\qquad\text{a.e. in }\Omega.
\end{equation}
%---------------------------------------------------------------------------
\subsection{Step 7: Recovery of \(c\)}

It remains to prove that $c=\tilde c$ a.e. in $\Omega$. 
At this stage, we have already proved that
\[
\sigma=\tilde\sigma,\qquad f=\tilde f,\qquad h=\tilde h
\qquad\text{a.e. in }\Omega.
\]
Therefore, the time-domain integral identity in Lemma~\ref{lem:first_identity} now reduces to
\begin{equation}\label{eq:first_identity_reduced_c}
\begin{aligned}
&\int_\Omega\int_0^\infty
\left(\frac{1}{\tilde{c}^2}-\frac{1}{c^2}\right)\tilde{u}(x,t)\,\partial_{tt}w(x,t)\,\mathrm{d}t\mathrm{d}x\\
&\qquad
+\int_\Omega
f(x)\left(\frac{1}{\tilde{c}(x)^2}-\frac{1}{c(x)^2}\right)\partial_t w(x,0)\,\mathrm{d}x
-\int_\Omega
h(x)\left(\frac{1}{\tilde{c}(x)^2}-\frac{1}{c(x)^2}\right)w(x,0)\,\mathrm{d}x
=0.
\end{aligned}
\end{equation}

The argument in this step is different from the previous ones, but it is still carried out within the same unified framework.
There, we changed the test functions, passed to the frequency domain, and used the high-frequency expansion to extract a suitable decoupling quantity.
Here we do not follow that route.
Instead, we return to the time-domain identity and introduce a small parameter in the time variable together with a suitably chosen time-dependent test solution.
The point is to produce a different asymptotic decoupling, now tailored to the coefficient \(c\).

More precisely, the test solution is chosen so that, as the small time parameter tends to zero, the terms involving the already recovered quantities become negligible or explicitly known, while the contribution containing \(c\) survives in an isolable form.
This provides a new decoupling mechanism in the time domain, and it is this mechanism that allows us to recover \(c\).
\subsubsection{The third test function}

We now choose a time-dependent test solution with a small temporal frequency.
Fix \(\xi\in\mathbb C^3\), and let \(0<\rho_0\ll1\).
We look for a solution \(w_{\rho_0,\xi}\) of
\begin{equation*}\label{eq:hyperbolic_cgo_eq}
\nabla\cdot(\sigma\nabla w_{\rho_0,\xi})-\frac{1}{c^2}\partial_{tt}w_{\rho_0,\xi}=0
\qquad\text{in }\Omega\times(0,\infty)
\end{equation*}
in the separated form
\begin{equation*}\label{eq:w_cgo_form}
w_{\rho_0,\xi}(x,t)=e^{i\rho_0 t}v_{\rho_0,\xi}(x).
\end{equation*}
Then the spatial factor \(v_{\rho_0,\xi}\) satisfies
\begin{equation*}\label{eq:v_rho0_eq}
\nabla\cdot(\sigma\nabla v_{\rho_0,\xi})+\frac{\rho_0^2}{c^2}v_{\rho_0,\xi}=0
\qquad\text{in }\Omega.
\end{equation*}

We apply the Liouville transform
\begin{equation*}\label{eq:u_rho0_def}
u_{\rho_0,\xi}(x):=\sigma(x)^{1/2}v_{\rho_0,\xi}(x).
\end{equation*}
This gives the Schr\"odinger-type equation
\begin{equation*}\label{eq:u_rho0_eq}
-\Delta u_{\rho_0,\xi}+q_{\rho_0}u_{\rho_0,\xi}=0
\qquad\text{in }\Omega,
\end{equation*}
with
\begin{equation*}\label{eq:q_rho0_def}
q_{\rho_0}:=\sigma^{-1/2}\Delta(\sigma^{1/2})-\frac{\rho_0^2}{\sigma c^2}.
\end{equation*}

We take \(u_{\rho_0,\xi}\) in the CGO form
\begin{equation*}\label{eq:u_rho0_cgo}
u_{\rho_0,\xi}(x)=e^{\xi\cdot x}(1+\psi_{\rho_0,\xi}(x)).
\end{equation*}
Equivalently,
\begin{equation*}\label{eq:v_rho0_cgo}
v_{\rho_0,\xi}(x)=\sigma(x)^{-1/2}e^{\xi\cdot x}(1+\psi_{\rho_0,\xi}(x)),
\end{equation*}
and therefore
\begin{equation*}\label{eq:w_low_freq_general}
w_{\rho_0,\xi}(x,t)=e^{i\rho_0 t}\sigma(x)^{-1/2}e^{\xi\cdot x}(1+\psi_{\rho_0,\xi}(x)).
\end{equation*}
A direct computation gives the following initial-time values and second time derivative:
\begin{align}
&w_{\rho_0,\xi}(x,0)
=
\sigma(x)^{-1/2}e^{\xi\cdot x}(1+\psi_{\rho_0,\xi}(x)),
\label{eq:w_time_derivatives_general}\\
&\partial_t w_{\rho_0,\xi}(x,0)
=
i\rho_0\,\sigma(x)^{-1/2}e^{\xi\cdot x}(1+\psi_{\rho_0,\xi}(x)),
\label{eq:w_time_derivatives_general_2}\\
&\partial_{tt}w_{\rho_0,\xi}(x,t)
=
-\rho_0^2w_{\rho_0,\xi}(x,t).
\label{eq:w_tt}
\end{align}

Substituting \eqref{eq:w_time_derivatives_general}, \eqref{eq:w_time_derivatives_general_2}, and \eqref{eq:w_tt} into \eqref{eq:first_identity_reduced_c}, we obtain
\begin{equation}\label{eq:first_low_freq_general_c}
\begin{aligned}
&-\rho_0^2\int_\Omega\int_0^\infty
\left(\frac{1}{\tilde c^2}-\frac{1}{c^2}\right)\tilde u(x,t)\,w_{\rho_0,\xi}(x,t)\,\mathrm{d}t\mathrm{d}x\\
&\qquad
+i\rho_0\int_\Omega
f(x)\left(\frac{1}{\tilde c(x)^2}-\frac{1}{c(x)^2}\right)
\sigma(x)^{-1/2}e^{\xi\cdot x}(1+\psi_{\rho_0,\xi}(x))
\,\mathrm{d}x\\
&\qquad
-\int_\Omega
h(x)\left(\frac{1}{\tilde c(x)^2}-\frac{1}{c(x)^2}\right)
\sigma(x)^{-1/2}e^{\xi\cdot x}(1+\psi_{\rho_0,\xi}(x))
\,\mathrm{d}x
=0.
\end{aligned}
\end{equation}

\subsubsection{Low-frequency time-domain decoupling}
The present transformed equation differs slightly from the model considered in Lemmas~\ref{lem G zeta} and \ref{lem rho_0}, but the same Fourier-multiplier and fixed-point argument applies.
In particular, for all sufficiently small \(\rho_0>0\), the corresponding remainder \(\psi_{\rho_0,\xi}\) exists and is uniformly bounded, and moreover
\[
\psi_{\rho_0,\xi}\to \psi_{0,\xi}
\qquad\text{in }L^\infty(\Omega)
\qquad\text{as }\rho_0\to0.
\]

\begin{proposition}\label{prop:c_identity}
For every admissible complex phase \(\xi\in\mathbb C^3\), one has
\begin{equation}\label{eq:c_identity_xi_rigorous}
\int_\Omega
h(x)\left(\frac{1}{\tilde c(x)^2}-\frac{1}{c(x)^2}\right)
\sigma(x)^{-1/2}e^{\xi\cdot x}(1+\psi_{0,\xi}(x))
\,\mathrm{d}x
=0.
\end{equation}
\end{proposition}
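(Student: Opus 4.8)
The plan is to pass to the limit $\rho_0\to0^+$ in the reduced integral identity \eqref{eq:first_low_freq_general_c}, showing that its first two terms are negligible while the third converges to exactly the claimed expression. First I would record that \eqref{eq:first_low_freq_general_c} is valid for every sufficiently small $\rho_0>0$: it is obtained by inserting the time-dependent test solution $w_{\rho_0,\xi}(x,t)=e^{\mathrm{i}\rho_0 t}v_{\rho_0,\xi}(x)$ into the reduced time-domain identity \eqref{eq:first_identity_reduced_c}, which in turn follows from Lemma~\ref{lem:first_identity} together with the already-established equalities $\sigma=\tilde\sigma$, $f=\tilde f$, $h=\tilde h$. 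The hypotheses of Lemma~\ref{lem:first_identity} hold here because $w_{\rho_0,\xi}$ solves $\tfrac{1}{c^2}\partial_{tt}w-\nabla\cdot(\sigma\nabla w)=0$ in $\Omega\times(0,\infty)$, because $|e^{\mathrm{i}\rho_0 t}|=1$ makes $w_{\rho_0,\xi}$ and $\partial_t w_{\rho_0,\xi}$ bounded in time uniformly in $\rho_0$ (using the uniform-in-$\rho_0$ bound on $\psi_{\rho_0,\xi}$ in $L^\infty(\Omega)$ from the Fourier-multiplier/fixed-point construction), and because Admissibility~\ref{ass:general_decay} supplies the time decay of $u,\tilde u$ that annihilates the endpoint terms at $t=\infty$ in the integrations by parts.

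Next I would estimate the three terms of \eqref{eq:first_low_freq_general_c} separately. For the first term, using $\bigl\|\tfrac{1}{\tilde c^2}-\tfrac{1}{c^2}\bigr\|_{L^\infty(\Omega)}<\infty$ from \eqref{eq:general_bounds}, Cauchy--Schwarz in the spatial variable, and $\sup_{t\ge0}\|w_{\rho_0,\xi}(\cdot,t)\|_{L^\infty(\Omega)}=\sup\|v_{\rho_0,\xi}\|_{L^\infty(\Omega)}\le C_\xi$ uniformly in $\rho_0$, one bounds it by $C\rho_0^2\int_0^\infty\|\tilde u(\cdot,t)\|_{L^2(\Omega)}\,\mathrm{d}t$, and the last integral is finite by Admissibility~\ref{ass:general_decay} because $\eta\in L^1(0,\infty)$; hence this term is $O(\rho_0^2)\to0$. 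For the second term, the explicit prefactor $\rho_0$ multiplies an integral over the bounded set $\Omega$ of the fixed $L^\infty$ functions $f$, $\tfrac{1}{\tilde c^2}-\tfrac{1}{c^2}$, $\sigma^{-1/2}$, $e^{\xi\cdot x}$ times $(1+\psi_{\rho_0,\xi})$, and the uniform bound on $\|\psi_{\rho_0,\xi}\|_{L^\infty(\Omega)}$ makes this term $O(\rho_0)\to0$. For the third term, since $\psi_{\rho_0,\xi}\to\psi_{0,\xi}$ in $L^\infty(\Omega)$ as $\rho_0\to0$ and all remaining factors are fixed $L^\infty$ functions on the bounded domain $\Omega$, dominated convergence gives convergence of the third term to $-\int_\Omega h\bigl(\tfrac{1}{\tilde c^2}-\tfrac{1}{c^2}\bigr)\sigma^{-1/2}e^{\xi\cdot x}(1+\psi_{0,\xi})\,\mathrm{d}x$. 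Letting $\rho_0\to0^+$ in \eqref{eq:first_low_freq_general_c} and combining these three limits yields \eqref{eq:c_identity_xi_rigorous}.

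The main point requiring care — more bookkeeping than a deep difficulty — is that all $\rho_0$-dependent quantities must be controlled uniformly as $\rho_0\to0$: the correctors $\psi_{\rho_0,\xi}$ exist and are bounded in $L^\infty(\Omega)$ uniformly in $\rho_0$, they converge to $\psi_{0,\xi}$, and the test solutions $w_{\rho_0,\xi}$ together with their first time derivatives are uniformly bounded in time so that the integration-by-parts identity in Lemma~\ref{lem:first_identity} is legitimate for each such $w$. These facts are the analogue, for the operator $\nabla\cdot(\sigma\nabla\,\cdot\,)+\rho_0^2 c^{-2}$, of the Fourier-multiplier right inverse and contraction argument of Lemmas~\ref{lem G zeta} and~\ref{lem rho_0}, and are exactly the properties already asserted in the text preceding the proposition; once they are in hand, the passage $\rho_0\to0$ is immediate from the power-of-$\rho_0$ estimates above.
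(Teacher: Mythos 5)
Your proposal is correct and follows essentially the same route as the paper: the paper's proof also splits \eqref{eq:first_low_freq_general_c} into the three terms (spacetime, $f$-term, $h$-term), kills the first two via the prefactors $\rho_0^2$ and $\rho_0$ together with the local decay condition and the uniform $L^\infty$ bound on $\psi_{\rho_0,\xi}$, and passes to the limit in the third by $\psi_{\rho_0,\xi}\to\psi_{0,\xi}$ in $L^\infty(\Omega)$ and dominated convergence. The extra justification you give for the validity of \eqref{eq:first_low_freq_general_c} itself is material the paper places before the proposition rather than inside the proof, but it is the same argument.
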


\begin{proof}
Write \eqref{eq:first_low_freq_general_c} as
\[
A(\rho_0)+B(\rho_0)+C(\rho_0)=0,
\]
where
\[
A(\rho_0):=
-\rho_0^2\int_\Omega\int_0^\infty
\left(\frac{1}{\tilde c^2}-\frac{1}{c^2}\right)\tilde u(x,t)\,w_{\rho_0,\xi}(x,t)\,\mathrm{d}t\mathrm{d}x,
\]
\[
B(\rho_0):=
i\rho_0\int_\Omega
f(x)\left(\frac{1}{\tilde c(x)^2}-\frac{1}{c(x)^2}\right)
\sigma(x)^{-1/2}e^{\xi\cdot x}(1+\psi_{\rho_0,\xi}(x))
\,\mathrm{d}x,
\]
and
\[
C(\rho_0):=
-\int_\Omega
h(x)\left(\frac{1}{\tilde c(x)^2}-\frac{1}{c(x)^2}\right)
\sigma(x)^{-1/2}e^{\xi\cdot x}(1+\psi_{\rho_0,\xi}(x))
\,\mathrm{d}x.
\]

By the local decay condition \eqref{eq decaygene} and the integrability of the decay rate \(\eta\) in Admissibility condition~\ref{ass:general_decay}, the spacetime integral in \(A(\rho_0)\) is bounded uniformly for all sufficiently small \(\rho_0>0\) and \(w_{\rho_0,\xi}\) is uniformly bounded on \(\Omega\times(0,\infty)\).
Therefore the prefactor \(\rho_0^2\) gives
\[
A(\rho_0)\to0
\qquad\text{as }\rho_0\to0.
\]
Similarly, since \(\psi_{\rho_0,\xi}\) is uniformly bounded in \(L^\infty(\Omega)\) and \(\Omega\) is bounded, the spatial integral in \(B(\rho_0)\) is uniformly bounded.
The prefactor \(\rho_0\) then implies
\[
B(\rho_0)\to0
\qquad\text{as }\rho_0\to0.
\]

On the other hand, \(\psi_{\rho_0,\xi}\to\psi_{0,\xi}\) in \(L^\infty(\Omega)\) as \(\rho_0\to0\).
Since \(\Omega\) is bounded, it follows by dominated convergence that
\[
C(\rho_0)\to
-\int_\Omega
h(x)\left(\frac{1}{\tilde c(x)^2}-\frac{1}{c(x)^2}\right)
\sigma(x)^{-1/2}e^{\xi\cdot x}(1+\psi_{0,\xi}(x))
\,\mathrm{d}x.
\]

Passing to the limit \(\rho_0\to0\) in
\[
A(\rho_0)+B(\rho_0)+C(\rho_0)=0,
\]
we obtain \eqref{eq:c_identity_xi_rigorous}.
\end{proof}

\subsubsection{Zero-frequency analysis}

We emphasize that the low-frequency family \(w_{\rho_0,\xi}\) was introduced only to justify the decoupling step and to derive the limiting identity \eqref{eq:c_identity_xi_rigorous}.
Once that identity has been obtained, the time variable no longer plays any role.
From this point onwards, we therefore pass to the corresponding zero-frequency elliptic problem and work directly with the spatial equation underlying the limit \(\rho_0\to0\), namely,
\begin{equation*}\label{eq:v0_eq}
\nabla\cdot(\sigma\nabla v_\xi)=0
\qquad\text{in }\Omega.
\end{equation*}

Introducing the Liouville transform
\begin{equation}\label{eq:u0_def}
u_\xi(x):=\sigma(x)^{1/2}v_\xi(x),
\end{equation}
we obtain
\begin{equation}\label{eq:u0_eq}
-\Delta u_\xi+q_0u_\xi=0
\qquad\text{in }\Omega,
\end{equation}
where
\begin{equation}\label{eq:q0_def}
q_0:=\sigma^{-1/2}\Delta(\sigma^{1/2}).
\end{equation}
By the regularity of \(\sigma\) in \(\Omega\), we have
\[
q_0\in W^{2,\infty}(\Omega),
\qquad
\|q_0\|_{W^{2,\infty}(\Omega)}\le C_\sigma.
\]
As in Lemma~\ref{lem:cgo_sigma_estimate}, we extend \(q_0\) from \(\Omega\) to a compactly supported function \(\widetilde q_0\in W^{2,\infty}(\mathbb R^3)\), with \(\widetilde q_0=q_0\) in \(\Omega\).
Fix \(p>3\), and choose \(\tilde p\in(1,2)\) and \(\delta\) as in \eqref{eq:pptilde_choice} and \eqref{eq:delta_choice_cgo}, respectively.

We next outline the zero-frequency CGO construction for \eqref{eq:u0_eq}, which follows the same strategy as Lemma~\ref{lem:cgo_sigma_estimate}.
We highlight only the main differences.

\begin{lemma}\label{lem:zero_freq_CGO_decay}
Assume that \(\sigma\in W^{6,\infty}(\Omega)\) satisfies the bounds in \eqref{eq:general_bounds}.
For \(\delta\) given in \eqref{eq:delta_choice_cgo}, there exists a constant \(C_{\Omega,\sigma}>0\) such that, for every \(\xi\in\mathbb C^3\) with \(\xi\cdot\xi=0\) and \(|\xi|\) sufficiently large, equation \eqref{eq:u0_eq} admits a solution of the form
\begin{equation}\label{eq:u_zero_freq_cgo}
u_\xi(x)=e^{\xi\cdot x}(1+\psi_\xi(x)).
\end{equation}
Moreover,
\begin{equation}\label{eq:psi_zero_freq_Linf}
\|\psi_\xi\|_{L^\infty(\Omega)}+\|\nabla\psi_\xi\|_{L^\infty(\Omega)}
\le
C_{\Omega,\sigma}|\xi|^{-\delta},
\end{equation}
and
\begin{equation}\label{eq:psi_zero_freq_grad_scaled}
\left\|\frac{\nabla\psi_\xi}{|\xi|}\right\|_{L^\infty(\Omega)}
\le
C_{\Omega,\sigma}|\xi|^{-\delta-1}.
\end{equation}
Equivalently,
\begin{equation}\label{eq:v_zero_freq_cgo}
v_\xi(x)=\sigma(x)^{-1/2}e^{\xi\cdot x}(1+\psi_\xi(x))
\end{equation}
solves
\begin{equation}\label{eq:v_zero_freq_eq_full}
\nabla\cdot(\sigma(x)\nabla v_\xi(x))=0
\quad\text{in }\Omega.
\end{equation}
\end{lemma}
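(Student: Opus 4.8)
The plan is to repeat, essentially verbatim, the Faddeev‑type CGO construction of Lemma~\ref{lem:cgo_sigma_estimate}, with the single simplification that the potential is now the frequency‑independent function $q_0 = \sigma^{-1/2}\Delta(\sigma^{1/2})$ rather than $q_k = q_\sigma - k^2/\sigma$. First I would extend $q_0$ from $\Omega$ to a compactly supported $\widetilde q_0 \in W^{2,\infty}(\mathbb R^3)$ with $\widetilde q_0 = q_0$ on $\Omega$, exactly as in the proof of Lemma~\ref{lem:cgo_sigma_estimate}; since $\sigma \in W^{6,\infty}(\Omega)$ satisfies \eqref{eq:general_bounds}, one has the uniform bound $\|\widetilde q_0\|_{W^{2,\infty}(\mathbb R^3)} \le C_\sigma$ with \emph{no} factor $(1+k^2)$ — this is the only point at which the zero‑frequency case differs from Lemma~\ref{lem:cgo_sigma_estimate}.

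Next I would invoke the anisotropic Faddeev right inverse $G_\xi$ of $\Delta + 2\xi\cdot\nabla$ from \cite[Lemma 3.1 and the proof of Proposition 3.1]{cakoni2020corner}, which satisfies $\|G_\xi f\|_{H^{2,p}(\mathbb R^3)} \le C|\xi|^{-\delta}\|f\|_{H^{2,\tilde p}(\mathbb R^3)}$ for every admissible isotropic phase $\xi$ with $\xi\cdot\xi = 0$, where $\tilde p$ and $\delta$ are fixed by \eqref{eq:pptilde_choice} and \eqref{eq:delta_choice_cgo}. Multiplication by $\widetilde q_0$ is bounded $H^{2,p}(\mathbb R^3)\to H^{2,\tilde p}(\mathbb R^3)$ with operator norm $\le C_\sigma$, so $G_\xi\mathcal{M}_{\widetilde q_0}$ has $H^{2,p}\to H^{2,p}$ norm $\le CC_\sigma|\xi|^{-\delta}$. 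For $|\xi|$ large enough that $CC_\sigma|\xi|^{-\delta}\le \tfrac12$, the operator $I - G_\xi\mathcal{M}_{\widetilde q_0}$ is invertible with inverse bounded by $2$, and I set $\psi_\xi := (I - G_\xi\mathcal{M}_{\widetilde q_0})^{-1}G_\xi(\widetilde q_0) \in H^{2,p}(\mathbb R^3)$. Then $\psi_\xi$ solves $(\Delta + 2\xi\cdot\nabla)\psi_\xi = \widetilde q_0(1+\psi_\xi)$ in $\mathbb R^3$, hence $u_\xi(x) = e^{\xi\cdot x}(1+\psi_\xi(x))$ solves $-\Delta u_\xi + \widetilde q_0 u_\xi = 0$ in $\mathbb R^3$, and since $\widetilde q_0 = q_0$ in $\Omega$, the restriction of $u_\xi$ solves \eqref{eq:u0_eq}. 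This gives the representation \eqref{eq:u_zero_freq_cgo}.

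For the estimates, taking $H^{2,p}$ norms in the fixed‑point identity yields $\|\psi_\xi\|_{H^{2,p}(\Omega)} \le 2\|G_\xi(\widetilde q_0)\|_{H^{2,p}(\mathbb R^3)} \le 2CC_\sigma|\xi|^{-\delta}$. Since $p>3$, the Sobolev embedding $H^{2,p}(\Omega)\hookrightarrow C^{1,\alpha}(\overline\Omega)$ with $\alpha = 1 - 3/p \in (0,1)$ gives $\|\psi_\xi\|_{L^\infty(\Omega)} + \|\nabla\psi_\xi\|_{L^\infty(\Omega)} \le C_{\Omega,p}\|\psi_\xi\|_{H^{2,p}(\Omega)} \le C_{\Omega,\sigma}|\xi|^{-\delta}$, which is \eqref{eq:psi_zero_freq_Linf}, and \eqref{eq:psi_zero_freq_grad_scaled} follows by dividing the gradient bound by $|\xi|$. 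Finally, undoing the Liouville transform \eqref{eq:u0_def}, the function $v_\xi := \sigma^{-1/2}u_\xi = \sigma^{-1/2}e^{\xi\cdot x}(1+\psi_\xi)$ of \eqref{eq:v_zero_freq_cgo} solves $\nabla\cdot(\sigma\nabla v_\xi) = 0$ in $\Omega$, which is \eqref{eq:v_zero_freq_eq_full}.

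I do not expect a genuine obstacle here: the argument is a direct specialization of Lemma~\ref{lem:cgo_sigma_estimate} to $\rho_0 = 0$, and the only point worth flagging is that the absence of the $k^2/\sigma$ term removes the $(1+k^2)$ prefactor from every constant, so that the smallness threshold on $|\xi|$ and the decay rates $|\xi|^{-\delta}$ and $|\xi|^{-\delta-1}$ are all uniform, independent of any auxiliary parameter. The only care needed is bookkeeping: checking that $q_0 \in W^{2,\infty}(\Omega)$ (which is exactly where the hypothesis $\sigma \in W^{6,\infty}(\Omega)$ enters, as in Lemma~\ref{lem:cgo_sigma_estimate}) so that the multiplier $\mathcal{M}_{\widetilde q_0}\colon H^{2,p}\to H^{2,\tilde p}$ is bounded, and keeping track of the Liouville identity relating \eqref{eq:u0_eq} to \eqref{eq:v_zero_freq_eq_full}.
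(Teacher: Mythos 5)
Your proposal is correct and follows essentially the same route as the paper: extend $q_0=\sigma^{-1/2}\Delta(\sigma^{1/2})$ to a compactly supported $\widetilde q_0\in W^{2,\infty}(\mathbb R^3)$, run the $G_\xi$ contraction argument of Lemma~\ref{lem:cgo_sigma_estimate} with $k=0$ (so all constants lose the $(1+k^2)$ prefactor), and conclude via the $H^{2,p}\hookrightarrow C^{1,\alpha}$ embedding and the inverse Liouville transform. No gaps.
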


\begin{proof}
By \eqref{eq:q0_def} and the regularity of \(\sigma\), we have
\[
q_0\in W^{2,\infty}(\Omega),
\qquad
\|q_0\|_{W^{2,\infty}(\Omega)}\le C_\sigma,
\]
where \(C_\sigma\) depends only on the a priori \(W^{6,\infty}(\Omega)\) bound for \(\sigma\) and the ellipticity bounds in \eqref{eq:general_bounds}.
Since \(\Omega\) is a Lipschitz domain, the Sobolev extension theorem gives an extension \(E q_0\in W^{2,\infty}(\mathbb R^3)\) such that
\[
E q_0=q_0
\qquad\text{in }\Omega.
\]
Choose \(\chi\in C_c^\infty(\mathbb R^3)\) such that \(\chi\equiv1\) in a neighborhood of \(\overline\Omega\), and set
\[
\widetilde q_0:=\chi E q_0.
\]
Then
\[
\widetilde q_0\in W^{2,\infty}(\mathbb R^3),
\qquad
\operatorname{supp}\widetilde q_0\text{ is compact},
\qquad
\widetilde q_0=q_0\quad\text{in }\Omega,
\]
and
\[
\|\widetilde q_0\|_{W^{2,\infty}(\mathbb R^3)}\le C_\sigma.
\]
Since \(\widetilde q_0\in W^{2,\infty}(\mathbb R^3)\) and has compact support, we have
\[
\widetilde q_0\in H^{2,\tilde p}(\mathbb R^3).
\]
Moreover, multiplication by \(\widetilde q_0\) defines a bounded operator
\[
\mathcal{M}_{\widetilde q_0}:H^{2,p}(\mathbb R^3)\to H^{2,\tilde p}(\mathbb R^3)
\]
satisfying
\[
\|\widetilde q_0 f\|_{H^{2,\tilde p}(\mathbb R^3)}
\le
C_\sigma\|f\|_{H^{2,p}(\mathbb R^3)}.
\]

By the same argument as in Lemma~\ref{lem:cgo_sigma_estimate}, with \(k=0\), there exists a bounded solution operator \(G_\xi\) such that
\[
\|G_\xi f\|_{H^{2,p}(\mathbb R^3)}
\le
C|\xi|^{-\delta}\|f\|_{H^{2,\tilde p}(\mathbb R^3)}.
\]
Hence
\[
\|G_\xi(\widetilde q_0 f)\|_{H^{2,p}(\mathbb R^3)}
\le
CC_\sigma|\xi|^{-\delta}\|f\|_{H^{2,p}(\mathbb R^3)}.
\]
Therefore, if \(|\xi|\) is sufficiently large so that
\[
CC_\sigma|\xi|^{-\delta}\le \frac12,
\]
then \(I-G_\xi \mathcal{M}_{\widetilde q_0}\) is invertible on \(H^{2,p}(\mathbb R^3)\).
Define
\[
\psi_\xi:=(I-G_\xi \mathcal{M}_{\widetilde q_0})^{-1}G_\xi(\widetilde q_0).
\]
Then \(\psi_\xi\) satisfies
\[
(\Delta+2\xi\cdot\nabla)\psi_\xi=\widetilde q_0(1+\psi_\xi)
\qquad\text{in }\mathbb R^3.
\]
Consequently,
\[
u_\xi(x):=e^{\xi\cdot x}(1+\psi_\xi(x))
\]
solves
\[
-\Delta u_\xi+\widetilde q_0u_\xi=0
\qquad\text{in }\mathbb R^3.
\]
Since \(\widetilde q_0=q_0\) in \(\Omega\), the restriction of \(u_\xi\) to \(\Omega\) solves \eqref{eq:u0_eq}.

Moreover,
\[
\|(I-G_\xi \mathcal{M}_{\widetilde q_0})^{-1}\|_{H^{2,p}\to H^{2,p}}\le 2,
\]
and therefore
\[
\|\psi_\xi\|_{H^{2,p}(\mathbb R^3)}
\le
2\|G_\xi(\widetilde q_0)\|_{H^{2,p}(\mathbb R^3)}
\le
2CC_\sigma|\xi|^{-\delta}.
\]
Restricting to \(\Omega\), we obtain
\[
\|\psi_\xi\|_{H^{2,p}(\Omega)}
\le
C_\sigma|\xi|^{-\delta}.
\]
Since \(p>3\), the Sobolev embedding
\[
H^{2,p}(\Omega)\hookrightarrow C^{1,\alpha}(\overline\Omega),
\qquad
\alpha=1-\frac{3}{p}\in(0,1),
\]
yields
\[
\|\psi_\xi\|_{L^\infty(\Omega)}+\|\nabla\psi_\xi\|_{L^\infty(\Omega)}
\le
C_{\Omega,p}\|\psi_\xi\|_{H^{2,p}(\Omega)}
\le
C_{\Omega,\sigma}|\xi|^{-\delta}.
\]
This proves \eqref{eq:psi_zero_freq_Linf}.
The estimate \eqref{eq:psi_zero_freq_grad_scaled} follows immediately by dividing the gradient estimate by \(|\xi|\).

Finally, \eqref{eq:v_zero_freq_cgo} follows from \eqref{eq:u0_def}.
Since \(u_\xi\) solves \eqref{eq:u0_eq} in \(\Omega\), reversing the Liouville transform shows that \(v_\xi\) solves \eqref{eq:v_zero_freq_eq_full}.
\end{proof}

Denote the zero-frequency remainder \(\psi_{0,\xi}\) simply by \(\psi_\xi\).
By Proposition~\ref{prop:c_identity}, the limiting identity takes the form
\begin{equation}\label{eq:H_identity}
\int_\Omega H(x_1)e^{\xi\cdot x}(1+\psi_\xi(x))\,\mathrm{d}x=0
\qquad\text{for all admissible }\xi,
\end{equation}
where
\begin{equation}\label{eq:H_def}
H(x_1):=
h(x_1)\left(\frac{1}{\tilde c(x_1)^2}-\frac{1}{c(x_1)^2}\right)\sigma(x_1)^{-1/2}.
\end{equation}

To exploit \eqref{eq:H_identity}, we now specialize the complex phase \(\xi\) to the family
\begin{equation}\label{eq:xi_lambda_s_3d}
\xi_{\lambda,s}:=(\mathrm{i}\lambda,\ s,\ \mathrm{i}\mu_{k,\lambda})\in\mathbb C^3.
\end{equation}
for fixed $\lambda\in\mathbb R$ and $\mu_{k,\lambda}$, $s=s_{k,\lambda}>|\lambda|$ chosen as in \eqref{eq:mu_choice_half_period} and \eqref{eq:s_k_def}, respectively.
Then
\begin{equation*}\label{eq:xi_lambda_s_null}
\xi_{\lambda,s}\cdot\xi_{\lambda,s}=0,
\end{equation*}
and
\begin{equation*}\label{eq:exp_xi_lambda_s}
e^{\xi_{\lambda,s}\cdot x}
=
e^{\mathrm{i}\lambda x_1}e^{s x_2}e^{\mathrm{i}\mu_{k,\lambda}x_3}.
\end{equation*}

Hence, for each fixed \(\lambda\in\mathbb R\), the zero-frequency CGO construction applies once \(s>|\lambda|\) is sufficiently large.
In particular, besides the Fourier parameter \(\lambda\), we still retain the additional large parameter \(s\), which will be used below to control the remainder term and to isolate the leading contribution in \eqref{eq:H_identity}.

\subsubsection{Unique recovery of \(c\)}
We now complete the recovery of \(c\).
By the structural assumptions on $h,c$ and $\sigma$, 
\[H(x)=H(x_1),
\]
where \(H\) is defined by \eqref{eq:H_def}.

For \(\xi=\xi_{\lambda,s}\), we write
\[
\psi_{\lambda,s}:=\psi_{\xi_{\lambda,s}}.
\]
Then \eqref{eq:H_identity} becomes
\begin{equation*}\label{eq:H_identity_lambda_s}
\int_\Omega
H(x_1)
e^{\mathrm{i}\lambda x_1}
e^{s x_2}
e^{\mathrm{i}\mu_{k,\lambda}x_3}
(1+\psi_{\lambda,s}(x))
\,\mathrm{d}x
=0.
\end{equation*}
We split this identity into the principal term and the remainder term:
\begin{equation}\label{eq:H_main_error_zero}
H_1(\lambda,s)+H_2(\lambda,s)=0,
\end{equation}
where
\begin{equation*}\label{eq:I1_H}
H_1(\lambda,s):=
\int_\Omega
H(x_1)
e^{\mathrm{i}\lambda x_1}
e^{s x_2}
e^{\mathrm{i}\mu_{k,\lambda}x_3}
\,\mathrm{d}x,
\end{equation*}
and
\begin{equation*}\label{eq:I2_H}
H_2(\lambda,s):=
\int_\Omega
H(x_1)
e^{\mathrm{i}\lambda x_1}
e^{s x_2}
e^{\mathrm{i}\mu_{k,\lambda}x_3}
\psi_{\lambda,s}(x)
\,\mathrm{d}x.
\end{equation*}

Since \(H\) depends only on \(x_1\), the principal term factorizes as
\begin{equation}\label{eq:I1_factor}
H_1(\lambda,s)
=
\widehat H(\lambda)\,J_1(s)\,J_2(\lambda,s),
\end{equation}
where
\begin{equation*}\label{eq:Hhat_def}
\widehat H(\lambda):=
\int_{a_1}^{b_1}
H(x_1)e^{\mathrm{i}\lambda x_1}\,\mathrm{d}x_1,
\end{equation*}
\begin{equation*}\label{eq:J1_box}
J_1(s):=
\int_{a_2}^{b_2} e^{s x_2}\,\mathrm{d}x_2
=
\frac{e^{s b_2}-e^{s a_2}}{s},
\end{equation*}
and
\begin{equation*}\label{eq:J2_box}
J_2(\lambda,s):=
\int_{a_3}^{b_3} e^{\mathrm{i}\mu_{k,\lambda}x_3}\,\mathrm{d}x_3
=
\frac{e^{\mathrm{i}\mu_{k,\lambda}b_3}-e^{\mathrm{i}\mu_{k,\lambda}a_3}}{\mathrm{i}\mu_{k,\lambda}}.
\end{equation*}

We next estimate \(H_2(\lambda,s)\) by integrating by parts in the \(x_3\)-direction.
Since \(H\) is independent of \(x_3\), we write
\[
e^{\mathrm{i}\mu_{k,\lambda}x_3}
=
\frac{1}{\mathrm{i}\mu_{k,\lambda}}
\partial_{x_3}\bigl(e^{\mathrm{i}\mu_{k,\lambda}x_3}\bigr),
\]
and thus
\[
H_2(\lambda,s)
=
\frac{1}{\mathrm{i}\mu_{k,\lambda}}
\int_\Omega
H(x_1)e^{\mathrm{i}\lambda x_1}e^{s x_2}
\partial_{x_3}\bigl(e^{\mathrm{i}\mu_{k,\lambda}x_3}\bigr)
\psi_{\lambda,s}(x)
\,\mathrm{d}x.
\]
Integrating by parts in \(x_3\), we obtain
\[
H_2(\lambda,s)=B(\lambda,s)+R(\lambda,s),
\]
where
\[
B(\lambda,s)
:=
\frac{1}{\mathrm{i}\mu_{k,\lambda}}
\int_{a_1}^{b_1}\int_{a_2}^{b_2}
H(x_1)e^{\mathrm{i}\lambda x_1}e^{s x_2}
\Bigl[
e^{\mathrm{i}\mu_{k,\lambda}x_3}\psi_{\lambda,s}(x)
\Bigr]\Big|_{x_3=a_3}^{b_3}
\,\mathrm{d}x_2\,\mathrm{d}x_1,
\]
and
\[
R(\lambda,s)
:=
-\frac{1}{\mathrm{i}\mu_{k,\lambda}}
\int_\Omega
H(x_1)e^{\mathrm{i}\lambda x_1}e^{s x_2}
e^{\mathrm{i}\mu_{k,\lambda}x_3}
\partial_{x_3}\psi_{\lambda,s}(x)
\,\mathrm{d}x.
\]

Using
\[
\left|
\Bigl[
e^{\mathrm{i}\mu_{k,\lambda}x_3}\psi_{\lambda,s}(x)
\Bigr]\Big|_{x_3=a_3}^{b_3}
\right|
\le
2\|\psi_{\lambda,s}\|_{L^\infty(\Omega)},
\]
we obtain
\[
|B(\lambda,s)|
\le
\frac{2}{|\mu_{k,\lambda}|}
\|\psi_{\lambda,s}\|_{L^\infty(\Omega)}
\left(
\int_{a_1}^{b_1}|H(x_1)|\,\mathrm{d}x_1
\right)
J_1(s).
\]
Similarly,
\[
|R(\lambda,s)|
\le
\frac{1}{|\mu_{k,\lambda}|}
\|\partial_{x_3}\psi_{\lambda,s}\|_{L^\infty(\Omega)}
\int_\Omega |H(x_1)|e^{s x_2}\,\mathrm{d}x,
\]
and hence
\[
|R(\lambda,s)|
\le
\frac{b_3-a_3}{|\mu_{k,\lambda}|}
\|\partial_{x_3}\psi_{\lambda,s}\|_{L^\infty(\Omega)}
\left(
\int_{a_1}^{b_1}|H(x_1)|\,\mathrm{d}x_1
\right)
J_1(s).
\]
Therefore, there exists a constant \(C_H>0\), independent of \(s\), such that
\begin{equation}\label{eq:I2_bound_final_new}
|H_2(\lambda,s)|
\le
C_H\,
\frac{J_1(s)}{|\mu_{k,\lambda}|}
\left(
\|\psi_{\lambda,s}\|_{L^\infty(\Omega)}
+
\|\partial_{x_3}\psi_{\lambda,s}\|_{L^\infty(\Omega)}
\right).
\end{equation}

Combining \eqref{eq:H_main_error_zero}, \eqref{eq:I1_factor}, and \eqref{eq:I2_bound_final_new}, we infer
\begin{equation*}\label{eq:Hhat_J2_bound_new}
|\widehat H(\lambda)\,J_2(\lambda,s)|
=
\frac{|H_2(\lambda,s)|}{J_1(s)}
\le
\frac{C_H}{|\mu_{k,\lambda}|}
\left(
\|\psi_{\lambda,s}\|_{L^\infty(\Omega)}
+
\|\partial_{x_3}\psi_{\lambda,s}\|_{L^\infty(\Omega)}
\right).
\end{equation*}

Now observe that $J_2(\lambda,s)=B_{k,\lambda}$ as defined in \eqref{eq:Blambdas_def}. Consequently, by Lemma~\ref{lem:B_lower_bound}, 
\begin{equation*}\label{eq:Bhat_nonvanishing_sequence}
|J_2(\lambda,s)|
\ge
c_{B,\lambda}s_{k,\lambda}^{-1}
\qquad\text{for all sufficiently large }k,
\end{equation*}
for the same constant \(c_{B,\lambda}>0\).
Since $ \mu_{k,\lambda}\asymp s=s_{k,\lambda}$, we obtain
\[
|\widehat H(\lambda)|
\le
C
\left(
\|\psi_{\lambda,s}\|_{L^\infty(\Omega)}
+
\|\partial_{x_3}\psi_{\lambda,s}\|_{L^\infty(\Omega)}
\right).
\]
By Lemma~\ref{lem:zero_freq_CGO_decay}, the right-hand side tends to \(0\) as \(k \to\infty\). Since $\widehat H$ is independent of $k$,
\begin{equation}\label{eq:Hhat_zero_final}
\widehat H(\lambda)=\int_{a_1}^{b_1}
H(x_1)e^{\mathrm{i}\lambda x_1}\,\mathrm{d}x_1=0
\qquad
\text{for every }\lambda\in\mathbb R.
\end{equation}
Here \(k\) is only used as an index for the large-parameter sequence \(s_{k,\lambda}\).

Viewing \(H\) as a function on \(\mathbb R\) by zero extension outside \((a_1,b_1)\), we have \(H\in L^1(\mathbb R)\).
Since its Fourier transform vanishes identically by \eqref{eq:Hhat_zero_final}, the injectivity of the Fourier transform yields
\begin{equation}\label{eq:H_x1_identity}
H(x_1)=0
\qquad\text{for a.e. }x_1\in(a_1,b_1).
\end{equation}

Recalling \eqref{eq:H_def}, we have
\[
H(x_1)=
h_0(x_1)
\left(\frac{1}{\tilde c_0(x_1)^2}-\frac{1}{c_0(x_1)^2}\right)
\sigma_0(x_1)^{-1/2}.
\]
Since \(\sigma_0(x_1)>0\), \eqref{eq:H_x1_identity} implies
\begin{equation}\label{eq:hD_zero}
h_0(x_1)
\left(\frac{1}{\tilde c_0(x_1)^2}-\frac{1}{c_0(x_1)^2}\right)
=0
\qquad\text{for a.e. }x_1\in(a_1,b_1).
\end{equation}

We now return to \eqref{eq:first_low_freq_general_c}.
Using \eqref{eq:hD_zero}, dividing the remaining identity by \(i\rho_0\), and letting \(\rho_0\to0\), we obtain, by the same local-decay argument as above,
\[
\int_\Omega
f(x)
\left(\frac{1}{\tilde c(x)^2}-\frac{1}{c(x)^2}\right)
\sigma(x)^{-1/2}e^{\xi\cdot x}(1+\psi_{0,\xi}(x))
\,\mathrm{d}x
=0
\]
for every admissible complex phase \(\xi\).
Using the structural form in Assumption~\ref{ass:general_structure}, this identity becomes
\[
\int_\Omega
f_0(x_2)
D(x_1)
e^{\xi\cdot x}(1+\psi_\xi(x))
\,\mathrm{d}x
=0,
\]
where we have written \(\psi_\xi:=\psi_{0,\xi}\) and
\[
D(x_1):=
\left(\frac{1}{\tilde c_0(x_1)^2}-\frac{1}{c_0(x_1)^2}\right)\sigma_0(x_1)^{-1/2}.
\]
Taking \(\xi=\xi_{\lambda,s}\) as in \eqref{eq:xi_lambda_s_3d} and separating the principal term from the remainder as above, we obtain
\[
A_f(s)B_{k,\lambda}
\int_{a_1}^{b_1}
D(x_1)e^{\mathrm{i}\lambda x_1}\,\mathrm{d}x_1
+
R_{k,\lambda}
=0,
\]
where
\[
A_f(s):=\int_{a_2}^{b_2}f_0(x_2)e^{sx_2}\,\mathrm{d}x_2,
\qquad
B_{k,\lambda}:=\int_{a_3}^{b_3}e^{\mathrm{i}\mu_{k,\lambda}x_3}\,\mathrm{d}x_3.
\]
We first record the non-vanishing and size of the \(x_2\)-Laplace factor
\[
A_f(s):=\int_{a_2}^{b_2}f_0(x_2)e^{s x_2}\,\mathrm{d}x_2.
\]
By \eqref{eq:fprime_nondeg_near_bf}, after possibly replacing \(\delta_f\) by a smaller positive number, \(f_0'\) has a fixed sign and satisfies
\[
|f_0'(x_2)|\ge c_f
\qquad
\text{for }x_2\in(b_2-\delta_f,b_2).
\]
Hence \(f_0\) is strictly monotone on this interval and has at most one zero there.

Let \(L:=f_0(b_2^-)\) be the one-sided trace at \(b_2\). 
If \(L\neq0\), then the standard one-sided Laplace asymptotic gives
\[
A_f(s)
=
L\,\frac{e^{s b_2}}{s}
+
O\!\left(\frac{e^{s b_2}}{s^2}\right)
+
O\!\left(e^{s(b_2-\delta_f)}\right),
\]
and therefore
\[
|A_f(s)|\ge C\frac{e^{s b_2}}{s}
\]
for all sufficiently large \(s\).

If \(L=0\), then integration by parts gives
\[
A_f(s)
=
-\frac{1}{s}\int_{a_2}^{b_2}f_0'(x_2)e^{s x_2}\,\mathrm{d}x_2
+O(e^{s a_2}/s).
\]
Since \(f_0'\) has a fixed sign and is bounded away from zero on
\((b_2-\delta_f,b_2)\), the contribution from this rightmost interval dominates the exponentially smaller contribution from \((a_2,b_2-\delta_f)\). Hence
\[
|A_f(s)|\ge C\frac{e^{s b_2}}{s^2}
\]
for all sufficiently large \(s\).

Consequently, in either case, \(A_f(s)\neq0\) for all sufficiently large \(s\). Moreover, the same one-sided monotonicity and exponential localization imply
\begin{equation}\label{eq:Af_abs_control}
\int_{a_2}^{b_2}|f_0(x_2)|e^{s x_2}\,\mathrm{d}x_2
\le
C|A_f(s)|
\qquad
\text{for all sufficiently large }s.
\end{equation}

We now estimate the remainder term \(R_{k,\lambda}\). 
As in the preceding analysis, using integration by parts in the \(x_3\)-direction and the estimate \eqref{eq:psi_zero_freq_Linf}, we obtain
\[
|R_{k,\lambda}|
\le
\frac{C}{|\mu_{k,\lambda}|}
\left(
\|\psi_\xi\|_{L^\infty(\Omega)}
+
\|\partial_{x_3}\psi_\xi\|_{L^\infty(\Omega)}
\right)
\int_{a_2}^{b_2}|f_0(x_2)|e^{s x_2}\,\mathrm{d}x_2.
\]
Using \eqref{eq:Af_abs_control}, together with
\[
|\mu_{k,\lambda}|\asymp s,
\qquad
|B_{k,\lambda}|\ge c_{B,\lambda}s^{-1},
\]
we obtain
\[
\frac{|R_{k,\lambda}|}{|A_f(s)B_{k,\lambda}|}
\le
C
\left(
\|\psi_\xi\|_{L^\infty(\Omega)}
+
\|\partial_{x_3}\psi_\xi\|_{L^\infty(\Omega)}
\right)
\to0
\qquad
\text{as }k\to\infty.
\]
Therefore the \(x_2\)-factor \(A_f(s)\) can be divided out.
Together with the lower bound for \(B_{k,\lambda}\) in Lemma~\ref{lem:B_lower_bound}, we obtain
\[
\int_{a_1}^{b_1}D(x_1)e^{\mathrm{i}\lambda x_1}\,\mathrm{d}x_1=0
\qquad\text{for every }\lambda\in\mathbb R.
\]
By the injectivity of the one-dimensional Fourier transform,
\[
D(x_1)=0
\qquad\text{for a.e. }x_1\in(a_1,b_1).
\]
That is,
\[
\left(\frac{1}{\tilde c_0(x_1)^2}-\frac{1}{c_0(x_1)^2}\right)\sigma_0(x_1)^{-1/2}
=0
\qquad\text{for a.e. }x_1\in(a_1,b_1).
\]
Since \(\sigma_0(x_1)>0\), and since \(c_0,\tilde c_0>0\), it follows that
\begin{equation}\label{eq uniq c}
c_0=\tilde c_0
\qquad\text{a.e. in }(a_1,b_1).
\end{equation}
Equivalently,
\[
c=\tilde c
\qquad\text{a.e. in }\Omega.
\]

\medskip
Combining \eqref{eq uniq sigma}, \eqref{eq uniq f}, \eqref{eq uniq h}, and \eqref{eq uniq c}, we complete the proof of Theorem~\ref{thm:general}.
\end{proof}

%%%%%%%%%%%%%%%%%%%%%%%%%%%%%%%%%%%%%%%%%%%%%%%%%%%%%%%%%%%%%%%%%%%%%%%%%%%%%%
\section{Sketch of the proofs for parabolic and Schr\"odinger extensions}\label{sec:parabolic}

\subsection{Sketch of the proof for the parabolic model}

We sketch the proof of Extension~\ref{ext:parabolic-model}.
We work with the parabolic model \eqref{eq:parabolic-isotropic} and the passive boundary observation \(\mathcal P_{f,\mu}\) introduced above.
As explained there, the configurations are assumed to satisfy suitable compact-support, positivity, regularity, separated-variable, and frequency-side admissibility conditions.
The purpose of this subsection is only to indicate how the same passive-observation mechanism applies in the parabolic setting.

\begin{proof}[Sketch of the proof of Extension~\ref{ext:parabolic-model}]
We only indicate the main steps of the argument, emphasizing how the same unified framework adapts to the parabolic setting.

\subsubsection{Step 1: Two integral identities}

For an admissible test function \(w\) satisfying the adjoint parabolic equation
\[
-\frac{1}{\mu(x)}\partial_t w(x,t)-\Delta w(x,t)=0,
\qquad
(x,t)\in\Omega\times\mathbb R_+,
\]
one derives, by the same integration-by-parts argument as in Lemma~\ref{lem:integral_identity}, a first identity of the form
\begin{equation}\label{eq:parabolic-firstID}
\int_0^\infty\!\!\int_\Omega
\left(\frac{1}{\mu}-\frac{1}{\tilde\mu}\right)\tilde u\,\partial_t w\,\mathrm{d}x\,\mathrm{d}t
+
\int_\Omega
\left(\frac{f}{\mu}-\frac{\tilde f}{\tilde\mu}\right)w(x,0)\,\mathrm{d}x
=0.
\end{equation}

The second identity uses a different class of test functions.
Here the test function is chosen to solve the free adjoint heat equation rather than the adjoint equation with coefficient \(\mu\).
This is the parabolic analogue of changing the test family in the hyperbolic argument in order to isolate a different decoupling quantity.
More precisely, let
\[
w_s(x,t)=e^{-st}v(x),
\qquad
v(x)=e^{-\mathrm{i}\zeta\cdot x},
\qquad
\zeta\cdot\zeta=-s,
\]
so that
\[
-\partial_t w_s-\Delta w_s=0.
\]
The proof follows the same argument as in Lemma~\ref{lem:second_identity}, and yields
\begin{equation}\label{eq:parabolic-secondID}
\int_\Omega
\left(\frac{f}{\mu}-\frac{\tilde f}{\tilde\mu}\right)w_s(x,0)\,\mathrm{d}x
=
\int_0^\infty\!\!\int_\Omega
\left(1-\frac{1}{\mu}\right)u\,\partial_t w_s\,\mathrm{d}x\,\mathrm{d}t
-
\int_0^\infty\!\!\int_\Omega
\left(1-\frac{1}{\tilde\mu}\right)\tilde u\,\partial_t w_s\,\mathrm{d}x\,\mathrm{d}t.
\end{equation}

\subsubsection{Step 2: Recovery of the quotient \(f/\mu\)}

To exploit the first identity, we use CGO-type solutions of the adjoint parabolic equation in the form
\[
w(x,t)=e^{-\rho_0^2 t+\mathrm{i}\rho'\cdot x}(1+\psi_\rho(x)),
\]
where \(\rho_0>0\) is small, \(\rho'\in\mathbb C^3\) is chosen with the corresponding null relation, and \(\psi_\rho\) is obtained by the same Fourier-multiplier and contraction argument as in the hyperbolic case.
The only change is that the spatial corrector is now associated with the elliptic equation obtained from the adjoint parabolic operator.
In particular, \(w\) decays as \(t\to\infty\).

Substituting this \(w\) into \eqref{eq:parabolic-firstID} and letting \(\rho_0\to0^+\), the spacetime term becomes negligible, while the initial-time term yields
\[
\int_\Omega
\left(\frac{f}{\mu}-\frac{\tilde f}{\tilde\mu}\right)
e^{\mathrm{i}\rho'\cdot x}\,\mathrm{d}x
=0.
\]
Using the separated-variable admissibility condition in the same way as in the proof of Theorem~\ref{thm0}, we obtain
\begin{equation}\label{eq:parabolic-quotient}
\frac{f(x)}{\mu(x)}=\frac{\tilde f(x)}{\tilde\mu(x)}
\qquad\text{a.e. in }\Omega.
\end{equation}

\subsubsection{Step 3: High-frequency separation of \(f\) and \(\mu\)}

We next use the large-\(s\) behavior of the Laplace transform, which plays the role of the high-frequency expansion in the parabolic setting \eqref{eq:parabolic-isotropic}.
Let
\[
\hat u(x,s):=\int_0^\infty u(x,t)e^{-st}\,\mathrm{d}t.
\]
Then \(\hat u\) satisfies
\[
\frac{s}{\mu(x)}\hat u-\Delta \hat u=\frac{f}{\mu(x)}.
\]
Under the corresponding regularity and coefficient assumptions, the Laplace transform admits the expansion
\begin{equation}\label{eq:parabolic-asymptotic-expansion}
\hat u(x,s)=\frac{f(x)}{s}+\mathbf R(x,s),
\end{equation}
where the remainder satisfies an \(O(s^{-2})\)-type estimate in the relevant norm.
The same expansion holds for \(\widehat{\tilde u}\), with a remainder \(\tilde{\mathbf R}\).

We now return to \eqref{eq:parabolic-secondID}.
Since
\[
w_s(x,0)=e^{-\mathrm{i}\zeta\cdot x},
\qquad
\partial_t w_s(x,t)=-s e^{-st}e^{-\mathrm{i}\zeta\cdot x},
\]
the spacetime terms in \eqref{eq:parabolic-secondID} can be written in terms of \(\hat u(\cdot,s)\) and \(\widehat{\tilde u}(\cdot,s)\).
Substituting \eqref{eq:parabolic-asymptotic-expansion} and its analogue for \(\widehat{\tilde u}\), and using the quotient identity \eqref{eq:parabolic-quotient}, we obtain
\[
-\int_\Omega (f-\tilde f)e^{-\mathrm{i}\zeta\cdot x}\,\mathrm{d}x
=
s\int_\Omega
\left[
\left(1-\frac{1}{\mu}\right)\mathbf R(x,s)
-
\left(1-\frac{1}{\tilde\mu}\right)\tilde{\mathbf R}(x,s)
\right]
e^{-\mathrm{i}\zeta\cdot x}\,\mathrm{d}x.
\]
The remainder estimates imply that the right-hand side tends to zero as \(s\to\infty\).
Hence
\[
\int_\Omega (f-\tilde f)e^{-\mathrm{i}\zeta\cdot x}\,\mathrm{d}x\to0
\qquad\text{as }s\to\infty.
\]

Choosing \(\zeta\) in the same Fourier-testing manner as in the proof of Theorem~\ref{thm:unique_f_c}, and using the separated-variable admissibility condition on \(f\) and \(\tilde f\), we obtain
\[
f=\tilde f
\qquad\text{a.e. in }\Omega.
\]
Combining this with \eqref{eq:parabolic-quotient} gives
\[
\mu=\tilde\mu
\qquad\text{a.e. on }\operatorname{supp}(f).
\]
\end{proof}

%--------------------------------------------------------------------
\subsection{Sketch of the proof for the Schr\"odinger model}

We sketch the proof of Extension~\ref{ext:schrodinger-model}.
The purpose is to indicate how the same decoupling strategy applies to the Schr\"odinger setting.

\begin{proof}[A sketch of the proof of Extension~\ref{ext:schrodinger-model}]

Let \(\psi\) and \(\tilde\psi\) be the solutions corresponding to \((f,V)\) and \((\tilde f,\tilde V)\), respectively.
For any test function \(w\) satisfying the adjoint Schr\"odinger equation
\[
-\mathrm{i}\partial_t w(x,t)+\Delta w(x,t)-V(x)w(x,t)=0,
\qquad
(x,t)\in\Omega\times\mathbb R_+,
\]
one derives, by the same integration-by-parts argument as before, the identity
\begin{equation}\label{eq:schrodinger-firstID}
\int_0^\infty\!\!\int_\Omega
(V(x)-\tilde V(x))\tilde\psi(x,t)w(x,t)\,\mathrm{d}x\,\mathrm{d}t
+
\mathrm{i}\int_\Omega (f(x)-\tilde f(x))w(x,0)\,\mathrm{d}x
=0.
\end{equation}
provided that the passive boundary observations coincide on \(\partial\Omega\times\mathbb R_+\).

A second identity is obtained by changing the test family.
Here we use test functions satisfying the free adjoint Schr\"odinger equation.
Writing
\[
\mathrm{i}\partial_t\psi+\Delta\psi=V\psi
\]
and subtracting the analogous equation for \(\tilde\psi\), we test the resulting equation against
\[
w_k(x,t)=e^{\mathrm{i}kt}v(x),
\qquad
-\mathrm{i}\partial_t w_k+\Delta w_k=0.
\]
The same integration-by-parts argument gives
\begin{equation}\label{eq:schrodinger-secondID}
-\mathrm{i}\int_\Omega (f(x)-\tilde f(x))v(x)\,\mathrm{d}x
=
\int_0^\infty\!\!\int_\Omega V(x)\psi(x,t)w_k(x,t)\,\mathrm{d}x\,\mathrm{d}t
-
\int_0^\infty\!\!\int_\Omega \tilde V(x)\tilde\psi(x,t)w_k(x,t)\,\mathrm{d}x\,\mathrm{d}t.
\end{equation}

We first recover \(f\).
Using \eqref{eq:schrodinger-firstID} with CGO-type solutions of the adjoint Schr\"odinger equation of the form
\[
w(x,t)=e^{\mathrm{i}\phi_0 t+\phi'\cdot x}(1+\varphi_\phi(x)),
\]
where \(\varphi_\phi\) is the corresponding spatial corrector, one obtains
\[
\int_\Omega (f(x)-\tilde f(x))e^{\phi'\cdot x}\,\mathrm{d}x=0
\]
for an admissible family of complex phases \(\phi'\).
The parameters are chosen so that the spacetime term involving \((\tilde V-V)\tilde\psi w\) is negligible in the limiting process, while the initial-time term gives the Fourier information on \(f-\tilde f\).
Using the separated-variable admissibility condition as in the TAT/PAT argument, we obtain
\[
f=\tilde f
\qquad\text{a.e. in }\Omega.
\]

We next recover \(V\).
For large temporal frequencies \(k\), the corresponding frequency-side field,
with the convention
\[
\check\psi(x,k)=\int_0^\infty \psi(x,t)e^{\mathrm{i}kt}\,\mathrm{d}t,
\]
admits an expansion of the form
\[
\check\psi(x,k)=\frac{\mathrm{i}}{k}f(x)+\mathbf R(x,k).
\]
Since \(f=\tilde f\), substituting this expansion and its analogue for \(\check{\tilde\psi}\) into \eqref{eq:schrodinger-secondID} yields
\[
-\frac{\mathrm{i}}{k}\int_\Omega (V-\tilde V)f\,v\,dx
=
\int_\Omega
\bigl(V\mathbf R-\tilde V\tilde{\mathbf R}\bigr)v\,dx.
\]
Taking
\[
v(x)=e^{\mathrm{i}\xi\cdot x},
\qquad
|\xi|^2=k,
\]
and using the high-frequency decay of the remainder terms, we obtain
\[
\int_\Omega (V-\tilde V)f\,e^{\mathrm{i}\xi\cdot x}\,\mathrm{d}x
\to0
\qquad\text{as }k\to\infty.
\]
The separated-variable admissibility condition then gives, by the same final Fourier argument as above,
\[
(V-\tilde V)f=0
\qquad\text{a.e. in }\Omega.
\]
Therefore,
\[
V=\tilde V
\qquad\text{a.e. on }\operatorname{supp}(f).
\]
This completes the sketch of the proof.

\end{proof}

\section*{Acknowledgement}

The work of Y. Jiang was support the RGC Project JRFS2627-1S06. The work of H. Liu was supported by the Hong Kong RGC General Research Funds (projects 11311122,
12301420 and 11300821). The work of C. W. K. Lo is supported by the National Natural Science Foundation of China (No. 12501660) and the Guangdong Province ``Pearl River" Talent Recruitment Program (Top youth talent) (No. 2024QN11X268). The authors would like to thank Dr. Yavar Kian for many constructive comments and helpful suggestions.

\bibliographystyle{plain}
\bibliography{ref.bib}

\begin{thebibliography}{10}

\bibitem{astala2006calderon}
Kari Astala and Lassi P{\"a}iv{\"a}rinta.
\newblock Calder{\'o}n's inverse conductivity problem in the plane.
\newblock {\em Annals of Mathematics}, pages 265--299, 2006.

\bibitem{BelgerSchimmingWuensch1997}
Martin Belger, Rainer Schimming, and Volkmar W{\"u}nsch.
\newblock A survey on huygens' principle.
\newblock {\em Zeitschrift f{\"u}r Analysis und ihre Anwendungen}, 16(1):9--36, 1997.

\bibitem{belishev1987approach}
Mikhail~I. Belishev.
\newblock An approach to multidimensional inverse problems for the wave equation.
\newblock {\em Soviet Mathematics Doklady}, 36(3):481--484, 1988.

\bibitem{belishev1992reconstruction}
Mikhail~I. Belishev and Yaroslav~V. Kurylev.
\newblock To the reconstruction of a riemannian manifold via its spectral data (bc-method).
\newblock {\em Communications in Partial Differential Equations}, 17(5-6):767--804, 1992.

\bibitem{BukhgeuimKlibanov}
Alexander~L. Bukhgeim and Michael~V. Klibanov.
\newblock Uniqueness in the large of a class of multidimensional inverse problems.
\newblock {\em Soviet Mathematics Doklady}, 24:244--247, 1981.

\bibitem{CakoniCPAM2023}
Fioralba Cakoni and Michael~S. Vogelius.
\newblock Singularities almost always scatter: regularity results for non-scattering inhomogeneities.
\newblock {\em Communications on Pure and Applied Mathematics}, 76(12):4022--4047, 2023.

\bibitem{cakoni2020corner}
Fioralba Cakoni and Jingni Xiao.
\newblock On corner scattering for operators of divergence form and applications to inverse scattering.
\newblock {\em Communications in Partial Differential Equations}, 46(3):413--441, 2021.

\bibitem{Calderon1980}
Alberto~P. Calder{\'o}n.
\newblock On an inverse boundary value problem.
\newblock In {\em Seminar on Numerical Analysis and its Applications to Continuum Physics}, pages 65--73. Sociedade Brasileira de Matem{\'a}tica, Rio de Janeiro, 1980.

\bibitem{colton2019inverse}
David Colton and Rainer Kress.
\newblock {\em Inverse acoustic and electromagnetic scattering theory}, volume~93 of {\em Applied Mathematical Sciences}.
\newblock Springer, Cham, 2019.

\bibitem{deng2019identifying}
Youjun Deng, Jinhong Li, and Hongyu Liu.
\newblock On identifying magnetized anomalies using geomagnetic monitoring.
\newblock {\em Archive for Rational Mechanics and Analysis}, 231(1):153--187, 2019.

\bibitem{deng2020identifying}
Youjun Deng, Jinhong Li, and Hongyu Liu.
\newblock On identifying magnetized anomalies using geomagnetic monitoring within a magnetohydrodynamic model.
\newblock {\em Archive for Rational Mechanics and Analysis}, 235(1):691--721, 2020.

\bibitem{deng2019inverse}
Youjun Deng, Hongyu Liu, and Gunther Uhlmann.
\newblock On an inverse boundary problem arising in brain imaging.
\newblock {\em Journal of Differential Equations}, 267(4):2471--2502, 2019.

\bibitem{eskin2011lectures}
Gregory Eskin.
\newblock {\em Lectures on linear partial differential equations}, volume 123 of {\em Graduate Studies in Mathematics}.
\newblock American Mathematical Society, Providence, RI, 2011.

\bibitem{evans2010partial}
Lawrence~C. Evans.
\newblock {\em Partial differential equations}, volume~19 of {\em Graduate Studies in Mathematics}.
\newblock American Mathematical Society, Providence, RI, second edition, 2010.

\bibitem{feizmohammadi2025reconstruction}
Ali Feizmohammadi.
\newblock Reconstruction of 1d evolution equations and their initial data from one passive measurement.
\newblock {\em SIAM Journal on Mathematical Analysis}, 57(5):5089--5106, 2025.

\bibitem{Friedlander1975}
Frederick Friedlander.
\newblock {\em The Wave Equation on a Curved Space-Time}.
\newblock Cambridge Monographs on Mathematical Physics. Cambridge University Press, Cambridge and New York, 1975.

\bibitem{gilbarg2001elliptic}
David Gilbarg and Neil~S. Trudinger.
\newblock {\em Elliptic partial differential equations of second order}.
\newblock Classics in Mathematics. Springer-Verlag, Berlin, 2001.
\newblock Reprint of the 1998 edition.

\bibitem{graham2019helmholtz}
Ivan~G. Graham, Owen~R. Pembery, and Euan~A. Spence.
\newblock The helmholtz equation in heterogeneous media: a priori bounds, well-posedness, and resonances.
\newblock {\em Journal of Differential Equations}, 266(6):2869--2923, 2019.

\bibitem{grisvard1985elliptic}
Pierre Grisvard.
\newblock {\em Elliptic Problems in Nonsmooth Domains}, volume~24 of {\em Monographs and Studies in Mathematics}.
\newblock Pitman Advanced Publishing Program, Boston, 1985.

\bibitem{kachalov2001inverse}
Alexander Kachalov, Yaroslav Kurylev, and Matti Lassas.
\newblock {\em Inverse boundary spectral problems}.
\newblock Chapman and Hall/CRC, 2001.

\bibitem{KKLM2004}
Alexander Katchalov, Yaroslav Kurylev, Matti Lassas, and Niculae Mandache.
\newblock Equivalence of time-domain inverse problems and boundary spectral problems.
\newblock {\em Inverse Problems}, 20(2):419--436, 2004.

\bibitem{kian2022simultaneous}
Yavar Kian.
\newblock Simultaneous determination of different class of parameters for a diffusion equation from a single measurement.
\newblock {\em Inverse Problems}, 38(7):075008, 2022.

\bibitem{kian2025uniqueness}
Yavar Kian and Hongyu Liu.
\newblock Uniqueness and stability in determining the wave equation from a single passive boundary measurement, 2025.
\newblock arXiv:2507.10012.

\bibitem{kian2025determination}
Yavar Kian and Gunther Uhlmann.
\newblock Determination of the sound speed and an initial source in photoacoustic tomography.
\newblock {\em Transactions of the American Mathematical Society}, 378(08):5329--5353, 2025.

\bibitem{KinderlehrerNirenberg1977}
David Kinderlehrer and Louis Nirenberg.
\newblock Regularity in free boundary problems.
\newblock {\em Annali della Scuola Normale Superiore di Pisa - Classe di Scienze}, 4(2):373--391, 1977.

\bibitem{KlibanovLi+2021}
Michael~V. Klibanov and Jingzhi Li.
\newblock {\em Inverse Problems and Carleman Estimates: Global Uniqueness, Global Convergence and Experimental Data}, volume~63 of {\em Inverse and Ill-Posed Problems Series}.
\newblock De Gruyter, Berlin and Boston, 2021.

\bibitem{KlibanovTimonov}
Michael~V. Klibanov and Alexander~A. Timonov.
\newblock {\em Carleman Estimates for Coefficient Inverse Problems and Numerical Applications}.
\newblock VSP, Utrecht, 2004.

\bibitem{KlibanovYamamoto}
Michael~V. Klibanov and Masahiro Yamamoto.
\newblock Lipschitz stability of an inverse problem for an acoustic equation.
\newblock {\em Applicable Analysis}, 85(5):515--538, 2006.

\bibitem{knox2020determining}
Christina Knox and Amir Moradifam.
\newblock Determining both the source of a wave and its speed in a medium from boundary measurements.
\newblock {\em Inverse Problems}, 36(2):025002, 2020.

\bibitem{lee1989determining}
John~M. Lee and Gunther Uhlmann.
\newblock Determining anisotropic real-analytic conductivities by boundary measurements.
\newblock {\em Communications on Pure and Applied Mathematics}, 42(8):1097--1112, 1989.

\bibitem{liu2015determining}
Hongyu Liu and Gunther Uhlmann.
\newblock Determining both sound speed and internal source in thermo- and photo-acoustic tomography.
\newblock {\em Inverse Problems}, 31(10):105005, 10, 2015.

\bibitem{liu2024inversehyperbolicproblemapplication}
Shitao Liu, Gunther Uhlmann, and Yang Yang.
\newblock An inverse hyperbolic problem with application to joint photoacoustic parameter determination, 2024.
\newblock arXiv:2408.12759 [math.AP].

\bibitem{SonegoFaraoni1992}
Sebastiano Sonego and Valerio Faraoni.
\newblock Huygens' principle and characteristic propagation property for waves in curved space-times.
\newblock {\em Journal of Mathematical Physics}, 33(2):625--632, 1992.

\bibitem{Stefanov2013Recovery}
Plamen Stefanov and Gunther Uhlmann.
\newblock Recovery of a source term or a speed with one measurement and applications.
\newblock {\em Transactions of the American Mathematical Society}, 365(11):5737--5758, 2013.

\bibitem{sylvester1987global}
John Sylvester and Gunther Uhlmann.
\newblock A global uniqueness theorem for an inverse boundary value problem.
\newblock {\em Annals of Mathematics}, 125(1):153--169, 1987.

\bibitem{Uhlmann2019}
Gunter Uhlmann.
\newblock Some open inverse problems.
\newblock In {\em 5th EAIPIA Young Scholars Symposium}, Beijing, 2019. EAIPIA.

\bibitem{MR1054376}
Boris~R. Vainberg.
\newblock {\em Asymptotic Methods in Equations of Mathematical Physics}.
\newblock Gordon and Breach Science Publishers, New York, 1989.
\newblock Translated from the Russian by E. Primrose.

\end{thebibliography}

% =========================================================

\end{document}